\DeclareSymbolFontAlphabet{\amsmathbb}{AMSb}%
\newcommand{\nc}{\newcommand}
\nc{\dmo}{\DeclareMathOperator}
\dmo{\ra}{\rightarrow}
\dmo{\Prob}{\mathbb{P}}
\dmo{\E}{\mathbb{E}}
\dmo{\N}{\mathbb{N}}
\dmo{\Z}{\mathbb{Z}}
\dmo{\Q}{\mathbb{Q}}
\dmo{\R}{\mathbb{R}}
\dmo{\C}{\mathcal{C}}
\dmo{\X}{\mathcal{X}}
\dmo{\U}{\mathcal{U}}
\dmo{\T}{\mathcal{T}}
\dmo{\F}{\mathcal{F}}
\dmo{\AC}{\mathcal{AC}}
\dmo{\w}{\omega}
\dmo{\MIN}{\mathcal{MIN}}
\dmo{\Mod}{Mod}
\dmo{\PMod}{PMod}
\dmo{\PMF}{\mathcal{PMF}}
\dmo{\Mat}{Mat}
\dmo{\supp}{supp}
\dmo{\UE}{\mathcal{UE}}
\dmo{\vol}{vol}
\dmo{\B}{B}
\dmo{\PB}{PB}
\dmo{\PR}{PSL(2,\mathbb{R})}
\dmo{\GL}{GL(k, \mathbb{C})}
\dmo{\SL}{SL(2, \mathbb{Z})}
\dmo{\Isom}{Isom}
\dmo{\RP}{\mathbb{R} \mathrm{P}}
\dmo{\I}{\mathcal{I}}
\dmo{\el}{\ell_{\C}}
\dmo{\NN}{\mathcal{N}}
\dmo{\rk}{rank}
\dmo{\tr}{tr}
\dmo{\llangle}{\langle\langle}
\dmo{\rrangle}{\rangle\rangle}
\dmo{\Unif}{Unif}
\dmo{\Out}{Out}
\dmo{\diam}{\operatorname{diam}}
\dmo{\Aut}{\operatorname{Aut}}
\dmo{\sumRho}{\mathscr{B}}
\dmo{\stopping}{\vartheta}
\dmo{\diffPivot}{\mathcal{P}}
\dmo{\diffEvenPivot}{\mathcal{Q}}
\dmo{\varGam}{\Upsilon}
\dmo{\prodSeq}{\Pi}
\dmo{\NSupp}{N_{supp}}
\dmo{\KSleep}{\mathnormal{K_{sleep}}}
\dmo{\Devi}{\upsilon}
\dmo{\DeviStr}{\iota}
\dmo{\DeviDisc}{\varrho}
\dmo{\sphere}{\mathcal{S}}
\dmo{\DeviUni}{\varsigma}
\dmo{\wVar}{\check{\w}}
\dmo{\muVar}{\eta}
\dmo{\sublinear}{\Delta}
\dmo{\axes}{\mathbf{Y}}
\dmo{\Stab}{\operatorname{Stab}}
\tikzset{->-/.style={decoration={
  markings,
  mark=at position #1 with {\arrow{>}}},postaction={decorate}}}
\nc{\nt}{\newtheorem}
\newtheorem{thm}{{\bf Theorem}}[section]
\newtheorem{conv}[thm]{{\bf Convention}}
\newtheorem{lem}[thm]{{\bf Lemma}}
\newtheorem{cor}[thm]{{\bf Corollary}}
\newtheorem{prop}[thm]{{\bf Proposition}}
\newtheorem{fact}[thm]{Fact}
\newtheorem{claim}[thm]{Claim} 
\newtheorem{remark}[thm]{Remark}
\newtheorem{definition}[thm]{Definition}
\newtheorem{notation}[thm]{Notation}
\numberwithin{equation}{section}
\title[Differentiability of the escape rate]{Contracting isometries and differentiability of the escape rate}
\date{\today}
\author{Inhyeok Choi}
\address{%
		School of Mathematics, KIAS\\
		85 Hoegiro Dongdaemun-gu, Seoul, 02455, South Korea 
}
\email{%
        inhyeokchoi48@gmail.com
                }
\begin{document}

\begin{abstract}
Let $G$ be a countable group whose action on a metric space $X$ involves a contracting isometry. This setting naturally encompasses groups acting on Gromov hyperbolic spaces, Teichm{\"u}ller space, Culler-Vogtmann Outer space and CAT(0) spaces. We discuss continuity and differentiability of the escape rate of random walks on $G$. For relatively hyperbolic groups, CAT(-1) groups and CAT(0) cubical groups, we further discuss analyticity of the escape rate. Finally, assuming that the action of $G$ on $X$ is weakly properly discontinuous (WPD), we discuss continuity of the asymptotic entropy of random walks on $G$.

\noindent{\bf Keywords.} Random walk, escape rate, entropy, hyperbolic group, CAT(0) space, mapping class group

\noindent{\bf MSC classes:} 20F67, 30F60, 57M60, 60G50
\end{abstract}

\maketitle

\section{Introduction}\label{section:intro}

Let $G$ be a countable group acting on a geodesic metric space $(X, d)$ with a basepoint $o \in X$ and let $\mu$ be a probability measure on $G$. The \emph{first moment} and the \emph{(time-one) entropy} of $\mu$  are defined respectively by\[
L(\mu) := \sum_{g \in G} \mu(g) d(o, go), \quad H(\mu) := \sum_{g \in G} \mu(g) \log \mu(g).
\]
Since these quantities are subadditive with respect to convolution of measures, we can define two asymptotic quantities \[
l(\mu) := \lim_{n \rightarrow \infty} \frac{L(\mu^{\ast n})}{n}, \quad h(\mu) := \lim_{n \rightarrow \infty} \frac{H(\mu^{\ast n})}{n},
\]
called the \emph{escape rate} (or \emph{drift}) and the \emph{asymptotic} (or \emph{Avez}) \emph{entropy}. These quantities have been studied in connection with the analytic properties of the ambient group. For example, a countable group $G$ is nonamenable if and only if every admissible measure $\mu$ on $G$ has strictly positive escape rate $l(\mu) > 0$ \cite{kaimanovich1983entropy}. Moreover, for a probability measure $\mu$ on $G$ with finite  time-one entropy, its asymptotic entropy $H(\mu)$ vanishes if and only if the Poisson boundary for $(G, \mu)$ is trivial, i.e., there is no non-constant $\mu$-harmonic bounded function on $G$ (see \cite{MR324741}, \cite{derriennic1980quelques} and \cite{kaimanovich1983entropy}; see also \cite{karlsson2007linear} for the role of $L(\mu)$ in this problem).

Furthermore, these two quantities tell us how the random walk probes the $G$-orbit on $X$. When $L(\mu)$ is finite, almost every sample path $(Z_{n})_{n>0}$ escapes to infinity with rate $l(\mu)$, i.e., $l(\mu) = \lim_{n} d(o, Z_{n} o)/n$. Similarly, when $H(\mu)$ is finite, almost every sample path $(Z_{n})_{n >0}$ arrives at an $\operatorname{exp}(-n h(\mu))$-probable orbit point at time $n$, i.e., $h(\mu) = -\lim_{n} \log \mu^{\ast n} (Z_{n}) / n$. Finally, the volume growth $v := \limsup_{n} \ln(\# \{ g \in G : d(o, go) < n\}) / n$ tells us how many orbit points there are within a bounded distance. The Guivarc'h fundamental inequality (\cite{guivarch1980loi}, \cite{blachere2008asymptotic}) reads \[ h \le l v,\]and for word hyperbolic groups, the equality holds if and only if the limiting distributions for the $\mu$-random walk (the $\mu$-harmonic measure) and the orbit counting (the Patterson-Sullivan measure) are equivalent \cite{blachere2011harmonic}.

It is natural to ask  about the dependence of the quantities $h(\mu)$ and $l(\mu)$ on the underlying measure $\mu$. Indeed, Gou{\"e}zel, Math{\'e}us and Maucourant proved in \cite{gouezel2018entropy} that given a finite subset $S \subseteq G$ of a   non-virtually free word hyperbolic group, there exists a constant $C <1$ such that $h(\mu) \le Cl(\mu)v$ holds for all symmetric probability measures $\mu$ supported in $S$. One ingredient of their argument is the continuity of $h(\mu)$ and $l(\mu)$ with respect to the underlying measure $\mu$.

In this paper, we focus on group actions that involve contracting isometries (see Convention \ref{conv:main}). We say that a probability measure $\mu$ on $G$ is \emph{non-elementary} if the semigroup generated by the support of $\mu$ contains two independent contracting isometries of $X$. We say that a sequence of measures $\{\mu_{i}\}_{i>0}$ on $G$ \emph{converges simply} to $\mu$ if $\mu_{i}(g) \rightarrow \mu(g)$ for each $g\in G$. We recall the continuity of the escape rate:

\begin{thm}[{\cite[Proposition 5.15]{gouezel2022exponential}}]\label{thm:driftContinuity}
Let $G$ be a countable group acting on a metric space $(X, d)$ that involves two independent contracting isometries. Let $\mu$ be a non-elementary probability measure on $G$, and let $\{\mu_{i}\}_{i > 0}$ be a sequence of probability measures that converges simply to $\mu$ with $L(\mu_{i}) \rightarrow L(\mu)$. Then $l(\mu_{i})$ converges to $l(\mu)$.
\end{thm}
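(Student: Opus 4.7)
The plan is to prove the upper and lower bounds $\limsup_i l(\mu_i) \le l(\mu) \le \liminf_i l(\mu_i)$ by separate arguments: the first using only subadditivity and the hypotheses already in hand, the second by extracting a uniform exponential deviation inequality from the contracting-isometry structure.

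For the upper bound, I would exploit Fekete's identity $l(\mu) = \inf_n L(\mu^{*n})/n$. Given $\epsilon > 0$, fix $n$ with $L(\mu^{*n})/n < l(\mu) + \epsilon$. The intermediate step is to show $L(\mu_i^{*n}) \to L(\mu^{*n})$ as $i \to \infty$ for this fixed $n$. This is a uniform-integrability argument: writing $|g| := d(o, go)$, Scheff\'e's lemma promotes the simple convergence $\mu_i \to \mu$ to total-variation convergence, while the hypothesis $L(\mu_i) \to L(\mu)$ yields uniform integrability of $|\cdot|$ against the $\mu_i$. Bounding $d(o, g_1 \cdots g_n o) \le \sum_k |g_k|$ then splits $L(\mu_i^{*n})$ into a bounded-support piece (convergent by TV) and a tail piece (uniformly small in $i$), giving $L(\mu_i^{*n}) \to L(\mu^{*n})$. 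Since $l(\mu_i) \le L(\mu_i^{*n})/n$ by subadditivity, this yields $\limsup_i l(\mu_i) \le L(\mu^{*n})/n < l(\mu) + \epsilon$.

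The lower bound is more delicate. I would aim for a uniform deviation inequality: for every $\epsilon > 0$ there exist $i_0$ and constants $C, \alpha > 0$ such that, for all $i \ge i_0$ and all $n$,
\[
\Prob_{\mu_i}\bigl[d(o, Z_n o) \le (l(\mu) - \epsilon) n\bigr] \le C e^{-\alpha n}.
\]
Granted this, the Borel--Cantelli lemma combined with Kingman's subadditive ergodic theorem forces $l(\mu_i) \ge l(\mu) - \epsilon$ for all $i \ge i_0$, completing the liminf direction and hence the theorem.

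The main obstacle is precisely this uniform deviation estimate, and it is where the contracting-isometry hypothesis intervenes through the pivoting technique of Gou\"ezel. Two independent contracting isometries in the semigroup generated by $\supp(\mu)$ can be written as fixed finite products of elements of $\supp(\mu)$; by simple convergence, the same factors have strictly positive $\mu_i$-mass for $i$ large and these weights converge to positive limits. The pivoting construction then produces, along a $\mu_i$-trajectory, a positive density of pivotal times, each yielding an additive increment to $d(o, Z_n o)$ that is bounded below, with the constants $C$ and $\alpha$ governed only by these finitely many probabilities. Packaging the pivoting machinery of \cite{gouezel2022exponential} so that its constants stabilize along the sequence $\mu_i$, and reconciling this with the moment control provided by $L(\mu_i) \to L(\mu)$ to handle long excursions away from the pivotal alphabet, is the technical heart of the argument; once achieved, the displayed deviation inequality follows and the proof is complete.
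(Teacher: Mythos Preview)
The paper does not give its own proof of this statement: Theorem~\ref{thm:driftContinuity} is quoted from \cite[Proposition~5.15]{gouezel2022exponential}, with the surrounding paragraph attributing it to Erschler--Kaimanovich, Mathieu--Sisto, and Gou\"ezel. So there is no in-paper argument to compare against directly.

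That said, the paper does prove the strictly stronger Theorem~\ref{thm:driftLipschitz} (local Lipschitz continuity of $l$) under the same finite-first-moment hypothesis that is implicit in $L(\mu_i)\to L(\mu)$, and that proof is methodologically different from your sketch. The paper's route is purely through expectations: it writes $L(\mu'^{*n})-L(\mu^{*n})$ as a telescoping sum and bounds each term by the uniform defect estimate of Corollary~\ref{cor:bddDefect}, which in turn comes from the alignment Lemma~\ref{lem:conditionedDist}. No pathwise deviation inequality and no Borel--Cantelli step appear; one never compares $d(o,Z_no)$ to a threshold, only expectations of differences. Your approach, by contrast, is the pathwise one of Gou\"ezel's original paper: upper semicontinuity from subadditivity plus uniform integrability, lower semicontinuity from a deviation bound $\Prob_{\mu_i}[d(o,Z_no)\le (l(\mu)-\epsilon)n]\le Ce^{-\alpha n}$ that is uniform in $i$ because the pivoting constants depend only on finitely many $\mu$-weights. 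Both arguments rest on the same pivotal-time machinery at bottom; the paper's expectation-based packaging is what allows it to upgrade continuity to a Lipschitz bound, while the deviation-inequality packaging you describe is more natural if one also cares about almost-sure statements.

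Your sketch itself is sound. The upper bound is correct as written (Scheff\'e applied to $g\mapsto d(o,go)\mu_i(g)$ gives $\|\mu_i-\mu\|_{1}\to 0$, hence $L(\mu_i^{*n})\to L(\mu^{*n})$ for each fixed $n$). For the lower bound, the only point worth flagging is that the deviation inequality you need has $l(\mu)$, not $l(\mu_i)$, on the inside; this is exactly what the uniform version of Gou\"ezel's exponential bound delivers, and you correctly identify that the uniformity comes from the stability of the finitely many weights defining the Schottky set. You are right that this is where the work lies, and your description of how to obtain it matches the cited source.
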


This result is originally due to Erschler and Kaimanovich for hyperbolic groups \cite{kaimanovich2013continuity} and due to Mathieu and Sisto for acylindrically hyperbolic groups and Teichm{\"u}ller space \cite{mathieu2020deviation}.   In \cite{masai2021drift}, Masai gave  another argument for Teichm{\"u}ller space. Gou{\"e}zel reproved this result for non-elementary isometry groups of Gromov hyperbolic spaces. His argument can be adapted to group actions involving contracting isometries. See \cite[Section 3.2, 3.3]{choi2022random1} for the necessary modification, where the alignment lemmata are presented).

Hugely motivated by the works of Gou{\"e}zel \cite{gouezel2022exponential}, Mathieu \cite{mathieu2015entropy} and Mathieu and Sisto \cite{mathieu2020deviation}, we aim to investigate higher regularity of the escape rate on various spaces. Given a signed measure $\eta$ on $G$, we define 
\[\begin{aligned}
\| \eta\|_{0} &:= \sum_{g \in G} |\eta(g)| \quad \textrm{(total variation)},\\
\| \eta\|_{1} &:= \sum_{g \in G} d(o, go) |\eta(g)| \quad \textrm{(first moment)},\\
\| \eta\|_{0, 1} &:= \|\eta\|_{0} + \|\eta\|_{1} = \sum_{g \in G} (d(o, go) + 1) |\eta(g)|.
\end{aligned}
\]

Our main results are as follows.

\begin{theorem}\label{thm:driftLipschitz}
Let $G$ be as in Convention \ref{conv:main} and let $\mu$ be a non-elementary probability measure on $G$ with finite first moment. Then there exists $C, \epsilon> 0$ such that \[
\big| l(\mu') - l(\mu) \big| \le C \| \mu' - \mu\|_{0, 1}
\]
for any probability measure $\mu'$ such that $\|\mu' - \mu\|_{0, 1} < \epsilon$.
\end{theorem}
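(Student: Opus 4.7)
The plan is to represent $l(\mu)$ via a renewal formula coming from a pivotal construction and then to bound perturbations of the relevant expectations by a telescoping (discrete Girsanov) argument. A naive coupling of the $\mu$- and $\mu'$-walks will not work here: once two coupled sample paths disagree at some step, their future displacements generically do not cancel (the action need not contract distances on $X$), so the displacement gap grows quadratically in $n$. The pivotal structure is what lets us replace an unbounded functional of the whole path with a functional of a single renewal cycle.

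\emph{Step 1: Renewal formula.} Following the pivotal construction adapted to contracting isometries (cf.\ \cite{gouezel2022exponential, choi2022random1} and the references around Theorem \ref{thm:driftContinuity}), I would produce a stopping time $T$ measuring the length of the first complete pivotal cycle for the walk $Z_n = g_1 \cdots g_n$. The crucial features are that $T$ has an exponential tail under $\Prob_\mu$ and that the cycle displacement $\Delta := d(o, Z_T o)$ is dominated by $\sum_{i \le T} d(o, g_i o)$ and has strong integrability. By Kac's lemma / the renewal theorem one gets
\[
l(\mu) = \frac{\E_\mu[\Delta]}{\E_\mu[T]},
\]
and the same formula holds for every non-elementary $\mu'$ close enough to $\mu$.

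\emph{Step 2: Telescoping expansion.} Regard $\Delta$ and $T$ as functionals $F(g_1, g_2, \ldots)$ on $G^{\N}$. Write $\E_{\mu'}[F] - \E_\mu[F]$ as a telescoping sum indexed by $k \ge 1$, where the $k$-th term replaces the law of $g_k$ from $\mu$ to $\mu'$ while $g_i$ for $i < k$ has law $\mu'$ and $g_i$ for $i > k$ has law $\mu$. Since $F$ depends only on $g_1, \ldots, g_T$, integrating the factor $\eta(g_k) = (\mu' - \mu)(g_k)$ first and using $|\Delta| \le \sum_{i \le T} d(o, g_i o)$, the $k$-th summand is controlled by $\|\mu' - \mu\|_{0,1}$ times an expectation that decays exponentially in $k$ via the tail of $T$. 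Summing in $k$ yields
\[
\big|\E_{\mu'}[\Delta] - \E_\mu[\Delta]\big| \le C \|\mu' - \mu\|_{0,1}, \qquad \big|\E_{\mu'}[T] - \E_\mu[T]\big| \le C \|\mu' - \mu\|_{0}.
\]
The Lipschitz bound on $l$ then follows from the quotient rule, once $\E_{\mu'}[T] \ge \E_\mu[T]/2$ is verified for small perturbations.

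\emph{Main obstacle.} The crux is establishing uniform exponential tails on $T$ valid along the telescoping interpolation between $\mu$ and $\mu'$ (and in particular under $\mu'$ itself). In my scheme the ``future'' coordinates $g_i$, $i > k$, remain $\mu$-distributed, so the tail on $\{T \ge k\}$ coming from future pivoting attempts is handled purely by the $\mu$-construction; the subtler issue is the prefix, where coordinates are $\mu'$-distributed. This can be arranged because the event $\{T \ge k\}$ depends on prefix statistics (drift, pivot gaps, Schottky success rates) that are stable under $\|\cdot\|_{0,1}$-small perturbations of $\mu$. The fact that the perturbation norm includes the first moment $\|\cdot\|_1$ (rather than just the total variation) is what allows us to absorb the contribution of large-displacement disagreement steps in the bound for $\E_{\mu'}[\Delta] - \E_\mu[\Delta]$; this is precisely why $\|\cdot\|_{0,1}$ is the right norm in the statement.
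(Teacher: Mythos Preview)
Your Step~1 has a genuine gap: the pivotal construction does not yield a stopping time. In Gou{\"e}zel's framework the set of pivotal times is updated at each step and earlier pivots may be erased by later ones (cf.\ Fact~\ref{fact:listPivot}(1)), so there is no natural stopping time $T$ marking ``the end of the first pivotal cycle,'' and no Kac-type formula $l(\mu)=\E[\Delta]/\E[T]$ follows. You could instead take $T$ to be the first pivot that is never subsequently erased, but this depends on the entire future path; then in your Step~2 telescoping the event $\{T\ge k\}$ is no longer $\sigma(g_1,\dots,g_{k-1})$-measurable, and replacing the law of $g_k$ interacts with both the indicator $1_{\{T\ge k\}}$ and with $\Delta$ in ways your outline does not control. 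Turning this into a rigorous regeneration argument (with i.i.d.\ cycles, as Kac's lemma requires) would take substantial extra work, and is not how any of the cited papers proceed in this generality.

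The paper's route is shorter and sidesteps all of this. It telescopes $L(\mu'^{\ast n})-L(\mu^{\ast n})$ directly (Equation~\eqref{eqn:driftLipschitz}), rewriting the $i$-th summand as $\E_{\mu'-\mu}\|g\|$ plus $\sum_g(\mu'(g)-\mu(g))$ times the conditional expectation of the \emph{defect}
\[
R_{n,m}(\mathbf g)=\|g_{-m}\cdots g_{-1}\|+\|g_0\|+\|g_1\cdots g_n\|-\|g_{-m}\cdots g_n\|.
\]
The single estimate driving everything is Corollary~\ref{cor:bddDefect}: this defect has expectation bounded by a constant $K_1$, uniformly in $n,m$, in the conditioned value $g_0=g$, and over all products of measures $\|\cdot\|_{0,1}$-close to $\mu$. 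The pivotal machinery enters only to prove this uniform bound (via Lemma~\ref{lem:conditionedDist}, which shows that some intermediate $Z_l o$ lies within $K$ of $[o,Z_ko]$ with conditional probability $\ge 1-Ke^{-k/K}$). No stopping time, no renewal formula, no quotient rule: just a uniformly bounded functional summed against $\|\mu'-\mu\|_0$, divided by $n$, and sent to infinity.
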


\begin{theorem}\label{thm:driftDiff}
Let $G$ be as in Convention \ref{conv:main} and let $\mu$ be a non-elementary probability measure on $G$ with finite first moment, let $\eta$ be a signed measure that is absolutely continuous with respect to $\mu$ and such that $\|\eta\|_{0, 1} < \infty$, and let $\{\mu_{t} : t \in [-1, 1]\}$ be a family of probability measures such that \[
\| \mu_{t} - \mu - t \eta \|_{0, 1} = o(t).
\]
Then $l(\mu_{t})$ is differentiable at $t=0$ with the derivative \[
\lim_{n} \frac{1}{n} \sum_{i=1}^{n} \E_{\mu^{\ast i-1} \ast \eta \ast \mu^{\ast n-i}} d(o, go).
\]
Moreover, this derivative is continuous with respect to $\mu$ and $\eta$ in $\|\cdot\|_{0, 1}$-topology.
\end{theorem}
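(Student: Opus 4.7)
The starting observation is that the finite-time drift $l_n(\nu) := L(\nu^{\ast n})/n$ is a polynomial of degree $n$ in $\nu$ on the Banach space of signed measures with the $\|\cdot\|_{0,1}$-norm, hence Fr\'echet differentiable. Applying the convolution Leibniz rule, the first derivative is exactly
\[
D l_n(\mu)[\eta] = \frac{1}{n}\sum_{i=1}^n \E_{\mu^{\ast i-1}\ast\eta\ast\mu^{\ast n-i}} d(o, go) =: D_n(\mu,\eta).
\]
The plan is to isolate two tasks: (i) show that $D_n(\mu,\eta) \to D_\infty(\mu,\eta)$ as $n \to \infty$, with continuity in $(\mu, \eta)$; and (ii) transfer differentiability from $l_n$ to $l$ by a careful interchange of the $n \to \infty$ limit with the perturbation in $t$.

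For task (i), use $\eta \ll \mu$ to write $\eta = \phi\mu$ with $\E_\mu[\phi] = 0$ (since $\sum\eta = 0$) and $\sum_g (d(o,go)+1)|\phi(g)|\mu(g) < \infty$. Then
\[
D_n(\mu,\eta) = \frac{1}{n}\E\Big[\,d(o, Z_n o)\sum_{i=1}^n \phi(X_i)\,\Big]
\]
for $(X_i)$ i.i.d.\ $\mu$ and $Z_n = X_1 \cdots X_n$. Since $\E\phi(X_1) = 0$, this is a normalized cross-covariance between the displacement and a centered additive functional of the increments. I will adapt Gou\"ezel's pivoting/exponential-deviation construction for contracting isometries (as used in Theorem~\ref{thm:driftContinuity}): decomposing $d(o, Z_n o)$ along pivot-aligned increments, the cross-term at index $i$ decorrelates exponentially fast from pivots at distant indices, so that the sum reorganizes into an essentially stationary ergodic series whose Ces\`aro averages converge to a limit $D_\infty(\mu,\eta)$. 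The same estimates provide uniformity of this convergence on $\|\cdot\|_{0,1}$-bounded neighborhoods of $(\mu, \eta)$, hence continuity of $D_\infty$.

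For task (ii), decompose
\[
l(\mu_t) - l(\mu) - tD_\infty(\mu,\eta) = (l - l_n)(\mu_t) + \bigl[l_n(\mu_t) - l_n(\mu) - tD_n(\mu,\eta)\bigr] + t\bigl[D_n - D_\infty\bigr] + (l_n - l)(\mu).
\]
The first and fourth terms tend to $0$ as $n \to \infty$ uniformly over $\nu$ in a $\|\cdot\|_{0,1}$-neighborhood of $\mu$; this requires a quantitative version of $l_n \to l$ extracted from the same pivoting estimates that underlie Theorem~\ref{thm:driftContinuity}. The middle bracket is a Taylor remainder of the polynomial $l_n$ that splits as $D l_n(\mu)[\mu_t - \mu - t\eta] + O(\|\mu_t - \mu\|_{0,1}^2)$; the first summand is $o(t)$ because $\|D l_n(\mu)\|$ is uniformly bounded in $n$ (again via the pivoting apparatus), and the second is $O(t^2)$. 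The third bracket is $t \cdot o_n(1)$ by task (i). Choosing $n = n(t) \to \infty$ at a compatible rate then yields the claimed derivative, and continuity of the derivative in $(\mu, \eta)$ follows from the continuity of $D_\infty$.

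The main obstacle is task (i): making Gou\"ezel's pivotal decomposition work for a signed perturbation step $\eta = \phi \mu$ rather than an honest probability step. The decorrelation estimates must survive integration against $\phi$, which should follow from $\|\eta\|_{0,1} < \infty$ via the first-moment bound on $\phi$, but the pivoting construction itself must be revisited carefully to produce convergence of the cross-covariance with the quantitative uniformity needed to drive the interchange in task (ii).
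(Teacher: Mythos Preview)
Your overall plan is sensible but has a genuine gap in task~(ii). You claim the second-order Taylor remainder $R_2 := l_n(\mu_t) - l_n(\mu) - Dl_n(\mu)[\mu_t-\mu]$ is $O(t^2)$ \emph{uniformly in $n$}. Telescoping twice, $R_2 = \tfrac{1}{n}\sum_{i<j}\E_{\mu^{\ast(i-1)}\ast\rho\ast\mu^{\ast(j-i-1)}\ast\rho\ast(\cdots)}\|Z_n\|$ with $\rho=\mu_t-\mu$; there are $\binom{n}{2}/n\sim n/2$ summands, and the defect bound (Corollary~\ref{cor:bddDefect}, even with two conditioned positions as Lemma~\ref{lem:conditionedDist} allows) only gives each summand $=O(\|\rho\|_0^2)$. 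This yields $R_2=O(nt^2)$, not $O(t^2)$. With $(l-l_n)(\nu)=O(1/n)$ (which does follow from the bounded defect), your interchange would require both $nt\to\infty$ and $nt\to 0$. To improve $O(nt^2)$ to $O(t^2)$ you would need that the conditioned defect $g\mapsto\E[R_{n-i,i-1}\mid g_0=g]$ is uniformly Lipschitz in the ambient measures, equivalently that $R$ is ``localized'' to a bounded number of steps. That is exactly what the squeezing machinery of Section~\ref{section:squeezing} provides, but it is \emph{not} available in the generality of Theorem~\ref{thm:driftDiff}. The same issue undermines your task~(i): without squeezing you do not get pointwise convergence of the conditioned defect, so a direct decorrelation proof of $D_n\to D_\infty$ is not in reach here.

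The paper bypasses both obstacles with one trick. From the proof of Theorem~\ref{thm:driftLipschitz} one has, for every $k$,
\[
\Bigl|\,l(\mu_t)-l(\mu)-\tfrac{1}{k}\E_{\mu_t^{\ast k}-\mu^{\ast k}}\|g\|\,\Bigr|\ \le\ \tfrac{K_1}{k}\,\|\mu_t^{\ast k}-\mu^{\ast k}\|_0,
\]
using that the defect constant satisfies $K(\mu_t^{\ast k},\mu^{\ast k})\le K(\mu_t,\mu)\le K_1$. Dividing by $t$ and sending $t\to 0$ replaces the middle term by $D_k(\mu,\eta)$ and the right-hand side by $\tfrac{K_1}{k}\bigl\|\sum_{i=1}^k\mu^{\ast(i-1)}\ast\eta\ast\mu^{\ast(k-i)}\bigr\|_0$. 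The crucial point, and the real reason $\eta\ll\mu$ is assumed, is that this last quantity is $\le K_1\,\E_{\mu^k}\bigl|\tfrac{1}{k}\sum_{i=1}^k\phi(g_i)\bigr|\to 0$ by the law of large numbers for the mean-zero density $\phi=d\eta/d\mu$. This single estimate gives existence of the limit \emph{and} convergence of $D_k$ simultaneously, with no need to control $D^2 l_n$ or prove decorrelation of $R$. You identified $\phi$ and its mean-zero property but did not exploit it in this way; that is the missing idea.
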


Analyticity of the escape rate of random walks on free groups and free products was discussed independently by Ledrappier and Gilch (see \cite{ledrappier2001free}, \cite{gilch2007rate}, \cite{ledrappier2012analyticity}). Ha{\"i}ssinsky, Mathieu and M{\"u}ller proved analyticity of the escape rate of random walks on surface groups by using automatic structures \cite{haissinsky2018renewal}. On Gromov hyperbolic groups, Lipschitz continuity and $C^{1}$-regularity of the escape rate is due to Ledrappier \cite{ledrappier2013regularity} and Mathieu \cite{mathieu2015entropy}, respectively. These results are generalized to acylindrically hyperbolic groups and Teichm{\"u}ller spaces by Mathieu and Sisto \cite{mathieu2020deviation}. Finally, Gou{\"e}zel established analyticity of the escape rate on hyperbolic groups in \cite{gouezel2017analyticity}. Theorem \ref{thm:driftLipschitz} and \ref{thm:driftDiff} generalize Mathieu-Sisto's differentiability of the escape rate to CAT(0) spaces and Culler-Vogtmann Outer space.

For certain examples, we have a different formula for the derivative of the escape rate.

\begin{theorem}\label{thm:driftDiffSqueeze}
Let $(X, G)$ be either: \begin{itemize}
\item Teichm{\"u}ller space (with the Teichm{\"u}ller metric or the Weil-Petersson metric) and the mapping class group;
\item Culler-Vogtmann Outer space $CV_{N}$ and $\Out(F_{N})$;
\item a CAT(-1) space and a countable group of its isometries;
\item a CAT(0) cube complex and a countable group of its isometries;
\item relatively hyperbolic group equipped with a Green metric;
\item the standard Cayley graph of a surface group, or;
\item the standard Cayley graph of a free product of nontrivial groups.
\end{itemize}
Let $\mu$ be a strongly non-elementary probability measure on $G$ with finite first moment, let $\eta$ be a signed measure on $G$ such that $\|\eta\|_{0, 1} < \infty$, and let $\{\mu_{t}: t \in [-1, 1]\}$ be a family of probability measures such that \[
\| \mu_{t} - \mu - t \eta \|_{0, 1} = o(t).
\]
Then $l(\mu_{t})$ is differentiable at $t=0$, with derivative \[
 \sigma_{1}(\mu, \eta) := \E_{\eta} d(o, go) -  \lim_{n \rightarrow \infty} \E_{\mu^{\ast n} \times \eta \times \mu^{\ast n}} \big[ d(o, g_{1}o) +d(o, g_{2}o) + d(o, g_{3}o) - d(o, g_{1}g_{2}g_{3}o)\big].
\]
Moreover, this derivative is continuous with respect to $\mu$ and $\eta$ in $\|\cdot\|_{0, 1}$-topology.
\end{theorem}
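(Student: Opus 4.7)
The plan is to invoke Theorem \ref{thm:driftDiff} in the absolutely continuous case and rewrite the resulting derivative formula using the triangle-inequality defect; the general case of $\eta$ follows by approximation and continuity.

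\textbf{Step 1 (reduction via the defect).} Assume first that $\eta$ is absolutely continuous with respect to $\mu$. Theorem \ref{thm:driftDiff} yields
\[
l'(0) = \lim_{n \to \infty} \frac{1}{n} \sum_{i=1}^{n} \E_{\mu^{\ast(i-1)} \ast \eta \ast \mu^{\ast(n-i)}} d(o, go).
\]
Set $D(g_{1}, g_{2}, g_{3}) := d(o, g_{1} o) + d(o, g_{2} o) + d(o, g_{3} o) - d(o, g_{1} g_{2} g_{3} o)$. Since $\eta$ has total mass zero, integration against $\mu^{\ast(i-1)} \times \eta \times \mu^{\ast(n-i)}$ annihilates any term that depends only on $g_{1}$ or only on $g_{3}$ (or on the pair $(g_{1}, g_{3})$), so
\[
\E_{\mu^{\ast(i-1)} \ast \eta \ast \mu^{\ast(n-i)}} d(o, go) = \E_{\eta} d(o, g_{2} o) - \E_{\mu^{\ast(i-1)} \times \eta \times \mu^{\ast(n-i)}} D.
\]

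\textbf{Step 2 (convergence of the defect).} The core task is to prove that $\E_{\mu^{\ast k} \times \eta \times \mu^{\ast m}} D$ converges as $\min(k, m) \to \infty$, with the limit equal to $\lim_{n \to \infty} \E_{\mu^{\ast n} \times \eta \times \mu^{\ast n}} D$. Two applications of the Gromov product identity decompose the defect as
\[
D(g_{1}, g_{2}, g_{3}) = 2 \, (g_{1}^{-1} o \mid g_{2} o)_{o} + 2 \, (g_{2}^{-1} g_{1}^{-1} o \mid g_{3} o)_{o},
\]
reducing the problem to the two Gromov products. For each space on the list, the inverse-walk endpoint $g_{k}^{-1} o$ converges almost surely as $k \to \infty$ to a boundary-valued random variable in the appropriate compactification (Gromov boundary for hyperbolic and relatively hyperbolic cases, Thurston/PMF boundary for Teichm{\"u}ller space, Roller boundary for CAT(0) cube complexes, visual boundary for CAT($-1$) spaces, etc.), on which the Gromov product extends continuously, yielding convergence in distribution. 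Uniform integrability against $\mu^{\ast k} \times |\eta| \times \mu^{\ast m}$ is supplied by the space-specific exponential deviation inequalities: Gou{\"e}zel's estimates for the hyperbolic and relatively hyperbolic settings, Mathieu--Sisto's inequalities for mapping class groups and $\Out(F_{N})$, and pivotal-element arguments for CAT(0) cube complexes and CAT($-1$) spaces. Together these give convergence of $\E D$ in the double limit; combined with uniform boundedness of $\E_{\mu^{\ast k} \times \eta \times \mu^{\ast m}} D$ in $k, m$, a standard Cesaro argument identifies the average in Step 1 with the diagonal limit, giving $l'(0) = \sigma_{1}(\mu, \eta)$ in the AC case.

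\textbf{Step 3 (general $\eta$ and continuity).} The expression $\sigma_{1}(\mu, \eta)$ is linear in $\eta$ and, by the same uniform deviation bounds, continuous in $\eta$ with respect to $\| \cdot \|_{0, 1}$. For general $\eta$, I would approximate by AC signed measures $\eta_{j}$ with $\|\eta_{j} - \eta\|_{0, 1} \to 0$, apply Step 2 to $\mu + t \eta_{j}$, and control the difference $l(\mu_{t}) - l(\mu + t \eta_{j})$ via Theorem \ref{thm:driftLipschitz} by $C |t| \, \|\eta - \eta_{j}\|_{0, 1} + o(t)$. Dividing by $t$, the limsup and liminf of the difference quotient differ by at most $2 C \|\eta - \eta_{j}\|_{0, 1} \to 0$, which establishes both differentiability and the formula. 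Continuity of $\sigma_{1}$ in $(\mu, \eta)$ follows from the same deviation estimates, applied to the joint limit.

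The hard part will be Step 2: establishing the uniform $L^{1}$-integrability of the Gromov-product defect against the signed product measure $\mu^{\ast k} \times \eta \times \mu^{\ast m}$, uniformly in $k, m$. Each of the seven geometries in the list requires invoking a tailored exponential deviation inequality reflecting the boundary behavior of its contracting isometries, and verifying compatibility with the factor $\eta$ whose support need not be concentrated near the boundary.
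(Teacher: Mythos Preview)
Your approach is genuinely different from the paper's, and Step 2 contains a real gap.

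The paper does not go through Theorem \ref{thm:driftDiff} at all. Instead it introduces the notion of an $\epsilon$-\emph{squeezing} geodesic (Definition \ref{dfn:squeezing}): a segment $\gamma$ containing a point $p$ that lies $\epsilon$-close to every geodesic whose $0.01\,\mathrm{length}(\gamma)$-neighbourhood contains $\gamma$. The key probabilistic input (Proposition \ref{prop:squeeze}) says that, conditioned on $g_{0}$ and any one other coordinate $g_{j}$, the bi-infinite walk passes through suitable squeezing segments on both sides with probability at least $1-Ke^{-k/K}$. This is what forces the defect $R_{n,m}$ to be Cauchy pointwise (Lemma \ref{lem:squeezeCompare}) and in $L^{1}$ uniformly in $g_{0}$ (Lemma \ref{lem:squeezeL1}). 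The derivative formula then follows by a direct dominated-convergence argument, with no absolute-continuity hypothesis on $\eta$ and no approximation step.

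Your Step 2 has two concrete problems. First, the Gromov-product identity $D = 2(g_{1}^{-1}o \mid g_{2}o)_{o} + 2(g_{2}^{-1}g_{1}^{-1}o \mid g_{3}o)_{o}$ uses $d(o,g_{1}^{-1}o)=d(o,g_{1}o)$, which fails for the Lipschitz metric on Outer space; the decomposition simply does not hold there. Second, and more seriously, the assertion that ``the Gromov product extends continuously'' to the named compactifications is exactly the missing ingredient and is not available in this generality: for Teichm\"uller space with $d_{\T}$ the Thurston boundary is not a Gromov boundary and Gromov products do not extend continuously; for CAT(0) cube complexes neither the Roller nor the visual boundary carries such a continuity property. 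The second term $(g_{2}^{-1}g_{1}^{-1}o \mid g_{3}o)_{o}$, in which both arguments escape to infinity, is precisely the quantity these boundaries fail to control. The paper's squeezing argument is the substitute: rather than asking the boundary to stabilise the Gromov product, it shows that the walk is funnelled through a nearly deterministic bottleneck point, after which $R_{n,m}$ becomes $\epsilon'$-stable regardless of what happens further along the path. That geometric bottlenecking, not boundary theory, is the mechanism you are missing.
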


When the underlying space  exhibits a CAT(-1) behaviour near an axis of a contracting isometry, we can promote this $C^{1}$-regularity of the drift to  the $C^{\infty}$-regularity. For simplicity, we here present a result about $C^{2}$-regularity.

\begin{theorem}\label{thm:driftDiffSqueezeSecond}
Let $(X, G)$ be either: \begin{itemize}
\item a CAT(-1) space and its discrete isometry group;
\item a CAT(0) cube complex and its countable isometry group;
\item relatively hyperbolic group equipped with a Green metric;
\item the standard Cayley graph of a surface group, or;
\item the standard Cayley graph of a free product of nontrivial groups.
\end{itemize}
Then the function $\sigma_{1}(\mu, \eta)$ is differentiable in the following sense. Let $\mu$ be a strongly non-elementary probability measure on $G$ with finite first moment and let $\eta, \eta'$ be signed measures on $G$ such that $\|\mu\|_{0, 1}, \|\eta\|_{0, 1} < \infty$. Let $\{\mu_{t} : t \in [-1, 1]\}$ be a family of probability measures and $\{\eta_{t} : t \in [-1, 1]\}$ be a family of  balanced signed measures (i.e., $\sum_{g \in G} \eta_{t}(g) = 0$) such that \[
\|\mu_{t} - \mu - t \eta\|_{0, 1},\,\, \|\eta_{t} - \eta- t \eta'\|_{0, 1} = o(t).
\]
Then $\sigma_{1}(\mu_{t}, \eta_{t})$ is differentiable at $t = 0$, with derivative  \[\begin{aligned}
&\sigma_{2}(\mu, \eta, \eta') := \E_{\eta'} d(o, go)  
- \lim_{n\rightarrow \infty} \E_{\mu^{\ast n} \times \eta' \ast \mu^{\ast n}} \big[ d(o, g_{1}o) +d(o, g_{2}o) + d(o, g_{3}o) - d(o, g_{1}g_{2}g_{3}o)\big]\\
&- \sum_{i =1}^{\infty} \lim_{n \rightarrow \infty} \E_{\mu^{\ast n} \times \eta \times \left( \mu^{\ast (i-1)} \ast \eta \ast \mu^{\ast (n-i)}\right)} \big[ d(o, g_{1}o) +d(o, g_{2}o) + d(o, g_{3}o) - d(o, g_{1}g_{2}g_{3}o)\big] \\
&-\sum_{i =1}^{\infty} \lim_{n \rightarrow \infty} \E_{\left(\mu^{\ast (n-i)} \ast \eta \ast \mu^{\ast (i-1)}\right) \times \eta \times \mu^{\ast n}} \big[ d(o, g_{1}o) +d(o, g_{2}o) + d(o, g_{3}o) - d(o, g_{1}g_{2}g_{3}o)\big].
\end{aligned}
\]
Moreover, this derivative is continuous with respect to $\mu$, $\eta$ and $\eta'$ in $\|\cdot\|_{0, 1}$-topology.
\end{theorem}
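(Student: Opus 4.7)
This statement is a second-order analogue of Theorem \ref{thm:driftDiffSqueeze}, and the plan is to adapt the same perturbative template one level deeper: decompose $\sigma_{1}$, apply the product rule to a finite-$n$ truncation of the defining limit, and commute the derivative in $t$ with the limit $n \to \infty$.

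Write $\sigma_{1}(\mu_{t}, \eta_{t}) = \E_{\eta_{t}} d(o, go) - U(\mu_{t}, \eta_{t})$, where $U(\mu, \eta) := \lim_{n \to \infty} T_{n}(\mu, \eta)$, $T_{n}(\mu, \eta) := \E_{\mu^{\ast n} \times \eta \times \mu^{\ast n}}[F]$, and $F(g_{1}, g_{2}, g_{3}) := d(o, g_{1} o) + d(o, g_{2} o) + d(o, g_{3} o) - d(o, g_{1} g_{2} g_{3} o)$. The hypothesis $\|\eta_{t} - \eta - t \eta'\|_{0, 1} = o(t)$ gives immediately $\frac{d}{dt}\big|_{t=0} \E_{\eta_{t}} d(o, go) = \E_{\eta'} d(o, go)$, the first term of $\sigma_{2}$. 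For $-U$, I first differentiate $T_{n}$ using multilinearity. Writing $\mu_{t}^{\ast n} = \mu^{\ast n} + t A_{n} + R_{n}(t)$ with $A_{n} := \sum_{i=1}^{n} \mu^{\ast (i-1)} \ast \eta \ast \mu^{\ast (n-i)}$, the product rule yields
\[
\frac{d}{dt}\Big|_{t=0} T_{n}(\mu_{t}, \eta_{t}) = \E_{A_{n} \times \eta \times \mu^{\ast n}}[F] + \E_{\mu^{\ast n} \times \eta' \times \mu^{\ast n}}[F] + \E_{\mu^{\ast n} \times \eta \times A_{n}}[F].
\]
Expanding $A_{n}$ as a sum over insertion positions and re-indexing the first sum by $j = n - i + 1$ so that $\eta$ sits at distance $j$ from the right end of the left factor, I recover exactly the three expressions in $\sigma_{2}$ once $n \to \infty$.

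The core technical step is to justify (i) commuting $\lim_{n \to \infty}$ with the finite $i$-sums to produce convergent infinite series, and (ii) interchanging $\frac{d}{dt}\big|_{t=0}$ with $\lim_{n \to \infty}$. Both rest on a single \emph{insertion-decay} estimate: there should exist $C > 0$ and $\rho \in (0, 1)$, depending only on a $\|\cdot\|_{0,1}$-neighborhood of $\mu$, so that
\[
\big| \E_{\mu^{\ast n} \times \eta \times (\mu^{\ast (i-1)} \ast \eta \ast \mu^{\ast (n-i)})}[F] \big| \le C \rho^{\min(i,\, n-i)} \|\eta\|_{0, 1}^{2},
\]
and symmetrically on the left. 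This expresses the fact that inserting a perturbation at depth $i$ inside a $\mu$-walk affects $F$ only by a geometric factor in $i$; it is the quantitative input that distinguishes the settings listed in this theorem from the general contracting setting of Theorem \ref{thm:driftDiff}. In CAT(-1) spaces, CAT(0) cube complexes, the Weil--Petersson Teichm\"{u}ller space, surface groups, free products, and relatively hyperbolic groups with a Green metric, trajectories of the random walk cling exponentially tightly to axes of contracting isometries, so the Gromov-product-like quantity $F$ has exponential tails and is nearly blind to perturbations inserted at depth.

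Granted this estimate, dominated convergence legitimizes both interchanges, and the Taylor remainder $T_{n}(\mu_{t}, \eta_{t}) - T_{n}(\mu, \eta) - t \cdot \frac{d}{dt}\big|_{t=0} T_{n}$ is $O(t^{2})$ uniformly in $n$, because a second-order contribution requires two simultaneous perturbation-insertions whose joint effect on $F$ decays in the distance separating them, rendering the $\binom{n}{2}$-sum uniformly bounded. Continuity of $\sigma_{2}$ in $(\mu, \eta, \eta')$ follows from the same uniform constants, since the geometric rate $\rho$ is stable under small perturbations in $\|\cdot\|_{0,1}$. The main obstacle I anticipate is proving the insertion-decay estimate across all seven settings: I would re-use the pivoting technology of \cite{gouezel2022exponential} in the acylindrically-hyperbolic, Green-metric and cube-complex cases, and invoke direct CAT(-1) thin-triangle estimates in the CAT(-1) and Weil--Petersson cases, upgrading the one-insertion deviation bounds that underlie Theorem \ref{thm:driftDiffSqueeze} to the two-insertion bounds needed here.
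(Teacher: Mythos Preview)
Your outline follows the right template, but two crucial ingredients are missing. First, the insertion-decay bound you state does not follow from geometry alone: the expectation $\E_{\mu^{\ast n}\times \eta\times(\mu^{\ast(i-1)}\ast\eta\ast\mu^{\ast(n-i)})}[F]$ has no reason to be small unless you exploit that $\eta$ is \emph{balanced} (Lemma~\ref{lem:driftDiffMean0}). The mechanism, used explicitly in the paper (Equation~\ref{eqn:squeeze2ndDiff}), is to subtract the truncated defect $R_{i-1,\,\cdot}$, which is independent of the $i$-th coordinate, so that the zero-mass $\eta$ annihilates it; only then does Lemma~\ref{lem:expSqueezeL1} give exponential decay in $i$. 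Your heuristic ``trajectories cling exponentially to axes'' is the geometric input to Lemma~\ref{lem:expSqueezeL1}, but without the algebraic cancellation the raw expectation is merely bounded (Corollary~\ref{cor:bddDefect}) and does not decay.

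Second, and more seriously, your justification that the Taylor remainder is $O(t^{2})$ uniformly in $n$ is incorrect. A second-order term with two insertions on the same side at positions $i<j$ decays, after cancellation, like $e^{-i/K}$ (the shallower depth) or $e^{-j/K}$ (the deeper), \emph{not} in the separation $j-i$; summing $\sum_{i<j}\rho^{j-i}$ gives $O(n)$, and for insertions on opposite sides the cancellation against $R_{j-1,i-1}$ yields decay only in $\min(i,j)$, again producing an $O(n)$ sum. The paper sidesteps this entirely by working with the almost-sure limit $R(\mathbf{g})=\lim R_{n,m}$ from the start and telescoping $\sigma_{1}(\mu_{t},\eta_{t})-\sigma_{1}(\mu,\eta)$ one coordinate at a time (Equation~\ref{eqn:secondDiff}), so that every summand carries exactly \emph{one} $(\mu_{t}-\mu)$-insertion at depth $k$ and is bounded by $Ce^{-k/K}\|\mu_{t}-\mu\|_{0,1}$; dividing by $t$ then gives a uniformly dominated series and one takes $t\to 0$ termwise. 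Your finite-$n$ route would need a genuinely new two-insertion estimate to close, whereas the diagonal telescoping avoids the issue.
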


As mentioned, analyticity of the escape rate on Gromov hyperbolic groups is due to Gou{\"e}zel \cite{gouezel2017analyticity}. Gou{\"e}zel's strategy is to utilize the contracting property of the Markov operator on a subset of the Busemann boundary of $G$. A crucial ingredient of his argument is that the values of a Busemann function $h$ on a bounded ball can determine the values of $h$ on a larger region (\cite[Lemma 3.4]{gouezel2017analyticity}). This phenomenon is expected when the underlying space is either a Gromov hyperbolic graph or a CAT(-1) space. It is not known whether such a strategy can be implemented for the group action on a general Gromov hyperbolic space.

Meanwhile, even restricted to the graph metric, our strategy requires much stronger (yet not global) rigidity of Busemann functions. Recall that for a hyperbolic group $G$, there is a continuous, (uniformly) finite-to-one surjection $\pi : \partial_{B} G \rightarrow \partial G$ from the Busemann boundary to the Gromov boundary. Theorem \ref{thm:driftDiffSqueezeSecond} applies to the Cayley graph of a hyperbolic group $G$ that contains a loxodromic element $g$ whose attracting point $g^{+} \in \partial G$ is a  one-to-one point of $\pi$, i.e., $\pi^{-1}(g^{+})$ consists of a single horofunction. This holds on the standard Cayley graph of a surface group, for example, but is not true in general. For example, on a ``ladder" $G = \Z \times \Z/2\Z$, the two ends of the group correspond to two boundary points on $\partial G$ and each of them has two preimages in $\partial_{B} G$. 

Another difference between Gou{\"e}zel's method and ours is the condition on the underlying measure $\mu$. Gou{\"e}zel's method depends on a spectral property of the Markov operator which requires $\mu$ to be admissible (i.e., the support of $\mu$ generates entire $G$) and have finite support (or at least with exponential tail). In contrast, we do not make use of spectral theory and  instead rely on the deviation inequality.  For this reason, our theorem applies to random walks that are not admissible (but non-elementary) nor finitely supported (but with finite first moment).

We next turn to the asymptotic entropy of $\mu$.  The following theorem describes continuity of the asymptotic entropy without moment condition.  To the best of the author's knowledge, this is new even on the free groups.

\begin{theorem}\label{thm:entropyContinuity}
Let $(X, G)$ be as in Convention \ref{conv:main} with a WPD contracting element $g \in G$, let $\mu$ be a probability measure with finite time-one entropy satisfying $\mu^{\ast n}(g^{m}) > 0$ for some $n, m > 0$, and let $\{\mu_{i}\}_{i > 0}$ be a sequence of probability measures that converges simply to $\mu$ with $H(\mu_{i}) \rightarrow H(\mu)$. Then $h(\mu_{i})$ converges to $h(\mu)$.
\end{theorem}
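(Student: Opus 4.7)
The plan is to prove upper and lower semicontinuity of $h(\cdot)$ separately under the given mode of convergence.

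\textbf{Upper semicontinuity.} By subadditivity and Fekete's lemma, $h(\mu) = \inf_{n} H(\mu^{\ast n})/n$. I would reduce this direction to the finite-$n$ continuity: if $\mu_i \to \mu$ simply with $H(\mu_i) \to H(\mu) < \infty$, then $H(\mu_i^{\ast n}) \to H(\mu^{\ast n})$ for each fixed $n$. Granted this, $h(\mu_i) \leq H(\mu_i^{\ast n})/n \to H(\mu^{\ast n})/n$, and taking the infimum over $n$ yields $\limsup_{i} h(\mu_i) \leq h(\mu)$. For the finite-$n$ continuity, note first that simple convergence of probability measures on a countable group is equivalent to total variation convergence (Scheffé). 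I would then isolate the general fact: if $\pi_i \to \pi$ simply on a countable set with $H(\pi_i) \to H(\pi) < \infty$, then $H(f_\ast \pi_i) \to H(f_\ast \pi)$ for any function $f$. This follows from the identity $H(\pi_i) = H(f_\ast \pi_i) + H(\pi_i \mid f_\ast \pi_i)$ combined with Fatou applied to the nonnegative summands $(f_\ast \pi_i)(t)\, H((\pi_i)_t)$, giving $\liminf_{i} H(\pi_i \mid f_\ast \pi_i) \geq H(\pi \mid f_\ast \pi)$ and hence $\limsup_{i} H(f_\ast \pi_i) \leq H(f_\ast \pi)$; the reverse inequality is the usual lower semicontinuity of $H$. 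Apply this with $\pi_i = \mu_i^{\otimes n}$ (whose entropy is $nH(\mu_i) \to nH(\mu)$) and $f$ the iterated multiplication $G^n \to G$.

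\textbf{Lower semicontinuity.} Here the WPD hypothesis is essential. Simple convergence gives $\mu_i^{\ast n_0}(g^{m_0}) \to \mu^{\ast n_0}(g^{m_0}) > 0$, so each $\mu_i$ with $i$ large satisfies the same non-elementarity hypothesis as $\mu$, and the walk driven by $\mu_i$ admits the pivotal-time construction for WPD contracting isometries developed in \cite{gouezel2022exponential, choi2022random1}. I would use this to produce a uniform exponential spread: for $n$ large, $\mu_i^{\ast n}$ places mass on at least $\exp((h(\mu)-\epsilon)n)$ distinct elements carrying a definite fraction of the total mass, uniformly in $i$ large. Concretely, on a $\mu_i$-trajectory of length $n$ one extracts at least $\delta n$ pivotal positions at which $g^{m_0}$ can be inserted, with $\delta > 0$ bounded below because $\mu_i(g^{m_0})$ and related quantities are uniformly controlled for $i$ large; WPD guarantees that distinct patterns of insertion yield distinct group elements. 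The resulting spread estimate gives $H(\mu_i^{\ast n})/n \geq h(\mu) - \epsilon$ uniformly in $i$ and $n$ large, hence $\liminf_{i} h(\mu_i) \geq h(\mu) - \epsilon$; letting $\epsilon \to 0$ concludes.

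\textbf{Main obstacle.} The hard direction is the lower semicontinuity. Unlike the drift-continuity theorems earlier in the paper, Theorem~\ref{thm:entropyContinuity} has no moment assumption on $\mu$, so drift- or deviation-based arguments are unavailable; the entropy bound must be extracted purely from the combinatorial WPD structure. The essential technical point is that the pivotal spread must match the asymptotic entropy $h(\mu)$ up to $\epsilon$ — rather than a strictly smaller quantity — and that this matching is stable under simple convergence together with entropy convergence. The hypothesis $H(\mu_i) \to H(\mu)$ calibrates the comparison: it rules out perverse scenarios in which entropy mass escapes into combinatorially cheap orbits under perturbation, while simple convergence together with $\mu^{\ast n_0}(g^{m_0}) > 0$ ensures the pivotal structure survives.
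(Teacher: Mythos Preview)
Your upper semicontinuity argument is correct and matches the paper's treatment (which simply cites \cite{amir2013amenability} and \cite{gouezel2018entropy} for this direction).

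The lower semicontinuity sketch, however, has a real gap. You propose that pivotal times give $\delta n$ positions ``at which $g^{m_0}$ can be inserted'' and that WPD makes distinct insertion patterns yield distinct elements. But this kind of combinatorial freedom only produces roughly $e^{Cn}$ outcomes with $C$ a fixed constant depending on the Schottky set, not $e^{(h(\mu)-\epsilon)n}$. You flag this yourself (``the pivotal spread must match the asymptotic entropy $h(\mu)$ up to $\epsilon$ rather than a strictly smaller quantity'') but then do not say how to close the gap. The paper closes it with two ingredients you are missing. First, the entropy is carried not by the Schottky choices at pivots but by the \emph{inter-pivotal increments} $r_k$, which are distributed approximately as $\mu^{\ast 2M_0 N}\ast\nu^{\ast\cdots}$ and hence each carry entropy at least $(1-\epsilon)H(\mu^{\ast 2M_0 N})\approx 2M_0 N\,h(\mu)$; WPD is used to show that the $r_k$'s can be recovered from $Z_n$ up to a loss bounded by a constant per pivot (Claim~\ref{claim:entropyContinuityC4}). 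Second, to pass from the joint entropy of the $r_k$'s to $H(Z_n)$ one must control the entropy of the conditioning data, in particular the sequence of displacements $\lfloor\|Z_{j'(k)}\|\rfloor$; this is where Chawla--Forghani--Frisch--Tiozzo's sublinear entropy of displacement (Proposition~\ref{prop:sublinearEntropy}) enters, and it is essential precisely because there is no moment assumption. Without this bound the conditioning could absorb a linear amount of entropy and the argument collapses.

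In short: pivots alone only give positive entropy, not the correct entropy. The mechanism that recovers $h(\mu)$ is ``entropy of inter-pivotal blocks $+$ WPD recovery $+$ sublinear displacement entropy,'' and your sketch contains only the first hint of this.
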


This continuity of the asymptotic entropy on hyperbolic groups was proven by Erschler and Kaimanovich \cite{kaimanovich2013continuity} when the underlying measure $\mu$ has finite first moment.  This was later generalized to measures with finite logarithmic moment by Gou{\"e}zel, Math{\'e}us and Maucourant \cite{gouezel2018entropy}. See \cite{masai2021drift} for the case of Teichm{\"u}ller space. While preparing the current manuscript, the author was informed by Anna Erschler and Joshua Frisch of their independent proof of Theroem \ref{thm:entropyContinuity}. 

For higher regularity (differentiability, analyticity, etc.) of the asymptotic entropy on hyperbolic groups, see \cite{gilch2011asymptotic}, \cite{ledrappier2012analyticity}, \cite{ledrappier2013regularity}, \cite{mathieu2015entropy}, \cite{gouezel2017analyticity} and \cite{mathieu2020deviation}. On hyperbolic groups and acylindrically hyperbolic groups, the known proofs of differentiability of the asymptotic entropy rely on the Martin boundary or the Green metric of $G$, and the Ancona inequality lies at the heart of the argument. Consequently, the underlying measures are subject to some moment condition (finite support or exponential tail). It seems very challenging to prove an analogue of Theorem \ref{thm:driftDiff} for asymptotic entropy, with finite entropy condition but without moment condition.

We prove Theorem \ref{thm:entropyContinuity} by combining Gou{\"e}zel's pivotal time technique in \cite{gouezel2022exponential} (modulo small modification as in \cite{choi2022random1}) and Chawla-Frisch-Forghani-Tiozzo's sublinear entropy growth of displacement \cite{chawla2022the-poisson}.

In the first half of Section \ref{section:prelim}, we explain notions related to contracting isometries and random walks  following  \cite{choi2022random1}.  We then summarize the geometric features of metric spaces possessing contracting isometries, including Teichm{\"u}ller space, CAT(0) spaces, CAT(0) cube   complexes, CAT(-1) spaces, Culler-Vogtmann Outer space and (relatively) hyperbolic groups. In Section \ref{section:pivoting}, we   explain Gou{\"e}zel's pivoting technique. In section \ref{section:diff}, we establish a single probabilistic estimate (Lemma \ref{lem:conditionedDist}) that leads to Theorem \ref{thm:driftLipschitz} and \ref{thm:driftDiff}. Once Lemma \ref{lem:conditionedDist} is given, the remaining argument for Theorem \ref{thm:driftLipschitz} and \ref{thm:driftDiff}   is purely probablistic and does not require much geometric information. In Section \ref{section:squeezing}, similarly, we single out a probabilistic estimate (Proposition \ref{prop:squeeze}) that leads to Theorem \ref{thm:driftDiffSqueeze}. In Section \ref{section:expSqueeze},  we prove a slightly more delicate estimate (Proposition \ref{prop:expSqueeze}) is  that leads to Theorem \ref{thm:driftDiffSqueezeSecond} is deduced. In Section \ref{section:entropy}, we recall Chawla-Forghani-Frisch-Tiozzo's sublinear growth of entropy of displacement \cite{chawla2022the-poisson} and a more complicated version of pivotal time construction \cite[Section 5C]{gouezel2022exponential}. By combining them, we establish Theorem \ref{thm:entropyContinuity}.

\subsection*{Acknowledgments}
The author thanks Kunal Chawla, Giulio Tiozzo and Joshua Frisch for explaining the usage of pivotal times for estimating asymptotic entropy. The author also thanks Ilya Gekhtman and Hidetoshi Masai for fruitful discussion. The author is grateful to the anonymous referee for suggesting many valuable comments that helped improved the exposition of this paper. The author is also grateful to the American Institute of Mathematics and the organizers and the participants of the workshop ``Random walks beyond hyperbolic groups'' in April 2022 for helpful and inspiring discussions. 

The author was supported by Samsung Science \& Technology Foundation (SSTF-BA1301-51), Mid-Career Researcher Program (RS-2023-00278510) through the National Research Foundation funded by the government of Korea, and by KIAS Individual Grants (SG091901, MG091901). This paper was revised while the author was at Cornell University. The author gratefully acknowledges the hospitality of the Department of Mathematics.

\section{Preliminaries}\label{section:prelim}

In this section we review geometric and probabilistic preliminaries. Subsection \ref{subsection:contracting}, \ref{subsection:RW} and \ref{subsection:align} are  summaries of the essence of \cite{choi2022random1}, regarding random walks and contracting isometries. These subsections are the only prerequisites  for Theorem \ref{thm:driftLipschitz}, \ref{thm:driftDiff} and \ref{thm:entropyContinuity}. The remaining subsections deal with individual spaces that appear in Theorem \ref{thm:driftDiffSqueeze} and \ref{thm:driftDiffSqueezeSecond}.

\subsection{Contracting isometries and BGIP isometries}\label{subsection:contracting}

Inspired   by many work including \cite{arzhantseva2015growth}, \cite{sisto2018contracting}, \cite{yang2019statistically}, \cite{kapovich2022random}, we focus on contracting isometries of metric spaces. The following definition is meant to  encompass Culler-Vogtmann Outer space  equipped with an asymmetric metric.

\begin{definition}[Metric space]\label{dfn:metric}
An \emph{(asymmetric) metric space} $(X, d)$ is a set $X$ equipped with a function $d : X \times X \rightarrow \mathbb{R}_{\ge 0}$ that satisfies the following: \begin{itemize}
\item (non-degeneracy) for each $x, y \in X$, $d(x, y) = 0$ if and only if $x = y$;
\item (triangle inequality) for each $x, y, z \in X$, $d(x, z) \le d(x, y) + d(y, z)$;
\item (local symmetry) for each $x \in X$, there exist $\epsilon, K>0$ such that $d(y, z) \le K d(z, y)$ holds for $y, z \in \left\{a \in X : \min\left(d(x, a), d(a, x)\right)<\epsilon\right\}$.
\end{itemize}
In this situation, we say that $d$ is a \emph{metric} on $X$.The metric $d$ is said to be \emph{symmetric} if $d(x, y) = d(y, x)$ holds for all $x, y \in X$. We define a symmetric metric called the \emph{symmetrization} of $d$ by \[
d^{sym}(x, y) := d(x, y) + d(y, x).
\]
We endow $(X, d)$ with the topology induced by $d^{sym}$.
\end{definition}

A \emph{path} is a map from an interval or a set of consecutive integers to the ambient space $X$. We say that two paths $\gamma$ and $\eta$ are \emph{$K$-fellow traveling} if there is a orientation-preserving reparametrization $\gamma'$ of $\gamma$ such that $d^{sym}(\gamma'(\tau), \eta(\tau)) < K$ for every input $\tau$. 

We define the Gromov product between $x$ and $y$ based at $z$ by \[
(x, y)_{z} := \frac{1}{2} [d(x, z) + d(z, y) - d(x, y)].
\]
When a path $\gamma : I \rightarrow X$ is understood, we denote by $\bar{\gamma}$ its reversal, i.e., the same path with orientation reversed.   Thus the beginning point of $\gamma$ becomes the ending point of $\bar{\gamma}$ and vice versa.

\begin{definition}[Quasigeodesics]\label{dfn:quasigeod}
A path $\gamma : I \rightarrow X$ from an interval or a set of consecutive integers $I$ is called a \emph{$K$-quasigeodesic} if \begin{equation}\label{eqn:quasiGeod}
\frac{1}{K} |t-s| - K \le d\big(\gamma(s), \gamma(t)\big) \le K|t-s| + K
\end{equation}
holds for all $s, t \in I$ such that $s<t$. If Inequality \ref{eqn:quasiGeod} holds for all $s, t \in I$, we say that $\gamma$ is a \emph{$K$-bi-quasigeodesic}. A path is called a \emph{geodesic} if $d\big(\gamma(s), \gamma(t)\big) = t-s$ holds for all  $s, t \in I$ such that $s < t$.

A metric space $X$ is said to be \emph{geodesic} if every ordered pair of points can be connected by a geodesic, i.e.,  for every $x, y \in X$ there exists a geodesic $\gamma : [a, b] \rightarrow X$ such that $\gamma(a) = x$ and $\gamma(b) = y$. We denote $\gamma$ by $[x, y]$.
\end{definition}

\begin{remark}\label{rem:asymmetric}
In general asymmetric metric spaces, the reversal of a geodesic may not be a geodesic. For an illuminating example due to Coulbois and Wiest, see \cite[Section 6]{francaviglia2011metric}.
\end{remark}

For a closed set $A \subseteq X$,  we denote by $\pi_{A}(\cdot)$ the closest point projection onto $A$. A $K$-quasigeodesic $\gamma$ is called a \emph{$K$-contracting axis} if \[
\diam(\pi_{\gamma}(B)) < K
\]
holds for every metric ball $B$ disjoint from the $K$-neighborhood of $\gamma$.

In metric spaces with a symmetric metric, contracting property is known to be equivalent to the \emph{bounded geodesic image property (BGIP)}: a $K$-bi-quasigeodesic $\gamma$ is called a $K$-BGIP axis if \[
\diam(\pi_{\gamma}(\eta)) < K
\]
for any geodesic $\eta$ that is disjoint from the $K$-neighborhood of $\gamma$. (Some authors call this equivalent notion   the contracting property.) In asymmetric metric spaces, these two properties are not equivalent in general. We stick to the language of BGIP axes, but they really mean contracting axes when the underlying metric is symmetric.

Fix a basepoint  $o\in X$. An isometry $g$ of $X$ is said to have \emph{$K$-BGIP} if its orbit $\{g^{i} o\}_{i \in \Z}$ is a $K$-BGIP axis. Two BGIP isometries $g, h \in \Isom(X)$ are said to be \emph{independent} if their orbits have bounded projections onto each other. A group $G \le \Isom(X)$ is said to be \emph{non-elementary} if it contains two independent BGIP isometries. \textbf{From now on, unless otherwise stated, we adopt}:

\begin{conv}\label{conv:main}
We fix a space $X$ with a possibly asymmetric metric $d$ and with a basepoint $o$. $G$ is a non-elementary countable isometry group of $X$, i.e., $G$ contains two independent BGIP isometries. When $X$ is assumed to be geodesic and $x, y \in X$, $[x, y]$ denotes an arbitrary geodesic connecting $x$ to $y$. 
\end{conv}

For genuine metric spaces with symmetric metrics, one can replace the above convention with: \begin{conv}\label{conv:weak}
We fix a metric space $(X, d)$ (where $d$ is symmetric) with a basepoint $o$. For each $x, y \in X$, $[x, y]$ denotes an arbitrary geodesic connecting $x$ to $y$. $G$ is a non-elementary countable isometry group of $X$, i.e., $G$ contains two independent contracting isometries.
\end{conv}

For each isometry $g$ of $X$, we define two norms \[
\|g\| := d(o, go), \quad \|g\|^{sym} := d(o, go) + d(go, o).
\]
Note that these norms are subadditive. Clearly, $\| \cdot \|^{sym} = 2 \| \cdot \|$ when $d$ is a genuine metric. On Outer space, $\| \cdot \|^{sym}$ is only bi-Lipschitz to $\| \cdot \|$ \cite[Theorem 24]{algom-kfir2012asymmetry}.

We say that a BGIP isometry $g$ of $X$  has the \emph{WPD (weak proper discontinuity) property}, or that $g$ is a WPD element, if for each $K\ge 0$ there exists $N > 0$ such that \[
\Stab_{K}(o, g^{N} o) := \big\{h \in G : d(o, ho) \le K, \, d\big(g^{N}o, hg^{N} o\big) \le K\big\}.
\]
is finite \cite[Section 3]{bestvina2002bounded}. For example, when the $G$-action on $X$ is properly discontinuous, every BGIP isometry of $X$ in $G$ is  WPD.

\subsection{Random walks}\label{subsection:RW}

Let $\{\mu_{i}\}_{i=1}^{n}$ be signed measures on $G$. By pushing forward the product measure $\mu_{1} \times \cdots \times \mu_{n}$ on $G^{n}$  via the composition map, we obtain the convolution measure $\mu_{1} \ast \cdots \ast\mu_{n}$ on $G$. When $\mu_{1} = \ldots = \mu_{n} = \mu$, we write by $\mu^{n}$ and $\mu^{\ast n}$ the $n$-product measure and the $n$-fold convolution measure of $\mu$, respectively.

Let $\mu$ be a probability measure on $G$. The (right) random walk on $G$ generated by $\mu$ is a Markov chain with the transition probability $p(x, y) = \mu(x^{-1}y)$.
\begin{notation}\label{notat:location}
Given a sequence of isometries $\mathbf{g} := (g_{i})_{i \in \Z}$, let \[
Z_{n} := \left\{ \begin{array}{cc} g_{1} \cdots g_{n} & n > 0 \\ id & n = 0 \\ g_{0}^{-1} \cdots g_{n+1}^{-1} & n < 0 \end{array}\right.
\]
\end{notation} Given the i.i.d. input $(g_{i})_{i \in \Z}$, the random variable $Z_{n}(g_{1}, \ldots, g_{n})$ models the $n$-th step position of the random walk starting from the identity. We can write the escape rate  of $\mu$ as \[
l(\mu) := \lim_{n \rightarrow \infty} \frac{1}{n} \E_{\mu^{\ast n}} \|Z_{n}\|.
\]
For $p > 0$, we define the $p$-th moment of  $\mu$ by \[
\E_{\mu} \|g\|^{p} = \sum_{g \in G} \mu(g) \, d(o, go)^{p}.
\]
The \emph{support} of $\mu$ is the set of elements $g \in G$ such that $\mu(g) > 0$. 

Let us now fix the metric space $X$ on which $G$ acts by isometries. A probability measure $\mu$ on $G$ is \emph{non-elementary} if the semigroup $\llangle \supp \mu \rrangle$ generated by the support of $\mu$ contains two independent BGIP isometries. We will say that a non-elementary probability measure $\mu$ is \emph{strongly non-elementary} in the following cases: 

\begin{itemize}
\item $G$ is the mapping class group  of an orientable surface $\Sigma_{g, n}$ of genus $g$ with $n$ punctures, where $3g - 3+n > 0$, and $X$ is the Teichm{\"u}ller space of $\Sigma_{g, n}$ with the Teichm{\"u}ller or the Weil-Petersson metric. In this case, since $\mu$ is assumed to be non-elementary, $\llangle \supp \mu \rrangle$ contains a pseudo-Anosov mapping class.
\item $G$ is the outer automorphism group $\Out(F_{N})$ of a free group of   rank $N \ge 3$, $X$ is the Culler-Vogtmann Outer space and $\llangle \supp \mu \rrangle$ contains a principal fully irreducible outer automorphism.
\item $G$ is a countable automorphism group of a CAT(0) cube complex and $\llangle \supp \mu \rrangle$ contains an automorphism that skewers a pair of strongly separated hyperplanes (see Fact \ref{fact:CAT(0)Bridge}).
\item $G$ is a countable isometry group of a CAT(-1) space or a relatively hyperbolic group and $\llangle \supp \mu \rrangle$ contains a loxodromic element.
\item $G = \langle a_{1}, \ldots, a_{g}, b_{1}, \ldots, b_{g} \, | \, \Pi_{i=1}^{g} [a_{i}, b_{i}] \rangle$ is the surface group and $\llangle \supp \mu \rrangle$ contains $a_{1}^{2g} b_{2}^{2g}$.
\item $G = F_{1} \ast F_{2} \ast \cdots \ast F_{N}$ is a free product of nontrivial groups and $\llangle \supp \mu \rrangle$ contains $w_{1}w_{2}$ for some $w_{i} \in F_{i} \notin \{id\}$ for $i=1, 2$.
\end{itemize}

Later, we will evaluate some RVs on $G^{\Z}$ with respect to several different measures. We introduce a notation for this purpose.

\begin{notation}\label{notat:prodExp}
Let $\{\mu_{i}\}_{i \in \Z}$ be signed measures on $G$, all but finitely many of which are probability measures. Let $F : G^{\Z} \rightarrow \mathbb{R}$ be measurable function. When $g_{i}$ are independently distributed according to $\mu_{i}$ $(i \in \Z)$, we denote the expectation of $F(\mathbf{g})$ by \[
\E_{\prod_{i \in \Z} \mu_{i}} F(\mathbf{g}),
\]
and also by  \[
\E_{ \prod_{i=1}^{\infty} \mu_{i} \,\otimes\, \mu_{0} \, \otimes\, \prod_{i=1}^{\infty} \mu_{-i}} F(\mathbf{g})
\]
when we need to specify the indices associated with each measure.
\end{notation}

\subsection{Alignment lemma}\label{subsection:align}

We fix a \emph{geodesic} metric space $X$. We summarize the content of Subsection 3.2 and 3.3 of \cite{choi2022random1}. We refer the readers to \cite{choi2022random1} for the proofs.

\begin{definition}[{\cite[Definition 3.5]{choi2022random1}}]\label{dfn:alignment}
For $i=1, \ldots, n$, let $\gamma_{i}$ be a path on $X$ whose beginning and ending points are $x_{i}$ and $y_{i}$, respectively. We say that $(\gamma_{1}, \ldots, \gamma_{n})$ is \emph{$C$-aligned} if \[
\diam\big(y_{i} \cup \pi_{\gamma_{i}}(\gamma_{i+1})\big) < C, \quad \diam\big(x_{i+1} \cup \pi_{\gamma_{i+1}} (\gamma_{i}) \big) < C
\]
hold for $i = 1, \ldots, n-1$.
\end{definition}

The following isclear: \begin{enumerate} \item If $(\gamma_{1}, \ldots,\gamma_{n})$ and $(\gamma_{n}, \ldots, \gamma_{m})$ are each $C$-aligned, then so is $(\gamma_{1}, \ldots, \gamma_{n}, \ldots, \gamma_{m})$. 
\item If $(\gamma_{1}, \ldots, \gamma_{n})$ is $C$-aligned, so is $(\bar{\gamma}_{n}, \ldots, \bar{\gamma}_{1})$.
\end{enumerate}

\begin{lem}[{\cite[Lemma 3.7]{choi2022random1}, \cite[Lemma 2.18]{choi2022random3}}]\label{lem:1segment}
For each $C>0$ and $K>1$, there exists $D=D(C, K) > \max(C, K)$ that satisfies the following.

Let   $\gamma$ and $\gamma'$ be $K$-BGIP axes whose beginning points are $x$ and $x'$, respectively. If $(\gamma, x')$ and $(x, \gamma')$ are $C$-aligned, then $(\gamma, \gamma')$ is $D$-aligned.
\end{lem}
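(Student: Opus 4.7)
The plan is to prove the two conditions defining \emph{$D$-alignment} of $(\gamma,\gamma')$, namely $\diam(y\cup\pi_\gamma(\gamma'))<D$ and $\diam(x'\cup\pi_{\gamma'}(\gamma))<D$; the two are proved by analogous arguments with the roles of $\gamma,\gamma'$ interchanged, so I sketch only the first. Throughout I rely on two standard consequences of the $K$-contracting hypothesis: every $K$-contracting axis is $M(K)$-Morse, so any geodesic with endpoints on such an axis stays within $M$ of the corresponding sub-arc, and closest-point projections onto a $K$-contracting axis are coarsely Lipschitz with constants depending only on $K$.

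First I fix $z\in\gamma'$ and a geodesic $\eta=[x',z]$. If $\eta$ misses the $K$-neighborhood $N_K(\gamma)$, the contracting property of $\gamma$ gives $d(\pi_\gamma(z),\pi_\gamma(x'))<K$, and the hypothesis $d(\pi_\gamma(x'),y)<C$ concludes with $D\ge C+K$. Otherwise, let $r^*$ be the last point of $\eta\cap N_K(\gamma)$ and set $\bar r^*:=\pi_\gamma(r^*)\in\gamma$; since $(r^*,z]$ misses $N_K(\gamma)$, contracting gives $d(\bar r^*,\pi_\gamma(z))\le K$, reducing the problem to bounding $d(\bar r^*,y)$.

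To bound $d(\bar r^*,y)$ I use the Morse property of $\gamma'$, applied to $\eta$ whose endpoints lie on $\gamma'$, to place $r^*$ and hence $\bar r^*$ within $K+M$ of $\gamma'$. I then consider a geodesic $\alpha=[x,\bar r^*]$ and split on whether $\alpha\cap N_K(\gamma')=\emptyset$. In the first case, contracting of $\gamma'$ together with the hypothesis $d(\pi_{\gamma'}(x),x')<C$ places $\pi_{\gamma'}(\bar r^*)$, and hence $\bar r^*$, within $2K+C+M$ of $x'$; since $\bar r^*\in\gamma$, this bounds $d(\pi_\gamma(x'),x')$, and combining with the first alignment hypothesis then yields a bound on $d(\bar r^*,y)$ by the triangle inequality. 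In the second case, let $s$ be the first entry of $\alpha$ into $N_K(\gamma')$; the argument symmetric to the first paragraph pins $s$ within $2K+C$ of $x'$. Using Morse for $\gamma$ to place $s$ within $M$ of a point $s'\in\gamma\vert_{[x,\bar r^*]}$, I obtain $d(s',x')\le 2K+C+M$, hence $d(s',y)$ is bounded via the first alignment hypothesis. Finally, $\bar r^*$ lies on the sub-arc of the $K$-quasigeodesic $\gamma$ from $s'$ to $y$, and the quasi-geodesic bound converts $d(s',y)$ into an ambient-metric bound of the form $d(\bar r^*,y)\le K\cdot d(s',y)+K$.

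The main obstacle is the second case above, in which both alignment hypotheses must be used in concert with the quasi-geodesic property of $\gamma$ to rule out the scenario that $\bar r^*$ drifts far along $\gamma$ away from $y$; this is also where the quasi-geodesic constant enters $D=D(C,K)$ most strongly. Taking $D$ large enough to absorb all the explicit bounds appearing in the above cases completes the proof of the first alignment condition, and the second condition follows by the analogous argument with $\gamma,\gamma'$ interchanged and the auxiliary geodesic $[x',z]$ replaced by $[x,w]$ for $w\in\gamma$.
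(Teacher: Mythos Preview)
The paper does not actually prove this lemma; it is listed among the results of Subsection~3.2--3.3 of \cite{choi2022random1}, with the explicit instruction ``Refer to \cite{choi2022random1} for the proofs.'' So there is no in-paper argument to compare against. Your approach---reducing to controlling $\pi_\gamma(z)$ via the last entry/exit point of a geodesic into $N_K(\gamma)$, then invoking the Morse property to pull that entry point back near the other axis, and iterating---is the standard direct route and is correct in outline.

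Two small technical points deserve tightening. First, when you apply the contracting inequality to the sub-geodesic $(r^*,z]$, note that $r^*$ itself lies in $N_K(\gamma)$, so strictly speaking the contracting hypothesis does not apply to $[r^*,z]$; you should pass to a point just beyond $r^*$ and then use that $d(r^*,\gamma)\le K$ to control $d(\pi_\gamma(r^*),\pi_\gamma(z))$ up to an additive $2K$ or so. Second, your final step ``$d(\bar r^*,y)\le K\cdot d(s',y)+K$'' undercounts: since $\gamma$ is only a $K$-quasigeodesic, converting the parameter bound $|t_{s'}-t_y|\le K(d(s',y)+K)$ back to the ambient metric gives $d(\bar r^*,y)\le K^2 d(s',y)+K^3+K$. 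Neither issue affects the existence of $D=D(C,K)$, but both should be stated correctly. The claimed symmetry for the second condition is genuine: reversing the orientation of both axes and swapping them exchanges the two hypotheses and the two conclusions, so your argument indeed covers $\diam(x'\cup\pi_{\gamma'}(\gamma))<D$ as well.
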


The following is a combination of the results in \cite{yang2019statistically}; see \cite{choi2022random1} for another version.

\begin{prop}\label{prop:BGIPWitness}
For each $D>0$ and $K>1$, there exist $E = E(D, K) > \max(D, K)$ and $L = L(D, K)$ that satisfy the following. 

\begin{enumerate}
\item Let $x, y \in X$ and let $\gamma_{1}, \ldots, \gamma_{n}$ be $K$-BGIP axes whose domains are longer than $L$ and such that $(x, \gamma_{1}, \ldots, \gamma_{n}, y)$ is $D$-aligned. Then the geodesic $[x, y]$ has subsegments $\eta_{1}, \ldots, \eta_{n}$, in order from left to right, that are longer than $100E$ and such that $\eta_{i}$ and $\gamma_{i}$ are $0.1E$-fellow traveling. Moreover, the nearest point projection of $x$ onto $\gamma_{1} \cup \ldots \cup \gamma_{n}$ is equal to $\pi_{\gamma_{1}}(x)$.
\item Let $\gamma_{1}, \ldots, \gamma_{n}$ be $K$-BGIP axes such that: \begin{enumerate}
\item the beginning point of $\gamma_{i+1}$ and the ending point of $\gamma_{i}$ are the same for $i=1, \ldots, n-1$;
\item each $\gamma_{i}$ has domain longer than $L$, and 
\item $(\gamma_{1}, \ldots, \gamma_{n})$ is $D$-aligned.
\end{enumerate}
Then the concatenation   $\gamma_{1} \cup \ldots \cup \gamma_{n}$ is
 an $E$-BGIP axis.
\end{enumerate}
\end{prop}

\begin{proof}
Let us assume the hypothesis of Item (1). The first half of the conclusion is proved in \cite[Proposition 2.7]{yang2019statistically}, \cite[Proposition 3.11]{choi2022random1} and \cite[Proposition 2.20]{choi2022random3}. Now, note that $[x, y]$ contains disjoint subsegments $\eta_{1}, \ldots, \eta_{n}$, in order from left to right, such that $\eta_{i}$ and $\gamma_{i}$ are $0.1E$-fellow traveling. Moreover, each $\eta_{i}$ is longer than $100E$. From this, for each $i$ we deduce \[
d(x, \gamma_{i+1}) \ge d(x, \eta_{i+1}) - 0.1E \ge d(x, \eta_{i}) + 99E \ge d(x, \gamma_{i}) + 98E.
\]
This implies the second half of the conclusion.

Item (2) is proved in \cite[Proposition 2.9]{yang2019statistically}.
\end{proof}

\begin{remark}\label{rem:nonGeod}
We will later deal with non-geodesic metrics, namely, the Green metrics on a relatively hyperbolic group $G$. Note that we have other metrics on $G$   that are $G$-equivariantly quasi-isometric to $d$, namely, the word metrics on $G$. Since the word metrics are geodesic metrics, the alignment lemmata   apply to the contracting axes in $G$ with respect to them. For this reason, we always   use the word metrics when defining the alignment of paths in relatively hyperbolic groups. We will revisit this   issue in Section \ref{subsection:relhyp}.
\end{remark}

We now define the notion of Schottky set, a collection of BGIP axes (of uniform quality) that head into distinct directions. Given a sequence $s  = (s_{1}, \ldots, s_{n}) \in G^{n}$, we define: \[\begin{aligned}
\Pi(s) &:= s_{1} s_{2} \cdots s_{n}, \\
\Gamma(s) &:= \big(o, \,s_{1} o, \,s_{1} s_{2} o,\, \ldots, \,\Pi(s) o\big).
\end{aligned}
\]
Recall our notation for reversal: we have $\bar{\Gamma}(s) := (\Pi(s)o, \ldots,   s_{1} o, o)$.

\begin{definition}[cf. {\cite[Definition 3.11]{gouezel2022exponential}, \cite[Definition 3.14]{choi2022random1}}]\label{dfn:Schottky}
For a given constant $K_{0}>0$, we define: \begin{itemize}
\item $D_{0}= D(K_{0}, K_{0})$ be as in Lemma \ref{lem:1segment}, 
\item $E_{0}= E(D_{0}, K_{0})$, $L_{0} = L(D_{0}, K_{0})$ be as in Proposition \ref{prop:BGIPWitness}.
\end{itemize}
We say that $S \subseteq G^{n}$ is a \emph{long enough $K_{0}$-Schottky set} if: \begin{enumerate}
\item $n > L_{0}$;
\item $\Gamma(s)$ is $K_{0}$-contracting axis for all $s \in S$;
\item for each $x \in X$ we have \[
\#\Big\{ s \in S : \textrm{$\big(x, \Gamma(s)\big)$ and $\big(\Gamma(s), \Pi(s) x\big)$ are $K_{0}$-aligned}\Big\} \ge \# S - 1;
\]
\item for each $s, s' \in S$, $\big(\Gamma(s), \Pi(s)\Gamma(s')\big)$ is $K_{0}$-aligned.
\end{enumerate}

In addition, we say that $S$ is \emph{large} if it has cardinality at least 400. Once a long enough and large Schottky set $S$ is understood, its element $s$ is called a \emph{Schottky sequence} and the translates of $\Gamma(s)$ are called \emph{Schottky axes}. When $\mu$ is a probability measure on $G$ such that $S \subseteq (\supp \mu)^{n}$, then we say that $S$ is a  long enough and large Schottky set for $\mu$.
\end{definition}
Using the notation for the reversal, Item (3) above can be seen as: \[
\#\Big\{ s \in S : \textrm{$\big(x, \Gamma(s)\big)$ and $\big(\bar{\Gamma}(s), x\big)$ are $K_{0}$-aligned}\Big\} \ge \# S - 1.
\]
Also, Item (4) is requiring that $(\bar{\Gamma}(s), \Gamma(s'))$ is   $K_{0}$-aligned.

\begin{prop}[{\cite[Proposition 3.18]{choi2022random1}}] \label{prop:Schottky}
Let $\mu$ be a non-elementary probability measure on $G$. Then for each $N, L>0$, there exists $K = K(N) > 0$ and $n=n(N, L) > L$ such that there exists a $K$-Schottky set $S \subseteq (\supp \mu)^{n}$ of cardinality $N$.
\end{prop}

\begin{definition}[{\cite[Definition 2.25]{choi2022random1}}]\label{dfn:semiAlign}
Let $S$ be a long enough Schottky set, let $D>0$, let $x, y \in X$, let $(\gamma_{1}, \ldots, \gamma_{N})$ be a sequence of Schottky axes, and let $(\gamma_{i(1)}, \ldots, \gamma_{i(n)})$ be its subsequence. If $(x, \gamma_{1}, \ldots, \gamma_{N}, y)$ is $D$-aligned, then we say that $(x, \gamma_{i(1)}, \ldots, \gamma_{i(n)}, y)$ is \emph{$D$-semi-aligned}.
\end{definition}

By applying Proposition \ref{prop:BGIPWitness} to   the Schottky axes for a long enough Schottky set, we observe:

\begin{cor}\label{cor:semiAlign}
Let $n > 0$ and let $S \subseteq G^{n}$ be a long enough $K_{0}$-Schottky set, associated with the constants $D_{0}, E_{0}$ as in Definition \ref{dfn:Schottky}. If $x, y \in X$ and Schottky axes $\gamma_{1}, \ldots, \gamma_{N}$ are such that \[
(x, \gamma_{1}, \gamma_{2}, \ldots, \gamma_{N}, y)
\]
is $D_{0}$-semi-aligned, then the $E_{0}$-neighborhood of $[x, y]$ contains $\gamma_{1}$, $\ldots$, $\gamma_{N}$.

More precisely, $[x, y]$ has subsegments $\eta_{1}, \ldots, \eta_{N}$, in order from left to right, that are longer than $100E_{0}$ and such that $\eta_{i}$ and $\gamma_{i}$ are $0.1E_{0}$-fellow traveling for each $i$.
\end{cor}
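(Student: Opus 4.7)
The plan is to peel off the definition of semi-alignment and then invoke Proposition \ref{prop:BGIPWitness} directly; essentially all the geometric work has already been done there. By Definition \ref{dfn:semiAlign}, saying that $(x, \gamma_1, \ldots, \gamma_N, y)$ is $D_0$-semi-aligned means there exists a longer sequence of Schottky axes $(\Gamma_1, \ldots, \Gamma_M)$ together with indices $1 \le i(1) < i(2) < \cdots < i(N) \le M$ such that $\gamma_k = \Gamma_{i(k)}$ and $(x, \Gamma_1, \ldots, \Gamma_M, y)$ is $D_0$-aligned in the sense of Definition \ref{dfn:alignment}. The proof therefore has two steps: (i) apply Proposition \ref{prop:BGIPWitness} to the fully aligned super-sequence; (ii) restrict the resulting witnesses along the subsequence $i(1) < \cdots < i(N)$.

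For step (i), each $\Gamma_j$ is, by construction, a $G$-translate of some $\Gamma(s)$ with $s \in S$, so it is a $K_0$-contracting axis whose parameter domain exceeds $L_0$ by condition (1) of Definition \ref{dfn:Schottky}. The hypotheses of Proposition \ref{prop:BGIPWitness} are then in force with $D = D_0$ and $K = K_0$, and the proposition supplies subsegments $\tilde\eta_1, \ldots, \tilde\eta_M$ of the geodesic $[x, y]$, appearing in order from left to right, each longer than $100 E_0$, with $\tilde\eta_j$ and $\Gamma_j$ being $0.1 E_0$-fellow traveling for every $j$.

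For step (ii), I would set $\eta_k := \tilde\eta_{i(k)}$ for $k = 1, \ldots, N$. Since the $i(k)$ are strictly increasing, the subsegments $\eta_1, \ldots, \eta_N$ still appear in order along $[x, y]$, and the length bound $> 100 E_0$ together with the $0.1 E_0$-fellow traveling of $\eta_k$ with $\gamma_k = \Gamma_{i(k)}$ are inherited coordinate by coordinate. This is the ``more precise'' conclusion of the corollary; the first conclusion, that $\gamma_1, \ldots, \gamma_N$ lie in the $E_0$-neighborhood of $[x, y]$, follows at once since each $\gamma_k$ is within $0.1 E_0 < E_0$ of the subsegment $\eta_k \subseteq [x, y]$.

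The only obstacle I anticipate is conceptual bookkeeping rather than any geometric difficulty: one must correctly interpret $D_0$-semi-alignment as the \emph{existence} of some ambient $D_0$-aligned super-sequence (rather than, say, a weaker pairwise condition among the displayed $\gamma_k$'s alone, which in general will \emph{not} be aligned because arbitrary subsequences of $D_0$-aligned sequences need not be $D_0$-aligned), and then track the two indexing conventions — positions $j$ in the super-sequence and positions $k$ in the subsequence — so that the final $\eta_k$ are relabelled consistently with the $\gamma_k$.
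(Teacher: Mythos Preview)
Your proof is correct and is exactly the intended argument: the paper does not spell out a proof of this corollary (it refers to \cite{choi2022random1}), but the statement is designed so that unpacking Definition~\ref{dfn:semiAlign} and applying Proposition~\ref{prop:BGIPWitness} with $D=D_{0}$, $K=K_{0}$ (whose output constants are precisely $E_{0}$ and $L_{0}$ by Definition~\ref{dfn:Schottky}) gives the result immediately. Your bookkeeping remark about the two indexings is the only subtlety, and you handled it correctly.
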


  For a sequence $\alpha = (g_{1}, \ldots, g_{m}) \in G^{m}$, we denote by $\alpha^{(k)}$ the $k$-self-concatenation of $\alpha$, i.e., \[
\alpha^{(k)} := \big(  \underbrace{g_{1}, \ldots, g_{m}, g_{1}, \ldots, g_{m}, \ldots, g_{1}, \ldots, g_{m}}_{\textrm{$k$ times}}\big).
\] 

We now record a technical lemma about Schottky sets. It says that we can incorporate a given BGIP isometry into the construction of Schottky sets. 

\begin{prop}\label{prop:SchottkyPA}
Let $(X, G)$ be as in Convention \ref{conv:main}. Let $A \subseteq G$ be a subset such that the semigroup $\llangle A \rrangle$ generated by $A$ contains two independent BGIP elements $g$ and $h$. Then for each $n>0$, there exists $m>0$ and a large enough and long Schottky set $S \subseteq A^{m}$   such that for each $s \in S$, $\Gamma(s)$ contains a translate of $(o, go, \ldots, g^{n} o)$ as a subsequence.

More precisely, let $\alpha$ be a sequence in $A$ such that $\Pi(\alpha) = g$. For each $N>0$ there exists $K_{0} = K_{0}(N)>0$ and a finite set $S'' \subseteq A^{m}$ of cardinality $N$ such that, the   set  \[
S := \{(s, \alpha^{(n)}, s) : s \in S''\}
\]
forms a $K_{0}$-Schottky set for any sufficiently large $n$.
\end{prop}

\begin{proof}
By the assumption, the BGIP isometry $g$ is realized as $g = \Pi(\alpha)$ by some sequence   $\alpha$ in $A$. Note that  $\Gamma(\alpha^{(m)})$ is quasi-isometric (uniformly in $m$) to its subpath $(o, go, \ldots, g^{m} o)$. Because $g$ is a BGIP isometry, there exists a uniform constant $K_{g}$ such that $\Gamma(\alpha^{(m)})$ is a  $K_{g}$-BGIP axis for any $m \in \Z_{>0}$.

Let $K_{temp}$ be $K(N+1)$ as in Proposition \ref{prop:Schottky} or $K_{g}$, whichever larger. Let: \begin{itemize}
\item $D_{temp} = D(K_{temp}, K_{temp}) > K_{temp}$ be as in Lemma \ref{lem:1segment},
\item $K_{0} = E(D_{temp}, K_{temp}) > D_{temp}$, $L_{0} = L(D_{temp}, K_{temp})$ be as in Proposition \ref{prop:BGIPWitness}, 
\item $D_{0} = D(K_{0}, K_{0})$ be as in Lemma \ref{lem:1segment},
\item $E_{0} = E(D_{0}, K_{0})$, $L_{1} = L(D_{0}, K_{0})$ be as in Proposition \ref{prop:BGIPWitness}.
\end{itemize}

By Proposition \ref{prop:Schottky}, there exists a $K_{temp}$-Schottky set $S' \subseteq A^{m}$ of cardinality $N+1$, with $m > L_{0} + L_{1}$. Now for $n > L_{0} + L_{1} + 2E_{0} + 3$,  $\Gamma(\alpha^{(n)})$ is a $K_{g}$-BGIP axis whose domain is longer than $L_{0}$, $L_{1}$ and $2E_{0}+2$. 

By the Schottky property of $S'$, we can take a subset $S''\subseteq S'$ of cardinality $N$ such that \[
\big(\Pi(\alpha^{(n)})^{-1} o, \Gamma(s)\big)\,\,\textrm{and}\,\,\big(\Gamma(s), \Pi(s)\Pi(\alpha^{(n)}) o\big)\,\,\textrm{are $K_{temp}$-aligned}. \quad( \forall \, s \in S'').
\] 
Note that for every path $\gamma$, $\big((\textrm{beginning point of $\gamma$}),\, \gamma\big)$ and $\big(\gamma, \,(\textrm{ending point of $\gamma$}) \big)$ are always $0$-aligned. This means that $\big(o, \Gamma(s)\big)$ and $\big(\Gamma(s), \Pi(s) o \big)$ are $0$-aligned for each $s \in S''$. Note also that $\Gamma(\alpha^{(n)})$ and $\Gamma(s)$ for $s \in S'$ are $K_{temp}$-BGIP axes whose domain is longer than $L_{0}$. Lemma \ref{lem:1segment} implies that \begin{equation}\label{eqn:alignSchottkySingle}
\Big(\Gamma(s),\, \Pi(s) \Gamma(\alpha^{(n)}), \,\Pi(s) \Pi(\alpha^{(n)}) \Gamma(s) \Big)\,\,\textrm{is $D_{temp}$-aligned} \quad (\forall \,s \in S'').
\end{equation} We now consider the set \[
S:= \Big\{ \big(s, \alpha^{(n)}, s\big) : s \in S''\Big\}.
\]
By Display \ref{eqn:alignSchottkySingle} and Proposition \ref{prop:BGIPWitness}, $\Gamma(s, \alpha^{(n)}, s)$ is a $K_{0}$-BGIP axis for each $s \in S''$. Next, given $x \in X$, the Schottky property of $S'$ tells us that \begin{equation}\label{eqn:alignSchottkyTriple}
\big( x, \Gamma(s)\big)\,\,\textrm{and}\,\, \big(\Gamma(s), \Pi(s) x \big)\,\,\textrm{are $K_{temp}$-aligned}
\end{equation}
for all $s \in S'$ except at most one. Moreover, when the condition in Display \ref{eqn:alignSchottkyTriple} holds, then \[
\pi_{\Gamma(s, \alpha^{(n)}, s)}(x) = \pi_{\Gamma(s)} (x) \subseteq N_{K_{temp}}(o)
\]
by Display \ref{eqn:alignSchottkySingle}, Proposition \ref{prop:BGIPWitness} and the fact that $\Gamma(s)$ and $\Gamma(\alpha^{(n)})$ are $K_{temp}$-BGIP axes whose domains are longer than $L_{0}$. Hence, $\big(x, \Gamma(s, \alpha^{(n)}, s)\big)$ is $K_{temp}$-aligned. For a similar reason, the condition in Display \ref{eqn:alignSchottkyTriple} implies that $\big(\Gamma(s, \alpha^{(n)}, s), \Pi(s) \Pi(\alpha^{(n)})\Pi(s) x\big)$ is $K_{temp}$-aligned. Summing up, $S$ satisfies the requirements for   the $K_{0}$-Schottky property. Furthermore, $\#S = \#S'' = N$ holds, and each element of $S$ has domain longer than $L_{1}$. Hence, $S$ is a long enough and large Schottky set.
\end{proof}

\subsection{Squeezing isometries} \label{subsection:squeezing}

We now   describe a geometric ingredient for Theorem \ref{thm:driftDiffSqueeze} and \ref{thm:driftDiffSqueezeSecond}.

\begin{definition}\label{dfn:squeezing}
Let $(X, d)$ be a (possibly asymmetric) geodesic metric space. Let $\gamma_{1}, \ldots, \gamma_{n}$ be $K$-BGIP axes in $X$.

We say that the sequence $(\gamma_{1}, \ldots, \gamma_{n})$ is \emph{$(K, \epsilon)$-squeezing} if the following holds. Let $x_{1}, x_{2}, y_{1}, y_{2} \in X$ be points such that $(x_{i}, \gamma_{1})$ is $K$-aligned and $(\gamma_{n}, y_{i})$ is $K$-aligned for $i=1, 2$. Then we have \[
\big| d(x_{1}, y_{1}) + d(x_{2}, y_{2}) - d(x_{1}, y_{2}) - d(x_{2}, y_{1}) \big| < \epsilon.
\]
\end{definition}

In particular, a BGIP axis $\gamma$ is said to be $(K, \epsilon)$-squeezing if, for every quadruple of points $x_{1}, x_{2}, y_{1}, y_{2} \in X$ such that $(x_{i}, \gamma, y_{i})$ is $K$-aligned for $i=1, 2$, we have \[
\big| d(x_{1}, y_{1}) + d(x_{2}, y_{2}) - d(x_{1}, y_{2}) - d(x_{2}, y_{1}) \big| < \epsilon.
\]

Since we   have chosen to use the word metric when discussing alignment in a relatively hyperbolic group, the definition reads as follows.

\begin{definition}\label{dfn:squeezingGreen}
Let $G$ be a relatively hyperbolic group equipped with a Green metric $d$. Let $\gamma_{1}, \ldots, \gamma_{n}$ be paths in $G$ that are $K$-contracting with respect to the word metric.

We say that $(\gamma_{1}, \ldots, \gamma_{n})$ is \emph{$(K, \epsilon)$-squeezing} if the following holds. Let $x_{1}, x_{2}, y_{1}, y_{2} \in G$ be points such that $(x_{i}, \gamma_{1})$ is $K$-aligned and $(\gamma_{n}, y_{i})$ is $K$-aligned with respect to the word metric for $i=1, 2$. Then we have \[
\big| d(x_{1}, y_{1}) + d(x_{2}, y_{2}) - d(x_{1}, y_{2}) - d(x_{2}, y_{1}) \big| < \epsilon.
\]
\end{definition}

Archetypal examples of squeezing axes are geodesic segments in an $\mathbb{R}$-tree. In Figure \ref{fig:tree},   four points are arranged so that $x_{1}$ and $x_{2}$ are on the left of $\gamma$ while $y_{1}$ and $y_{2}$ are on the right of $\gamma$. Suppose that $(x_{i}, \gamma, y_{i})$ are $K$-aligned and that $\gamma$ is longer than $2K$. Define $p$ to be the rightmost one between $\pi_{\gamma}(x_{1})$ and $\pi_{\gamma}(x_{2})$, and $q$ to be the leftmost one between $\pi_{\gamma}(y_{1})$ and $\pi_{\gamma}(y_{2})$. Then the assumption tells us that $p$ is   to the left of $q$, and we have \[
d(x_{i}, y_{j}) = d(x_{1}, p) + d(p, q) + d(q, y_{j})  \quad (i, j \in \{1, 2\}).
\]
This implies that $d(x_{1}, y_{1}) + d(x_{2}, y_{2}) - d(x_{1}, y_{2}) - d(x_{2}, y_{1})=0$.

\begin{figure}
\begin{tikzpicture}
\draw[very thick] (0, 0) -- (5, 0);
\fill (-0.4, 0.9) circle (0.07);
\fill (0.5, -0.9) circle (0.07);
\fill (4.3, 1.1) circle (0.07);
\fill (3.6, -1.3) circle (0.07);
\draw[dashed] (-0.4, 0.9) -- (-0.4, 0) -- (0, 0);
\draw[dashed] (0.5, -0.9) -- (0.5, 0);
\draw[dashed] (4.3, 1.1) -- (4.3, 0);
\draw[dashed] (3.6, -1.3) -- (3.6, 0);
\draw (-0.4, 1.15) node {$x_{1}$};
\draw (0.5, -1.15) node {$x_{2}$};
\draw (4.3, 1.35) node {$y_{1}$};
\draw (3.6, -1.55) node {$y_{2}$};
\draw (0.5, 0.25) node {$p$};
\draw (3.6, 0.25) node {$q$};
\draw (5.22, 0) node {$\gamma$};
\end{tikzpicture}
\caption{  Four points and a geodesic in an $\mathbb{R}$-tree.}
\label{fig:tree}
\end{figure}

In general, when the $K$-alignment of $(x_{i}, \gamma, y_{j})$ forces $[x_{i}, y_{j}]$ to pass nearby the midpoint of $\gamma$, then a similar computation is possible and $\gamma$ is $(K, \epsilon)$-squeezing for some $\epsilon$. In view of Proposition \ref{prop:BGIPWitness}, an aligned sequence of sufficiently long BGIP axes is squeezing. In the following subsections, we will see that the squeezing constant can be controlled explicitly in many spaces.

\subsection{Teichm{\"u}ller space} \label{subsection:Teich}

Let $\Sigma$ be a connected, finite-type hyperbolic surface. Its Teichm{\"u}ller space $\T(\Sigma)$ carries two canonical metrics, namely, the Teichm{\"u}ller metric $d_{\T}$ and the Weil-Petersson metric $d_{WP}$. The mapping class group $\Mod(\Sigma)$ of $\Sigma$ acts on $X$ by isometries with respect to both metrics.

A mapping class in $\Mod(\Sigma)$ either is finite-order, preserves a multicurve or exhibits Anosov dynamics on $\Sigma$ away from finitely many points. The last ones are called pseudo-Anosov mapping classes, and they are contracting isometries with respect to both $d_{\T}$ and $d_{WP}$ (see \cite[Contraction Theorem]{minsky1996quasi-projections}, \cite[Theorem 8.1]{rafi2014hyperbolicity}; \cite[Proposition 8.1]{bestvina2009higher}). Conversely, the orbit of a non-pseudo-Anosov mapping class lies in a quasi-flat   and is not contracting. Since the mapping class group action is properly discontinuous, every pseudo-Anosov mapping class is WPD. In this context, a probability measure $\mu$ on $\Mod(\Sigma)$ is non-elementary iff the semigroup generated by $\supp \mu$ contains two independent pseudo-Anosov mapping classes.

Each pseudo-Anosov mapping class has two fixed points on the Thurston boundary of $(\T(\Sigma)
, d_{\T})$, both uniquely ergodic. Masur's stability result \cite[Theorem 2]{masur1980uniquely} tells us that these fixed points are connected by a unique Teichm{\"u}ller geodesic. Using this, we can observe:

\begin{fact}\label{fact:TeichSqueeze}
Let $\varphi$ be a pseudo-Anosov mapping class and let $\gamma : \mathbb{R} \rightarrow \T(\Sigma)$ be its invariant Teichm{\"u}ller geodesic. Then for each $\epsilon > 0$ there exists $T>0$ such that, if $x, y \in \T(\Sigma)$ satisfy \[
\pi_{\gamma}(x) \ni \gamma(t_{1}), \,\, \pi_{\gamma}(y) \ni \gamma(t_{2})
\] for some $t_{2} > t_{1} + 3T$, then $[x, y]$ contains a subsegment $\eta$ that $\epsilon$-fellow travels with $\gamma|_{[t_{1} + T, t_{2} - T]}$ (with respect to $d_{\T}$).
\end{fact}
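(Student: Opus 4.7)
The plan is to combine Masur's uniqueness theorem for Teichm\"uller geodesics with uniquely ergodic endpoints with a compactness argument exploiting the $\langle\varphi\rangle$-action on $\gamma$. A coarse version of the conclusion is immediate from Proposition \ref{prop:BGIPWitness} applied to the single $K$-contracting axis $\gamma$: for $t_{2}-t_{1}$ sufficiently large, $[x,y]$ contains a subsegment that $E_{0}$-fellow travels with $\gamma|_{[t_{1}+T_{0},\, t_{2}-T_{0}]}$, where $E_{0},T_{0}$ depend only on the contraction constant $K$ of $\gamma$. The task is to upgrade $E_{0}$-fellow traveling to genuine $\epsilon$-fellow traveling, and this is where the special geometry of $\gamma$ (its endpoints being uniquely ergodic) enters.

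I would argue by contradiction. Suppose the conclusion fails for some $\epsilon_{0}>0$: there exist $T_{n}\to\infty$, points $x_{n},y_{n}\in\T(\Sigma)$ with $\pi_{\gamma}(x_{n})=\gamma(t_{1,n})$, $\pi_{\gamma}(y_{n})=\gamma(t_{2,n})$, $t_{2,n}-t_{1,n}>3T_{n}$, and witness points $p_{n}\in\gamma|_{[t_{1,n}+T_{n},\, t_{2,n}-T_{n}]}$ with $d_{\T}(p_{n},[x_{n},y_{n}])\ge\epsilon_{0}$. Choose integers $k_{n}$ so that $\varphi^{-k_{n}}(p_{n})$ lies in a fixed fundamental domain for $\langle\varphi\rangle$ acting on $\gamma$; after replacing $(x_{n},y_{n},p_{n})$ by $(\varphi^{-k_{n}}x_{n},\,\varphi^{-k_{n}}y_{n},\,\varphi^{-k_{n}}p_{n})$ we may assume $p_{n}\to p_{\infty}$ on $\gamma$. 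The projections of the new endpoints to $\gamma$ now diverge to $-\infty$ and $+\infty$ respectively, and the coarse fellow-traveling step places $x_{n},y_{n}$ in an $E_{0}$-neighborhood of $\gamma$ past those projections.

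The key remaining step is to show that, along a subsequence, the translated endpoints converge in the Thurston compactification to the repelling and attracting fixed points $\varphi^{-},\varphi^{+}$ of $\varphi$. Granting this, Masur \cite{masur1980uniquely} gives that the unique bi-infinite Teichm\"uller geodesic with endpoints $\{\varphi^{-},\varphi^{+}\}$ is $\gamma$ itself, and moreover that Teichm\"uller segments whose endpoints converge to $\varphi^{\pm}$ converge uniformly on compact subsets of $\T(\Sigma)$ to $\gamma$; in particular $[x_{n},y_{n}]$ eventually passes within $\epsilon_{0}/2$ of $p_{\infty}$, contradicting the defining property of $p_{n}$. The main obstacle is precisely this boundary-convergence step: one must rule out that the translated endpoints accumulate at non-uniquely-ergodic points of the Thurston boundary, which would leave the limiting Teichm\"uller geodesic non-unique and allow coarse fellow travelers to drift away from $\gamma$. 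The contracting property of $\gamma$, together with the fact that its own forward and backward limits are exactly the uniquely ergodic fixed points $\varphi^{\pm}$, is what pins the subsequential limits of the endpoints down to $\varphi^{\pm}$ and closes the argument.
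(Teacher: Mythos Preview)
The paper does not give a proof of this fact at all: it is stated as a ``Fact'' and attributed directly to Masur's stability result \cite{masur1980uniquely}, with the preceding sentence noting that the fixed points of a pseudo-Anosov are uniquely ergodic and hence connected by a unique Teichm\"uller geodesic. Your proposal is therefore not competing with a proof in the paper but rather supplying the standard derivation that the paper leaves implicit. The overall strategy---normalize by the $\langle\varphi\rangle$-action, pass to a limit, and invoke Masur's theorem that Teichm\"uller segments with endpoints converging to a pair of uniquely ergodic foliations converge locally uniformly to the unique geodesic joining them---is correct and is exactly what the citation is meant to encode.

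One small inaccuracy: you write that the coarse fellow-traveling step ``places $x_n,y_n$ in an $E_0$-neighborhood of $\gamma$''. It does not; the original endpoints can be arbitrarily far from $\gamma$. What Proposition~\ref{prop:BGIPWitness} (or the contracting property directly) gives is points $x_n',y_n'$ on the segment $[x_n,y_n]$ that lie within $E_0$ of $\gamma$ at parameters diverging to $\pm\infty$. Since $(\T(\Sigma),d_\T)$ is uniquely geodesic, $[x_n',y_n']\subset[x_n,y_n]$, so you may replace $x_n,y_n$ by $x_n',y_n'$ and run the limit argument on those. With that adjustment the boundary-convergence step is immediate (points at bounded distance from $\gamma(s)$ with $s\to\pm\infty$ converge to $\varphi^{\pm}$), and the obstacle you flag---ruling out accumulation at non-uniquely-ergodic boundary points---disappears.
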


This fact is well-known to the experts (cf. \cite[Lemma 5.3]{eskin2011counting}). In view of Proposition \ref{prop:Schottky}, Fact \ref{fact:TeichSqueeze} has the following consequence:

\begin{prop}\label{prop:TeichSqueeze}
Let $(X, d)$ be the Teichm{\"u}ller space with the Teichm{\"u}ller metric, let $G$ be the mapping class group and let $o \in X$. Let $\mu$ be a non-elementary probability measure on $G$ and let $\epsilon > 0$. Then there exists a long enough and large Schottky set $S$, associated with the constants $D_{0}, E_{0}$ as in Definition \ref{dfn:Schottky}, such that every Schottky axis is $(D_{0}, \epsilon)$-squeezing.
\end{prop}

The same conclusion for the Weil-Petersson geometry follows from the uniqueness of a recurrent Weil-Petersson geodesic connecting two ending laminations (Theorem 1.1, \cite{brock2010asymptotics}). 

\subsection{CAT(0) spaces} \label{subsection:CAT(0)}

CAT(0) spaces are geodesic spaces where geodesic triangles are not fatter than the Euclidean triangles with the same side lengths. Typical examples   of CAT(0) spaces are complete Riemannian manifolds with non-positive sectional curvature,   such as  Euclidean spaces, $n$-dimensional hyperbolic spaces and their products. The Weil-Petersson metric renders Teichm{\"u}ller space CAT(0): this is indeed the context of Proposition \ref{prop:bestvina2009Contracting} below.

If an isometry of a CAT(0) space preserves a bi-infinite geodesic that does not bound a flat half-plane, then we call it \emph{rank-1}. Rank-1 isometries are essential for our theory due to: \begin{prop}[{\cite[Theorem 5.3]{bestvina2009higher}}] \label{prop:bestvina2009Contracting}
Let $X$ be a proper CAT(0) space. Then an isometry $g$ of $X$ is rank-1 if and only if it is contracting.
\end{prop}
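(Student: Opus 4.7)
The plan is to argue both implications via the CAT(0) flat strip theorem together with an Arzel\`a--Ascoli compactness argument enabled by properness of $X$. The flat strip theorem---two asymptotic bi-infinite geodesics at bounded Hausdorff distance in a CAT(0) space bound a flat Euclidean strip---is the key geometric input, and the rank-$1$ hypothesis enters precisely as the obstruction to extending one flat strip to a flat half-plane.

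For the direction that contracting implies rank-$1$, I would first observe that since $g$ is contracting, its orbit $\{g^{i} o\}$ is a quasi-axis of positive translation length, so $g$ is a semisimple isometry of the proper CAT(0) space $X$ and therefore admits a genuine invariant geodesic axis $\gamma$. This $\gamma$ lies at bounded Hausdorff distance from the orbit and hence inherits the contracting property. If $\gamma$ were to bound a flat half-plane $H$, then for every $r>0$ the parallel geodesic $\gamma_{r}\subset H$ at distance $r$ from $\gamma$ would project via nearest-point projection onto the entirety of $\gamma$, hence with unbounded image, while staying outside the $(r-1)$-neighbourhood of $\gamma$. This contradicts the definition of contracting, so $\gamma$ bounds no flat half-plane and $g$ is rank-$1$.

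For the reverse direction, let $\gamma$ be the $g$-invariant geodesic that bounds no flat half-plane, and suppose for contradiction that $\gamma$ is not contracting. Then there exist geodesic segments $\eta_{n}$ whose projections $\pi_{\gamma}(\eta_{n})$ have diameter at least $L_{n}\to\infty$, while $\eta_{n}$ stays outside the $n$-neighbourhood of $\gamma$. Using that $g$ translates $\gamma$ by a fixed amount, I would translate each $\eta_{n}$ by a suitable power $g^{k_{n}}$ so that the midpoint of $\pi_{\gamma}(\eta_{n})$ lies in a fixed compact region, and then reparametrize $\eta_{n}$ by arclength centred at its closest point to $\gamma$. Properness of $X$ together with Arzel\`a--Ascoli produces a locally uniform limit $\eta_{\infty}$, which must be a bi-infinite geodesic sharing both endpoints at infinity with $\gamma$ (since $L_{n}\to\infty$ forces fellow-travelling on arbitrarily long intervals). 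The flat strip theorem then yields a flat strip bounded by $\gamma$ and $\eta_{\infty}$. Rerunning the same argument after additionally requiring that $\eta_{n}$ stay outside a neighbourhood of radius $r_{n}\to\infty$ produces parallel geodesics to $\gamma$ at arbitrarily large distance from $\gamma$, which together foliate a flat half-plane on one side of $\gamma$, contradicting rank-$1$.

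The main obstacle is the compactness step in the second implication: one must simultaneously arrange that the projections $\pi_{\gamma}(\eta_{n})$ are long (so the limit is a genuine bi-infinite geodesic, not a bounded segment) and that the translated segments $g^{k_{n}}\eta_{n}$ remain in a compact region around the basepoint. The tension is resolved by choosing $k_{n}$ to centre the projection and by exploiting the contracting-failure hypothesis to make $L_{n}\to\infty$ independently of the distance from $\eta_{n}$ to $\gamma$. Once the limit $\eta_{\infty}$ is produced and shown to be parallel to $\gamma$, the flat strip theorem finishes the first stage, and the upgrade from a single flat strip to a half-plane follows by a diagonal extraction as $r_{n}\to\infty$.
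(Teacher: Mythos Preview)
The paper does not supply its own proof of this proposition; it merely cites \cite{bestvina2009higher}. So there is no paper-side argument to compare against, and I will assess your sketch directly.

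Your first implication (contracting $\Rightarrow$ rank-$1$) is fine: a flat half-plane along $\gamma$ contains parallel geodesics at every distance $r$, each projecting onto all of $\gamma$, which immediately violates the contracting property.

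The second implication has a genuine gap at the Arzel\`a--Ascoli step. By negating the contracting property you obtain geodesics $\eta_n$ that stay outside the $n$-neighbourhood of $\gamma$; in particular $d(\eta_n,\gamma)\ge n\to\infty$. Translating by powers of $g$ preserves distance to $\gamma$, so the recentred $\eta_n$ still escape every compact set, and no subsequence can converge to a geodesic in $X$. Your ``resolution'' paragraph addresses the \emph{length} of the projection but not this divergence in \emph{distance}, which is the actual obstruction.

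The standard repair is to interpose an extra step: for each fixed $r>0$, take the endpoints $a_n,b_n$ of $\eta_n$, drop perpendiculars to $p_n=\pi_\gamma(a_n)$ and $q_n=\pi_\gamma(b_n)$, and let $a_n^r\in[p_n,a_n]$, $b_n^r\in[q_n,b_n]$ be the points at distance exactly $r$ from $\gamma$. Then $\pi_\gamma(a_n^r)=p_n$ and $\pi_\gamma(b_n^r)=q_n$ (perpendicularity), so $[a_n^r,b_n^r]$ still has projection of diameter $\ge n$, while by convexity of $d(\cdot,\gamma)$ it lies in the closed $r$-neighbourhood of $\gamma$. Now Arzel\`a--Ascoli applies and yields a bi-infinite geodesic $\gamma_r$ with $d(\gamma_r,\gamma)\le r$; one then checks $\gamma_r\ne\gamma$ and invokes the flat strip theorem. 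Finally a diagonal limit as $r\to\infty$ produces the half-plane. Without this height-$r$ truncation, the compactness argument as you wrote it does not go through.
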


We have plenty of contracting isometries in the following settings.

\begin{fact}[{\cite[Theorem 6.5]{bestvina2009higher}}] \label{prop:bestvina2009higher}
Let $X$ be a proper CAT(0) space. Suppose that the action of a group $\Gamma$ of isometries of $X$ contains a WPD element (see \cite[Definition 6.4]{bestvina2009higher}). Then $\Gamma$ is non-elementary.
\end{fact}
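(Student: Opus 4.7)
The plan is to produce two independent contracting isometries in $\Gamma$ by starting from the WPD rank-$1$ element supplied by the definition of a WPD action, and then conjugating it to get a second, independent one. By Proposition \ref{prop:bestvina2009Contracting}, any rank-$1$ isometry of a proper CAT(0) space is contracting, so the WPD hypothesis furnishes a contracting isometry $g \in \Gamma$ with axis $\gamma$ and boundary fixed points $g^+, g^- \in \partial X$.

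The first step is to exhibit an element $h \in \Gamma$ that fails to preserve the pair $\{g^+, g^-\}$. Suppose for contradiction that every element of $\Gamma$ preserves this pair; then the finite-index subgroup $\Gamma_0 \le \Gamma$ fixing both $g^+$ and $g^-$ pointwise consists of isometries of the parallel axis bundle of $\gamma$, each acting by a translation along the $\mathbb{R}$-factor composed with an isometry of the transverse factor. Applying the WPD condition to $g$ at a basepoint $o \in \gamma$, the set of elements of $\Gamma_0$ that displace both $o$ and $g^N o$ by a bounded amount is finite for large $N$; a standard argument then forces $\Gamma_0$ to be virtually cyclic, contradicting the non-virtual-cyclicity built into the definition of a WPD action in \cite[Definition 6.4]{bestvina2009higher}. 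Thus such an $h$ exists.

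Next, set $g' := h g h^{-1}$, a conjugate of $g$ with axis $h\gamma$ and fixed points $h g^\pm \ne g^\pm$. After replacing $g$ and $g'$ by sufficiently large powers $g^N$ and $(g')^N$, their translation lengths become so large that the alignment machinery of Subsection \ref{subsection:align} applies: the orbit of $o$ under $\langle g^N \rangle$ projects to a bounded subset of the axis of $g'$ and vice versa, which is precisely the independence condition from Subsection \ref{subsection:contracting}. A standard ping-pong argument on the pair $\bigl(g^N, (g')^N\bigr)$ then produces a rank-$2$ free subgroup of $\Gamma$ whose generators are two independent contracting isometries, giving the non-elementarity of $\Gamma$.

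The main obstacle is the first step. The delicate point is that in a proper CAT(0) space the parallel axis bundle of $\gamma$ can in principle be large, with a rich transverse isometry group; it is only the WPD condition --- applied to the well-separated reference points $o$ and $g^N o$ along $\gamma$ --- that forces the subgroup preserving both boundary points to be virtually cyclic. Once this rigidity is in hand, the conjugation-plus-ping-pong construction is routine.
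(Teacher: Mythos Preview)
The paper does not prove this statement; it is recorded as a Fact with a citation to \cite[Theorem~6.5]{bestvina2009higher}. Your outline follows the standard strategy, but the independence step has a genuine gap.

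From your first step you only obtain $\{hg^{+},hg^{-}\}\neq\{g^{+},g^{-}\}$ as \emph{sets}; the two pairs may still overlap. If, say, $hg^{-}=g^{-}$, then the axes $\gamma$ and $h\gamma$ are asymptotic at $g^{-}$, and in a CAT(0) space $h\gamma$ then lies in every neighbourhood of $\gamma$ along that end, so $\pi_{\gamma}(h\gamma)$ is unbounded and $g,g'$ are \emph{not} independent. Passing to powers does not help: the orbit $\{g^{Ni}o\}_{i\in\Z}$ is coarsely dense in $\{g^{i}o\}_{i\in\Z}$, and closest-point projection onto a contracting set is coarsely Lipschitz, so the projection of the thinned orbit onto the axis of $g'$ is bounded if and only if the full projection is. Large translation length and the alignment lemmas of Subsection~\ref{subsection:align} cannot manufacture bounded projection when a boundary endpoint is shared.

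The repair is to strengthen the first step so that $\{hg^{+},hg^{-}\}\cap\{g^{+},g^{-}\}=\emptyset$. Using that $\gamma$ is contracting, any $h\in\Gamma$ fixing $g^{+}$ has forward ray $h\gamma|_{[0,\infty)}$ eventually in a uniform $K$-neighbourhood of $\gamma$; after multiplying by a suitable power of $g$ it lands in a fixed WPD-stabilizer $\Stab_{C}(o,g^{N}o)$, which is finite. Hence $\mathrm{Stab}_{\Gamma}(g^{+})$ and, symmetrically, $\mathrm{Stab}_{\Gamma}(g^{-})$ are virtually cyclic. The elements swapping $g^{+}$ and $g^{-}$ form at most one coset of $\mathrm{Stab}_{\Gamma}(g^{+})\cap\mathrm{Stab}_{\Gamma}(g^{-})$. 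Since $\Gamma$ is not virtually cyclic, Neumann's covering lemma (a group covered by finitely many cosets of subgroups has one of those subgroups of finite index) yields $h\in\Gamma$ with $hg^{+},hg^{-}\notin\{g^{+},g^{-}\}$. With disjoint endpoint pairs, the contracting property immediately gives bounded mutual projections, and then your conclusion goes through; the passage to large powers is needed only for the ping-pong free subgroup, not for independence itself.
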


\begin{fact}[{\cite[Corollary 5.4]{hamenstadt2009rank}}]
Let $X$ be a proper CAT(0) space that admits a rank-1 isometry. Suppose that the limit set of a group $\Gamma$ of isometries of $X$ on the visual boundary has at least 3 points, and that $\Gamma$ does not globally fix a point in $\partial X$. Then $\Gamma$ is non-elementary.
\end{fact}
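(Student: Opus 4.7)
The target is to produce two independent rank-$1$ (equivalently, by Proposition \ref{prop:bestvina2009Contracting}, contracting) isometries of $X$. Fix the given rank-$1$ isometry $g_{0} \in \Isom(X)$, with invariant axis $\gamma_{0}$ and fixed points $g_{0}^{\pm} \in \partial X$. Recall that $g_{0}$ acts with north--south dynamics on the visual compactification $\overline{X}$: every $\xi \in \overline{X} \setminus \{g_{0}^{-}\}$ is pushed toward $g_{0}^{+}$ under $g_{0}^{n}$, and symmetrically for $g_{0}^{-n}$. The plan is a ping-pong construction: first find $f \in \Isom(X)$ with $f\{g_{0}^{+}, g_{0}^{-}\} \cap \{g_{0}^{+}, g_{0}^{-}\} = \emptyset$, and then show that high powers of $g_{0}$ and of $g_{1} := f g_{0} f^{-1}$ are independent rank-$1$ elements in the paper's sense.

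The first step is producing such an $f$. Suppose toward a contradiction that every $h \in \Isom(X)$ preserves the unordered pair $\{g_{0}^{+}, g_{0}^{-}\}$. Then the kernel of the induced permutation action on this pair is a normal subgroup $N \le \Isom(X)$ of index at most $2$, fixing both $g_{0}^{+}$ and $g_{0}^{-}$ pointwise. Since $\Isom(X)$ has no global fixed point in $\partial X$, this subgroup $N$ is proper and some element of $\Isom(X)$ swaps $g_{0}^{\pm}$. I would then appeal to the description of the limit set of a rank-$1$ CAT($0$) action (available in the same paper \cite{hamenstadt2009rank}): the limit set $\Lambda(\Isom(X))$ equals the closure of attracting fixed points of rank-$1$ isometries in the group, each of which is a conjugate of $g_{0}^{\pm}$. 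Under the preservation assumption, these conjugates reduce to $\{g_{0}^{+}, g_{0}^{-}\}$, forcing $|\Lambda(\Isom(X))| \le 2$ and contradicting the three-point hypothesis. Hence the desired $f$ exists.

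Given such $f$, set $g_{1} := f g_{0} f^{-1}$, with axis $\gamma_{1} := f \gamma_{0}$ and fixed points $f g_{0}^{\pm}$, disjoint from $\{g_{0}^{\pm}\}$. Choose pairwise disjoint neighborhoods $U^{\pm} \ni g_{0}^{\pm}$ and $V^{\pm} \ni f g_{0}^{\pm}$ in $\overline{X}$. By north--south dynamics of $g_{0}$ and $g_{1}$ on $\overline{X}$, pick an integer $N$ so large that $g_{0}^{\pm N}\bigl(\overline{X} \setminus U^{\mp}\bigr) \subseteq U^{\pm}$ and $g_{1}^{\pm N}\bigl(\overline{X} \setminus V^{\mp}\bigr) \subseteq V^{\pm}$. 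The classical ping-pong lemma then gives that $\langle g_{0}^{N}, g_{1}^{N} \rangle$ is free of rank two; in particular $g_{0}^{N}$ and $g_{1}^{N}$ are rank-$1$ with disjoint attracting--repelling pairs.

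The remaining step, which I expect to be the main technical point, is to upgrade disjointness of boundary fixed points to \emph{independence} in the geometric sense of bounded projections between orbits. Here I would invoke the contracting property of the closest-point projection onto a rank-$1$ axis: any geodesic ray or line that does not enter a sufficiently large neighborhood of $\gamma_{0}$ projects to $\gamma_{0}$ in a set of uniformly bounded diameter. Since the endpoints $fg_{0}^{\pm}$ of $\gamma_{1}$ are distinct from $g_{0}^{\pm}$, the geodesic $\gamma_{1}$ cannot asymptotically fellow-travel $\gamma_{0}$ at either end, so its tails leave every neighborhood of $\gamma_{0}$. Consequently $\pi_{\gamma_{0}}(\gamma_{1})$, and symmetrically $\pi_{\gamma_{1}}(\gamma_{0})$, is bounded, which is precisely the independence of the orbits. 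Thus $g_{0}^{N}$ and $g_{1}^{N}$ are two independent contracting isometries, and $\Isom(X)$ is non-elementary.
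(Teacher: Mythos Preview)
The paper does not supply its own proof of this statement: it is quoted verbatim as a \emph{Fact}, attributed to \cite[Corollary~5.4]{hamenstadt2009rank}, with no argument given. So there is nothing in the paper to compare your proposal against; the author is simply recording a known result to justify that the ambient hypotheses are nonvacuous.

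That said, your sketch is broadly along the lines of the standard argument and would serve as a proof outline, but one step is imprecise. In deriving the contradiction under the assumption that every $h \in \Isom(X)$ preserves $\{g_{0}^{+}, g_{0}^{-}\}$, you assert that every attracting fixed point of a rank-$1$ isometry is ``a conjugate of $g_{0}^{\pm}$''. There is no a priori reason that an arbitrary rank-$1$ element is conjugate to $g_{0}$. The correct deduction is direct from north--south dynamics: if $h$ is rank-$1$ and preserves the two-point set $\{g_{0}^{+}, g_{0}^{-}\}$, then $h^{2}$ fixes both points, and the only fixed points of $h^{2}$ on $\partial X$ are $h^{\pm}$; hence $\{h^{+}, h^{-}\} = \{g_{0}^{+}, g_{0}^{-}\}$. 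Combined with the result you cite from \cite{hamenstadt2009rank} that $\Lambda(\Isom(X))$ is the closure of such fixed points (equivalently, that the $\Isom(X)$-action on $\Lambda$ is minimal), this forces $|\Lambda| \le 2$, the desired contradiction. With this repair the argument goes through; your final step, deducing bounded mutual projections from disjointness of the endpoint pairs via the contracting property of a rank-$1$ axis, is standard.
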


\begin{fact}[{\cite[Proposition 3.4]{caprace2010rank-one}}]
Let $X$ be a proper CAT(0) space that admits a rank-1 isometry. Suppose that a group $\Gamma$ of isometries of $X$does not globally fix a point in $\partial X$ nor stabilize a geodesic line. Then $\Gamma$ is non-elementary.
\end{fact}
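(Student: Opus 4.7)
The plan is to first promote the hypotheses to the existence of two rank-1 isometries whose axes have disjoint endpoint pairs in $\partial X$, and then to run a standard ping-pong/north–south argument to produce a pair of independent contracting isometries. The bridge between rank-1 and the contracting terminology of Convention~\ref{conv:main} is Proposition~\ref{prop:bestvina2009Contracting}: on a proper CAT(0) space the two notions coincide, so it suffices to exhibit two rank-1 isometries in $\Isom(X)$ whose axes have disjoint limit sets in $\partial X$ (disjointness of endpoints forces their orbit projections onto one another to be bounded, which is exactly independence).

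Let $g\in\Isom(X)$ be the given rank-1 element, let $\gamma$ be a rank-1 axis for $g$, and let $g^\pm\in\partial X$ be its endpoints. Rank-1 isometries enjoy north–south dynamics on $\partial X$: every $\xi\in\partial X\setminus\{g^-\}$ satisfies $g^n\xi\to g^+$ uniformly on compact subsets of $\partial X\setminus\{g^-\}$ (this follows from the contracting property of $\gamma$ via standard Morse/projection arguments). The first key step is:

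\emph{Claim.} There exists $\varphi\in\Isom(X)$ such that $\varphi\{g^+,g^-\}\cap\{g^+,g^-\}=\emptyset$.

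Indeed, suppose for contradiction that $\{g^+,g^-\}$ is $\Isom(X)$-invariant as a set. Let $H\le\Isom(X)$ be the index-at-most-$2$ subgroup fixing $g^+$ and $g^-$ individually. By the flat-strip-rigidity consequences of the rank-1 assumption (any geodesic joining $g^+$ to $g^-$ lies within bounded Hausdorff distance of $\gamma$, and the rank-1 axis has a canonical parallel class), $H$ stabilizes $\gamma$, and the full group $\Isom(X)$ then stabilizes the geodesic line $\gamma$ as a set, contradicting the hypothesis. In the remaining case where some $\varphi$ moves $\{g^\pm\}$ but only off-diagonally (say $\varphi g^+\ne g^\pm$ while $\varphi g^-\in\{g^\pm\}$), apply $g^n$ for large $n$: north–south dynamics sends $\varphi g^\pm$ arbitrarily close to $g^+$ (or $g^-$), and a suitable $g^n\varphi$ yields a pair disjoint from $\{g^+,g^-\}$. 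The hypothesis that no point of $\partial X$ is globally fixed is what prevents this iteration from collapsing; if it always produced a shared endpoint, one would deduce a global fixed point on $\partial X$.

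Once $\varphi$ is found, set $h:=\varphi g\varphi^{-1}$, which is rank-1 with endpoints $\varphi g^\pm\in\partial X$ disjoint from $\{g^+,g^-\}$. A standard ping-pong argument, using north–south dynamics of both $g$ and $h$ on disjoint neighbourhoods of their fixed-point pairs in $\partial X$, shows that for all sufficiently large $N$ the elements $g^N$ and $h^N$ generate a free subgroup of $\Isom(X)$ and are themselves rank-1. By Proposition~\ref{prop:bestvina2009Contracting} they are contracting, and since their limit sets $\{g^\pm\}$ and $\{\varphi g^\pm\}$ are disjoint, their axes have bounded mutual projections, so $g^N$ and $h^N$ are independent contracting isometries. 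This furnishes the two independent contracting isometries required by the non-elementary definition.

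The main obstacle is the Claim: the delicate part is combining the two non-degeneracy hypotheses (no globally fixed point in $\partial X$, no globally stabilized geodesic line) to separate the endpoint pair $\{g^+,g^-\}$ from at least one of its $\Isom(X)$-translates. Everything after the Claim is the standard rank-1 ping-pong package.
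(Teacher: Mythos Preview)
The paper does not supply a proof of this statement: it is recorded as a Fact with a citation to \cite{caprace2010rank-one}, and no argument is given. So there is nothing in the paper to compare your proposal against.

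Your overall strategy is the standard one and is correct in outline, but the Claim is argued too loosely in two places. First, from ``$H$ fixes $g^{+}$ and $g^{-}$'' you conclude ``$H$ stabilizes $\gamma$''. This is not immediate: a rank-1 axis need not be the \emph{unique} geodesic joining its endpoints. What is true is that the union of all geodesics from $g^{-}$ to $g^{+}$ is a closed convex set splitting as $\mathbb{R}\times C$ with $C$ bounded (boundedness is where the contracting/rank-1 property enters), and $\Isom(X)$ acts on it preserving the splitting; the CAT(0) circumcenter of $C$ then gives an $\Isom(X)$-invariant line, which is the contradiction you want. Second, your ``off-diagonal'' paragraph is not a valid argument. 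If, say, $\varphi g^{-}\in\{g^{\pm}\}$ while $\varphi g^{+}\notin\{g^{\pm}\}$, then $g^{n}\varphi$ still sends one of $g^{\pm}$ into $\{g^{\pm}\}$ (because $g$ fixes both $g^{+}$ and $g^{-}$), so no word of the form $g^{n}\varphi$ can produce a pair disjoint from $\{g^{\pm}\}$. One genuinely needs to combine several group elements here: for instance, produce $\varphi$ with $\varphi\{g^{\pm}\}\cap\{g^{\pm}\}=\{g^{+}\}$ and $\psi$ with $\psi\{g^{\pm}\}\cap\{g^{\pm}\}=\{g^{-}\}$ (both exist by the no-global-fixed-point hypothesis once the setwise-invariant case is excluded), and then check that a suitable composition of $\varphi,\psi$ and powers of $g$ yields a disjoint pair. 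Once the Claim is secured, your conjugation plus north--south ping-pong, together with Proposition~\ref{prop:bestvina2009Contracting}, finishes as you describe.
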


\subsection{CAT(0) cube   complexes} \label{subsection:CAT(0)Cube}

CAT(0) cube   complexes are CAT(0) metric spaces  obtained by gluing Euclidean cells. A cube complex is CAT(0) if and only if it is simply connected and each vertex   satisfies a non-positive curvature condition,  namely, that its link is a flag complex. Originally proposed by Gromov, CAT(0) cube   complexes have become one of the central objects in geometric group theory. One instance of their importance lies in the     proof of the Virtual Haken Theorem \cite{agol2013virtual}, a major breakthrough in 3-manifold theory. 

We refer the readers to \cite{caprace2011rank} and \cite{chatterji2016median} for basic terminology about CAT(0) cube   complexes.   CAT(0) cube complexes often contain plenty of halfspaces, which cut the space into two (closed) halfspaces. Each halfspace $\mathfrak{h}$ is associated with a hyperplane that we denote by $\hat{\mathfrak{h}}$, and also with   the complementary halfspace $\overline{X \setminus \mathfrak{h}}$ that we denote by $\mathfrak{h}^{\ast}$. We say that two parallel hyperplanes of a CAT(0) cube complex are \emph{strongly separated} if no hyperplane is transverse to both of them. 

  We record Carpace-Sageev's rank rigidity theorem for CAT(0) cube complexes.

\begin{fact}\label{fact:cubeDist}
Let $X$ be a CAT(0) cube complex, let $x, y \in X$, and let $x \in \mathfrak{h}_{1} \subsetneq \mathfrak{h}_{2} \subsetneq \ldots \subsetneq \mathfrak{h}_{n} \not\ni y$ be a nested sequence of distinct halfspaces between $x$ and $y$. Then we have $n \le d(x, y)$.
\end{fact}

\begin{prop}[{\cite[cf. Theorem A]{caprace2011rank}}]\label{prop:capraceSageev}
Let $X$ be   a finite-dimensional CAT(0) cube complex and $G \le \Aut(X)$ be a group that    neither globally fix a point nor stabilize a 1-dimensional flat in $X \cup \partial X$. Then $X$ contains a geodesically convex $G$-invariant subcomplex $Y$ such that either $Y$ is the product of two unbounded subcomplexes or or the action of $G$ on $Y$ is strongly non-elementary. 

In particular, if $G$ acts on an $X$ essentially, without fixing a point  in $X \cup \partial X$, and if $X$ is irreducible, then $G$ contains independent contracting isometries that skewers a pair of strongly separated hyperplanes.
\end{prop}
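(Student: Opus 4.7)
The plan is to reduce the proposition to the Rank Rigidity Theorem of Caprace--Sageev and then promote a single contracting isometry to a pair of independent ones using a boundary ping-pong argument. First I would apply the essential core construction: since $G$ fixes no point of $X \cup \partial_{\infty} X$, Caprace--Sageev produce a nonempty convex $G$-invariant subcomplex $Y \subseteq X$ on which $G$ acts essentially, meaning that no halfspace of $Y$ stays at bounded Hausdorff distance from its $G$-orbit on one side of the associated hyperplane. Combining the irreducibility of $X$ with the hypothesis that $G$ does not stabilize a $1$-dimensional flat, one checks that $Y$ can be chosen so that the $G$-action on $Y$ does not split as a nontrivial product of $G$-invariant convex factors; this is the setting required by Rank Rigidity.

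Next I would invoke Caprace--Sageev's Rank Rigidity Theorem (Theorem A of their paper) to obtain a rank-one element $g \in G$ acting on $Y$. In the cubical setting this is produced concretely by the Double Skewering Lemma: strongly separated pairs of halfspaces exist by essentiality plus irreducibility, and any such pair is double-skewered by an element of $G$, which is then rank-one. By Proposition \ref{prop:bestvina2009Contracting}, $g$ is contracting on $Y$ in the metric sense. The element $g$ has two fixed points $g^{\pm}\in \partial_{\infty} Y$, and the pair $\{g^{+}, g^{-}\}$ is not globally $G$-fixed: otherwise $G$ would stabilize the $g$-axis, hence a $1$-dimensional flat in $X \cup \partial_{\infty} X$, contradicting the hypothesis.

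So there exists $h \in G$ with $h\{g^{+},g^{-}\} \neq \{g^{+},g^{-}\}$. After replacing $g$ by a high power and using a standard ping-pong on small neighborhoods of $g^{\pm}$ and $hg^{\pm}h^{-1}$ in the visual boundary (or equivalently, via strongly separated hyperplane pairs in $Y$), I obtain a new contracting element $g' := h g^{N} h^{-1}$ whose fixed point set is disjoint from that of $g^{N}$. Two contracting isometries with disjoint endpoint pairs on $\partial_{\infty} Y$ have axes with bounded mutual projection, hence are independent in the sense of Subsection \ref{subsection:contracting}. This gives the desired non-elementary pair inside $G$ acting on $Y$, proving the first assertion; the ``in particular'' statement is the case $Y = X$.

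The main obstacle I expect is keeping all the reductions compatible: the essential core construction, the irreducibility needed for Rank Rigidity, and the ``no stabilized $1$-flat'' hypothesis must interact consistently so that after passing to $Y$ we still retain enough of the hypotheses to run the Double Skewering Lemma and the boundary ping-pong. The delicate point is verifying that $Y$ can be chosen both essential \emph{and} irreducible (or at least that the action restricted to the relevant irreducible factor of $Y$ inherits the non-stabilization hypothesis), since a priori the essential core of an irreducible complex need not itself be irreducible. Once this bookkeeping is settled, the promotion from one rank-one element to two independent contracting ones is a routine ping-pong driven by the fact that $G$ does not preserve any endpoint pair.
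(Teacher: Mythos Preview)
The paper does not give its own proof of this proposition: it is stated as a citation of Caprace--Sageev (Theorem~A) and the text immediately following it says only that ``this result essentially follows from the Double Skewering Lemma of Caprace and Sageev \ldots\ with an additional information from \cite{chatterji2016median},'' then records that lemma as Fact~\ref{fact:CAT(0)Bridge} and moves on. So there is no in-paper argument to compare against; your outline \emph{is} the standard argument the paper is gesturing at, and it is correct in shape.

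Your self-identified obstacle is the right one and is the only place where the sketch is genuinely incomplete. The essential core $Y$ of an irreducible $X$ need not be irreducible, so you must argue on an irreducible factor of $Y$; Caprace--Sageev handle this by noting that $G$ virtually preserves each irreducible factor of the essential core and that the hypotheses (no fixed point in $X\cup\partial_\infty X$, no stabilized line) descend to at least one factor, where Rank Rigidity then produces the rank-one element. Once that bookkeeping is done, the promotion to two independent contracting isometries via $g$ and a conjugate $hgh^{-1}$ with disjoint endpoint pairs is routine, exactly as you describe.
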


This result essentially follows from the Double Skewering Lemma of Caprace and Sageev in \cite{caprace2011rank} that we describe below, with an additional information from \cite{chatterji2016median} and \cite{fernos2018the-furstenberg-poisson}. 

\begin{fact}[{\cite[Proposition 3.2]{caprace2011rank}, \cite[Lemma 2.18, 2.24]{chatterji2016median}, \cite[Lemm 3.28]{fernos2018the-furstenberg-poisson}}] \label{fact:CAT(0)Bridge}
Let $G$ be a group that essentially acts on an irreducible, finite-dimensional CAT(0) cube complex $X$ without  fixing a point. Then for any halfspace $\mathfrak{h}$ of $X$ associated with a hyperplane $\hat{\mathfrak{h}}$, there exists a rank-1 isometry $g \in G$ such that $\hat{\mathfrak{h}}$ and $g\hat{\mathfrak{h}}$ are strongly separated and such that $\mathfrak{h} \subsetneq g\mathfrak{h}$. (We say that $g$ skewers $\hat{\mathfrak{h}}$ and $g\hat{\mathfrak{h}}$.) Moreover, there exists a geodesic segment $\mathcal{I}$ connecting $\hat{\mathfrak{h}}$ to $g\hat{\mathfrak{h}}$ such that \begin{equation}\label{eqn:CAT(0)Bridge}
d(x, y) = d(x, p) +d(p,y)
\end{equation}
holds for every $x \in \mathfrak{h}$, $y \in (g\mathfrak{h})^{\ast}$ and $p \in \mathcal{I}$. (We call $\mathcal{I}$ the \emph{bridge} between $\hat{\mathfrak{h}}$ and $g\hat{\mathfrak{h}}$.)
\end{fact}

Using this, we can observe that: 

\begin{prop}\label{prop:CubeSqueeze}
Let $(X, d)$ be a CAT(0) cube complex with the combinatorial metric, let $G$ be a countable automorphism group of $X$, and let $o$ be a vertex of $X$. Let $\mu$ be a strongly non-elementary probability measure on $G$ and $\epsilon > 0$. Then there exists a long enough and large Schottky set $S$, associated with the constants $D_{0}, E_{0}$ as in Definition \ref{dfn:Schottky}, such that every Schottky axis is $(E_{0}, 0)$-squeezing with respect to $d$.
\end{prop}

\begin{proof}
Since $\mu$   is strongly non-elementary, $\llangle \supp \mu \rrangle$ contains a skewer $g$ for strongly separated hyperplanes, say $\hat{\mathfrak{h}}$ and $g \hat{\mathfrak{h}}$, satisfying $\mathfrak{h} \subsetneq g \mathfrak{h}$. Let us pick $x \in g \mathfrak{h}\setminus \mathfrak{h}$. Since $d(x, o)$ is finite,   there are finitely many mutually disjoint hyperplanes between $x$ and $o$. Using this, we can deduce that $o \in g^{m} \mathfrak{h} \setminus g^{n} \mathfrak{h}$ for some $m \neq n$, and consequently, that $o \in g^{k+1} \mathfrak{h} \setminus g^{k} \mathfrak{h}$ for some $k$. By replacing $\mathfrak{h}$ with $g^{k}\mathfrak{h}$, we may assume that $o \in g\mathfrak{h} \setminus \mathfrak{h}$.

Keeping the choice of $g$, we pick $K_{0}$ as in Proposition \ref{prop:SchottkyPA}. This determines the constants $D_{0}$ and $E_{0}$. Let $n = 2E_{0} + 2$. By Proposition \ref{prop:SchottkyPA}, there exists a long enough and large $K_{0}$-Schottky set $S$ such that   for each $s \in S$, $\Gamma(s)$ contains a translate of $(o, go, \ldots, g^{n} o)$ as a subsequence.

It remains to prove that $\Gamma(s)$ is $(E_{0}, 0)$-squeezing for each $s \in S$. To show this, let $x, y \in X$ be such that $(x, \Gamma(s), y)$ is $E_{0}$-aligned. By Proposition \ref{prop:BGIPWitness}, $[x, y]$ contains a subsegment that is $E_{0}$-fellow traveling with $\Gamma(s)$. Hence, there exist $p, q \in [x, y]$, $p$ coming earlier than $q$, such that \[
d\big(p, \varphi_{s}o\big) < E_{0}, \quad d\big(q, \varphi_{s} g^{n} o\big) < E_{0}.
\] 
Since $d(p, \varphi_{s} o)< E_{0}$ and $\varphi_{s}  o \in \varphi_{s}g\cdot \mathfrak{h}$, Fact \ref{fact:cubeDist} tells us that $p \in \Pi(s) g^{1 + E_{0}} \mathfrak{h} \subseteq \Pi(s) g^{n - E_{0} - 1} h$. Similarly, from the ingredients $d(q, \varphi_{s} g^{n} o) < E_{0}$, $\varphi_{s} g^{n} o \notin \varphi_{s} g^{n} \mathfrak{h}$, we deduce that $q \notin \varphi_{s} g^{n - E_{0}} \mathfrak{h}$. We then apply Fact \ref{fact:CAT(0)Bridge} to conclude that \[
d(x, y) = d(x, p) + d(p, y)
\]
for any point $p$ in the bridge between $\varphi_{s} g^{n - E_{0}- 1} \hat{\mathfrak{h}}$ and $\varphi_{s} g^{n - E_{0}} \hat{\mathfrak{h}}$. Using this, given any $x_{1}, x_{2}, y_{1}, y_{2} \in X$ such that $(x_{i}, \Gamma(s), y_{i})$ is $E_{0}$-aligned, we observe  \[
d(x_{1}, y_{1}) + d(x_{2}, y_{2}) = d(x_{1}, p) + d(x_{2}, p) + d(p, y_{1}) + d(p, y_{2}) = d(x_{1}, y_{2}) + d(x_{2}, y_{1}). \qedhere
\]
\end{proof}

An important family of examples comes from right-angled Artin groups (RAAGs). Recall that a RAAG $\Gamma$ is associated with a simply connected CAT(0) cube complex $\tilde{X}_{\Gamma}$; $\Gamma$ acts properly and cocompactly on $\tilde{X}_{\Gamma}$, and the resulting quotient is called the \emph{Salvetti complex} $X_{\Gamma}$ of $\Gamma$. It is proved in \cite[Theorem 5.2]{behrstock2012divergence} that if $\Gamma$ is not a direct product, then the universal cover $\tilde{X}_{\Gamma}$ of the Salvetti complex admits a rank-1 isometry. 

\subsection{CAT(-1) spaces} \label{section:CAT(-1)}

CAT(-1) spaces are uniquely geodesic spaces where geodesic triangles are not fatter than the corresponding ones in the hyperbolic plane. Since the triangles in the hyperbolic plane are uniformly thin, so are the triangles in CAT(-1) spaces. In particular, CAT(-1) spaces are Gromov hyperbolic and all $K$-quasigeodesics are $D$-contracting for a constant $D = D(K)$ determined by $K$. Further, the following is true.

\begin{fact}\label{fact:CAT(-1)Squeeze}
There exists a constant $0<c<1$ such that the following holds.

Let $X$ be a CAT(-1) space and let $x, y, x', y' \in X$ be such that \[
d(x, x'), d(y, y') < 0.02 d(x, y).
\]
Then $[x', y']$ is   $c^{d(x, y)}$-close to the midpoint of $[x, y]$.
\end{fact}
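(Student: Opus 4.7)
Set $R := d(x, y)$ and $\epsilon := \max\{d(x, x'), d(y, y')\} < 0.02 R$, and let $m$ denote the midpoint of $[x, y]$. The plan is to invoke the CAT($-1$) comparison with $\mathbb{H}^{2}$ twice, first to absorb the perturbation from $y$ to $y'$ and then from $x$ to $x'$.

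First I would form the comparison triangle $\triangle \bar x \bar y \bar y' \subset \mathbb{H}^{2}$ of the triangle $\triangle x y y' \subset X$. Let $\bar m \in [\bar x, \bar y]$ correspond to $m$, let $\bar q \in [\bar x, \bar y']$ be the point with $d(\bar x, \bar q) = R/2$, and let $q \in [x, y']$ be the point corresponding to $\bar q$. A direct computation with the hyperbolic law of cosines at $\bar x$---first to determine $\cos(\angle \bar y \bar x \bar y')$ from the three side lengths of $\triangle \bar x \bar y \bar y'$, then to compute $d(\bar m, \bar q)$ from the two sub-sides of length $R/2$ together with this angle---yields $d(\bar m, \bar q) \le C \epsilon e^{-R/2}$ for a universal $C$, provided $R$ exceeds some absolute threshold $R_{0}$. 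CAT($-1$) comparison then gives $d(m, q) \le d(\bar m, \bar q) \le C \epsilon e^{-R/2}$.

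Next I would feed $q \in [x, y']$ into the analogous argument for the triangle $\triangle x x' y'$, now based at the apex $y'$ with short side $[x, x']$. Since $q$ lies at distance $R/2 + O(\epsilon)$ from $y'$, the same computation, with the roles of the two long sides exchanged, produces $q' \in [x', y']$ with $d(q, q') \le C \epsilon e^{-R/2}$. Combining via the triangle inequality,
\[
d(m, [x', y']) \;\le\; d(m, q) + d(q, q') \;\le\; 2 C \epsilon e^{-R/2} \;\le\; 0.04\, C R\, e^{-R/2}.
\]
For any $c \in (e^{-1/2}, 1)$ one has $R e^{-R/2} = O(c^{R})$ as $R \to \infty$, which yields the claimed exponential bound for $R \ge R_{0}$; the regime $R < R_{0}$ is absorbed by choosing $c$ sufficiently close to $1$, since $d(m, [x', y'])$ is then bounded by a constant depending only on $R_{0}$.

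The main obstacle is running the hyperbolic trigonometry cleanly when the two long sides of each auxiliary triangle are nearly but not exactly of equal length, and verifying that this asymmetry contributes only an $O(\epsilon)$ correction that preserves the exponential rate $e^{-R/2}$ (rather than a weaker polynomial decay). The underlying computation is elementary, but the exponent $-R/2$ must be tracked carefully, and one should double-check that the CAT($-1$) comparison indeed applies to the pair of points $(m, q)$ (respectively $(q, q')$) via their comparison points on the sides of the relevant triangle.
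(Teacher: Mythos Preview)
The paper records this as a Fact and gives no proof, so there is nothing to compare against; your two-triangle CAT($-1$) comparison argument is the standard route and is correct in outline. The hyperbolic law of cosines indeed gives the angle at $\bar x$ of order $\epsilon/\sinh R$, and the separation at radius $R/2$ is then of order $\sinh(R/2)\cdot \epsilon/\sinh R = \epsilon/(2\cosh(R/2)) \le \epsilon\,e^{-R/2}$, just as you claim; the second triangle works the same way.

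One point to tighten: your treatment of the regime $R<R_0$ does not work as written. Saying that $d(m,[x',y'])$ is ``bounded by a constant depending only on $R_0$'' and then absorbing this by pushing $c$ toward $1$ fails whenever that constant exceeds $1$, since $c^R<1$ for every $c<1$ and $R>0$. The clean fix is that the hyperbolic estimate $d(\bar m,\bar q)\le C\epsilon/\cosh(R/2)$ in fact holds for \emph{all} $R>0$ with a moderate universal $C$ (no threshold is needed), giving $d(m,[x',y'])\le 0.04C\,Re^{-R/2}$ uniformly in $R$; since the coefficient $A=0.04C$ is small, one checks that $A\,Re^{-R/2}\le c^{R}$ for every $R>0$ once $c\in(e^{-1/2},1)$ satisfies $\log c + \tfrac12 \ge A/e$. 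Alternatively, CAT(0) convexity already yields $d(m,[x',y'])\le\epsilon<0.02R$, which handles $R\le R_0$ provided $R_0<50$, and the exponential bound takes over thereafter.

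(As an aside, the printed statement has $c^{-d(x,y)}$ with $0<c<1$, which would be a growing bound; the intended inequality --- confirmed by your argument and by how the paper uses the Fact --- is evidently $c^{\,d(x,y)}$.)
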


\subsection{Culler-Vogtmann Outer space} \label{subsection:Outer}

We now turn to the Culler-Vogtmann Outer space equipped with the Lipschitz metric. We will only record the necessary facts; for precise definitions and proofs, we refer the readers to \cite{vogtmann2015outer}, \cite{francaviglia2011metric}, \cite{bestvina2014hyperbolicity} and \cite{kapovich2022random}.

The Culler-Vogtmann Outer space $CV_{N}$ parametrizes projectivized marked minimal metric graphs with fundamental group $F_{N}$. The outer automorphism group of $F_{N}$, denoted by $\Out(F_{N})$, naturally acts on $CV_{N}$ by change of marking. Outer space has a natural simplicial structure that is preserved by the action of $\Out(F_{N})$. The difference of two marked graphs   is  measured by the Lipschitz constant of an optimal map between them, which gives the Lipschitz metric on Outer space. This metric is not symmetric but is geodesic. It is necessary to define the neighborhoods of sets in terms of the symmetrization of $d_{CV}$, namely, \[
d^{sym}(x, y) := d_{CV}(x, y) + d_{CV}(y, x).
\]Hence, the $R$-neighborhood of a set $A\subseteq X$ is now defined by \[
\{x \in X : d^{sym}(x, A) \le R\}.
\]

From now on, when discussing Outer space with basepoint $o$, we only consider the standard geodesics between $\Out(F_{N})$-orbit points of $o$, which are canonical $d_{CV}$-geodesics proposed by Bestvina and Feighn in \cite[Proposition 2.5]{bestvina2014hyperbolicity}. For each $go, ho \in G \cdot o$ there exists a \emph{standard geodesic} between $go$ and $ho$, which is a concatenation $\gamma_{1}\gamma_{2}$ of two subsegments $\gamma_{1}$ and $\gamma_{2}$, where $\gamma_{1}$ is a rescaling segment on the simplex of $x$ and $\gamma_{2}$ is a folding path. (For the definition of rescaling/folding path, refer to \cite{bestvina2014hyperbolicity}.) Note that the orbit points $G \cdot o$ are all $\epsilon$-thick for some $\epsilon > 0$. This gives a uniform bound on the length of rescaling path: $l(\gamma_{1}) < \log (2/\epsilon)$ (\cite[Lemma 2.6]{dowdall2018hyperbolic}).

We say that an outer automorphism $\phi \in \Out(F_{N})$ is \emph{reducible} if there exists a free product decomposition $F_{N} = C_{1} \ast \cdots \ast C_{k} \ast C_{k+1}$, with $k \ge 1$ and $C_{1}, \ldots, C_{k+1} \neq \{e\}$, such that $\phi$ permutes the conjugacy classes of $C_{1}, \ldots, C_{k}$. If not, we say that $\phi$ is \emph{irreducible}. We say that $\phi$ is \emph{fully irreducible} if no power of $\phi$ is reducible. For us, an important point is that each fully irreducible automorphism $\phi \in \Out(F_{N})$ has \emph{bounded geodesic image property (BGIP)}, i.e., there exists $K>0$ \[
\diam( \pi_{\{\phi^{i} o: i \in \Z\}} (\eta)) < K
\]
holds for   every geodesic $\eta$ that does not enter the $K$-neighborhood of $\{\phi^{i} o : i \in \Z\}$ (\cite[Theorem 7.8]{kapovich2022random}, cf. \cite[Proposition 1.3]{choi2022random3}).   This implies the strongly contracting property due to Algom-Kfir \cite{algom-kfir2011strongly}.

We now show a finer stability exhibited by certain outer   automorphisms. For each $N \ge 3$, $\Out(F_{N})$ contains \emph{principal} fully irreducible outer    automorphisms (\cite[Example 6.1]{algom-kfir2019stable}). These automorphisms have   a lone axis on $CV_{N}$ (cf. \cite{mosher2016lone}, \cite[Lemma 3.1]{kapovich2022tree}). Moreover, we have: 
\begin{fact}[{\cite[Lemma 5.9]{kapovich2022random}}]\label{fact:OuterKapo}
Let $\varphi \in \Out(F_{N})$ be a fully irreducible outer automorphism with lone axis $\gamma: \mathbb{R} \rightarrow CV_{N}$ (parametrized by length), let $t \in \mathbb{R}$ and let $\epsilon \ge 0$. Then there exists $R > 0$ such that, if $x, y \in G \cdot o$ satisfies \[
\pi_{\gamma}(x) \in \gamma((-\infty, t-R]), \quad \pi_{\gamma}(y) \in \gamma([t+R, +\infty)),
\]
then $[x, y]$ passes through the $\epsilon$-neighborhood of $\gamma(t)$.
\end{fact}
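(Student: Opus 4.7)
The plan is a proof by contradiction, combining the BGIP of fully irreducible outer automorphisms (stated just above) with the defining feature of a lone axis: $\gamma$ is the \emph{unique} bi-infinite geodesic in $CV_{N}$ with the two $\varphi$-fixed endpoints at infinity (\cite{mosher2016lone}). If the conclusion fails for some $\epsilon_{0}>0$, there exist sequences $x_{n}, y_{n} \in G\cdot o$ with $\pi_{\gamma}(x_{n}) \in \gamma\bigl((-\infty, t-n]\bigr)$ and $\pi_{\gamma}(y_{n}) \in \gamma\bigl([t+n, +\infty)\bigr)$, while the standard geodesic $[x_{n}, y_{n}]$ avoids the $\epsilon_{0}$-neighborhood of $\gamma(t)$.

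The first step is to use BGIP to force $[x_{n}, y_{n}]$ to fellow travel $\gamma$ on an interval around $\gamma(t)$ of length tending to infinity. Concretely, BGIP says that $\gamma|_{[t-n/2,\, t+n/2]}$ is a $K_{0}$-contracting axis for a uniform $K_{0}$, and the projection bound together with Lemma \ref{lem:1segment} gives an alignment of $(x_{n}, \gamma|_{[t-n/2, t+n/2]}, y_{n})$ with constants independent of $n$. Proposition \ref{prop:BGIPWitness} then produces a subsegment $\eta_{n} \subseteq [x_{n}, y_{n}]$ of length tending to $\infty$ that $0.1 E_{0}$-fellow travels with $\gamma$ around $\gamma(t)$, for a constant $E_{0}$ independent of $n$. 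In particular, there is a point $p_{n} \in \eta_{n}$ with $d^{sym}(p_{n}, \gamma(t)) \le E_{0}$ for all $n$.

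Next I would shrink the $E_{0}$ error down to $\epsilon_{0}$ by a compactness/limit argument. Since $G\cdot o$ is $\epsilon$-thick, the intersection of the thick part with the closed $E_{0}$-ball around $\gamma(t)$ is compact in $d^{sym}$, so after a diagonal extraction the $p_{n}$ converge to some $p_{\infty}$. The geodesics $[x_{n}, y_{n}]$ all pass through $p_{n}$, and their fellow-traveling intervals $\eta_{n}$ grow without bound on either side of $p_{n}$, so using the stability of Bestvina-Feighn standard geodesics \cite{bestvina2014hyperbolicity} one extracts, along a further subsequence, a bi-infinite limit geodesic $\gamma'$ through $p_{\infty}$. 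The BGIP estimate identifies the two endpoints of $\gamma'$ at infinity with those of $\gamma$, and the lone axis property forces $\gamma' = \gamma$. Hence $p_{\infty} \in \gamma$. Since $\eta_{n}$ fellow-travels $\gamma$ with error $0.1 E_{0}$ and its central point converges to a point of $\gamma$, the segment $\eta_{n}$ must pass arbitrarily close to $\gamma(t)$ for large $n$, contradicting the avoidance of $B_{\epsilon_{0}}(\gamma(t))$.

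The main obstacle is executing this limit argument rigorously in $CV_{N}$: the Lipschitz metric is non-proper and asymmetric, so Arzel\`a--Ascoli does not apply naively. The remedy is to restrict all limiting constructions to the orbit $G\cdot o$, which is $\epsilon$-thick and on which $d^{sym}$ is locally compact, and to exploit that standard geodesics between thick orbit points have rescaling portion of uniformly bounded length \cite{bestvina2014hyperbolicity}. The second delicate step is identifying the endpoints of the limit $\gamma'$ with the attracting and repelling points of $\varphi$: one passes the BGIP estimate to the limit, using that the projection of $[x_{n},y_{n}]$ onto $\gamma$ eventually covers every prescribed bounded portion of $\gamma$, which pins down the two ends of $\gamma'$ in any reasonable boundary of $CV_{N}$.
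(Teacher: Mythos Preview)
Your proposal is correct and follows essentially the same route as the paper's (commented-out) sketch: argue by contradiction, use BGIP to trap a long subsegment of $[x_{n},y_{n}]$ in a fixed neighborhood of $\gamma$, pass to a bi-infinite limit line via Arzel\`a--Ascoli, and invoke the lone-axis uniqueness to force this limit to equal $\gamma$. The one refinement the paper makes explicit is to first discard the bounded-length rescaling portion of each standard geodesic, so that the remaining segment is a genuine \emph{folding path} contained in the thick part; this is precisely what makes the Arzel\`a--Ascoli step rigorous in the asymmetric, non-proper setting of $CV_{N}$, and it is the concrete implementation of the workaround you allude to in your final paragraph.
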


This implies the following: 

\begin{fact}\label{fact:OuterSqueeze}
Let $(X, d)$ be the Culler-Vogtmann Outer space of rank $N$, let $G$ be the outer automorphism group $\Out(F_{N})$ and let $\mu$ be a strongly non-elementary probability measure on $G$. Then for each $\epsilon>0$, there   exists a long enough and large $K_{0}$-Schottky set $S$ for $\mu$, associated with the constants $D_{0}$ and $E_{0}$, such that each Schottky sequence is $(E_{0}, \epsilon)$-squeezing.
\end{fact}

\subsection{Relatively hyperbolic groups} \label{subsection:relhyp}

Relatively hyperbolic groups are generalizations of free products of groups and geometrically finite Fuchsian and Kleinian groups.   We refer the readers to \cite{bowditch2012relatively}, \cite{yaman2004a-topological}, \cite{osin2006relatively} and \cite{gerasimov2013quasi-isometric} for further details. We recall that a finitely generated group $G$ is \emph{word hyperbolic} if its Cayley graph is Gromov hyperbolic, or equivalently, if it acts geometrically on a Gromov hyperbolic space. Another characterization of hyperbolic Cayley graphs is that geodesics in hyperbolic Cayley graphs are $K$-contracting   \cite{papasoglu1995strongly}.

More generally, let $\{P_{\alpha}\}_{\alpha}$ be a   conjugacy invariant collection of subgroups of $G$. We say that $G$ is \emph{hyperbolic relative to the peripheral system $\{P_{\alpha}\}_{\alpha}$}  if it has a minimal, geometrically finite, convergence action on a compact space $X$ in a way that the maximal parabolic subgroups are $P_{\alpha}$'s. Here, if $G$ contains two hyperbolic elements with disjoint   pairs of fixed points, we say that $G$ is a \emph{non-elementary relatively hyperbolic group}.

Let us now fix a word metric $d_{S}$ for a finite generating set $S$ of a relatively hyperbolic group $G$. Then each hyperbolic element of $G$ is contracting with respect to $d_{S}$ (\cite[Lemma 7.2]{yang2014growth}, \cite[Proposition 8.5]{gerasimov2016quasiconvexity}). Meanwhile, there is another natural metric on $G$ studied by Blach{\`e}re, Ha{\" i}ssinsky and Mathieu \cite{blachere2011harmonic}. Given a symmetric, finitely supported and admissible probability measure $\mu$ on $G$, we define the Green function associated to $\mu$ by \[
G^{\mu}(g) = \sum_{n=0}^{\infty} \mu^{\ast n} (g)
\]
and the Green metric with respect to $\mu$ by \[
d_{G}^{\mu}(g, h) := \log G^{\mu}(id) - \log G^{\mu}(g^{-1} h).
\]

\begin{remark}[{\cite[Proposition 3.5]{blachere2011harmonic}}]\label{rem:Green}
Green metrics are not geodesic in general but are roughly geodesic \cite{MR1100282}. That means, there exists a uniform constant $C\ge 0$ such that for each $g, h \in G$ there is a $C$-coarse geodesic connecting $g$ to $h$, which is a path $\gamma : [0, L] \rightarrow G$ such that $\gamma(0) = g$, $\gamma(L) = h$ and \[
|t_{1} - t_{2}| - C \le d_{G}^{\mu}\big(\gamma(t_{1}), \gamma(t_{2})\big) \le |t_{1} - t_{2}| + C
\]
holds for all $t_{1}, t_{2} \in [0, L]$. For this metric, we   say that a $K$-quasigeodesic $\gamma : I \rightarrow G$ is a $K$-contracting axis if $\diam(\pi_{\gamma}(\eta)) < K$ holds for all $C$-coarse geodesic $\eta$ that does not enter the $K$-neighborhood of $\gamma$.   We can then discuss the alignment lemmata by using $C$-coarse geodesics in place of geodesics. Since we already have a theory of alignment on the relatively hyperbolic group $G$ in terms of the word metric, we will not go deeper.
\end{remark}

Let $f$ be a hyperbolic element of $G$ and let $\epsilon > 0$. Then the orbit $\{f^{n}\}_{n \in \Z}$ of $f$ cannot stay long in the $\epsilon$-neighborhood of each peripheral subgroup $gP_{\alpha}$.   That means, there exists $L > 0$ such that $\{f^{n} o, f^{n+1} o, \ldots, f^{n+L} o\}$ is not contained in $N_{\epsilon}(gP_{\alpha})$ for any $n \in \Z$, $g \in G$ and any peripheral $P_{\alpha}$. We record the strong Ancona inequality along a geodesic that does not go deep into any translates of peripheral subgroups.

\begin{prop}[{\cite[Proposition 2.2, Theorem 3.14]{dussaule2021stability}}] \label{prop:greenDecay}
Let $G$ be a non-elementary relatively hyperbolic group, with a word metric $d_{S}$ and a Green metric $d_{G}^{\mu}$. Let $f$ be a hyperbolic element of $G$ and let $K>0$. Then there exists $L>0$ such that, if $\gamma_{1}, \ldots, \gamma_{N}$ are paths on $G$ of the form \[
\gamma_{i} = (g o, gf o, \ldots, gf^{L} o) \quad (g \in G)
\]
and if $x, y, x', y' \in G$ satisfy that $(x, \gamma_{1}, \ldots, \gamma_{N}, y)$, $(x', \gamma_{1}, \ldots, \gamma_{N}, y')$ are $K$-aligned (with respect to $d_{S}$), then we have\begin{equation}\label{eqn:greenDecay}
|d_{G}^{\mu}(x, y) + d_{G}^{\mu}(x', y') - d_{G}^{\mu}(x, y') - d_{G}^{\mu}(x', y)| \le e^{-N}.
\end{equation}
\end{prop}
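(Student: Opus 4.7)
The plan is to convert the inequality into a bound on a cross-ratio of Green functions and extract exponential decay from a mixing estimate at the $N$ barriers. Using $d_{G}^{\mu}(g, h) = \log G^{\mu}(id) - \log G^{\mu}(g^{-1}h)$, the bound \eqref{eqn:greenDecay} is equivalent to
\[
\left| \log \frac{G^{\mu}(x^{-1}y') \, G^{\mu}(x'^{-1}y)}{G^{\mu}(x^{-1}y) \, G^{\mu}(x'^{-1}y')} \right| \le e^{-N},
\]
so it is enough to show that this cross-ratio equals $1 + O(e^{-N})$.

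The first step is to invoke the standard (bounded-constant) relative Ancona inequality for non-elementary relatively hyperbolic groups, which holds along relative geodesics that do not linger in any peripheral coset. Because $f$ is hyperbolic, its orbit exits every fixed neighborhood of a translate $gP_{\alpha}$ after a bounded number of steps, so by choosing $L$ large enough each $\gamma_{i}$ automatically witnesses a transverse crossing of the relative geodesic from $x$ to $y$ and from $x'$ to $y'$; the $K$-alignment hypothesis then forces the $\mu^{\ast n}$-typical paths that dominate $G^{\mu}$ to cross every $\gamma_{i}$. The Ancona inequality gives, for base points $p_{i} \in \gamma_{i}$, a factorization
\[
G^{\mu}(x^{-1}y) \asymp G^{\mu}(x^{-1}p_{1}) \, G^{\mu}(p_{1}^{-1}p_{2}) \cdots G^{\mu}(p_{N}^{-1}y),
\]
and analogous factorizations for $(x, y')$, $(x', y)$, $(x', y')$, with multiplicative constants uniform in $N$.

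To upgrade this bounded factorization to exponential convergence, I would introduce the first-passage distribution $\nu_{x}^{i}$ of the $\mu$-random walk on $\gamma_{i}$ started at $x$, together with a transition kernel $k_{i}$ between successive barriers, writing
\[
G^{\mu}(x^{-1}y) = \sum_{z_{1}, \ldots, z_{N}} \nu_{x}^{1}(z_{1}) \, k_{1}(z_{1}, z_{2}) \cdots k_{N-1}(z_{N-1}, z_{N}) \, G^{\mu}(z_{N}^{-1}y).
\]
The key estimate is that each $k_{i}$ contracts the Hilbert projective metric on probability measures supported on the finite set $\gamma_{i}$ by a definite factor $\lambda < 1$; this follows from the relative Ancona inequality applied across the barrier together with the strict positivity and finiteness of $k_{i}$ on $\gamma_{i} \times \gamma_{i+1}$. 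Iterating over the $N$ barriers yields $\lambda^{N}$-contraction, and a standard Birkhoff--Hopf argument converts this into the desired $O(\lambda^{N})$-estimate for the cross-ratio. Replacing $f$ by a sufficiently high power, and enlarging $L$ accordingly, ensures $\lambda \le 1/e$.

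The main obstacle is establishing the Hilbert contraction of $k_{i}$ uniformly in $i$ in the presence of arbitrary parabolic subgroups, where excursions into peripheral cosets could in principle destroy the mixing. The choice of $L$, together with the hyperbolicity of $f$, guarantees that each $\gamma_{i}$ itself stays away from peripheral cosets, so that the relative Ancona estimates apply uniformly on each side of the barrier; nevertheless, carrying this out rigorously requires the delicate harmonic-analytic machinery developed by Dussaule and Gekhtman in \cite[Theorem 3.14]{dussaule2021stability}.
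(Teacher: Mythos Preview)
The paper does not contain a proof of this proposition: it is stated as a citation of \cite[Theorem 3.14]{dussaule2021stability} and used as a black box (the very next sentence moves on to surface groups and free products). So there is no ``paper's own proof'' to compare your proposal against.

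That said, your outline is a faithful sketch of the standard route to such strong Ancona/exponential cross-ratio estimates---rewrite the Green cross-ratio, factor through successive barriers via Ancona, and extract geometric decay from Birkhoff--Hopf contraction of the transfer kernels---and you correctly identify the genuine difficulty (uniform contraction across barriers in the presence of parabolic excursions) as the place where the Dussaule--Gekhtman machinery is needed. As a self-contained proof it is incomplete precisely at that step, which you yourself flag; as a summary of the strategy behind the cited result it is accurate.
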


Combining this with Proposition \ref{prop:SchottkyPA}, we deduce:

\begin{cor}\label{cor:greenDecay}
Let $G$ be a non-elementary relatively hyperbolic group   equipped with a word metric $d$ and a Green metric $d_{G}$. Then there   exists a long and large enough $K_{0}$-Schottky set $S$ such that for each $N$, every $D_{0}$-semi-aligned sequence of Schottky axes $(\gamma_{1}, \ldots, \gamma_{N})$ is ($E_{0}, e^{-N})$-squeezing.
\end{cor}

In some cases, we have an estimate analogous to Inequality \ref{eqn:greenDecay}   for the Cayley graph of $G$ with   a word metric. Two notable examples are  the surface groups and   the free products. First consider the Cayley graph of $G = \langle a_{1}, \ldots, a_{g}, b_{1}, \ldots, b_{g} | \prod_{i=1}^{g} [a_{i}, b_{i}] \rangle$, equipped with the word metric $d$ for the   standard generating set $\{a_{1}, \ldots, a_{g}, b_{1}, \ldots, b_{g}\}$. Note that: 

\begin{lem}[{cf. \cite[Lemma 2.3]{sambusetti2002growth}}]\label{lem:corridor}
Let $w = a_{i}^{k} b_{i+1}^{k} $, let $n > 2$ and let $x, y, z \in G$ be such that: \begin{enumerate}
\item there exists a geodesic $\gamma$ from $x$ to $y$ containing $w^{n}$ as a subword, and 
\item $\pi_{\gamma}(z)$ is left to the $w^{n}$-subword of $\gamma$ by more than $4g$.
\end{enumerate}
Then every geodesic on $G$ from $z$ to $y$ must contain the middle subsegment $w^{n-2}$ of $w^{n}$.
\end{lem}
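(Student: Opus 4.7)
The plan is to run a corridor (dual band) argument in a van Kampen diagram over the surface-group relator $r = \prod_{j=1}^{g}[a_j, b_j]$, which has length $4g$. Fix an arbitrary geodesic $\gamma' : z \to y$ and any geodesic $\beta$ from $z$ to $p := \pi_\gamma(z)$. The loop traced out by $\beta$, the portion of $\gamma$ from $p$ to $y$, and the reverse of $\gamma'$ bounds a reduced van Kampen diagram $\Delta$ over $r$; the $w^n$-subword of $\gamma$ lies on this diagram's boundary and sits more than $4g$ letters to the right of where $\beta$ meets $\gamma$.

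The key combinatorial input is that $w^n = (a_i^k b_{i+1}^k)^n$ shares no two-letter subword with any cyclic conjugate of $r^{\pm 1}$: the positive two-letter subwords occurring in $w^n$ are $a_i a_i$, $a_i b_{i+1}$, $b_{i+1} b_{i+1}$ and $b_{i+1} a_i$, whereas the only positive two-letter cyclic subwords of $r$ (resp.\ $r^{-1}$) are $a_j b_j$ (resp.\ $b_j^{-1} a_j^{-1}$). Therefore, in the reduced diagram $\Delta$, no 2-cell is incident to two consecutive edges of the $w^n$ arc of $\partial\Delta$; in particular, the 2-cell dual to each edge of $w^n$ is distinct from the 2-cells dual to its neighbors.

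For each edge $e$ of the $w^n$ arc that is not already identified with an edge of $\gamma'$, I would then trace the maximal corridor through $\Delta$ starting at $e$. The previous paragraph guarantees that no such corridor can re-enter $w^n$, so each must exit through one of the remaining boundary arcs: the left cap formed by $\beta$ together with $\gamma$ up to the start of $w^n$, the right cap formed by $\gamma$ between the end of $w^n$ and $y$, or $\gamma'$ itself. The counting step is then the following: each corridor contributes at most $4g - 1$ consecutive edges to its exit arc, and the hypothesis that $p$ lies more than $4g$ letters to the left of $w^n$ forces all corridors landing on the left cap to originate from at most the first $w$-subblock of $w^n$; symmetrically (since $\gamma$ ends at $y$ immediately after $w^n$), only the last $w$-subblock can emit corridors to the right cap. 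Hence every edge of the middle $w^{n-2}$ subsegment emits a corridor landing on $\gamma'$, and the non-crossing property of corridors in reduced diagrams forces these landings to occur in matching order with matching labels, so $\gamma'$ contains $w^{n-2}$ as a subword. The main obstacle is to rigorously establish embeddedness and non-crossing of the corridors; both reduce to the two-letter noninterference above, and they form the technical heart of Sambusetti's original argument.
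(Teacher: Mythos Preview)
The paper does not re-run any diagram argument; it simply cites Sambusetti for the bigon case $z=x$ and reduces the general statement to that one via the thin-triangle structure of $\triangle xyz$: since $\pi_\gamma(z)$ lies more than $4g$ to the left of $w^n$, the side $[z,y]$ fellow-travels $\gamma$ along the whole $w^n$-segment (this is the ``pod'' near $y$), and Sambusetti's bigon lemma applies there. Your approach instead tries to run a van Kampen argument directly on the triangular loop.

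The two-letter non-interference you isolate is indeed the combinatorial core, but the corridor sketch has genuine gaps. First, you assert ``$\gamma$ ends at $y$ immediately after $w^n$''; the lemma does not assume this, so the right cap may be long and your count of which $w$-blocks can emit corridors there fails. Second, and more fundamentally, even if every corridor from the middle $w^{n-2}$ landed on $\gamma'$ in order with the correct label, that would only say $\gamma'$ carries those $a_i$- and $b_{i+1}$-edges in the right order---it would not force them to be \emph{consecutive}, i.e.\ to spell out $w^{n-2}$ as a contiguous subword. The actual mechanism in the bigon case is sharper: by the $C'(1/6)$ ladder structure, every interior $2$-cell contributes at least $2g-2\ge 2$ consecutive edges to the $\gamma$-side, which the two-letter obstruction forbids inside $w^n$; hence the diagram must degenerate and the two geodesics literally share those edges. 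Your corridor count does not reach this conclusion. (There is also the smaller issue that ``corridors'' for a $4g$-gon relator are not canonical---one must specify the edge-pairing, e.g.\ the two occurrences of each generator, and then verify that the $a_i$- and $b_{i+1}$-bands are embedded and non-crossing, which you only assert.) The paper sidesteps all of this by reducing to Sambusetti rather than redoing him for triangles.
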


In \cite{sambusetti2002growth}, the author describes the situation $z= x$. One can reduce the setting of Lemma \ref{lem:corridor} to that of  \cite[Lemma 2.3]{sambusetti2002growth}  by considering one pod of the geodesic triangle $\triangle xyz$, whose one side contains $w^{n}$ as subword. From this, we can deduce:

\begin{cor}\label{cor:corridor}
Let $w = a_{i}^{k} b_{i+1}^{k} $ and let $x, y \in G$ be such that \[
\pi_{\{w^{i} : i \in \Z\}}(x) = w^{n}, \,\, \pi_{\{w^{i} : i \in \Z\}}(y) = w^{m} \quad (n < m - 5).
\] 
Then any geodesic on $G$ from $x$ to $y$ must pass through the vertex $w^{n+2}$.

In particular, if we consider another pair of points $x', y' \in G$ such that \[
\pi_{\{w^{i} : i \in \Z\}}(x') = w^{n'}, \,\, \pi_{\{w^{i} : i \in \Z\}}(y') = w^{m'} \quad (n' \le n, m' \ge n+4),
\] 
Then we have \[
d_{G}(x, y) - d_{G}(x', y) = d_{G}(x, w^{n+2}) - d_{G}(x' ,w^{n+2}) = d_{G}(x, y') - d_{G}(x', y').
\]
\end{cor}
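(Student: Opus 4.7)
The plan is to exploit the corridor phenomenon packaged in Lemma \ref{lem:corridor}. Since surface groups are word hyperbolic, the element $w = a_i^k b_{i+1}^k$ is loxodromic and the orbit $A := \{w^j o\}_{j \in \Z}$ is a $K_0$-contracting quasi-axis in the Cayley graph, so all the alignment results of Subsection \ref{subsection:align} are available.

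I would first establish the main claim that every geodesic from $x$ to $y$ passes through $w^{n+2}$. Because $\pi_A(x) = w^n$ and $\pi_A(y) = w^m$ with $m - n > 5$, Corollary \ref{cor:semiAlign} produces a geodesic $\gamma_0$ from $x$ to $y$ whose midsection $0.1 E_0$-fellow-travels the axis segment from $w^n$ to $w^m$. In the integer-length simplicial Cayley graph of the surface group, this fellow-traveling can be refined so that $\gamma_0$ literally contains $w^L$ as a subword with $L \ge m - n - c$, for a constant $c$ depending only on $k$ and the contracting constant. I would then apply Lemma \ref{lem:corridor} to $\gamma_0$ with $z = x$: the projection $\pi_{\gamma_0}(x) = x$ sits at the very left end of $\gamma_0$, hence to the left of the $w^L$-subword by more than $4g$ once $k$ has been chosen large enough (if $x$ already lies close to the axis so that the fellow-traveling starts immediately, one instead applies the lemma to an auxiliary geodesic from a proxy $w^N$ with $N \ll n$ to $y$, for which the same conclusion is reached). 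The lemma then forces every geodesic from $x$ to $y$ to contain the middle subword $w^{L-2}$, which covers the vertex $w^{n+2}$ by the margin $n < m - 5$.

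For the ``in particular'' equalities I would rerun the same argument on each of the four pairs $(x,y)$, $(x, y')$, $(x', y)$, $(x', y')$. The hypotheses $n' \le n$ and $m' \ge n+4$ are calibrated exactly so that $w^{n+2}$ still lies in the middle $w^{L-2}$ of the corresponding literal subword in every configuration: the inequality on $n'$ keeps the left projection from pushing past $w^{n+2}$, while $m' \ge n+4$ leaves at least two copies of $w$ to the right of $w^{n+2}$ inside the subword. Once all four geodesics are shown to pass through $w^{n+2}$, additivity of distance along each such geodesic gives
\[
d_G(a,b) = d_G(a, w^{n+2}) + d_G(w^{n+2}, b) \qquad (a \in \{x, x'\},\; b \in \{y, y'\}),
\]
and the two claimed equalities follow by subtracting over $a$ for each fixed $b$.

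The main obstacle I anticipate is the bookkeeping of constants --- verifying that the numerical thresholds ``$m - n > 5$'' and ``$m' \ge n + 4$'' comfortably absorb all the additive losses: the slack $c$ converting fellow-traveling into a literal $w$-subword, the $4g$ entering condition (2) of Lemma \ref{lem:corridor}, and the two copies of $w$ shaved off when passing from $w^L$ to the middle $w^{L-2}$. Taking $k$ sufficiently large in $w = a_i^k b_{i+1}^k$ makes each copy of $w$ long enough to absorb $4g$ and the contracting slack, after which all the required inequalities become routine to check.
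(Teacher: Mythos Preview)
Your proposal has a genuine gap at the step where you ``refine'' fellow-traveling into literal containment of a $w^{L}$-subword. In a general word-hyperbolic Cayley graph, two geodesics that $\delta$-fellow-travel need not share any edges at all, let alone a long $w$-power. The fact that geodesics near the axis of this particular $w = a_{i}^{k} b_{i+1}^{k}$ are forced to run \emph{literally} along the axis is precisely the content of Sambusetti's lemma and of Lemma~\ref{lem:corridor}; it is not a consequence of the coarse alignment machinery of Subsection~\ref{subsection:align}. So your argument is circular: you are invoking the very phenomenon you are trying to establish in order to manufacture the hypothesis of Lemma~\ref{lem:corridor}. (A secondary point: Corollary~\ref{cor:semiAlign} is stated for Schottky axes, not for the orbit of $w$; you would need Proposition~\ref{prop:BGIPWitness} instead, but this does not fix the real problem.)

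The paper's intended deduction goes the other way round: one applies Lemma~\ref{lem:corridor} directly to the axis, which \emph{is} a geodesic and trivially contains $w^{L}$ as a subword for every $L$. Concretely, using the $w^{-1}$-version of the lemma on the reversed axis segment from $w^{m+C}$ to $w^{n-C}$ with $z = y$ shows that every geodesic from $w^{n-C}$ to $y$ contains the axis segment from $w^{n-C+1}$ to $w^{m-2}$; one such geodesic $\gamma'$ now genuinely carries a long $w$-subword ending at $y$, and a second application of Lemma~\ref{lem:corridor} to $\gamma'$ with $z = x$ (whose projection $w^{n}$ lies to the left of the sub-subword starting at $w^{n+1}$) gives the desired passage through $w^{n+2}$. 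The tight constants $m-n>5$ and $m'\ge n+4$ are calibrated for this two-step scheme with one copy of $w$ (of length $2k>4g$) lost at each end per application, not for absorbing the large constants $E_{0}$ coming from the general contracting theory.
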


A similar bottlenecking phenomenon happens in the free product of nontrivial groups $G = \ast_{\alpha} G_{\alpha}$, with the word metric for the standard generating set $S = \bigcup \{\textrm{generating sets of $G_{\alpha}$}\}$. Indeed, if we take two distinct free factors $G_{\alpha}$, $G_{\beta}$ and pick nontrivial elements $a \in G_{\alpha}$ and $b \in G_{\beta}$, then $w = ab$ does the same job   as $w = a_{i}^{k} b_{i+1}^{k}$ in Lemma \ref{lem:corridor}.

\section{Pivoting technique: summary of results} \label{section:pivoting}

In this section, we review   Gou{\"e}zel's pivoting technique developed in \cite{gouezel2022exponential}.   The version we present is the one adapted to metric spaces with contracting isometries \cite{choi2022random1}, \cite{choi2022random2}. Readers who are familiar with the techniques in \cite{gouezel2022exponential} can assume the following proposition and proceed to Section \ref{section:diff}.

\begin{prop}\label{prop:gouezelRWEventual}
  Let $(X, G)$ be as in Convention \ref{conv:main}. Let $0< \epsilon \le 1$ and let $S \subseteq G^{M_{0}}$ be a long and large enough $K_{0}$-Schottky set. Then there exists $\kappa = \kappa(\epsilon, S)>0$ such that the following holds.

Let $\{ \mu_{i} : i \in \Z_{>0}\}$ be probability measures such that for each $s \in S$ and $k \in \Z$, \[
\big(\mu_{k + 1} \times \cdots \times \mu_{k + M_{0}} \big) ( s) > \epsilon.
\]
Let $g_{i}$ be independently distributed according to $\mu_{i}$ for each   $i \in \Z$.

Then for   every  $M$ integers $t(1), \ldots, t(M) \in \Z$   and for every $g_{t(1)}, \ldots, g_{t(M)} \in G$, we have  \[
\Prob \left(\left.\begin{array}{c} \textrm{$\exists$ Schottky axes $\gamma_{1}, \gamma_{2}, \ldots, \gamma_{\lceil \kappa n \rceil} \subseteq \{o, Z_{1} o, Z_{2} o, \ldots, Z_{n} o\}$ such that } \\
\textrm{$(Z_{-m} o, \gamma_{1}, \ldots, \gamma_{\lceil \kappa n \rceil}, Z_{n+l} o)$ is $D_{0}$-semi-aligned for all $m, l\ge0$} \end{array}\, \right| \, g_{t(1)}, \ldots, g_{t(M)}\right) \ge 1-\frac{1}{\kappa}   e^{-(\kappa n - M)}.
\]
\end{prop}

  The remaining of this section will be devoted to the proof of Proposition \ref{prop:gouezelRWEventual}.

We first reduce random walks (and more generally, products of independent increments) to a combinatorial model. Here is a toy case: suppose that a probability measure $\mu$ on $G$   assigns weights $\epsilon$ on $a, b \in G$ each. Then $\mu = \epsilon \cdot 1_{\{a, b\}} + (\mu - \epsilon 1_{\{a, b\}})$ holds. Hence, a random path $(g_{1}, \ldots, g_{n})$ following the law $\mu^{n}$ can be obtained as follows. First flag each index independently for probability $\epsilon$. Then we choose $g_{i}$ according to the law $1_{\{a, b\}}$ for flagged $i$, and according to the law $\frac{1}{1-\epsilon} \cdot (\mu - \epsilon 1_{\{a, b\}})$ for  unflagged $i$, all independently. The following proposition describes a situation analogous to the above, but with some indices reserved (which will account for conditional probabilities).

\begin{lem} \label{lem:pivotFirstInfty}
For each $0 < \epsilon \le 1$ and $M_{0} \in \Z_{>0}$, there exists $\kappa = \kappa(\epsilon, M_{0})>0$ such that the following holds.

Let    $M\in \Z_{>0}$. Let $S \subseteq G^{M_{0}}$ be a finite set of sequences in $G$. Let $t(1), \ldots, t(M)$ be $M$ distinct integers. Let $\mu_{1}, \mu_{2}, \ldots$ be probability measures such that \begin{enumerate}
\item  $\mu_{t(l)}$ is the Dirac measure concentrated on an element $g_{t(l)} \in G$ for $l=1, \ldots, M$, and  
\item   for each $s \in S$ and $k \in \Z$ such that $\{M_{0} k + 1, \ldots, M_{0} k + M_{0} \} \subseteq \{1, \ldots, n \} \setminus \{t(1), \ldots, t(M)\}$, we have\[
\big(\mu_{M_{0} k + 1} \times \cdots \times \mu_{M_{0} k + M_{0}} \big) ( s) > \epsilon 
\]
\end{enumerate}
Then there   exist a probability space $\Omega$, a measurable partition $\mathcal{Q}$ on $\Omega$ and random variables \[\begin{aligned}
\{g_{1}, g_{2}, \ldots \} \in G^{\Z_{>0}}, \\ 
\big\{\vartheta(1) < \vartheta(2) < \ldots \big\} &\subseteq \Z_{\ge 0} 
\end{aligned}
\]
such that the following hold: \begin{enumerate}
\item For each   $n\in \Z$ we have\[
\Prob \Big( \{\vartheta(1), \ldots, \vartheta(\lfloor \epsilon^{4} n/8M_{0} \rfloor - M) \} \subseteq \{0, 1, \ldots, \lfloor n/4M_{0} \rfloor - 1\} \Big) \ge 1 - \frac{1}{\kappa} e^{-\kappa n - M}.
\]
\item $(g_{1},g_{2}, \ldots )$ is distributed according to $\mu_{1} \times \mu_{2} \times \cdots$   on $\Omega$.
\item  On each equivalence class $\mathcal{F} \in \mathcal{Q}$, the random variables $\vartheta(1)(\w),\vartheta(2)(\w)$, $\ldots$ are constant and the $M_{0}$-long sequences \begin{equation}\label{eqn:alphaBeta}
\left\{\begin{aligned} (g_{4M_{0}\stopping(l) + 1}, \,\,\ldots, \,\,g_{4M_{0}   \stopping(l) +M_{0}} ),\\  (g_{4M_{0} \stopping(l) + M_{0}+1}, \,\ldots, \,g_{4M_{0} \stopping(l) +2M_{0}} ),\\
(g_{4M_{0} \stopping(l) + 2M_{0}+1}, \,\ldots, \,g_{4M_{0}   \stopping(l)  +3M_{0}} ),\\ (g_{4M_{0}\stopping(l) + 3M_{0}+1}, \,\ldots, \,g_{4M_{0}\stopping(l)  +4M_{0}} )\end{aligned} :  l=1, 2, \ldots \right\} 
\end{equation}
are all independently and uniformly distributed on $S$. Furthermore, all the increments $g_{i}$'s not appearing in Display \ref{eqn:alphaBeta} are fixed   on $\mathcal{F}$.
\end{enumerate}
\end{lem}

A version of this lemma was proven in \cite{choi2022random2}: see \cite[Lemma 4.6]{choi2022random2}. We give proof for the reader's convenience. 

\begin{proof}[Proof of Lemma \ref{lem:pivotFirstInfty}]
 Let \[
\mathcal{I} := \big\{ \lfloor t(l)/4M_{0} \rfloor : l=1, \ldots, M\big\}, \,\, \mathcal{J} := \big\{ 4M_{0} i +1, \ldots,   4M_{0}i + 4M_{0} : i \in \mathcal{I} \big\}.
\]
We now consider the partition $\mathcal{P}$ of the ambient probability space   based on  the values of $\{g_{i} : i \in \mathcal{J}\}$. On each $\mathcal{E} \in \mathcal{P}$,   the measures $\mu_{i} |_{\mathcal{E}}$ are Dirac atoms for $i \in \mathcal{J}$, and $\mu_{i}|_{\mathcal{E}}= \mu_{i}$ otherwise. Our goal is to realize the RVs \[
\{g_{1}, \ldots, g_{n}\} \in G, \,\, \{\vartheta(1),\ldots, \vartheta(\lfloor \epsilon^{4} n / 8M_{0} \rfloor - M )\} \subseteq \{0, \ldots, \lfloor n/4M_{0} \rfloor -1\} \setminus \mathcal{I},
\]and a partition $\mathcal{Q}_{\mathcal{E}}$ on $\mathcal{E}$ that satisfy the conditions (1), (2) and (3) with respect to $\Prob(\cdot | \mathcal{E}) \sim \mu_{1}|_{\mathcal{E}} \times  \cdots \times \mu_{n}|_{\mathcal{E}}$. If this is possible, then $\bigcup_{\mathcal{E} \in \mathcal{P}} \mathcal{Q}_{\mathcal{E}}$ becomes the desired partition.

Hence, from now on, we assume that $\mu_{i}$'s are Dirac atoms for $i \in \mathcal{J}$. Let us denote the uniform measure on $S$ by $\mu_{S}$.   Thanks to the assumption, for each   $k \in \Z_{\ge 0} \setminus \mathcal{I}$: we have \[
\mu_{4M_{0} k + 1} \times \ldots \times \mu_{4M_{0} k +   4M_{0}}  = \epsilon^{4} \cdot \mu_{S}^{4} + (1- \epsilon^{4}) \cdot \nu_{k}
\]
  for some probability measure $\nu_{k}$.

  For each $k \in \Z_{>0}$ we consider a Bernoulli RV   $\rho_{k}$ with average $\epsilon^{4}$,   an RV $\eta_{k}$ with the law $\mu_{S}^{4}$, and an RV $v_{k}$ with the law $\nu_{k}$.   All these RVs are set to be independent. Now let \[
(g_{4M_{0}k+1}, \,\ldots, \,g_{4M_{0}(k+1)}) = \left\{\begin{array}{cc} v_{k} & \textrm{when}\,\, \rho_{k} = 0, \\ \eta_{k} & \textrm{when} \,\, \rho_{k} = 1  \end{array}\right. \quad (k \in \Z_{\ge 0}  \setminus \mathcal{I}).
\]
For $k \in \mathcal{I}$, we decide $(g_{4M_{0}k+1}, \,\ldots, \,g_{4M_{0}(k+1)})$ using the Dirac measure $\mu_{4M_{0}k+1} \times \cdots \times \mu_{4M_{0}(k+1)}$. Then $(g_{1}, g_{2}, \ldots)$ is distributed according to $\mu_{1} \times \mu_{2} \times \cdots$. We denote by $\Omega$ the ambient probability space on which $(\rho_{i},v_{i}, \eta_{i})_{i}, (g_{i})_{i}$ are all measurable. 

Recall that $\rho_{i}$'s are i.i.d. Bernoulli RVs with average $\epsilon^{4}$. Chernoff-Hoeffding's inequality implies that $\Prob\big(\sum_{i} \rho_{i} \le \frac{1}{2}\epsilon^{4} (n/4M_{0} - M)\big)$ decays exponentially   in $n$:\[
\Prob\left( B_{n}:= \Bigg\{\sum_{i \in \{0, 1, \ldots,    \lfloor n/4M_{0} \rfloor-1 \} \setminus \mathcal{I} } \rho_{i}< \frac{\epsilon^{4} n}{8M_{0}}- M\Bigg\}\right) \le \frac{1}{\kappa}  e^{-(\kappa n-M)}
\]holds for some $\kappa = \kappa(\epsilon, M_{0})$. This gives the first item.

  For $\w \in \Omega$ we collect those indices $i$ at which $\rho_{i} = 1$ and denote them by  $\vartheta(1), \vartheta(2), \ldots$ in the increasing    order. That means, we set \[
\vartheta(k)(\w) := \min \{ l: \sum_{i = 0}^{l} \rho_{i}(\w) = k\}.
\]
Then $\vartheta( \lfloor \frac{\epsilon^{4} n}{8M_{0}}\rfloor- M) \le \lfloor n/4M_{0} \rfloor - 1$ for $\omega \in B_{n}$ by the definition of $B_{n}$.

We now define   a partition $\mathcal{Q}$ of $\Omega$ determined by the values of \begin{equation}\label{eqn:rhoVG}
\big\{\rho_{i}, v_{i} : i \notin \mathcal{I} \big\} \cup \{g_{i} : i \in \mathcal{J}\} 
\end{equation}
and define   \begin{equation}\label{eqn:wAlpha}\begin{aligned}
w_{i-1} &:= g_{4M_{0}[\stopping(i-1) + 1] + 1} \cdots g_{4M_{0} \stopping(i)}  & (i=1, 2, \ldots),\\
\alpha_{i} &:= (g_{4M_{0}\stopping(i) + 1}, \,\ldots, \,g_{4M_{0} \stopping(i) +M_{0}} ),& (i=1, 2, \ldots),\\
\beta_{i} &:= (g_{4M_{0} \stopping(i) + M_{0}+1}, \,\ldots, \,g_{4M_{0} \stopping(i) +2M_{0}} ),& (i=1, 2, \ldots),\\
\gamma_{i} &:= (g_{4M_{0} \stopping(i) + 2M_{0}+1}, \,\ldots, \,g_{4M_{0} \stopping(i) +3M_{0}} ),& (i=1, 2, \ldots),\\
\delta_{i} &:= (g_{4M_{0}\stopping(i) + 3M_{0}+1}, \,\ldots, \,g_{4M_{0}\stopping(i) +4M_{0}} ).& (i=1, 2, \ldots).
\end{aligned}
\end{equation}

On each equivalence class $\mathcal{F}$ of $\mathcal{Q}_{n}$, the increments involved in $w_{i}$'s are constants (since these are determined by the RVs in Display \ref{eqn:rhoVG}) and $(\alpha_{i}, \beta_{i},  \gamma_{i}, \delta_{i})$ are i.i.d.s with distribution $\mu_{S}^{4}$. This ends the proof.
\end{proof}

\begin{remark}\label{rem:pivotFirstInfty}
 Given $M$ first, by enlarging $\kappa(\epsilon, M_{0})$ if necessary, we even have
\[
\Prob\Big ( \# \big(\{\vartheta(1), \vartheta(2), \ldots \} \cap \{n, n+1, \ldots, n+k \}\big)\ge 0.5\epsilon^{4}(n-k) \Big) \ge 1 - \frac{1}{\kappa} e^{-\kappa k} 
\]
for any $n, k \ge 0$ for some $\kappa = \kappa(\epsilon, M_{0})$.
\end{remark}

The above lemma associates each $\w \in \Omega$ with an infinite word \[
w_{0} \Pi(\alpha_{1})\Pi(\beta_{1}) \Pi(\gamma_{1}) \Pi(\delta_{1}) \cdots w_{k} \Pi(\alpha_{k})\Pi(\beta_{k}) \Pi(\gamma_{k}) \Pi(\delta_{k}) \cdots.
\]
  On each $\mathcal{F} \in \mathcal{Q}$ the isometries $w_{i}$'s are fixed, while $\alpha_{i}, \beta_{i}, \gamma_{i}, \delta_{i}$'s are independently and uniformly distributed on $S$. It might be the case that $w_{0}$ is the inverse of the remaining subword and $g_{1} \cdots g_{n}$ becomes $id$. The following proposition, which is the heart of the pivoting technique, says that such a situation is unlikely.

\begin{prop} \label{prop:pivotEquivDescrip}
For each $N_{0} \ge 400$, there exists $\kappa= \kappa(N_{0})>0$ such that the following holds.

Fix a long enough Schottky set  $S$ with cardinality $N_{0}$ and a sequence  $(w_{i})_{i=0}^{\infty}$ in $G$. Suppose that $\{\alpha_{i}, \beta_{i}, \gamma_{i}, \delta_{i} : i>0\}$ are independently and uniformly distributed on $S$. Then there exists a measurable set   $P(\mathbf{s})$ for each $\mathbf{s} = \{\{\alpha_{i}, \beta_{i}, \gamma_{i}, \delta_{i} : i>0\} \in (S^{\Z_{>0}})^{4}$: \[
P\big(\{\alpha_{i}, \beta_{i}, \gamma_{i}, \delta_{i} : i>0\}\big) = \{j(1) < j(2) <\ldots \} \subseteq \Z_{>0},
\] and a measurable partition $\mathcal{P}$ of the probability space that satisfy the following: 
\begin{enumerate}
\item (alignment) For each $\mathbf{s} \in S^{\Z_{>0}}$,  if we define \[
U_{1} := w_{0}\Pi(\alpha_{1}), \,\, U_{i+1} := U_{i} \cdot  \Pi(\beta_{i}) \cdot \Pi(\gamma_{i}) \Pi(\delta_{i}) w_{i} \Pi(\alpha_{i+1}) \,\, (i=1, \ldots, n),
\]then the sequence  \[
\big(o,\,\, U_{j(1)} \Gamma(\beta_{j(1)}), \,\,U_{j(2)}\Gamma(\beta_{j(2)}), \,\,\ldots, \big)
\]
is $D_{0}$-semi-aligned. 
\item (high chance) For each $n$, we have  \[
\Prob\big( \# (P(\mathbf{s})\cap\{1, \ldots, n\} )  \le (1- 10/N_{0})  n \big) \le e^{-\kappa n}.
\]
\item On each $\mathcal{E} \in \mathcal{P}$, the set  $P$ is constant, as well as \[
\{\alpha_{i}, \gamma_{i}, \delta_{i}: i>0\}, \quad \{\beta_{i} : i \notin P\}.
\]
Meanwhile, on $\mathcal{E}$, $(\beta_{j(1)}, \beta_{j(2)}, \ldots)$ are independently and uniformly distributed on $S$.
\end{enumerate}
\end{prop}

This proposition was first proved in \cite[Section 4B]{gouezel2022exponential} and adapted in \cite[Section 5.1]{choi2022random1}. Since the statements there are slightly different from this one, let us sketch the proof.
\begin{proof}[Proof of Proposition \ref{prop:pivotEquivDescrip}]
In \cite[Section 5.1]{choi2022random1}, we proved:
\begin{prop}\label{prop:pivotEquivOri}
In the setting of Proposition \ref{prop:pivotEquivDescrip}, there exists $K = K(N_{0})>0$, and for each $n$ there exists a measurable set $P_{n} = \{j(1) < j(2) < \ldots < j(\#P_{n})\} \subseteq \{1, \ldots, n\}$ and an equivalence relation $\sim_{n}$ such that the following holds: \begin{enumerate}[label=(\alph*)]
\item We have \begin{equation}\label{eqn:pivotingCond}\mathbf{s} \sim_{n} \mathbf{s}' \,\, \Leftrightarrow \,\, \begin{array}{cc}
\alpha_{i} = \alpha_{i}',\,\, \delta_{i} = \delta_{i}' & \textrm{for $i \in \{1, \ldots, n\}$}, \\
\beta_{i} = \beta_{i}', \,\,\gamma_{i} = \gamma_{i}' & \textrm{for $i \in \{1, \ldots, n\} \setminus P_{n}(\mathbf{s})$}.
\end{array}
\end{equation}
Furthermore, $P_{n}(\mathbf{s}) = P_{n}(\mathbf{s}')$ if $\mathbf{s} \sim_{n} \mathbf{s}'$.
\item $\big(o, \,U_{j(1)} \Gamma(\beta_{j(1)}),\,\ldots, \,U_{j(\#P_{n})} \Gamma(\beta_{j(\#P_{n})}), U_{n} \Pi(\beta_{n}) \Pi(\gamma_{n}) \Pi(\delta_{n}) w_{n} o \big)$ is $D_{0}$-semi-aligned.
\item $\Prob\big(\#P_{n}(\mathbf{s}) \le (1-10/N_{0}) n  \big) \le e^{-n/K}$. More generally, we have \[
\Prob\big( \#P_{n+k}(\mathbf{s}) \le \#P_{n}(\mathbf{s}) + (1-10/N_{0})k\big) \le e^{-k/K} \quad (\forall n, k\ge0).
\]
\item for each $n \in \Z_{>0}$, either $P_{n+1} = P_{n} \cup \{n\}$ or $P_{n+1}$ is an initial section of $P_{n}$, i.e., $P_{n+1} = \{ \textrm{the first $\#P_{n+1}$ elements of $P_{n}$}\}$.
\end{enumerate}
\end{prop}

Item (a) is according to the definition of pivoting in the paragraph before \cite[Proposition 5.6]{choi2022random1}; in our setting $\mathbf{v}$ is kept to be  the identity sequence,   cf. the paragraph before \cite[Section 5.2]{choi2022random1}. The condition in Display \ref{eqn:pivotingCond} is indeed an equivalence relation thanks to \cite[Lemma 5.5]{choi2022random1}. Item (b) is due to \cite[Proposition 5.1]{choi2022random1}. Item (c) is proven in \cite[Corollory 5.8]{choi2022random1}. Item (d) is according to the definition of the set of pivotal times before \cite[Proposition 5.1]{choi2022random1}.

Let us define $P := \liminf_{n} P_{n} = \limsup_{n} P_{n}$. Here, the second equality holds for the following reason: if an integer $k$ is removed from some $P_{n}$, it cannot appear again in $P_{n+1}, P_{n+2}, \ldots$. Hence, if $k$ appears infinitely often in $P_{1}, P_{2}, \ldots$, then $k \in P_{k}, P_{k+1}, \ldots$. In fact, we observe: \[
P \subseteq P_{k} \cup \{k+1, k+2, \ldots\}\quad \textrm{for each $k=1, 2, \ldots$}.
\]
We now define a relation: \[
\mathbf{s} \sim \mathbf{s}'\,\, \Leftrightarrow \,\,\begin{array}{cc}
\alpha_{i} = \alpha_{i}',\,\, \gamma_{i} = \gamma_{i}', \,\, \delta_{i} = \delta_{i}' & \textrm{for $i =1, 2, \ldots$}, \\
\beta_{i} = \beta_{i}'  & \textrm{for $i \in \{1, 2, \ldots \} \setminus P(\mathbf{s})$}.
\end{array}
\]
Then $\sim$ is a finer relation   than each $\sim_{n}$. Hence, if $\mathbf{s} \sim \mathbf{s}'$ then $P_{n}(\mathbf{s}) = P_{n}(\mathbf{s}')$ for each $n$, which implies $P(\mathbf{s}) = P(\mathbf{s}')$. It follows that $\sim$ is an equivalence relation.

Furthermore, any initial section of $P$ is an initial section of some (in fact, all but finitely many) $P_{k}$. This combined with Item (a) implies Item (1), the alignment property. Furthermore, for each $M$ and $N$ we have \[
 \#P_{k}(\mathbf{s}) \ge N\,\, \textrm{for all $k \ge M$} \,\, \Rightarrow\,\, \textrm{$P(\mathbf{s})$ contains the $N$-initial section of $P_{M}(\mathbf{s})$}.
 \]
This observation and Item (c) imply that for every $n$, \[\begin{aligned}
&\Prob \big( P(\mathbf{s}) \cap P_{n}(\mathbf{s}) \subseteq \{1, \ldots, n\}\,\,\textrm{has at least $(1-10/N_{0}) n$ elements}\big) \\
&\ge 
\Prob \big(\# P_{k}(\mathbf{s}) \ge (1-10/N_{0}) n \,\,\textrm{for each $k \ge n$}\big) \\
&\ge 1- \sum_{k \ge n} e^{-k/K} \ge 1 - \frac{1}{1-e^{-1/K}} e^{-n/K}. \qedhere
\end{aligned}
\]
\end{proof}

Given RVs $g_{1}, g_{2}, \ldots$, we introduced the notation $Z_{i} := g_{1} \cdots  g_{i}$. For convenience we also write \begin{equation}\label{notat:Yi}
\mathbf{Y}_{i} := (Z_{i-M_{0}}, Z_{i-M_{0} + 1}, \ldots, Z_{i})
\end{equation}
when $M_{0}>0$ is understood. We now recall  the following definition:

\begin{definition}[{\cite[Definition 4.1]{choi2022random1}}]\label{dfn:pivotEquivClass}
Let $M_{0} \in \Z_{>0}$. 
Let $S$ be a long enough Schottky set in $G^{M_{0}}$, and let $\Omega$ be a probability space with $G$-valued RVs $\{g_{i} : i > 0\}$.

A subset $\mathcal{E}$ of $\Omega$, accompanied with the choice of a subset $\diffPivot(\mathcal{E}) = \{j(1) < j(2) < \ldots\}\subseteq M_{0} \Z_{>0}$, is called a \emph{pivotal equivalence class (for $S$)} if: \begin{enumerate}
\item for each $i \notin \{j(k) - l : k \ge 1, l =0, \ldots, M_{0} - 1 \}$, $g_{i}(\w)$ is fixed on $\mathcal{E}$;
\item for each $\w \in \mathcal{E}$ and $k \ge 1$, the following is a Schottky sequence: \[
s_{k}(\w) := \big(g_{j(k) - M_{0} + 1}(\w),\, g_{j(k) - M_{0} + 2}(\w), \, \ldots, \, g_{j(k)} (\w) \big) \in S;
\]
\item for each $\w \in \mathcal{E}$, $\big(o, \, \axes_{j(1)}(\w), \, \axes_{j(2)} (\w), \, \ldots)$ is $D_{0}$-semi-aligned, and
\item on $\mathcal{E}$, $\{s_{1}(\w), s_{2}(\w), \ldots \}$ are i.i.d.s distributed according to the uniform measure on $S$.
\end{enumerate}
We say that $\mathcal{P}(\mathcal{E})$ is the \emph{set of pivotal times} for $\mathcal{E}$.

When a pivotal equivalence class $\mathcal{E} \subseteq \Omega$ is  understood with the set of pivotal times $\mathcal{P}(\mathcal{E})$, for each element $\w$ of $\mathcal{E}$ we call $\diffPivot(\mathcal{E})$ the \emph{set of pivotal times for $\w$} and write it as $\diffPivot(\w)$.
\end{definition}

Lemma \ref{lem:pivotFirstInfty} and Proposition \ref{prop:pivotEquivDescrip} together imply the following:

\begin{prop}\label{prop:gouezelRW1}
For each $0 < \epsilon \le 1$ and $M_{0} \in \Z_{>0}$, there exists $\kappa = \kappa(\epsilon, M_{0})>0$ such that the following holds.

Let  $n, M\in \Z_{>0}$. Let $S \subseteq G^{M_{0}}$ be large and long-enough $K_{0}$-Schottky set. Let $t(1), \ldots, t(M)$ be $M$ distinct integers. Let $\mu_{1}, \mu_{2}, \ldots$ be probability measures such that \begin{enumerate}
\item  $\mu_{t(l)}$ is the Dirac measure concentrated on an element $g_{t(l)} \in G$ for $l=1, \ldots, M$, and  
\item whenever $s \in S$ and $\{k + 1, \ldots, k + M_{0} \} \subseteq \{1, \ldots, n \} \setminus \{t(1), \ldots, t(M)\}$, we have \[
\big(\mu_{ k + 1} \times \cdots \times \mu_{ k + M_{0}} \big) ( s) > \epsilon.
\]
\end{enumerate}

Then there exist a probability space $\Omega$ equipped with RVs \[
(g_{1}, g_{2}, \ldots)\,\, \textrm{distributed according to}\,\, \mu_{1} \times \mu_{2} \times \cdots,
\]and a measurable partition $\mathcal{Q}$ of $\Omega$ into pivotal equivalence classes for $S$ such that \begin{equation}\label{eqn:gouezelRWDecay}
\Prob\big(\w : \# (\diffPivot(\w) \cap \{1, \ldots, n\}) \le \kappa n - M\big) \le \frac{1}{\kappa} e^{- (\kappa n - M)}
\end{equation}
for each $n$.
\end{prop}

When we are concerned with the pivotal time construction for step $n$, we do not need to construct $P$  from $P_{k}$'s. By combining Proposition \ref{prop:pivotEquivOri} with Lemma \ref{lem:pivotFirstInfty} (with Remark \ref{rem:pivotFirstInfty} in mind), we obtain the following:

\begin{prop}\label{prop:gouezelRW1Ori}
For each $0 < \epsilon \le 1$ and $M_{0} \in \Z_{>0}$, there exists $\kappa = \kappa(\epsilon, M_{0})>0$ such that the following holds.

Let  $n\in \Z_{>0}$. Let $S \subseteq G^{M_{0}}$ be large and long-enough $K_{0}$-Schottky set.  Let $\mu_{1}, \mu_{2}, \ldots$ be probability measures such that for each $s \in S$ and $k \in \Z$, we have \[
\big(\mu_{M_{0} k + 1} \times \cdots \times \mu_{M_{0} k + M_{0}} \big) ( s) > \epsilon.
\]
Let us then change just one measure $\mu_{i}$ into a Dirac measure.

Then there exist a probability space $\Omega_{n}$ equipped with RVs \[
(g_{1}, g_{2}, \ldots, g_{n} )\,\, \textrm{distributed according to}\,\, \mu_{1} \times \mu_{2} \times \cdots \mu_{n} 
\]and a measurable partition $\mathcal{Q}_{n}$ of $\Omega_{n}$ into pivotal equivalence classes for $S$ such that the following conditions are satisfied: 
\begin{enumerate}
\item for each pivotal equivalence class $\mathcal{E}$, the set of pivotal times $\mathcal{P}_{n}(\mathcal{E}) = \{j(1) < \ldots < j(\#\mathcal{P}_{n}(\mathcal{E}))\}$ is a subset of $\{1, \ldots, n\}$. Moreover, for each $\w \in \mathcal{E}$,  \[
(o, \mathbf{Y}_{j(1)}(\w), \ldots, \mathbf{Y}_{j(\#\mathcal{P}_{n}(\mathcal{E}))}, Z_{n} o)
\]
is $D_{0}$-semi-aligned;
\item for each $1 \le k < l \le n$, we have \[
\Prob \big( \w : \#(\mathcal{P}_{n}(\w) \cap \{k, \ldots, l\}) \le \kappa(k-l) \big) \le \frac{1}{\kappa}e^{-\kappa(k-l)}.
\]
\end{enumerate}
\end{prop}

Let us now recall two facts:

\begin{lem}[{\cite[Corollary 4.5]{choi2022random1}}]\label{cor:pivotingRW1}
Let $K_{0}, N_{0}>0$ and let  $S$ be a long enough $K_{0}$-Schottky set with cardinality $N_{0}$. Let $\mathcal{E}$ be a pivotal equivalence class for $S$ with $\diffPivot(\mathcal{E}) = \{j(1) < j(2) < \ldots\}$ and let $x \in X$. Then for each $k \ge 1$ we have \[
\Prob\left( \big(x, \,\axes_{j(k)}(\w),\, \axes_{j(k+1)}(\w), \ldots \big) \,\, \textrm{is $D_{0}$-semi-aligned} \, \Big| \, \mathcal{E} \right) \ge 1- (1/N_{0})^{k}.
\]
Moreover, for any $m\ge 1$, $n \ge j(m)$ and $k = 1, \ldots, m$, we have \[
\Prob\left( \big(\axes_{j(1)}(\w), \,\ldots, \,\axes_{j(m - k+1)}(\w),\, Z_{n}(\w) o \big) \,\, \textrm{is $D_{0}$-semi-aligned} \, \Big| \, \mathcal{E} \right) \ge 1- (1/N_{0})^{k}.
\]
\end{lem}

\begin{lem}[{\cite[Corollary 4.6]{choi2022random1}}]\label{cor:pivotingRW2}
Let $K_{0}, N_{0}>0$ and let $S$ and $\check{S}$ be long enough $K_{0}$-Schottky sets with cardinality $N_{0}$. Let $\mathcal{E}$ be a pivotal equivalence class for $S$ with $\diffPivot(\mathcal{E}) = \{j(1) < j(2) < \ldots\}$, and let $\check{\mathcal{E}}$ be a pivotal equivalence class for $\check{S}$ with $\diffPivot(\check{\mathcal{E}}) = \{\check{j}(1) < \check{j}(2) < \ldots\}$.  Then we have  \[
\Prob\left( \big(\bar{\axes}_{j(k)}(\w),\, \axes_{\check{j}(k)}(\check{\w})\big) \,\, \textrm{is $D_{0}$-semi-aligned} \, \Big| \, \mathcal{E} \right) \ge 1- (2/N_{0})^{k}. \quad (\forall k > 0)
\]
\end{lem}

We are now ready to prove Proposition \ref{prop:gouezelRWEventual}. Let us recall the statement:

\begin{prop}\label{prop:gouezelRW1well}
Let $0< \epsilon \le 1$ and let $S \subseteq G^{M_{0}}$ be a long and large enough $K_{0}$-Schottky set. Then there exists $\kappa = \kappa(\epsilon, S)>0$ such that the following holds.

Let $\{ \mu_{i} : i \in \Z_{>0}\}$ be probability measures such that \[
\big(\mu_{M_{0} k + 1} \times \cdots \times \mu_{M_{0} k + M_{0}} \big) ( s) > \epsilon. \quad (\forall s \in S, \forall k \in \Z)
\]
Let $g_{i}$ be distributed according to $\mu_{i}$ for each $i \in \Z$, all independently, and let $Z_{i}, \mathbf{Y}_{i}$ be as in Notation \ref{notat:location} and Display \ref{notat:Yi}.

Then for arbitrary $M$ integers $t(1), \ldots, t(M)$ and for arbitrary $g_{t(1)}, \ldots, g_{t(M)} \in G$, we have  \[
\Prob \left(\left.\begin{array}{c} \textrm{$\exists \,   j(1), j(2), \ldots, j(\lfloor \kappa n\rfloor) \in \{M_{0}, \ldots, n\}$ such that}\\
\textrm{$\mathbf{Y}_{j(1)}, \ldots, \mathbf{Y}_{j( \lfloor\kappa n\rfloor)}$ are Schottky axes and} \\
\textrm{$(Z_{-m} o, \mathbf{Y}_{j(1)}, \ldots, \mathbf{Y}_{j( \lfloor\kappa n \rfloor)}, Z_{n+l} o)$ is $D_{0}$-semi-aligned for all $m, l\ge0$} \end{array}\, \right| \, g_{t(1)}, g_{t(2)}, \ldots, g_{t(M)}\right) \ge 1-\frac{1}{\kappa} e^{-\kappa n - M}.
\]
\end{prop}

\begin{proof}
A very similar proof was given in \cite[Lemma 4.10]{choi2022random1} so we only sketch the idea.
Given $M$    integers $t(1), \ldots, t(M)$, we replace $\mu_{t(1)}$, $\ldots$, $\mu_{t(M)}$ with Dirac measures concentrated on the given elements, respectively. This will not affect the qualification for Proposition \ref{prop:gouezelRW1}.

We employ the notation \[\begin{aligned}
\check{g}_{i} &:= g_{-i+1}^{-1}   , \quad  \check{\mu}_{i}(\cdot) &:= \mu_{-i+1}(\cdot^{-1})   , \quad\check{S} &:= \{ (s_{M_{0}}^{-1}, \ldots, s_{1}^{-1}) : (s_{1}, \ldots, s_{M_{0}}) \in S\}.
\end{aligned}
\]
We then construct the probability space $\Omega$ for $\mu_{1} \times \mu_{2} \times \cdots$ and $\check{\Omega}$ for $\check{\mu}_{1} \times \check{\mu}_{2} \times \cdots$, equipped with partitions $\mathcal{Q}$ and $\check{\mathcal{Q}}$ into pivotal equivalence classes as in Proposition \ref{prop:gouezelRW1}. Let    $\kappa_{1} = \kappa(\epsilon, M_{0})$ be coefficient of the exponential decay rate coming from Proposition \ref{prop:gouezelRW1}.

Then    $(\Omega, \check{\Omega})$  comes with independent RVs $g_{i}$'s and $\check{g}_{i}$'s, each distributed according to $\mu_{i}$'s and $\check{\mu}_{i}$'s, respectively. We define \[
Z_{i} := g_{1} \cdots g_{i}, \quad  \check{Z}_{i} := \check{g}_{1} \cdots \check{g}_{i}, \quad \mathbf{Y}_{i}(\w) := (Z_{i-M_{0}}, \ldots, Z_{i}), \quad \mathbf{Y}_{i}(\check{\w}) := (\check{Z}_{i-M_{0}}, \ldots, \check{Z}_{i}).
\]
It is then immediate that $(\ldots, \check{Z}_{2}, \check{Z}_{1}, id, Z_{1}, Z_{2}, \ldots)$ has the same law as $(\ldots, Z_{-2}, Z_{-1}, Z_{0}, Z_{1}, Z_{2}, \ldots)$. 

Let \[
\begin{aligned}
\mathcal{A}_{n}& := \big\{ \w \in \Omega :\# ( \mathcal{P}(\w) \cap \{1, \ldots, k\} ) \ge    \kappa_{1} k - M \textrm{for all $k \ge n$}\big\}, \\
\check{\mathcal{A}}_{n} &:= \big\{ \check{\w} \in \check{\Omega} :\# ( \check{\mathcal{P}}(\check{\w}) \cap \{1, \ldots, k\} ) \ge    \kappa_{1} k - M \textrm{for all $k \ge n$}\big\}.
\end{aligned}
\]
These sets are unions of some pivotal equivalence classes and have probability at least    $1 - K_{1} e^{-(\kappa_{1} n - M)}$ for some $K_{1}$ depending only on    $\kappa_{1}$.

Next, we pick pivotal equivalence classes $\mathcal{E} \in \mathcal{Q}$ and $\check{\mathcal{E}} \in \check{\mathcal{Q}}$ from $\mathcal{A}_{n}$ and $\check{\mathcal{A}}_{n}$, respectively.    This choice also determines the sets of pivotal times $\mathcal{P}(\mathcal{E}) = \{j(1) < j(2) < \ldots\}$ and $\check{\mathcal{P}}(\check{\mathcal{E}}) = \{\check{j}(1) < \check{j}(2) < \ldots\}$. On $\mathcal{E} \times \check{\mathcal{E}}$, we estimate the conditional probability for the following event:

\begin{enumerate}
\item for $x \in \{o, \check{Z}_{1} o, \ldots, \check{Z}_{n} o\}$, the following sequence is $D_{0}$-semi-aligned: \[
\big(x, \,\axes_{j(\lceil \kappa_{1}n/3\rceil )},  \,\axes_{j(\lceil \kappa_{1}n/3\rceil + 1)}, \,\ldots\big);
\]
\item for each $k \ge n$, the following are $D_{0}$-semi-aligned: \[\begin{aligned}
&\big(o, \,\axes_{j(1)}(\w), \, \axes_{j(2)}(\w),  \, \ldots, \, \axes_{j(\lceil 2\kappa_{1}k/3\rceil)}(\w), \,Z_{k} o\big),\\
&\big(o, \axes_{\check{j}(1)}(\check{\w}) \, \axes_{\check{j}(2)}(\check{\w})  \, \ldots, \, \axes_{\check{j}(\lceil 2\kappa_{1}k/3\rceil)}(\check{\w}), \,\check{Z}_{k} o\big);
\end{aligned}
\]
\item $\big(\bar{\axes}_{\check{j}(i)}(\check{\w})   ,\, \axes_{j(i)}(\w) \big)$ is $D_{0}$-aligned for some $i \le    \kappa_{1}n/3$.
\end{enumerate}
Note    that $j(\lceil \kappa_{1} k \rceil - M) \le k$ and $\check{j}(\lceil \kappa_{1} k \rceil - M) \le k$ for each $k \ge n$. Lemma \ref{cor:pivotingRW1} and \ref{cor:pivotingRW2}     tell us the above    events happen at the same time with probability at least \[
1 -n (1/N_{0})^{\kappa_{1}n/3 - M} - 2\sum_{k\ge n} (1/N_{0})^{\kappa_{1}k/3- M} -  (2/N_{0})^{\kappa_{1}n/3}  \ge 1 - K(1/100)^{\kappa_{1}n/3 - M}
\]
for some suitable $K>0$ not depending on $n$. Here, we used the fact that $N_{0} \ge 400$.

Now    let $(\w, \check{\w})$ be an element of the  above event. For $m \in \{0, \ldots, n\}$ and $l \ge 0$, we observe that \[
(\check{Z}_{m}o, \mathbf{Y}_{j(\lceil \kappa_{1}n/3 \rceil)}, \ldots, \mathbf{Y}_{j(\lceil 2\kappa_{1}(n+l)/3 \rceil)}(\w), Z_{n+l} o)
\]
is $D_{0}$-semi-aligned. For $m \ge n$ and $l \ge 0$, we observe that \[
\big( \check{Z}_{m} o, \bar{\axes}_{\check{j}(\lceil 2\kappa_{1}m/3\rceil)}(\check{\w}), \ldots, \bar{\axes}_{\check{j}(i+1)}(\check{\w}),  \bar{\axes}_{\check{j}(i)}(\check{\w}), \axes_{j(i)}(\w), \ldots, \axes_{j(\lceil    \kappa_{1} n/3\rceil)}(\w), \ldots, \mathbf{Y}_{j(\lceil 2\kappa_{1}(n+l)/3 \rceil)}(\w), Z_{n+l} o\big)
\]
is $D_{0}$-semi-aligned. In either case, $(\check{Z}_{m} o, \mathbf{Y}_{j(\lceil   \kappa_{1} n/3\rceil)}, \ldots, \mathbf{Y}_{j(\lceil    2\kappa_{1} n/3 \rceil)}(\w), Z_{n+l} o)$ is a subsequence of a $D_{0}$-semi-aligned sequence, so it is itself a $D_{0}$-semi-aligned. Hence, we can choose \[
\{i(1), \ldots, i(\lfloor n/3K\rfloor)\} := \{j(\lceil    \kappa_{1} n/3 \rceil), \ldots, j(\lceil    2 \kappa_{1} n/3 \rceil) \} \subseteq M_{0}\Z \cap \{   1\le i  \le n\}.
\]
   Therefore the desired event holds for $\Prob(\cdot | \mathcal{E} \times \check{\mathcal{E}})$-probability at least $1-K (1/100)^{\kappa_{1} n/3 - M}$. The proof ends by averaging these conditional probabilities across $\mathcal{A}_{n} \times \check{\mathcal{A}}_{n}$ and multiplying by the bound $\Prob(\mathcal{A}_{n} \times \check{\mathcal{A}}_{n}) \ge    1 - K \cdot 100^{-(\kappa_{1} n - M)}.$
\end{proof}

\section{Differentiability of the escape rate}\label{section:diff}

In this section, we establish Lipschitz continuity and differentiability of the escape rate.

\subsection{Defect} \label{subsection:deficit}

We first define the \emph{defect} arising from    adding displacements along a random path. Let:
\[\begin{aligned}
R_{n, m}(\mathbf{g}) &:=   \|Z_{-(m+1)}^{-1} Z_{-1}\|+ \|Z_{-1}\| + \|Z_{n}\|- \|Z_{-(m+1)}^{-1} Z_{n}\| \\
&=
\| g_{-m} \cdots g_{-1}\| + \|g_{0}\| + \|g_{1} \cdots g_{n}\| - \|g_{-m} \cdots g_{n}\|.
\end{aligned}
\]

\begin{lem}\label{lem:conditionedDist}

   Let $(X, G)$ be as in Convention \ref{conv:main}. Let $\mu$ be a non-elementary probability measure on $G$. Then there    exist $K, \epsilon > 0$ such that\[
\Prob_{\mu_{1} \times\cdots \times \mu_{k}} \left(\left.(g_{1}, \ldots, g_{k}) : \begin{array}{c} \textrm{there exists $0 \le  l \le k$ such that}\\  \textrm{$d\left(Z_{l} o,    [g^{-1}o, Z_{k}h o]\right) < K$}\end{array} \right| g,  g_{j},    h\right) \ge 1 - K e^{-k/K}
\] for    every $1 \le j< k$, for   every choice of $g, h$ and $g_{j}$, and for    every sequence of probability measures $\{\mu_{i}\}_{i}$ such that $\| \mu_{i}- \mu\|_{0} < \epsilon$ for each $i$.
\end{lem}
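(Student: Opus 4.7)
The plan is to invoke Gou\"ezel's pivotal time construction from \cite{gouezel2022exponential} (as adapted in \cite{choi2022random1}) to produce, with exponentially high probability in $k$, a linear number of intermediate times $l$ at which $Z_l o$ is forced to lie within bounded distance of $[o, Z_k o]$. I will handle the conditioning on $g_1, g_j, g_k$ by observing that these three frozen increments can affect only a bounded number of ``Schottky blocks'' in the pivoting procedure.

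\emph{Schottky setup.} First I would apply Proposition \ref{prop:Schottky} to fix a long enough $K_0$-Schottky set $S \subseteq G^n$ for $\mu$ with $\#S \geq 3$, and let $D_0$, $E_0$ be the associated constants from Definition \ref{dfn:Schottky}. Setting $p_0 := \min_{s \in S} \mu^{\ast n}(\prodSeq(s)) > 0$, I would exploit the subadditivity of total variation under convolution to choose $\epsilon > 0$ so small that for any $n$ probability measures $\nu_1, \ldots, \nu_n$ with $\|\nu_i - \mu\|_0 < \epsilon$, we still have $(\nu_1 \ast \cdots \ast \nu_n)(\prodSeq(s)) \geq p_0/2$ for every $s \in S$. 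Then I would pick $K > 10(E_0 + \max_{s \in S} \|\prodSeq(s)\|)$.

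\emph{Pivoting.} Next I would group the $k$ increments into $\lfloor k/n \rfloor$ consecutive blocks of length $n$. Each block independently realizes a fixed $s \in S$ with probability at least $p_0/2$, uniformly in the choice of perturbed measures. Running the pivotal algorithm of \cite[Section 5]{gouezel2022exponential} and \cite{choi2022random1} should then produce an inductive pivotal set $\diffPivot \subseteq \{1, \ldots, \lfloor k/n \rfloor\}$ such that, writing $\diffPivot = \{i_1 < \cdots < i_p\}$, the tuple $(o, Z_{n i_1} o, \ldots, Z_{n i_p} o, Z_k o)$ is $D_0$-semi-aligned along the associated translates of Schottky axes. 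Corollary \ref{cor:semiAlign} then places each endpoint $Z_{n i} o$ with $i \in \diffPivot$ inside the $E_0$-neighborhood of $[o, Z_k o]$, hence within $K$. The standard counting lemma underlying the pivoting should yield $\Prob(\diffPivot = \emptyset) \leq K e^{-k/K}$.

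\emph{Conditioning and main obstacle.} The conditioning on $g_1, g_j, g_k$ freezes at most three of the $\lfloor k/n \rfloor$ blocks. The remaining free blocks still carry independent increments satisfying the Schottky probability lower bound above, so the pivoting proceeds unchanged and yields exponential concentration in the number of free blocks -- which is $\lfloor k/n \rfloor - 3$, still linear in $k$. I would treat the frozen blocks as adversarial inputs that may kill at most three pivotal candidates but do not interfere with the inductive update on the other blocks. The hard part will be verifying rigorously that Gou\"ezel's inductive pivoting -- normally run for i.i.d.\ increments of $\mu$ -- is robust under (i) step-dependent perturbations $\mu_i$, and (ii) the three frozen increments. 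For (i) it should suffice that pivoting only uses per-block Schottky lower bounds, which are preserved by the small-variation condition; for (ii) I would check via Lemma \ref{lem:1segment} and Corollary \ref{cor:semiAlign} that alignment carries over across the frozen blocks. Both adaptations require careful bookkeeping but no substantially new idea.
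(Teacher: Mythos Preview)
Your approach is essentially the paper's: Gou\"ezel's pivoting over a long enough Schottky set, with the three frozen indices absorbed into the fixed intermediate words $w_i$ so that pivoting runs unchanged on the remaining $\lfloor k/n\rfloor - O(1)$ free blocks and Fact~\ref{fact:pivotIncrease} supplies the exponential bound. One parameter fix: that pivotal-count estimate requires $\#S$ large (the paper takes cardinality $400$, and the construction in \cite{choi2022random1} uses four Schottky draws $a_i,b_i,c_i,d_i$ per slot, hence blocks of length $4M_0$ rather than $n$), so $\#S\ge 3$ will not suffice.
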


In particular, we have \[
\Prob\left(\left. \begin{array}{c} \textrm{for each $m, l \ge 0$ there exists $i \in \{0, \ldots, M_{0}\}$ such that}\\
\textrm{ $d(Z_{i} o, [Z_{-m} o, Z_{n+l} o]) \le E_{0}$ }\end{array}\, \right| \, g_{t(1)}, g_{t(2)}, \ldots, g_{t(M)}\right) \ge 1-\frac{1}{\kappa} e^{-\kappa n - M}.
\]

\begin{proof}
Let $\mu$ be a non-elementary probability measure. Let $N_{0} =400$, let $K_{0} = K(N_{0})$ be as in Proposition \ref{prop:Schottky}, let $D_{0}, E_{0}, L_{0}$ be the ones determined as in Definition \ref{dfn:Schottky}, and let $M_{0} = n(N_{0}, 2L_{0})$ be as in Proposition \ref{prop:Schottky}. Then Proposition \ref{prop:Schottky} guarantees a $K_{0}$-Schottky set in $G^{M_{0}}$ such that $\#S = 400$ and $\mu^{M_{0}}(s) >0$ for each $s \in S$. This $S$ is clearly large and long enough.

Let $\epsilon= \frac{1}{2}\min_{s \in S} \mu^{M_{0}}(s)$. Now pick any Schottky sequence $s = (s_{1}, \ldots, s_{M_{0}}) \in S$. We claim that $\mu(s_{i}) > 2 \epsilon$ for each $i$.    To see this, note that $\mu(g) \le 1$ for each $g \in G$. This implies \[
\mu(s_{i}) \ge  \mu(s_{i}) \cdot \mu(s_{1}) \cdots \mu(s_{i-1}) \cdot \mu(s_{i+1}) \cdots \mu(s_{M_{0}}) = \mu^{M_{0}}(s) \ge 2\epsilon.
\]
We now fix $\kappa = \kappa(2^{-M_{0}} \epsilon, S)>0$ using Proposition \ref{prop:gouezelRWEventual}.

Now consider a sequence of arbitrary measures $\mu_{i}$'s such that $\|\mu_{i} - \mu\|_{0} < \epsilon$ for each $i$. In particular, $\mu_{i}(g) \ge \mu(g) - \epsilon$    for each $i$ and for each $g \in G$. Then for any $s = (s_{1}, \ldots, s_{i}) \in S$ and for any integers $i(1), \ldots, i(M_{0})$ we have \[
\prod_{l} \mu_{i(l)}(s_{i}) \ge \prod_{i} 0.5 \mu(s_{i}) \ge 2^{-M_{0}} \cdot \mu^{M_{0}}(s).
\]
In particular, $(\mu_{k + 1} \times \cdots \mu_{k+M_{0}})(s) > 2^{-M_{0}} \epsilon$ for each $s \in S$ and for each $k$. 

Now Proposition \ref{prop:gouezelRWEventual} guarantees $\kappa = \kappa(\epsilon, S)$ such that 
    \[
\Prob \left(\left.\begin{array}{c} \textrm{$\exists$  $S$-Schottky axes $\gamma \subseteq \{o, Z_{1} o, Z_{2} o, \ldots, Z_{k} o\}$ such that } \\
\textrm{$(Z_{-1} o, \gamma, Z_{k+1} o)$ is $D_{0}$-semi-aligned} \end{array}\, \right| \, g_{-1} = g, g_{j}, g_{k+1} = h\right) \ge 1-\frac{1}{\kappa} e^{-(\kappa k - 3)}.
\]
Here Proposition \ref{prop:BGIPWitness} implies that, if $(Z_{-1} o, \gamma, Z_{k+1} o)$ is $D_{0}$-semi-aligned, then $\gamma$ lies in the $E_{0}$-neighborhood of $[Z_{-1} o, Z_{k+1} o]$. Combining these, we conclude that \[
\Prob_{(g_{i})_{i \in \Z} \sim \prod_{i \in \Z} \mu_{i}} \left( \exists l\in \{0, \ldots, k\} \,\,\textrm{such that} \,\, d(Z_{l} o, [Z_{-1} o, Z_{k+1} o]) \le E_{0} \, \Big| \, g_{-1}= g, g_{j}, g_{k+1} = h\right) \ge 1 - \frac{1}{\kappa} e^{-(\kappa k - 3)}.
\]
\end{proof}

\begin{cor}\label{cor:bddDefect}
   Let $(X, G)$ be as in Convention \ref{conv:main}.
Let $\mu$ be a non-elementary probability measure on $G$ with finite first moment. Then there   exist $K_{1}, \epsilon > 0$ such that\[
\E_{\prod_{i=1}^{\infty} \mu_{i}\, \otimes \,1_{g_{0}=g} \,\otimes \,\prod_{i =1}^{\infty} \mu_{-i}}[R_{n, m}(\mathbf{g}) ] \le K_{1}
\] for every $n, m> 0$, for every $g \in G$, and for every probability measures $\{\mu_{i}\}_{i \neq 0}$ such that $\|\mu_{i} - \mu\|_{0, 1} < \epsilon$ for each $i$. 
\end{cor}

\begin{proof}
We define auxiliary RVs \[\begin{aligned}
R'(\mathbf{g}) &:= \|g_{-m} \cdots g_{-1} g_{0}\| + \|g_{1} \cdots g_{n}\| - \|g_{-m} \cdots g_{-1} \cdot g_{0} \cdot g_{1} \cdots g_{n}\|,\\
R''(\mathbf{g}) &:= \|g_{-m} \cdots g_{-1}\| + \|g_{0}\| -  \|g_{-m} \cdots g_{-1} g_{0}\|.
\end{aligned}
\]
Clearly, we have $R_{n, m}(\mathbf{g}) = R'(\mathbf{g}) + R''(\mathbf{g})$.

Let $K, \epsilon > 0$ be as in Lemma \ref{lem:conditionedDist} and set\[\begin{aligned}
E_{j} &:= \big\{ \mathbf{g} :  \textrm{there exists $i \in \{1, \ldots, j\}$ such that  $d\big(Z_{i} o, \,[Z_{-(m+1)} o, Z_{n} o]\big) < K$}\big\} \\
&= \left\{ \mathbf{g} : \begin{array}{c} \textrm{there exists $i \in \{0, \ldots, j\}$ such that\,} \\
d\big(Z_{i} o, [g^{-1} o, Z_{j} ho]\big) < K\end{array}\right\} \quad (g= g_{-m} \cdots g_{0}, \, h = g_{j+1} \cdots g_{n}).
\end{aligned}
\]
By the previous lemma, we have \[
\Prob_{\prod_{i} \mu_{i}}\Big(E_{j}^{c}\, \Big| \, g_{-m}, \ldots, g_{0}, g_{j+1}, g_{j+2}, \ldots, g_{n}\Big) < K e^{-j/K}
\] for each $j\ge 0$ and each prescribed choices of $g_{-m}, \ldots, g_{0}, g_{j+1}, g_{j+2}, \ldots, g_{n}$. Moreover, if $Z_{l} o$ is $K$-close to $[Z_{-(m+1)} o, Z_{n}o]$ for some $l \in \{0, \ldots, j\}$, then
 \[\begin{aligned}
R'(\mathbf{g}) 
&= \| g_{-m} \cdots g_{-1} g_{0}\| + \|g_{1} \cdots g_{n}\| - \|g_{-m} \cdots g_{-1} \cdot g_{0} \cdot g_{1} \cdots g_{n}\|\\
& \le \|g_{-m} \cdots g_{-1}  g_{0}\| + \| g_{1} \cdots g_{n}\| - \left( \|g_{-m} \cdots g_{-1} g_{0} g_{1}\cdots g_{l}\| + \|g_{l} \cdots g_{n}\| - K \right) \\
&\le \|g_{1} \cdots g_{l}\|^{sym} + 2K \le \sum_{i=1}^{j} \|g_{i}\|^{sym} + 2K.
\end{aligned}
\]

Hence, we have \[\begin{aligned}
\E[ R'] &\le \E\left[ \sum_{i=1}^{\min\{ j : \mathbf{g} \in E_{j}\}} \|g_{i}\|^{sym} + 2K\right] \le \sum_{i=1}^{ n} \E\left[\|g_{i}\|^{sym} 1_{E_{i-1}^{c}}\right] + 2K \\
&\le 2K+ \sum_{i=1}^{m} \E\left[\|g_{i}\|^{sym} \Prob(E_{i-1}^{c} | g_{i})\right] \\
&\le 2K+\sum_{i=0}^{m-1} K e^{- i/K} \|\mu_{i}\|_{1}  \le 2K  + \frac{K}{1- e^{-1/K}}(\|\mu\|_{1} + \epsilon).
\end{aligned}
\]

Similarly, if we define \[
F_{j} := \left\{ \mathbf{g} : \exists i \in \{1, \ldots, j\} \textrm{\, s.t.\, }d\left(Z_{-(i+1)} o, [Z_{-m} o,  o] \right) < K\right\},
\]
then  we have $\Prob(F_{j-1}^{c} | g_{-m}, \ldots, g_{-j}, g_{0}) < K e^{-(j-1)/K}$ for each $j$ and $g_{-m}, \ldots, g_{-j}, g_{0}$. Moreover, we similarly have \[
R'' \le K + \sum_{i=1}^{\min \{j : \mathbf{g} \in F_{j}\}} \|g_{-i}\|^{sym}.
\] Based on the same computation, we conclude   that  \[
\E[R_{n, m}] = \E[R'] + \E[R''] \le 4K + \frac{2K}{1-e^{-1/K}} (\|\mu\|_{1} +\epsilon).\qedhere
\]
\end{proof}

We  are now ready to prove Theorem \ref{thm:driftLipschitz}.

\begin{proof}
Let $\mu$ be a non-elementary probability measure   on $G$ with $\|\mu\|_{1} < +\infty$ and let $K_{1}, \epsilon > 0$ be as in Corollary \ref{cor:bddDefect}. Now consider a measure $\mu'$ with $\|\mu' - \mu\|_{0,1} < \epsilon$. Then we have
\begin{equation}\label{eqn:driftLipschitz}\begin{aligned}
L(\mu'^{\ast n}) - L(\mu^{\ast n}) &= \sum_{i=1}^{n} \left( \E_{\mu^{\ast (i-1)} \times \mu' \times \mu'^{\ast(n-i)}} \|g_{1}g_{2}g_{3}\| - \E_{\mu^{\ast (i-1)} \times \mu \times \mu'^{\ast(n-i)}} \|g_{1}g_{2}g_{3}\| \right) \\
&=\sum_{i=1}^{n}\left(  \E_{\mu'}\|g\| +\E_{\mu^{\ast(i-1)}} \|g\| + \E_{\mu'^{\ast (n-i)}}  \|g\| - \E_{\mu^{i-1} \otimes \mu' \otimes \mu'^{n-i}} [R_{n-i, i-1} (\mathbf{g})]\right)\\
 &- \sum_{i=1}^{n} \left( \E_{\mu}\|g\| +\E_{\mu^{\ast(i-1)}} \|g\| + \E_{\mu'^{\ast (n-i)}}  \|g\| - \E_{\mu^{i-1} \otimes\mu \otimes \mu'^{n-i}} [R_{n-i, i-1} (\mathbf{g})] \right) \\
 &= n \E_{\mu'- \mu} \|g\| + \sum_{i=1}^{n} \sum_{g \in G} \left( \mu'(g) - \mu(g) \right) \E_{\mu^{i-1} \otimes 1_{g_{0}=g} \otimes \mu'^{n-i}} [R_{n-i, i-1} (\mathbf{g})] .
\end{aligned}
\end{equation}
Here, $\E_{\mu'- \mu} \|g\|$ is bounded by $\|\mu'-\mu\|_{1}$. Moreover, Corollary \ref{cor:bddDefect} tells us that $\E_{\mu^{k}\, \otimes\, 1_{g_{0} = g} \,\otimes \,\mu'^{l}} [R_{l, k} (\mathbf{g})] < K_{1}$ for any $k, l\ge 0$ and $g \in G$. Hence, the second term is bounded by $n K_{1} \| \mu'-\mu\|_{0}$. As a result, we have \[
\left| \frac{L(\mu'^{\ast n}) - L(\mu^{\ast n})}{n}  \right| \le \| \mu'- \mu\|_{1} + K_{1} \|\mu' - \mu\|_{0}.
\]
By sending $n$ to infinity, we obtain the same bound for $|l(\mu') - l(\mu)|$.
\end{proof}

We   state some lemmata before proceeding to Theorem \ref{thm:driftDiff} , whose proofs are omitted.

\begin{lem}\label{lem:driftDiffMean0}
Let $\mu$, $\eta$ be signed measures and $\{\mu_{t}\}_{t \in [-1, 1]}$ be a family of signed measures such that \[\begin{aligned}
\| \mu\|_{0} = \|\mu_{t}\|_{0} <  +\infty\,\, \textrm{for all $t$}, \quad \|\eta\|_{0} < +\infty,
\end{aligned}
\]
and such that $\|\mu_{t} - \mu - t\eta\|_{0} =o(t)$. Then $\eta$ is balanced, i.e., $\sum_{g \in G} \eta(g) = 0$.
\end{lem}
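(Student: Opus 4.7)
The plan is to pair the approximate expansion $\mu_{t} = \mu + t\eta + o(t)$ (in the total variation norm $\|\cdot\|_{0}$) against the constant test function $\phi \equiv 1$, and read off the balance condition from the resulting scalar identity. Concretely, from the triangle inequality $\left|\sum_{g} f(g)\right| \le \sum_{g}|f(g)| = \|f\|_{0}$ applied to $f = \mu_{t} - \mu - t\eta$, one obtains
\[
\left| \sum_{g \in G} \mu_{t}(g) - \sum_{g \in G} \mu(g) - t \sum_{g \in G} \eta(g) \right| \;\le\; \|\mu_{t} - \mu - t\eta\|_{0} \;=\; o(t).
\]
If one can show that the first two sums agree to order $o(t)$, dividing by $t$ and letting $t \to 0$ yields $\sum_{g} \eta(g) = 0$ immediately.

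In the applications that drive Theorem \ref{thm:driftDiff} the measures $\mu, \mu_{t}$ are probability measures, so $\sum_{g} \mu(g) = \sum_{g} \mu_{t}(g) = 1$ trivially, and the plan terminates here. For the general signed case the job is to extract the same cancellation from the hypothesis $\|\mu\|_{0} = \|\mu_{t}\|_{0}$. Note that from $\|\mu_{t} - \mu\|_{0} \le \|\mu_{t} - \mu - t\eta\|_{0} + |t|\,\|\eta\|_{0} = O(t)$ one has TV-convergence $\mu_{t} \to \mu$; since $f \mapsto f^{\pm}$ is $1$-Lipschitz for the TV norm, the Jordan decompositions satisfy $\|\mu_{t}^{\pm} - \mu^{\pm}\|_{0} \to 0$. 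Combined with the identity $\|\mu_{t}^{+}\|_{0} + \|\mu_{t}^{-}\|_{0} = \|\mu_{t}\|_{0} = \|\mu\|_{0} = \|\mu^{+}\|_{0} + \|\mu^{-}\|_{0}$, a coordinate-wise expansion of $\|\mu + t\eta\|_{0} - \|\mu\|_{0}$ (which is linear in $t$ on $\supp \mu$ but $|t|$-linear off $\supp \mu$) and the two-sided freedom $t \in [-1,1]$ together pin down $\sum_{g} \mu_{t}(g) - \sum_{g} \mu(g) = o(t)$.

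The main obstacle, and the only real content beyond bookkeeping, is this last step in the signed case: the naive bound $\|\mu_{t} - \mu\|_{0} = O(t)$ only controls the total-mass difference to order $O(t)$, so the $o(t)$ improvement must come from exploiting the TV-conservation hypothesis via the sign-function analysis above. Once that is in place, the whole argument reduces to pairing both sides of $\mu_{t} = \mu + t\eta + o(t)$ against the constant function $1$, as outlined.
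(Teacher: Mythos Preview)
The paper omits this proof entirely, remarking only that it follows from a simple computation. Your argument for the case where $\mu$ and the $\mu_{t}$ are probability measures (or, more generally, nonnegative measures) is exactly that simple computation: pair the $o(t)$ approximation against the constant function $1$, use $\sum_{g}\mu_{t}(g)=\sum_{g}\mu(g)$, divide by $t$. This is correct and is certainly what the author had in mind, since every application of the lemma in the paper (Theorem~\ref{thm:driftDiff} and its relatives) has $\mu,\mu_{t}$ probability measures.

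Your attempt at the general signed case, however, cannot be completed, and not because your sketch is vague: the lemma is simply false in that generality. Take $G$ to be a two-element set $\{a,b\}$, let $\mu=(1,-1)$, $\eta=(1,1)$, and $\mu_{t}=\mu+t\eta=(1+t,\,t-1)$. Then for every $t\in[-1,1]$ one has $\|\mu_{t}\|_{0}=|1+t|+|1-t|=2=\|\mu\|_{0}$ and $\|\mu_{t}-\mu-t\eta\|_{0}=0$, yet $\sum_{g}\eta(g)=2\neq 0$. In particular $\sum_{g}\mu_{t}(g)-\sum_{g}\mu(g)=2t$ is of exact order $t$, not $o(t)$, so the conclusion you hope to ``pin down'' from TV-conservation and the two-sided freedom in $t$ is false. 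The hypothesis $\|\mu_{t}\|_{0}=\|\mu\|_{0}$ controls $\sum_{g}|\mu_{t}(g)|$, not $\sum_{g}\mu_{t}(g)$, and for genuinely signed $\mu$ these are decoupled.

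So: your proof is complete and correct for the only case the paper actually needs, and the residual signed case is a harmless overstatement in the lemma rather than a gap in your work.
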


\begin{lem}\label{lem:driftConv}
Let $\mu$, $\mu'$, $\eta$, $\eta'$ be signed measures with finite $\|\cdot\|_{0, 1}$-norm, and let $\{\mu_{t}\}_{t\in [-1, 1]}$, $\{\mu_{t}' \}_{t \in [-1, 1]}$ be families of signed measures such that \[
 \|\mu_{t} - \mu - t\eta\|_{0, 1}, \|\mu_{t}' - \mu' - t\eta'\|_{0, 1} = o(t).
\]
Then we have \[
\big\|\mu_{t}\ast \mu_{t}' - \mu \ast \mu' - t(\eta\ast \mu' + \mu \ast \eta') \big\|_{0, 1} = o(t)
\]
\end{lem}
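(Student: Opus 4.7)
The plan is to combine a single algebraic identity with the submultiplicativity of the $\|\cdot\|_{0,1}$-norm under convolution, after which the whole statement reduces to a few asymptotic bookkeeping estimates. The key identity is
\[
\mu_{t}\ast\mu_{t}' - \mu\ast\mu' - t(\eta\ast\mu' + \mu\ast\eta') = (\mu_{t} - \mu - t\eta)\ast\mu_{t}' \;+\; t\,\eta\ast(\mu_{t}' - \mu') \;+\; \mu\ast(\mu_{t}' - \mu' - t\eta'),
\]
which is verified by expanding the right-hand side and cancelling like terms.

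First I will verify the submultiplicativity $\|\nu\ast\nu'\|_{0,1} \le \|\nu\|_{0,1}\,\|\nu'\|_{0,1}$ for arbitrary signed measures $\nu,\nu'$ on $G$. The total-variation part $\|\nu\ast\nu'\|_{0} \le \|\nu\|_{0}\|\nu'\|_{0}$ is immediate from the triangle inequality on coefficients, while the first-moment part follows from $d(o, hko)\le d(o, ho) + d(o, ko)$, giving $\|\nu\ast\nu'\|_{1}\le \|\nu\|_{1}\|\nu'\|_{0} + \|\nu\|_{0}\|\nu'\|_{1}$. Adding the two inequalities yields the claim.

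Next I will extract uniform estimates from the hypotheses. The assumption $\|\mu_{t}' - \mu' - t\eta'\|_{0,1}=o(t)$ together with the triangle inequality gives $\|\mu_{t}' - \mu'\|_{0,1} = O(t)$ and hence $\|\mu_{t}'\|_{0,1} = O(1)$ for $t$ near $0$. Applying the submultiplicative bound to each of the three summands above then produces: the first summand has norm at most $\|\mu_{t} - \mu - t\eta\|_{0,1}\cdot\|\mu_{t}'\|_{0,1} = o(t)\cdot O(1) = o(t)$; the second has norm at most $t\,\|\eta\|_{0,1}\cdot\|\mu_{t}' - \mu'\|_{0,1} = t\cdot O(t) = O(t^{2}) = o(t)$; and the third has norm at most $\|\mu\|_{0,1}\cdot\|\mu_{t}' - \mu' - t\eta'\|_{0,1} = O(1)\cdot o(t) = o(t)$. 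Summing these bounds gives the conclusion.

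Since the statement is a Leibniz-type rule for convolution, I do not anticipate a real obstacle. The only subtlety is to extract the uniform bound $\|\mu_{t}'\|_{0,1} = O(1)$ from the hypothesis before invoking submultiplicativity on the first summand; once this is noted, the argument fits in a few lines.
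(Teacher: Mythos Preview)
Your proof is correct and matches the paper's approach: the paper omits the proof as a ``simple computation,'' but the (commented-out) argument in the source uses the same decomposition $(\mu_{t} - \mu - t\eta)\ast\mu_{t}' + \mu\ast(\mu_{t}' - \mu' - t\eta') + t\eta\ast(\mu_{t}' - \mu' - t\eta') + t^{2}\eta\ast\eta'$, which differs from yours only in that your middle term $t\eta\ast(\mu_{t}' - \mu')$ is split into the paper's last two summands. Your packaging via the submultiplicativity $\|\nu\ast\nu'\|_{0,1}\le\|\nu\|_{0,1}\|\nu'\|_{0,1}$ is a clean way to organize the same estimate.
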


\begin{proof}[Proof of Theorem \ref{thm:driftDiff}]
 Let $K_{1}, \epsilon>0$ be the constants for $\mu$ as described in Corollary \ref{cor:bddDefect}.  For each $t \in [-1, 1]$ let us define \[
K(\mu_{t}, \mu) := \sup_{g \in G}\sup_{n, m>0} \E_{\mu_{t}^{n}\, \otimes \, 1_{g_{0} = g} \, \otimes \,\mu^{m}} [R_{n, m}(\mathbf{g}) ].
\]
In the previous proof, we   proved $K(\mu_{t}, \mu) < K_{1}$ and  \[
\left| \frac{1}{n} [L(\mu_{t}^{\ast n}) - L(\mu^{\ast n})] - \E_{\mu_{t} - \mu} \|g\| \right| \le K(\mu_{t}, \mu) \| \mu_{t} - \mu\|_{0} < K_{1}  \| \mu_{t} - \mu\|_{0}
\]
for small enough   $t$ such that $\|\mu_{t} - \mu\|_{0, 1} < \epsilon$.

Let us now replace $\mu_{t}$ with $\mu_{t}^{\ast k}$ and $\mu$ with $\mu^{\ast k}$.  Since $\mu^{\ast k}$ is non-elementary and since $\| \mu_{t}^{\ast k} - \mu^{\ast k}\|_{0, 1} \rightarrow 0$ as $t \rightarrow 0$, we similarly have \[
\left| \frac{1}{n} [L(\mu_{t}^{\ast nk}) - L(\mu^{\ast nk})] -  \E_{\mu_{t}^{\ast k} - \mu^{\ast k}} \|g\| \right| \le K(\mu_{t}^{\ast k}, \mu^{\ast k}) \| \mu_{t}^{\ast k} - \mu^{\ast k}\|_{0}.
\]
  Note that $K(\mu_{t}^{\ast k}, \mu^{\ast k}) \le K(\mu_{t}, \mu) \le K_{1}$ for small enough $t$. Hence,   dividing both sides by $k$ and $t$ we obtain  \[
\frac{1}{t} \left| \frac{1}{nk} [L(\mu_{t}^{\ast nk}) - L(\mu^{\ast nk})] - \frac{1}{k} \E_{\mu_{t}^{\ast k} - \mu^{\ast k}} \|g\| \right| \le \frac{1}{t} \frac{K_{1}}{k} \| \mu_{t}^{\ast k} - \mu^{\ast k}\|_{0}.
\]
for arbitrary $k>0$. By   letting $n$ to infinity, we have \begin{equation}\label{eqn:theDic}
\left|\frac{1}{t} [l(\mu_{t}) - l(\mu)] - \frac{1}{t}\frac{1}{k} \E_{\mu_{t}^{\ast k} - \mu^{\ast k}} \|g\| \right| \le \frac{K_{1}}{t} \cdot \frac{ \| \mu_{t}^{\ast k} - \mu^{\ast k} \|_{0}}{k}.
\end{equation}
By   combining Equation \ref{eqn:theDic} and Lemma \ref{lem:driftConv}, we observe \[
\lim_{t \rightarrow 0} \frac{1}{t} \frac{1}{k} \E_{\mu_{t}^{\ast k} - \mu^{\ast k}} \|g\| = \frac{1}{k} \sum_{i=1}^{k} \E_{\mu^{\ast(i-1)} \ast \eta \ast \mu^{\ast (k-i)}} \|g\|
\]
and  \begin{equation}\label{eqn:driftDiff1}
\left|\limsup_{t \rightarrow 0} \frac{1}{t} [l(\mu_{t}) - l(\mu)] -  \frac{1}{k} \sum_{i=1}^{k} \E_{\mu^{\ast(i-1)} \ast \eta \ast \mu^{\ast (k-i)}} \|g\|\right| \le \limsup_{t \rightarrow 0} \left|\frac{K_{1}}{t} \cdot \frac{ \| \mu_{t}^{\ast k} - \mu^{\ast k} \|_{0}}{k}\right|.
\end{equation}
The inequality   remains valid if we replace $\limsup_{t} \frac{l(\mu_{t}) - l(\mu)}{t}$ with $\liminf_{t} \frac{l(\mu_{t}) - l(\mu)}{t}$. Hence, to conclude that $\lim_{t\rightarrow 0} \frac{l(\mu_{t}) - l(\mu)}{t} = \lim_{k} \frac{1}{k} \sum_{i=1}^{k} \E_{\mu^{\ast (i-1)} \ast \eta \ast \mu^{\ast (k-i)}} \|g\|$, it suffices to prove that \[
\lim_{k \rightarrow \infty} \limsup_{t \rightarrow 0} \left|\frac{K_{1}}{t} \cdot \frac{ \| \mu_{t}^{\ast k} - \mu^{\ast k} \|_{0}}{k}\right| = 0.
\]
Again, Lemma \ref{lem:driftConv} implies\begin{equation}\label{eqn:driftDiff2}\begin{aligned}
\limsup_{t \rightarrow 0} \left| \frac{K_{1}}{t} \cdot \frac{ \|\mu_{t}^{\ast k} - \mu^{\ast k} \|_{0} }{k} \right| &= \limsup_{t \rightarrow 0} \frac{K_{1}}{kt} \cdot \left(  t \left\|\sum_{i=1}^{k} \mu^{\ast (i-1)} \ast \eta \ast \mu^{\ast(k-i)} \right\|_{0} + o(t) \right)\\
& = \frac{K_{1}}{k} \cdot \left\|\sum_{i=1}^{k} \mu^{\ast (i-1)} \ast \eta \ast \mu^{\ast (k-i)} \right\|_{0}.
\end{aligned}
\end{equation}
Let $f(g) := \eta(g) / \mu(g)$ for $g \in \supp \mu$. Then $f$ is a $\mu$-integrable function (since $\sum_{g} |f(g)| \mu(g) = \sum_{g} |\eta(g)|< \infty$) with mean 0 (since $\sum_{g} f(g) \mu(g) = \sum_{g} \eta(g) = 0$). Hence, we have \[\begin{aligned}
\frac{1}{k} \left\|\sum_{i=1}^{k} \mu^{\ast (i-1)} \ast \eta \ast \mu^{(k-i)} \right\|_{0} &= \frac{1}{k} \sum_{g_{i} \in G} |f(g_{1}) + \cdots + f(g_{k})| \, \mu(g_{1}) \cdots \mu(g_{k})\\
&\le \E_{\mu^{k}} \left|\frac{f(g_{1}) + \cdots + f(g_{k})}{k}\right|.
\end{aligned}
\]
The final term tends to 0 as $k$ tends to infinity, by the recurrence of integrable balanced random walk on $\mathbb{R}$ and the subadditive ergodic theorem. This concludes the differentiability of $l(\mu_{t})$.

Let us now combine Inequality \ref{eqn:driftDiff1} and \ref{eqn:driftDiff2}: under the assumptions on $\mu, \eta, \{\mu_{t}\}$, we have \begin{equation}\label{eqn:driftDiff3}
\left| \frac{d}{dt} l(\mu_{t}) \Big|_{t=0} - \frac{1}{k} \sum_{i=1}^{k} \E_{\mu^{\ast(i-1)} \ast \eta \ast \mu^{\ast(k-i)}} \|g\| \right| \le \frac{K_{1}}{k} \cdot \left\|\sum_{i=1}^{k} \mu^{\ast (i-1)} \ast \eta \ast \mu^{\ast (k-i)} \right\|_{0}
\end{equation}
for each $k$. Here, the constant $K_{1}$ depends on $\mu$, and can be kept the same even if we replace $\mu$ with $\mu'$ such that $\|\mu' - \mu\| < \epsilon/2$.

Let us now show continuity of the derivative. Fix $\delta >0$. Thanks to the previous argument, there exists $k>0$ such that \begin{equation}\label{eqn:driftDiff4}
\frac{1}{k} \left\|\sum_{i=1}^{k} \mu^{\ast (i-1)} \ast \eta \ast \mu^\ast {(k-i)} \right\|_{0} < \frac{\delta}{3K_{1}}.
\end{equation}
Note that this finite combination of convolutions of $\mu$ and $\eta$ is continuous with respect to $\mu$ and $\eta$ (under the $\|\cdot \|_{0}$-topology). For the same reason, \[
\frac{1}{k} \sum_{i=1}^{k} \E_{\mu^{\ast (i-1)} \ast \eta \ast \mu^{\ast(k-i)}} \|g\| 
\]
is continuous with respect to $\mu$ and $\eta$ (under the $\|\cdot\|_{0,1}$-topology). Hence, we can take $0 < \epsilon_{1} < \epsilon/2$ such that, for all probability measures $\mu'$ and for all signed measures $\eta'$ such that $\|\mu' - \mu\|_{0, 1}, \|\eta' - \eta\|_{0, 1} < \epsilon_{1}$, we have: \begin{equation}\label{eqn:driftDiff5}\begin{aligned}
\frac{1}{k} \left\|\sum_{i=1}^{k} \mu'^{\ast (i-1)} \ast \eta' \ast \mu'^{\ast (k-i)} \right\|_{0} &< \frac{\delta}{2K_{1}}, \\
\left| \frac{1}{k} \sum_{i=1}^{k} \E_{\mu'^{\ast (i-1)} \ast \eta' \ast \mu'^{\ast(k-i)}} \|g\| - \frac{1}{k} \sum_{i=1}^{k} \E_{\mu^{\ast (i-1)} \ast \eta \ast \mu^{\ast(k-i)}} \|g\| \right| &< \frac{\delta}{2}.
\end{aligned}
\end{equation}
Now let $\{\mu_{t}'\}_{t}$ be another family of probability measures such that $\|\mu_{t}' - \mu'\|_{0, 1} \rightarrow 0$, $\frac{1}{t}\|\mu_{t}' - \mu' - t\eta'\|_{0, 1} \rightarrow 0$. By plugging Inequality \ref{eqn:driftDiff4} and \ref{eqn:driftDiff5} into Inequality \ref{eqn:driftDiff3}, we conclude that $\left|\frac{d}{dt} l(\mu_{t}) - \frac{d}{dt} l(\mu_{t}')\right| < 2\delta$   for small enough $t$ as desired.
\end{proof}

\section{Squeezing isometries and escape rate} \label{section:squeezing}

As  in Lemma \ref{lem:conditionedDist}, we describe another simple geometric situation.   As long as this geometric condition holds for high probability, Theorem \ref{thm:driftDiffSqueeze} can be deduced from elementary arguments.

\begin{definition}\label{dfn:squeezingProbable}
Let $(X, G, o)$ be as in Convention \ref{conv:main} and let $\epsilon>0$. We define a collection $ A_{k; \epsilon} \subseteq G^{\Z}$ of step paths as follows. A step path $\mathbf{g} \in G^{\Z}$   belongs to $\mathcal{A}_{k; \epsilon}$ if there exist $E>0$ and an $(E, \epsilon)$-squeezing sequence of paths $(\gamma_{1}, \ldots, \gamma_{N})$, $(\eta_{1}, \ldots, \eta_{M})$ on $X$ such that the following sequences are all $E$-aligned for every $m \ge k$ and $n \ge 0$:\[
\big(Z_{-n} o, \gamma_{1} \big), \,\, \big(\gamma_{N}, Z_{m} o \big), \, \,  \big(Z_{-(m+1)} o, \eta_{1}\big), \,\, \big(\eta_{M}, Z_{n-1} o\big).
\]
\end{definition}

We now state the main probabilistic estimate. 

\begin{prop}\label{prop:squeeze}
Let $(X, G, o)$ be as in Theorem \ref{thm:driftDiffSqueeze} and let $\mu$ be a strongly non-elementary probability measure on $G$. Then there exists $\epsilon>0$ such that for each $\epsilon' > 0$, the following holds for some $K = K(\mu, \epsilon, \epsilon')$.

For every sequence of probability measures $\{\mu_{i}\}_{i \in \Z}$ with $\|\mu_{i} - \mu\|_{0} < \epsilon$, for every $j \in \Z \setminus \{0\}$ and $k \in \Z_{>0}$, and for every prescribed choice  of $g_{0}, g_{j} \in G$, we have \[
\Prob_{\prod_{i} \mu_{i}} \big(\mathbf{g} \in A_{k; \epsilon'} \,\big| \, g_{0}, g_{j}\big) \ge 1 - K e^{-k/K}.
\]
\end{prop}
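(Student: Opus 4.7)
The plan is to upgrade the pivoting argument of Lemma \ref{lem:conditionedDist} in two ways: first, by choosing a Schottky set whose axes themselves carry $\epsilon'$-squeezing segments, and second, by running a two-sided pivotal-time construction so that pivotal positions occur both at indices $\ge k$ and $\le -k$, with the resulting Schottky axes sitting close to all three geodesics $[Z_{-m}o, Z_n o]$, $[o, Z_n o]$, $[Z_{-m}o, Z_{-1}o]$ for every $m,n \ge k$ simultaneously.

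\textbf{Step 1 (Squeezing Schottky set).} Since $\mu$ is non-elementary, some power $\mu^{\ast N_0}$ has in its support two independent contracting isometries $g,h$. For each of the seven classes of $(X,G)$ in Convention \ref{conv:squeeze}, high powers of $g$ (or of a commutator of $g$ and $h$) produce axes that contain a central $\epsilon'$-squeezing subsegment of length $L_1$ as large as we wish: this is Fact \ref{fact:CAT(-1)Squeeze} in the CAT$(-1)$ case, Fact \ref{fact:TeichSqueeze} and Proposition \ref{prop:WPSqueeze} in the Teichm\"uller cases, the lone-axis fact \cite[Lemma 5.9]{kapovich2022random} in Outer space, Fact \ref{fact:CAT(0)Bridge} (applied to strongly separated halfspaces giving a $0$-squeezing bridge) in cube complexes, Proposition \ref{prop:greenDecay} for the Green metric, and Corollary \ref{cor:corridor} together with its free-product analogue for the surface group and free-product Cayley graphs. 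Feeding such elements into Proposition \ref{prop:Schottky}, I pick a long enough $K_0$-Schottky set $S \subseteq (\supp \mu^{\ast M_0})^{M_0}$ of cardinality $\ge 400$ whose Schottky axis $\Gamma(s)$ has, for every $s \in S$, a central subsegment that is $\epsilon'$-squeezing (with $L_1 \gg E_0, D_0$).

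\textbf{Step 2 (Two-sided pivoting).} Exactly as in Lemma \ref{lem:conditionedDist}, set $\epsilon := \tfrac12 \min_{s \in S,\, i}\mu(s_i)$ and $\beta := \epsilon^{4M_0}$. For any $\{\mu_i\}$ with $\|\mu_i - \mu\|_{0,1}<\epsilon$, each block $\mu_{i+1}\times\cdots\times\mu_{i+M_0}$ dominates $\beta \cdot (\text{Unif}_{S})^{4}$. Partition $\Z\setminus\{0,j\}$ into disjoint blocks of length $4M_0$, independently mark each with probability $\beta$, and on the marked blocks sample Schottky quadruples uniformly from $S^4$; all other coordinates are sampled from the residual measures and from the prescribed values at $0$ and $j$. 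Applying the (two-sided) version of Fact \ref{fact:pivotAlign}--\ref{fact:pivotIncrease} carried out in \cite[\S 5]{gouezel2022exponential} and reviewed in \cite{choi2022random1}, the set of pivotal times has cardinality at least $k/K'$ on both sides $(-\infty,-k]$ and $[k,\infty)$, except on an event of probability $\le Ke^{-k/K}$.

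\textbf{Step 3 (Extracting $\gamma$ and $\gamma'$).} On the complement of that exceptional event, pick a pivotal index $i_+\ge k$ and let $\gamma$ be the central $\epsilon'$-squeezing subsegment of the Schottky axis $W_{i_+-1}\Gamma(a_{i_+})$; do the same on the negative side to get $\gamma'$. The defining property of a pivotal time is that it remains pivotal for every right- or left-extension of the word, so the semi-alignment conclusion of Fact \ref{fact:pivotAlign} holds simultaneously with respect to every endpoint $Z_n o$ with $n\ge i_+$ and every $Z_{-m} o$ with $m \ge 0$. Corollary \ref{cor:semiAlign} therefore places $\gamma$ within the $0.1 E_0$-neighborhood of each of $[o, Z_n o]$ and $[Z_{-m}o, Z_n o]$ for all $m,n\ge k$; choosing $L_1$ so that $0.1 E_0 < 0.01 L_1$ yields condition (2) and (1) of the definition of $A_{k;\epsilon'}$, and the analogous statement for $\gamma'$ yields (3) and (1). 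For the two non-geodesic settings (Green metric and Outer space), I replace geodesics by coarse/standard geodesics throughout and interpret the conclusion through Definition \ref{dfn:squeezingNon}; the argument is otherwise unchanged.

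\textbf{Main obstacle.} The subtle point is Step 3: the single piece of randomness $(a_{i_+}, b_{i_+}, c_{i_+}, d_{i_+})$ must produce a subsegment close to a whole family of geodesics indexed by $(m,n)$, whose endpoints depend on arbitrarily large amounts of additional data. This is exactly what Gou\"ezel's pivotal-time formalism is designed to guarantee, and verifying that the persistence of pivotal times carries through under $L^1$-small perturbations $\mu_i$ of $\mu$, uniformly in the conditioning on $g_0$ and $g_j$, is the heart of the argument; the decomposition into $\beta(\text{Unif}_S)^4 + (1-\beta)\nu_l$ reduces it to Gou\"ezel's setting. The case-by-case verification in Step 1 that squeezing axes actually exist with uniformly controlled squeezing constant is routine but requires Convention \ref{conv:squeeze}; this is why the statement is restricted to those seven families rather than all spaces with contracting isometries.
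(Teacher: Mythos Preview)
Your overall plan---build an $\epsilon'$-squeezing Schottky set and then run a two-sided pivotal-time argument---is exactly the paper's route (Propositions \ref{prop:SchottkySqueeze} and \ref{prop:conditionedDistSq}). Step 1 is fine. The gaps are all in Steps 2--3, and they are precisely the content of Proposition \ref{prop:conditionedDistSq} that you have not supplied.

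First, the indexing is inverted. The axis $\gamma$ must lie near $[o,Z_no]$ for \emph{every} $n\ge k$, so the Schottky block carrying $\gamma$ has to sit at a position $\le k$ in the forward word, not at some $i_+\ge k$: for $n\in[k,i_+)$ the axis $W_{i_+-1}\Gamma(a_{i_+})$ is simply beyond $Z_no$ and there is no alignment. What you actually need are pivotal times among the blocks in $[1,k]$ (and in $[-k,-1]$) that survive for all larger $n$ (resp.\ $m$).

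Second, the sentence ``the defining property of a pivotal time is that it remains pivotal for every right- or left-extension'' is false. By Fact \ref{fact:listPivot}(1), $P_{n+1}$ is either $P_n\cup\{n+1\}$ or a \emph{proper} initial section of $P_n$; late pivotal times are routinely erased. What is true is probabilistic: Fact \ref{fact:listPivot}(3) bounds the chance that more than $j$ pivots are erased in a single extension by $(0.1)^{j+1}$. The paper packages this, together with the count $\#P\ge k/K'$, into the events $E_n',E_n'',E_n'''$ and sums over all $n\ge k$ to show that an initial block of pivotal times of size proportional to $k$ is common to \emph{all} the sets $P_{T(n)}$, $n\ge k$. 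This persistence estimate is the heart of the matter and cannot be replaced by a definition.

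Third, condition (1) of $A_{k;\epsilon'}$ asks that $\gamma$ (built from the forward walk) be close to $[Z_{-m}o,Z_no]$, whose left endpoint lives on the backward side. The forward pivotal construction only yields $(o,\gamma,Z_no)$ semi-aligned; it says nothing about $Z_{-m}o$. The paper closes this by first running the backward construction to produce $\gamma'_1,\ldots$, and then spending the residual randomness in the $b_{j(i)}$'s via Fact \ref{fact:listPivot}(2),(4) to force $(\gamma'_{i},\gamma_{i})$ to be $D_0$-aligned for some $i$ with probability $\ge 1-0.1^{\,ck}$. Only then does Corollary \ref{cor:semiAlign} place both $\gamma$ and $\gamma'$ near $[Z_{-m}o,Z_no]$. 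Your Step 3 does not mention this linking step at all.

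In short: your sketch correctly identifies the two ingredients but replaces the actual probabilistic work (persistence of early pivots for all $n\ge k$, and forward/backward linking) by assertions that are either false or unproved. Filling these in amounts to reproving Proposition \ref{prop:conditionedDistSq}.
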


\begin{proof}
The condition for $\big(Z_{-n} o, \gamma_{1} \big), \,\, \big(\gamma_{N}, Z_{m} o \big)$ and the condition for $ \big(Z_{-(m+1)} o, \eta_{1}), \,\, (\eta_{M}, Z_{n-1} o\big)$ are symmetric, so we will prove that the   former condition holds for high probability.

Let $\mu$ be a strongly non-elementary probability measure on $G$. This implies that $\supp \mu$ generates two independent BGIP elements $g$ and $h$. Here,   $g$ is chosen  as the one specified in the definition of strongly non-elementary measure, namely: \begin{itemize}
\item If $G$ is the mapping class group, we choose $g$ to be a pseudo-Anosov (in fact, it must be so \emph{a fortiori}).
\item If $G = \Out(F_{N})$, we take $g$ to be a principal fully irreducible outer automorphism.
\item If $G$ is a countable group of isometries of a CAT(-1) space, then we take $g$ to be an arbitrary loxodromic element.
\item If $G$ is a countable group of automorphisms of a CAT(0) cube complex, then we take $g$ to be an automorphism that skewers a pair of strongly separated hyperplanes.
\item If $G$ is the surface group or a free product of nontrivial groups, then we take $g$ to be the element that is 0-constricting.
\end{itemize}

Let $N_{1},   l, l'$ be such that \[
\min\big(\mu^{\ast N_{1}} (  g^{l}), \mu^{\ast N_{1}}(h^{l'}) \big):= \epsilon_{min} > 0.
\]
We then choose $\epsilon = \frac{1}{2} \epsilon_{min}$. Then for any probability measures $\mu_{i}$'s with $\|\mu_{i} - \mu\|_{0} < \epsilon$ for each $i$, we have \begin{equation}\label{eqn:stabilizedMu}\begin{aligned}
(\mu_{k+1} \ast \cdots \ast \mu_{k+N_{1}})(g) \ge 2^{-N_{1}} \mu^{\ast N_{1}} (  g^{l}) \ge 2^{-N_{1}} \epsilon_{min},\\
(\mu_{k+1} \ast \cdots \ast \mu_{k+N_{1}})(h) \ge 2^{-N_{1}} \mu^{\ast N_{1}} (  h^{l'}) \ge 2^{-N_{1}} \epsilon_{min}.
\end{aligned}
\end{equation}

Now, let $K_{0}>0$ be the constant given by Proposition \ref{prop:SchottkyPA} for   $g^{l}$, $h^{l'}$ and $  A = \{g^{l}, h^{l'}\}$, which determines $D_{0}, E_{0}$ as well. Now let $n$ be long enough that $\Gamma_{g} :=   (o, g^{l} o, \ldots, g^{ln} o)$ is an $(E_{0}, \epsilon')$-squeezing axis. We now construct a long and large enough $K_{0}$-Schottky set $S \subseteq G^{M_{0}}$ as in Proposition \ref{prop:SchottkyPA} for $n$. Namely, $\Gamma(s)$ for each $s \in S$ has a subpath that is a translate of   $(o, g^{l} o, \ldots, g^{ln} o)$. Furthermore, Inequality \ref{eqn:stabilizedMu} tells us that $(\mu_{k+1} \ast \cdots \ast \mu_{k+M_{0}})(s) > 2^{-M_{0}N_{1}} \epsilon_{min}$ for each $s \in S$. 

Now let $\kappa = \kappa(2^{-M_{0}N_{1}} \epsilon_{min}, S)$ for this specific $S$, as in Proposition \ref{prop:gouezelRWEventual}. Then we have \[
\Prob_{(g_{i})_{i \in \Z} \sim \prod_{i \in \Z} \mu_{i}} \left( \begin{array}{c}  \exists \textrm{  an $S$-Schottky axis $\gamma$ such that $(Z_{-m} o, \gamma,   Z_{k+l} o)$ is}\\
 \textrm{$D_{0}$-semi-aligned for each $m, l\ge 0$} \end{array}\, \Big| \, g_{0}, g_{j} \right) \ge 1 - \frac{1}{\kappa} e^{-\kappa (k-1) - 2},
\]
regardless of $j$. Furthermore, when $(Z_{-m} o, \gamma, Z_{n+l} o)$ is $D_{0}$-semi-aligned for some $S$-Schottky axis $\gamma$, then Proposition \ref{prop:BGIPWitness} tells us that $\gamma$ is $0.1E_{0}$-fellow traveling with the subpath $\varphi \Gamma_{g}$, for some $\varphi$. In particular, we have: \[
\textrm{$(Z_{-m}, \varphi \Gamma_{g}, Z_{n+l} o)$ is $E_{0}$-aligned for each $m, l\ge 0$}.
\]
This ends the proof.
\end{proof}

Our notion of $A_{k, \epsilon'}$ is justified   by the following lemma.   Recall that $R_{n, m}(\mathbf{g}) := \|g_{-m} \cdots g_{-1}\| + \|g_{0}\| + \|g_{1} \cdots g_{n}\| - \|g_{-m} \cdots g_{n}\|$.

\begin{lem}\label{lem:squeezeCompare}
Let $\mathbf{g} \in A_{k;\epsilon'}$. Then for each $n, m, n', m' \ge k$, we have \[
|R_{n, m}(\mathbf{g})- R_{n', m'}(\mathbf{g})| < 2\epsilon'.
\]
\end{lem}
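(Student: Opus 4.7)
My plan is to express $R_{n,m}$ as a 4-point distance expression on the orbit and then bound $R_{n,m} - R_{n',m'}$ by telescoping through $R_{n',m}$, where each individual change collapses to a ``cross-product'' sum that is controlled by the squeezing property.

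I would first rewrite $R_{n, m}$ using the left-invariance of the metric as
\[
R_{n, m}(\mathbf{g}) = d(Z_{-(m+1)} o, Z_{-1} o) + d(Z_{-1} o, o) + d(o, Z_n o) - d(Z_{-(m+1)} o, Z_n o).
\]
Writing $M = m+1$ and $M' = m'+1$, the middle term $d(Z_{-1} o, o)$ cancels in any difference $R_{n, m} - R_{n', m'}$. I would then decompose this difference as $(R_{n, m} - R_{n', m}) + (R_{n', m} - R_{n', m'})$ so that the forward endpoint changes first (with the backward endpoint fixed at $Z_{-M} o$) and then the backward endpoint changes. Each piece collapses to a cross-product:
\[
R_{n,m} - R_{n',m} = d(o, Z_n o) + d(Z_{-M} o, Z_{n'} o) - d(o, Z_{n'} o) - d(Z_{-M} o, Z_n o),
\]
\[
R_{n',m} - R_{n',m'} = d(Z_{-M} o, Z_{-1} o) + d(Z_{-M'} o, Z_{n'} o) - d(Z_{-M'} o, Z_{-1} o) - d(Z_{-M} o, Z_{n'} o).
\]

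Next, I would invoke the squeezing geodesics supplied by $\mathbf{g} \in A_{k;\epsilon'}$. For the first cross-product, conditions (1) and (2) (applied with indices $M$ and $n$ or $n'$, all $\ge k$) guarantee that the forward squeezing geodesic $\gamma$ lies in the $0.01L$-neighborhood of each of the four geodesics $[o, Z_n o]$, $[o, Z_{n'} o]$, $[Z_{-M} o, Z_n o]$, $[Z_{-M} o, Z_{n'} o]$. By the $\epsilon'$-squeezing of $\gamma$, each of these four geodesics passes within $\epsilon'$ of a single distinguished point $p \in \gamma$. A short triangle-inequality argument, comparing each $d(x_i, y_j)$ to $d(x_i, p) + d(p, y_j)$ via a point $p_{ij} \in [x_i, y_j]$ with $d(p_{ij}, p) < \epsilon'$, shows that the cross-product has absolute value at most $4 \epsilon'$. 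The same argument applied to the second cross-product using the backward squeezing geodesic $\gamma'$ (through conditions (1) and (3)) yields a bound of $4 \epsilon'$. Summing gives $|R_{n, m} - R_{n', m'}| \le 8 \epsilon' < 10 \epsilon'$.

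The main obstacle I anticipate is bootstrapping the single-point squeezing of Definition \ref{dfn:squeezing} into a clean 4-point inequality: one must carefully track how the four foot points $p_{ij}$ float within $\epsilon'$ of $p$ and verify that the error terms sum to at most $4 \epsilon'$ per cross-product (rather than a looser $8 \epsilon'$). For spaces governed by the 4-point formulation (Definition \ref{dfn:squeezingNon}), the cross-product bound is essentially built into the definition and the computation is immediate. Apart from this bookkeeping and the identification of which of the three conditions in $A_{k;\epsilon'}$ supplies each of the four near-containments, no input beyond the definitions of $R_{n,m}$ and $A_{k;\epsilon'}$ is needed.
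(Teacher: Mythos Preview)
Your proposal is correct and follows essentially the same route as the paper. The paper also telescopes through $R_{n',m}$, uses the forward squeezing geodesic $\gamma$ and its distinguished point $p$ (lying $\epsilon'$-close to the four geodesics $[o,Z_n o]$, $[o,Z_{n'}o]$, $[Z_{-(m+1)}o,Z_n o]$, $[Z_{-(m+1)}o,Z_{n'}o]$) to replace each distance by $d(\cdot,p)+d(p,\cdot)$ up to an error in $[-2\epsilon',0]$, and obtains $|R_{n,m}-R_{n',m}|\le 4\epsilon'$; the backward half is symmetric, giving the stated bound of $10\epsilon'$. Your explicit cross-product formulation makes the cancellation of the $d(p,\cdot)$ terms transparent, but the underlying computation is identical.
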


\begin{proof}
We will prove that $|R_{n, m}(\mathbf{g}) - R_{n', m}(\mathbf{g})| < \epsilon'$. Together with its symmetric counterpart, this leads to the conclusion.

Since $\mathbf{g} \in A_{k; \epsilon'}$, there exists an $(E, \epsilon')$-squeezing sequence of  $(\gamma_{1}, \ldots, \gamma_{N})$ for some $E$ such that \[
\big(Z_{-(m+1)} o, \gamma_{1}\big),\,\, \big(o =: Z_{0} o, \gamma_{1}\big),\,\,  \big(\gamma_{N}, Z_{n} o\big), \,\, \big( \gamma_{N}, Z_{n'} o\big)
\] are all $E$-aligned. Since $(\gamma_{1}, \ldots, \gamma_{N})$ is $(E, \epsilon')$-squeezing, we conclude that \[
\big|d(Z_{-(m+1)} o, Z_{n} o) + d(o, Z_{n'} o) - d( o, Z_{n} o) - d(Z_{-(m+1)} o, Z_{n'} o) \big| < \epsilon'.
\]
By adding and subtracting $d(Z_{-(m+1)} o, Z_{-1} o) + d(o, Z_{-1} o)$, the above display can be rewritten as \[
\big| R_{n, m}(\mathbf{g}) - R_{n', m} (\mathbf{g}) \big| < \epsilon'. \qedhere
\]
\end{proof}

Note that for each $\epsilon'$, we have \[
A_{1; \epsilon'} \subseteq A_{2; \epsilon'} \subseteq \ldots, \quad \Prob(A_{k; \epsilon'}) \nearrow 1.
\]
It follows that $\cap_{n >0} \cup_{k >0} A_{k; 1/n}$ has measure 1. Moreover, Lemma \ref{lem:squeezeCompare} says that   the sequence $\{R_{n, m}(\mathbf{g})\}_{n, m}$ is Cauchy for each $\mathbf{g} \in \cap_{n >0} \cup_{k>0} A_{k;1/n}$. Hence, we have:

\begin{cor}\label{cor:squeezeRV}
In the setting of Proposition \ref{prop:squeeze}, \[
R(\mathbf{g}) := \lim_{n, m \rightarrow +\infty} R_{n, m}(\mathbf{g})
\]
is well-defined almost everywhere.
\end{cor}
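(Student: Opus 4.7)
The strategy is to combine the two tools already in place: the quantitative probability bound from Proposition \ref{prop:squeeze} and the Cauchy-type comparison from Lemma \ref{lem:squeezeCompare}. My plan is to extract from them a full-measure set on which the double-indexed sequence $\{R_{n,m}(\mathbf{g})\}$ is Cauchy, hence convergent.

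First I would integrate the conditional bound in Proposition \ref{prop:squeeze} against the joint law of $g_{0}, g_{j}$ to obtain, for each fixed $\epsilon' > 0$, the unconditional bound $\Prob(A_{k;\epsilon'}^{c}) \le Ke^{-k/K}$. Since $\{A_{k;\epsilon'}\}_{k}$ is nondecreasing in $k$ (the defining condition is required for all $m,n \ge k$, so enlarging $k$ weakens the requirement), this already yields $\Prob\bigl(\bigcup_{k > 0} A_{k;\epsilon'}\bigr) = 1$. Intersecting over the countable family $\epsilon' = 1/n$ produces the full-measure event
\[
\Omega := \bigcap_{n > 0} \bigcup_{k > 0} A_{k; 1/n}.
\]

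The set $\Omega$ directly enforces the Cauchy criterion. Fix $\mathbf{g} \in \Omega$ and $\delta > 0$; choose $n$ with $10/n < \delta$, and using $\mathbf{g} \in \bigcup_{k} A_{k;1/n}$ select some $k_{0} = k_{0}(n, \mathbf{g})$ with $\mathbf{g} \in A_{k_{0};1/n}$. Lemma \ref{lem:squeezeCompare} then gives
\[
\bigl|R_{n_{1},m_{1}}(\mathbf{g}) - R_{n_{2},m_{2}}(\mathbf{g})\bigr| < 10/n < \delta \quad \text{whenever } n_{1}, m_{1}, n_{2}, m_{2} \ge k_{0}.
\]
Hence $\{R_{n,m}(\mathbf{g})\}$ is Cauchy along the directed set $\N \times \N$, and the double limit $R(\mathbf{g}) = \lim_{n,m \to \infty} R_{n,m}(\mathbf{g})$ exists for every $\mathbf{g} \in \Omega$.

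I do not anticipate any real obstacle: all of the geometric substance has already been absorbed into Proposition \ref{prop:squeeze} and Lemma \ref{lem:squeezeCompare}, and what remains is a routine Borel--Cantelli-flavoured full-measure extraction. The only point requiring a moment's care is the indexing convention — because the Lemma compares $R_{n,m}$ with $R_{n',m'}$ uniformly over all pairs with both indices above the threshold $k$, the natural notion of convergence is along the product directed set $\N \times \N$, which is precisely what the corollary asserts.
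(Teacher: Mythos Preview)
Your argument is correct and is essentially the paper's own proof: both integrate Proposition~\ref{prop:squeeze} to get $\Prob(A_{k;\epsilon'})\nearrow 1$, form the full-measure set $\bigcap_{n>0}\bigcup_{k>0}A_{k;1/n}$, and invoke Lemma~\ref{lem:squeezeCompare} to conclude that $\{R_{n,m}(\mathbf{g})\}$ is Cauchy there. Your write-up is in fact cleaner than the paper's paragraph preceding the corollary, which contains an apparent typo in the order of the union and intersection.
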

Note that in Corollary \ref{cor:squeezeRV}, $R_{n, m}(\mathbf{g})$   is  well-defined for each $n, m$ and is finite everywhere. This does not guarantee that $R_{n, m}(\mathbf{g})$ is integrable. Still, $R(\mathbf{g}) = \lim_{n, m} R_{n, m}(\mathbf{g})$ is defined as a finite value almost everywhere. 

If , moreover,  $\mu$   has  finite first moment and if $\mu_{i}$'s are suitably close to $\mu$ in $\|\cdot \|_{0, 1}$-norm,  then we also have the $L^{1}$-convergence of $R_{n, m}$.

\begin{lem}\label{lem:squeezeL1}
Let $(X, G)$ be as in Theorem \ref{thm:driftDiffSqueeze}, let $\mu$ be a strongly non-elementary probability measure on $G$ with finite first moment and let $\epsilon> 0$ be as in Proposition \ref{prop:squeeze}. Then for each $\epsilon' > 0$ there exists $N>0$ such that, for every sequence of probability measures $\{\mu_{i}\}_{i}$  with $\|\mu_{i} - \mu\|_{0, 1} < \epsilon$ for each $i$, we have \[
\E_{\prod_{i=1}^{\infty} \mu_{i} \, \otimes\, 1_{g_{0} = g} \,\otimes \, \prod_{i=1}^{\infty} \mu_{-i}}   \big|R(\mathbf{g}) - R_{n, m}(\mathbf{g})\big|   < \epsilon'
\]
 for every $n, m > N$ and for every $g \in G$.
\end{lem}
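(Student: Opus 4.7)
The plan is to split the expectation across the uniform squeezing event $A_{k;\epsilon'/20}$ from Proposition \ref{prop:squeeze} and its complement, choosing $k$ large. Given $\epsilon' > 0$, let $\epsilon, K$ be the constants supplied by Proposition \ref{prop:squeeze} for parameter $\epsilon'/20$, and abbreviate $A_k := A_{k;\epsilon'/20}$. Lemma \ref{lem:squeezeCompare} asserts $|R_{n,m} - R_{n',m'}| \le \epsilon'/2$ on $A_k$ for all $n, m, n', m' \ge k$; letting $n', m' \to \infty$ and invoking Corollary \ref{cor:squeezeRV}, this upgrades to $|R - R_{n,m}| \le \epsilon'/2$ pointwise on $A_k$ whenever $n, m \ge k$. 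Hence
\[
\E|R - R_{n,m}| \le \tfrac{\epsilon'}{2} + \E[R\,1_{A_k^c}] + \E[R_{n,m}\,1_{A_k^c}],
\]
and the task reduces to driving each tail term to $0$ as $k \to \infty$, uniformly in $n, m$, in $g$, and in the admissible family $\{\mu_i\}$.

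The first tail term I would handle by dominated convergence. Corollary \ref{cor:bddDefect} gives $\sup_{n,m} \E R_{n,m} \le K_1$ uniformly in $g$ and $\{\mu_i\}$, so Fatou's lemma yields $\E R \le K_1$. The events $A_k^c$ are nested decreasing with $\Prob(A_k^c) \le Ke^{-k/K} \to 0$, so the integrable majorant $R$ and dominated convergence force $\E[R\,1_{A_k^c}] \to 0$.

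The second tail term requires a uniform bound on $R_{n,m}$ restricted to $A_k^c$; this will be the heart of the argument. I would revisit the stopping-time decomposition from the proof of Corollary \ref{cor:bddDefect}: writing $R_{n,m} = R' + R''$ as in that proof, one has $R' \le K + \sum_{i=1}^{\infty} \|g_i\|^{sym}\, 1_{E_{i-1}^c}$, where $E_{i-1}^c$ denotes the event that no $Z_l o$ with $1 \le l \le i-1$ lies within $K$ of $[Z_{-m}o, Z_n o]$, and $\Prob(E_{i-1}^c \mid g_i) \le K e^{-(i-1)/K}$. Because Proposition \ref{prop:squeeze} permits conditioning on two prescribed indices, conditioning simultaneously on $g_0 = g$ and $g_i$ (with $i \ne 0$) also yields $\Prob(A_k^c \mid g_0 = g, g_i) \le Ke^{-k/K}$. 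Taking the better of the two bounds,
\[
\Prob(E_{i-1}^c \cap A_k^c \mid g_0 = g, g_i) \le K e^{-\max(i-1, k)/K}.
\]
Splitting the sum at $i = k$ and using $\E\|g_i\|^{sym} = O(\|\mu\|_1 + \epsilon)$ bounds $\E[R'\,1_{A_k^c}]$ by a multiple of $(k+1)e^{-k/K}$, which vanishes as $k \to \infty$ uniformly in $n, m, g$, and $\{\mu_i\}$. A symmetric argument handles $R''$. Taking $N = k$ large enough for all three error terms to sum to less than $\epsilon'$ completes the proof.

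The principal delicate point is matching the two exponential bounds — one from Proposition \ref{prop:squeeze}, the other from the pivot-time construction of Corollary \ref{cor:bddDefect} — against a common conditioning on $(g_0, g_i)$, so that one may take the minimum of the two decay rates. This is precisely why Proposition \ref{prop:squeeze} is formulated with two prescribed indices rather than one.
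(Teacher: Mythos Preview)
Your argument is essentially correct and follows a genuinely different route from the paper. The paper does not split into $\E[R\,1_{A_k^c}]$ and $\E[R_{n,m}\,1_{A_k^c}]$; instead it proves a single pointwise inequality
\[
|R-R_{n,m}|\;\le\;10\eta\;+\;2\sum_{1\le|i|\le N}\|g_i\|\,1_{\mathbf g\notin A_{N-1;\eta}}\;+\;2\sum_{|i|>N}\|g_i\|\,1_{\mathbf g\notin A_{|i|-1;\eta}}
\]
for all $n,m>N$, obtained by case analysis on $k=\sup\{i:\mathbf g\notin A_{i-1;\eta}\}$ together with the crude Lipschitz bound $|R_{n',m'}-R_{n,m}|\le 2\sum\|g_i\|$. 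Each summand on the right is then estimated directly by conditioning on $(g_0,g_j)$ and invoking Proposition~\ref{prop:squeeze} alone. Your approach instead recycles the stopping-time decomposition $R_{n,m}=R'+R''$ from Corollary~\ref{cor:bddDefect} and combines the exponential decay of $\Prob(E_{i-1}^c\mid g_0,g_i)$ with that of $\Prob(A_k^c\mid g_0,g_i)$; this is slightly more indirect but has the virtue of making transparent why both Lemma~\ref{lem:conditionedDist} and Proposition~\ref{prop:squeeze} are stated with two free conditioning indices.

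One point needs repair: your dominated-convergence argument for $\E[R\,1_{A_k^c}]\to 0$ is not uniform in $g$ and in the family $\{\mu_i\}$, whereas the lemma demands a single $N$ working simultaneously for all such choices. Knowing $\E R\le K_1$ and $\Prob(A_k^c)\le Ke^{-k/K}$ gives convergence on each fixed probability space, but the rate may depend on $(g,\{\mu_i\})$. The fix is immediate and uses nothing new: since $R_{n,m}\ge 0$ and $R=\lim R_{n,m}$ almost surely, Fatou yields
\[
\E[R\,1_{A_k^c}]\;\le\;\liminf_{n,m}\E[R_{n,m}\,1_{A_k^c}],
\]
and the right-hand side is already bounded by your own second-term estimate $O\bigl((k+1)e^{-k/K}\bigr)$, uniformly in $n,m,g,\{\mu_i\}$.
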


\begin{proof}
Observe a simple inequality: for   four  points $x, y, x', y' \in X$, we have \[
|d(x, y) - d(x', y')| \le d(x, x') + d(y, y').
\]
It follows that for any $0 \le n \le n'$ and $0 \le m \le m'$, we have  \begin{equation}\label{eqn:CAT(-1)SqueezePrelim}
|R_{n', m'}(\mathbf{g}) - R_{n, m}(\mathbf{g})| \le 2\sum_{i=n+1}^{n'} \|g_{i}\| + 2 \sum_{i=m+1}^{m'} \|g_{-i}\|.
\end{equation}

Let $\eta = 0.01 \epsilon'$. For each $n, m > N$ we claim that  \begin{equation}\label{eqn:CAT(-1)SqueezeL1}
|R(\mathbf{g}) - R_{n, m}(\mathbf{g})| \le 10\eta + 2\sum_{1 \le |i| \le N} \|g_{i}\| \cdot 1_{\mathbf{g} \notin A_{N-1; \eta}} +  2\sum_{|i| > N} \|g_{i}\| \cdot 1_{\mathbf{g} \notin A_{|i|-1; \eta}}.
\end{equation}

To prove this, let $k = \sup\{i : \mathbf{g} \notin A_{i-1; \eta}\}$. Lemma \ref{lem:squeezeCompare} tells us that $|R(\mathbf{g}) - R_{n', m'} (\mathbf{g})| \le 10\eta$ for any $n', m' \ge k$. In particular, Inequality \ref{eqn:CAT(-1)SqueezeL1} holds when $n, m \ge k$.

If $n, m < k$, then we have \[\begin{aligned}
|R(\mathbf{g}) - R_{n, m}(\mathbf{g})| &\le |R(\mathbf{g}) - R_{k, k}(\mathbf{g})| +
|R_{k, k}(\mathbf{g}) - R_{n, m}(\mathbf{g})| \\
&\le 10\eta  + 2 \sum_{1 \le |i| \le k} \|g_{i}\| & (\because \textrm{Inequality \ref{eqn:CAT(-1)SqueezePrelim}}) \\
&\le 10 \eta + 2 \sum_{1 \le |i| \le N} \|g_{i}\| \cdot 1_{\mathbf{g} \notin A_{N-1; \eta}} + 2\sum_{|i| > N} \|g_{i}\| \cdot 1_{\mathbf{g} \notin A_{|i|-1; \eta}}. & (\because N < n< k)
\end{aligned}
\]
If $n< k \le m$, then we have \[\begin{aligned}
|R(\mathbf{g}) - R_{n, m}(\mathbf{g})| &\le |R(\mathbf{g}) - R_{k, m}(\mathbf{g})| +
|R_{k, m}(\mathbf{g}) -R_{n, m}(\mathbf{g})| \\
&\le 10 \eta + 2 \sum_{i=n+1}^{k} \|g_{i}\|  & (\because \textrm{Inequality \ref{eqn:CAT(-1)SqueezePrelim}}) \\
&\le 10 \eta  + 2\sum_{|i| > N} \|g_{i}\| \cdot 1_{\mathbf{g} \notin A_{|i|-1; \eta}}. & (\because N < n < k)
\end{aligned}
\]
The case of $m < k \le n$ can be handled in a similar way.

We now estimate the expectation of the RHS of Inequality \ref{eqn:CAT(-1)SqueezeL1}. For each $j \in \Z$ and $k > 0$ we have \[\begin{aligned}
\E_{\prod_{i=1}^{\infty} \mu_{i}\, \otimes \,1_{g_{0} = g}\, \otimes\, \prod_{i=1}^{\infty} \mu_{-i}}\left[ \|g_{j}\| \cdot 1_{\mathbf{g} \notin A_{k; \eta}} \right] &= \E_{\mu_{j}}\big[ \|g_{j}\| \Prob (A_{k; \eta}^{c} | g_{0}, g_{j})\big] \\
&\le \E_{\mu_{j}} \|g_{j}\| \cdot K e^{-k/K} & (\because \textrm{Proposition \ref{prop:squeeze}})\\
&\le K(1+\epsilon) \|\mu\|_{1} \cdot e^{-k/K}.
\end{aligned}
\]

  Therefore, we have \[\begin{aligned}
&\E\Big[10\eta + 2\sum_{1 \le |i| \le N} \|g_{i}\| \cdot 1_{\mathbf{g} \notin A_{N-1; \eta}} +  2\sum_{|i| > N} \|g_{i}\| \cdot 1_{\mathbf{g} \notin A_{|i|-1; \eta}}\Big] \\
&\le 10\eta + 2 K (1+\epsilon) \|\mu\|_{1} \cdot \left( N e^{-(N-1)/K} + \frac{1}{1-e^{-1/K}} e^{-(N-1)/K} \right).
\end{aligned}
\]
By taking suitably large $N$, we can guarantee $\E|R(\mathbf{g}) - R_{n, m}(\mathbf{g})| < 20\eta$ for each $n, m > N$. 
\end{proof}

We now prove Theorem \ref{thm:driftDiffSqueeze}  which was as follows:

\begin{thm}\label{thm:driftDiffSqueezeRe}
Let $(X, G)$ be as in Theorem \ref{thm:driftDiffSqueeze},  let $\mu$ be a strongly non-elementary probability measure on $G$ with finite first moment, let $\eta$ be a signed measure on $G$ such that $\|\eta\|_{0, 1} < \infty$, and let $\{\mu_{t}: t \in [-1, 1]\}$ be a family of probability measures such that \[
\| \mu_{t} - \mu - t \eta \|_{0, 1} = o(t).
\]
Then $l(\mu_{t})$ is differentiable at $t=0$, with derivative \[
 \sigma_{1}(\mu, \eta) := \E_{\eta} d(o, go) - \E_{\prod_{i=1}^{\infty} \mu \,\otimes\, \eta \,\otimes\, \prod_{i =1}^{\infty} \mu} [R(\mathbf{g})]
\]
Moreover, this derivative is continuous with respect to $\mu$ and $\eta$.
\end{thm}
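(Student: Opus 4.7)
The plan is to recognize Theorem \ref{thm:driftDiffSqueezeRe} as a geometric reinterpretation of Theorem \ref{thm:driftDiff} in the presence of squeezing geodesics. Differentiability and a valid formula for $l'(0)$ come for free from Theorem \ref{thm:driftDiff}; the real work is to identify
\[
\lim_n \tfrac{1}{n} \sum_{i=1}^n \E_{\mu^{\ast(i-1)} \ast \eta \ast \mu^{\ast(n-i)}} d(o, go) = \E_\eta d(o, go) - \E_{\prod_{i=1}^\infty \mu \,\otimes\, \eta \,\otimes\, \prod_{i=1}^\infty \mu} R(\mathbf{g}),
\]
and then to verify the continuity claim.

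First I would rewrite each Ces\`aro summand. Expanding the convolution and substituting the tautology
\[
\|g_1 g_2 g_3\| = \|g_1\| + \|g_2\| + \|g_3\| - R_{n-i,\, i-1}(\mathbf{g})
\]
produces four sums. Because $\eta$ is balanced (Lemma \ref{lem:driftDiffMean0}), the factor $\sum_{g_2} \eta(g_2) = 0$ annihilates both the $\|g_1\|$ and $\|g_3\|$ contributions, leaving
\[
\E_{\mu^{\ast(i-1)} \ast \eta \ast \mu^{\ast(n-i)}} d(o, go) = \E_\eta d(o, go) - \E_{\mu^{i-1} \times \eta \times \mu^{n-i}} R_{n-i,\, i-1}(\mathbf{g}).
\]

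Second, I would exchange $R_{n-i,i-1}$ for $R$ in the Ces\`aro average. Lemma \ref{lem:squeezeL1} extends to a signed middle factor by decomposing $\eta$ atom-by-atom, applying the uniform $L^1$-bound with $g_0 = g$ fixed, and summing against $|\eta(g)|$; this yields, for some threshold $N = N(\epsilon')$,
\[
\bigl| \E_{\mu^{\ast m} \times \eta \times \mu^{\ast n}}(R - R_{n, m}) \bigr| \le \epsilon' \|\eta\|_0 \quad \text{for all } n, m \ge N.
\]
Splitting $\tfrac{1}{n} \sum_{i=1}^n$ into central indices $i \in [N+1, n-N]$ (each summand within $\epsilon' \|\eta\|_0$ of $\E R$) and at most $2N$ edge indices (each summand bounded by $K_1$ from Corollary \ref{cor:bddDefect}) gives a total error bounded by $\epsilon' \|\eta\|_0 + O(N/n)$. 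Sending first $n \to \infty$ then $\epsilon' \to 0$ completes the identification.

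For continuity in $(\mu, \eta)$ under $\|\cdot\|_{0, 1}$, I would split $\sigma_1 = \E_\eta d(o, go) - \E R$. The first term is $\eta$-continuous trivially. For the second, I would fix $\delta > 0$ and choose $N$ large enough that $|\E R - \E R_{N, N}| < \delta \|\eta\|_0$ holds uniformly in a $\|\cdot\|_{0, 1}$-neighborhood of $\mu$ (this uniformity is exactly the scope of Lemma \ref{lem:squeezeL1}, whose statement already tolerates $\|\mu_i - \mu\|_{0,1} < \epsilon$). The truncated quantity $(\mu', \eta') \mapsto \E_{\mu'^{\ast N} \times \eta' \times \mu'^{\ast N}} R_{N, N}$ is a finite combination of convolutions with an integrand dominated by $\sum_{|i| \le N} \|g_i\|$, hence continuous under $\|\cdot\|_{0, 1}$ by Lemma \ref{lem:driftConv} and dominated convergence.

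The main obstacle I anticipate is bookkeeping around the signed measure $\eta$: both the $L^1$-bound of Lemma \ref{lem:squeezeL1} and the triangle identity for $\|g_1 g_2 g_3\|$ must be applied coordinate-by-coordinate in $g_0$ before being summed against $|\eta(g)|$, and the uniform edge bound $|\E R_{n, m}| \le K_1$ from Corollary \ref{cor:bddDefect} is what allows the Ces\`aro split to be pushed through without deterioration in $n$.
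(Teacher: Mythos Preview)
Your argument is correct but takes a genuinely different route from the paper. One point you should make explicit: Theorem~\ref{thm:driftDiff} requires $\eta \ll \mu$, a hypothesis not stated in Theorem~\ref{thm:driftDiffSqueezeRe}. It is, however, automatic here: since each $\mu_t$ is a probability measure and $t$ ranges over a two-sided neighborhood of $0$, for any $g$ with $\mu(g)=0$ the constraint $\mu_t(g)\ge 0$ combined with $|\mu_t(g)-t\eta(g)|\le\|\mu_t-\mu-t\eta\|_0=o(t)$ forces $\eta(g)=0$ (take $t\to 0^+$ and $t\to 0^-$ separately). Once this is noted, your invocation of Theorem~\ref{thm:driftDiff} is legitimate, and your Ces\`aro identification via Lemma~\ref{lem:squeezeL1} and the edge bound from Corollary~\ref{cor:bddDefect} is correct.

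The paper does \emph{not} factor through Theorem~\ref{thm:driftDiff}. Instead it works with the exact finite-$t$ difference: starting from the telescoping identity in Equation~\eqref{eqn:driftLipschitz}, it passes to the limit $n\to\infty$ by dominated convergence (domination from Corollary~\ref{cor:bddDefect}, pointwise limit from Lemma~\ref{lem:squeezeL1}) to obtain a closed formula
\[
l(\mu_t)-l(\mu)=\E_{\mu_t-\mu}\|g\|-\sum_{g\in G}(\mu_t(g)-\mu(g))\,\E_{\prod\mu_t\,\otimes\,1_{g_0=g}\,\otimes\,\prod\mu}R(\mathbf{g}),
\]
and only then divides by $t$ and sends $t\to 0$. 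This direct approach never touches the ergodic-theorem step inside Theorem~\ref{thm:driftDiff} (where $\eta\ll\mu$ enters), so it is self-contained and in principle applies even if one only had a one-sided family $\{\mu_t\}_{t\in[0,1]}$. Your route is more modular and conceptually cleaner, but it leans on the full strength of Theorem~\ref{thm:driftDiff}; the paper's route trades that dependence for a somewhat longer DCT argument carried out at finite $t$.
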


\begin{proof}[Proof of Theorem \ref{thm:driftDiffSqueezeRe}]
Let $\epsilon > 0$ be as in Proposition \ref{prop:squeeze}. Without loss of generality , we assume that $\|\mu_{t} - \mu\|_{0, 1} \le \epsilon$ for $t \in [-1, 1]$.

Recall first that Corollary \ref{cor:bddDefect} guarantees a uniform bound $K_{1}$ such that \[
\E_{\mu_{t}^{i} \,\otimes\, 1_{g_{0} = g} \,\otimes \,  \mu^{j}} R_{i, j}(\mathbf{g}) < K_{1}
\] for any $i, j > 0$ and $g \in G$. Given $n >0$ and $t \in [-1, 1]$ we define \[
E_{n, t}:= \frac{1}{n} \sum_{g \in G} (\mu_{t}(g) - \mu(g)) \cdot \sum_{i=1}^{n} \E_{\mu_{t}^{n-i} \,\otimes\,  1_{g_{0} = g} \,\otimes\, \mu^{i-1}} \E[R_{n-i, i-1} (\mathbf{g})].
\]
Then Equation \ref{eqn:driftLipschitz} reads   \[
\frac{L(\mu_{t}^{\ast n}) - L(\mu^{\ast n})}{n}  = \E_{\mu_{t} - \mu} \|g\| + E_{n, t}.
\]

Now for $(g, u) \in G \times [0, 1]$ we define  \[
F_{n, t}(g, u) := \left(\mu_{t}(g) - \mu(g) \right) \cdot \E_{\mu_{t}^{n-i} \,\otimes\,  1_{g_{0} = g} \,\otimes\, \mu^{i-1}} [R_{n-i, i-1}(\mathbf{g}) ] \quad (i := n \lceil u/n \rceil).
\] 
Then $\sum_{g \in G} \int_{0}^{1} F_{n, t}(g, u) \,du = E_{n, t}$, where $du$ is the Lebesgue measure on $[0, 1]$. Moreover, $|F_{n, t}(g, u)|$ is pointwise dominated by $  2K_{1} \big( \mu(g) + \mu_{t}(g)\big)$, which is integrable: $\sum_{g \in G} 2K_{1}\big(\mu(g)+\mu_{t}(g)\big) = 2K_{1} < +\infty$. Finally, for all $u \in (0, 1)$ and for all $g \in G$, we have \[
\lim_{n \rightarrow +\infty} F_{n, t}(g, u) =  \left(\mu_{t}(g) - \mu(g) \right) \cdot \E_{\prod_{i=1}^{\infty} \mu_{t} \, \otimes \, 1_{g_{0} = g} \, \otimes \,\prod_{i=1}^{\infty} \mu} R(\mathbf{g})
\]
by Lemma \ref{lem:squeezeL1}. Hence, by the dominated convergence theorem, we deduce that \[\begin{aligned}
l(\mu_{t}) - l(\mu)  &= \E_{\mu_{t} - \mu} \|g\|+\lim_{n \rightarrow +\infty} E_{n, t} = \E_{\mu_{t} - \mu} \|g\|+ \lim_{n \rightarrow +\infty} \sum_{g \in G} \int_{0}^{1}F_{n, t}(g, u) \,du\\
&= \E_{\mu_{t} - \mu} \|g\|+ \sum_{g \in G} \int_{0}^{1} \left( \lim_{n} F_{n, t}(g, u)\right) \, du \\
&= \E_{\mu_{t} - \mu} \|g\|+ \sum_{g \in G} \left(\mu_{t}(g) - \mu(g) \right) \cdot \E_{\prod_{i=1}^{\infty} \mu_{t} \, \otimes \, 1_{g_{0} = g} \, \otimes \,\prod_{i=1}^{\infty} \mu} R(\mathbf{g}).
\end{aligned}
\]
We now divide this by $t$ and claim \begin{equation}\label{eqn:squeezeDiff}
\lim_{t \rightarrow 0} \frac{1}{t} \big(l(\mu_{t}) - l(\mu) \big) = \E_{\eta} \|g\| - \E_{\prod_{i=1}^{\infty} \mu \, \otimes\, \eta \,\otimes \, \prod_{i =1}^{\infty} \mu} R(\mathbf{g}).
\end{equation}
First, we have \[
\left|\frac{1}{t} ( \E_{\mu_{t} - \mu} \|g\|) - \E_{\eta} \|g\|\right| = \frac{1}{t} \left|\E_{\mu_{t} - \mu - t\eta} \|g\| \right| \le \frac{1}{t} \| \mu_{t} - \mu - t\eta\|_{1} \rightarrow 0. \quad( \textrm{as $t\rightarrow 0$})
\]
Next, note that
\[\begin{aligned}
&\left|\frac{1}{t}\sum_{g \in G} \left(\mu_{t}(g) - \mu(g) \right) \cdot \E_{\prod_{i=1}^{\infty} \mu_{t} \, \otimes \,  1_{g_{0} = g} \, \otimes \,  \prod_{i=1}^{\infty}  \mu} R(\mathbf{g})  - \sum_{g \in G} \eta(g) \cdot \E_{\prod_{i=1}^{\infty} \mu \, \otimes \, 1_{g_{0} = g} \, \otimes \, \prod_{i=1}^{\infty} \mu} R(\mathbf{g}) \right| \\
& \le \sum_{g \in G} \left|\frac{\mu_{t}(g) - \mu(g)}{t} - \eta(g) \right| \left|\E_{\prod_{i=1}^{\infty} \mu_{t} \, \otimes \, 1_{g_{0} = g} \, \otimes \, \prod_{i=1}^{\infty} \mu} R(\mathbf{g}) \right|  \\
&+ \sum_{g \in G} |\eta(g)| \left| \E_{\prod_{i=1}^{\infty} \mu_{t} \, \otimes \, 1_{g_{0} = g} \, \otimes \,  \prod_{i=1}^{\infty} \mu} R(\mathbf{g})- \E_{\prod_{i=1}^{\infty} \mu \, \otimes \, 1_{g_{0} = g} \, \otimes \,  \prod_{i=1}^{\infty} \mu} R(\mathbf{g})\right|.
\end{aligned}
\]
We   now show that both summations tend to zero. The first summation is bounded by $K_{1} \cdot \|\frac{\mu_{t} - \mu}{t} - \eta\|_{0}$, which tends to 0 as $t \rightarrow 0$. The second summation is over $G$ and the summand is bounded by $2K_{1} |\eta(g)|$ for any $t$, which is integrable. Hence, we can conclude by DCT once we show for each $g \in G$ that \begin{equation}\label{eqn:squeezeDiffFinal}
\lim_{t \rightarrow 0} \E_{\prod_{i=1}^{\infty} \mu_{t} \, \otimes \, 1_{g_{0} = g} \, \otimes \,  \prod_{i=1}^{\infty} \mu} R(\mathbf{g}) = \E_{\prod_{i=1}^{\infty} \mu \, \otimes \, 1_{g_{0} = g} \, \otimes \,  \prod_{i=1}^{\infty} \mu} R(\mathbf{g}).
\end{equation}
Let $\epsilon '> 0$ and let $N$ be as in Lemma \ref{lem:squeezeL1}. Then we have \[\begin{aligned}
\left| \E_{\prod_{i=1}^{\infty} \mu_{t} \, \otimes \, 1_{g_{0} = g} \, \otimes \,  \prod_{i=1}^{\infty} \mu} R(\mathbf{g}) - \E_{\prod_{i=1}^{N} \mu_{t} \, \otimes \, 1_{g_{0} = g} \, \otimes \,  \prod_{i=1}^{N} \mu} R(\mathbf{g}) \right| &< \epsilon' \quad (t \in [-1, 1]), \\
\left| \E_{\prod_{i=1}^{\infty} \mu\, \otimes \, 1_{g_{0} = g} \, \otimes \,  \prod_{i=1}^{\infty} \mu} R(\mathbf{g}) - \E_{\prod_{i=1}^{N} \mu \, \otimes \, 1_{g_{0} = g} \, \otimes \,  \prod_{i=1}^{N} \mu} R(\mathbf{g}) \right| &< \epsilon', \\
\lim_{t \rightarrow 0} \E_{\prod_{i=1}^{N} \mu_{t} \, \otimes \, 1_{g_{0} = g} \, \otimes \,  \prod_{i=1}^{N} \mu} R(\mathbf{g}) = \E_{\prod_{i=1}^{N} \mu \, \otimes \, 1_{g_{0} = g} \, \otimes \,  \prod_{i=1}^{N} \mu} R(\mathbf{g}).
\end{aligned}
\]
  It follows that the two expectations in Equation \ref{eqn:squeezeDiffFinal} differ by at most $2\epsilon'$ eventually as $t \rightarrow 0$. We now send $\epsilon' \rightarrow 0$ to establish Equation \ref{eqn:squeezeDiffFinal}.

We now discuss the continuity of $\sigma_{1}(\mu, \eta)$ with respect to $\mu$ and $\eta$. For this let $\mu_{n}, \eta_{n}$ be probability measures and signed measures, respectively, such that $\|\mu_{n} - \mu\|_{0, 1}, \|\eta_{n} - \eta\|_{0, 1} \rightarrow 0$. Without loss of generality we assume $\|\mu_{n} - \mu\|_{0, 1} < \epsilon$ for each $n$. For notational purposes we temporarily introduce \[\begin{aligned}
D_{n}(g) &:= \E_{\prod_{i=1}^{\infty} \mu_{n} \,\otimes\, 1_{g_{0} = g} \, \otimes \, \prod_{i=1}^{\infty} \mu_{n}} R(\mathbf{g}), \\
D(g) &:= \E_{\prod_{i=1}^{\infty} \mu \,\otimes\, 1_{g_{0} = g} \, \otimes \, \prod_{i=1}^{\infty} \mu} R(\mathbf{g}).
\end{aligned}
\]
  Note that the above argument for Equation \ref{eqn:squeezeDiffFinal} did not require the differentiability of $\{\mu_{t}\}_{t \in [-1, 1]}$. Namely, the exactly same argument implies that 

\[
\lim_{n \rightarrow \infty} D_{n}(g) = D(g)
\]
holds for each $g \in G$. Moreover, Corollary \ref{cor:bddDefect} gives the bound $D_{n}(g), D(g) \le K_{1}$ for all $g \in G$ and $n \in \Z_{>0}$.

Now given $\epsilon > 0$, let $A$ be a finite set of $G$ such that $\|\eta 1_{G \setminus A}\|_{0,1} < \epsilon$. We then have \[
\begin{aligned}
|\sigma_{1}(\mu_{n}, \eta_{n}) - \sigma_{1}(\mu, \eta)| &= \left| \E_{\eta_{n}} d(o, go) - \E_{\eta}d(o, go) - \left(\sum_{g \in G} \eta_{n}(g) D_{n}(g) - \sum_{g \in G} \eta(g) D(g)\right) \right|\\
&\le \|\eta_{n} - \eta\|_{1} +  \sum_{g \in A} |\eta_{n}(g) D_{n}(g) - \eta(g) D(g)| \\
&+ \sum_{g \notin A} |\eta_{n}(g) - \eta(g)| |D_{n}(g)| + \sum_{g \notin A} |\eta(g)| |D_{n}(g) - D(g)|.
\end{aligned}
\]
Here, the first summation is over a finite set, and it tends to 0 as $n$ goes to infinity. The second summation is bounded by $K_{1} \|\eta_{n} - \eta\|_{0}$, which also tends to 0. The third summation is bounded by $2K_{1} \cdot \| \eta 1_{G \setminus A}\|_{0} < 2K_{1}\epsilon$. Letting $\epsilon$ to zero completes the proof.

\end{proof}

\section{Exponentially squeezing geodesics} \label{section:expSqueeze}

In this section, we discuss higher regularity of the escape rate on CAT(-1) spaces and their ilk.
Throughout the section we assume that:

\begin{conv}\label{conv:expSqueeze}
  Let $(X, G)$ be one of the following: \begin{itemize}
\item a CAT(-1) space and a countable group of its isometries;
\item a CAT(0) cube complex and a countable group of its isometries;
\item relatively hyperbolic group equipped with a Green metric;
\item the standard Cayley graph of a surface group, or;
\item the standard Cayley graph of a free product of nontrivial groups.
\end{itemize}
\end{conv}

\begin{prop}\label{prop:expSqueeze}
Let $(X,   G)$ be as in Convention \ref{conv:expSqueeze} and let $\mu$ be a strongly non-elementary probability measure on $G$. Then there exist $\epsilon, K>0$ such that for every sequence of probability measures $\{\mu_{i}\}_{i}$ with $\|\mu_{i} - \mu\|_{0, 1} < \epsilon$, for every $k \in \Z_{>0}$, and for every combination of $N$ integers $\{j_{1}, \ldots, j_{N}\} \subseteq \Z \setminus \{0\}$, we have \[
\Prob_{\prod_{i} \mu_{i}} \big(\mathbf{g} \in A_{k; e^{-k/K}} \,\big| \, g_{0}, g_{j_{1}}, g_{j_{2}}, \ldots, g_{j_{N}}\big) \ge 1 - K   e^{-(k/K - N)}.
\]
\end{prop}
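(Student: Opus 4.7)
The plan is to extend the pivotal-time argument of Lemma \ref{lem:conditionedDist} and Proposition \ref{prop:squeeze}, and to combine it with the case-by-case geometric exponential-squeezing feature available in each setting of Convention \ref{conv:expSqueeze}. The key point is that a chain of $M$ aligned Schottky axes must force a squeezing constant that decays exponentially in $M$, not merely stay below a prescribed $\epsilon' > 0$.

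First, I would select a long enough Schottky set $S \subseteq (\supp \mu^{\ast M_{0}})^{M_{0}}$ of cardinality at least $400$, as in the proof of Lemma \ref{lem:conditionedDist}, but with the Schottky axes chosen so that $M$ consecutive aligned translates produce an $e^{-M/K_{2}}$-squeezing configuration. Concretely, this holds for free by Fact \ref{fact:CAT(-1)Squeeze} and Proposition \ref{prop:WPSqueeze} in the CAT($-1$) and Weil--Petersson cases; in the CAT($0$) cube case one picks each Schottky axis to cross a strongly separated pair of hyperplanes, so Fact \ref{fact:CAT(0)Bridge} already gives $0$-squeezing at a single pivot; in the Green-metric case one uses hyperbolic Schottky generators producing $(\epsilon,\eta)$-transition points, and Proposition \ref{prop:greenDecay} converts $M$ aligned transition points into an $e^{-M}$ defect; for the surface-group and free-product Cayley graphs one takes Schottky axes based on a corridor word $w = a_{i}^{k} b_{i+1}^{k}$ respectively $w = ab$, so that Lemma \ref{lem:corridor} and Corollary \ref{cor:corridor} deliver $0$-squeezing once a few consecutive aligned copies appear.

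Second, I would rerun the interval/mixture decomposition of Lemma \ref{lem:conditionedDist}, but now avoiding the $N+1$ prescribed indices $\{0, j_{1}, \ldots, j_{N}\}$. Partition $\{1, \ldots, k\}$ into disjoint blocks of length $4M_{0}$, discard those blocks meeting any prescribed index (losing at most $N+1$ blocks), and on each retained block write $\mu_{4M_{0}\ell+1} \times \cdots \times \mu_{4M_{0}(\ell+1)} \;\geq\; \beta \cdot \mathrm{Unif}(S)^{4}$ with $\beta = \epsilon^{4M_{0}}$. Accept each retained block independently with probability $\beta$ to form $\mathcal{A}$; by the usual concentration one has $\#\mathcal{A} \geq 0.1\beta(k-N-1)/M_{0}$ outside a set of probability at most $e^{-\beta(k-N-1)/8}$. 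Conditionally on everything else, the draws on $\mathcal{A}$ are i.i.d.\ uniform on $S$, so Fact \ref{fact:pivotIncrease} yields at least $\#\mathcal{A}/K'$ pivotal times outside another set of probability at most $K' e^{-\#\mathcal{A}/K'}$. A union bound produces $\Omega(k-N)$ pivotal times except with probability $\lesssim e^{-(k-N)/K}$, conditionally on the $N+1$ prescribed values.

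Third, I would convert pivotal times into geometric squeezing via Fact \ref{fact:pivotAlign} and Corollary \ref{cor:semiAlign}: the resulting chain of aligned Schottky axes sits within the $E_{0}$-neighbourhood of $[Z_{-m}o, Z_{n}o]$, and by restricting to the subchain of pivots occurring after step $0$ (respectively before step $-1$) we obtain corresponding chains for $[o, Z_{n}o]$ and $[Z_{-m}o, Z_{-1}o]$. The exponential-squeezing property of the chosen Schottky axes then yields a single squeezing geodesic of squeezing constant $\leq e^{-(k-N)/K}$; after absorbing the additive $N$ into $K$, this is bounded by $e^{-k/K}$, proving $\mathbf{g} \in A_{k; e^{-k/K}}$.

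The main obstacle will be checking, uniformly in the geometric cases, that a single Schottky pivot genuinely survives the slack inherent in $D_{0}$-alignment and still produces the claimed geometric feature (strongly separated halfspaces, transition points, or a visible corridor subword of $[Z_{-m}o,Z_{n}o]$); this is where the flexibility in the choice of Schottky set and the witness segments of Proposition \ref{prop:BGIPWitness} are used. A secondary bookkeeping obstacle is keeping track of the $(k-N)$ factor, which simply reflects that conditioning on $N+1$ indices can erase at most $N+1$ of the length-$4M_{0}$ blocks in the mixture decomposition.
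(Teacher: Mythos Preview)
Your outline follows the paper's strategy: choose Schottky axes so that a chain of $M$ aligned copies forces $e^{-M/K_2}$-squeezing (Subsections \ref{subsection:squeezeSchottky}--\ref{subsection:expSqueeze}), and then run a pivotal-time argument that avoids the conditioned indices (the paper packages this as Proposition \ref{prop:conditionedDistSq}). Two substantive pieces are missing, though, and one of them is the crux of the argument.

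First, your step 2 only decomposes $\{1,\ldots,k\}$, so Fact \ref{fact:pivotAlign} yields $(o,\gamma_1,\ldots,\gamma_M,Z_n o)$ semi-aligned; it does \emph{not} give that the chain sits near $[Z_{-m}o,Z_no]$ as you assert in step 3. Running the same construction backward gives $(Z_{-m}o,\gamma'_{M'},\ldots,\gamma'_1,Z_{-1}o)$ semi-aligned, but since $g_0$ is prescribed and unbounded you cannot splice the two chains across the gap between $Z_{-1}o$ and $o$ for free: nothing yet forces $(\gamma'_1,\gamma_1)$ to be aligned. The paper handles this with an extra layer of randomness, namely the equivalence classes of Fact \ref{fact:listPivot}(2) on which the $b$-coordinates at pivotal times are still i.i.d.\ uniform in $S$; Fact \ref{fact:listPivot}(4) then shows that, conditionally on such a class on each side, some pair $(\gamma'_i,\gamma_i)$ becomes $D_0$-aligned with probability $\ge 1-0.1^{M/2}$. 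Only after this bridging step do you get the full $(Z_{-m}o,\ldots,Z_no)$ alignment required by condition (1) in the definition of $A_{k;\epsilon'}$.

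Second, $A_{k;\epsilon'}$ demands the \emph{same} $\gamma,\gamma'$ work for every $m,n\ge k$, whereas a single application of Fact \ref{fact:pivotIncrease} only treats one fixed word length. The paper secures this uniformity via Fact \ref{fact:listPivot}(1),(3): the pivotal sets $P_{T(n)}$ for varying $n\ge k$ share a common initial segment of length $\gtrsim k/K'$ on the intersection of the events $E'_n,E''_n,E'''_n$ in the proof of Proposition \ref{prop:conditionedDistSq}, so the first $\lfloor k/K\rfloor$ Schottky axes are stable as $n$ (and symmetrically $m$) grows. This stability, not just the count of pivots, is what lets you pass from ``many pivots for each $n$'' to membership in $A_{k;e^{-k/K}}$.
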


\begin{proof}
 Our goal is to find an $(E_{0}, e^{-k/K})$-squeezing sequence   $(\gamma_{1}, \ldots, \gamma_{N})$ and $(\eta_{1}, \ldots, \eta_{N})$ for the random path with high probability.   Since the condition for $\big(Z_{-n} o, \gamma_{1} \big), \,\, \big(\gamma_{N}, Z_{m} o \big)$ and the condition for $ \big(Z_{-(m+1)} o, \eta_{1}), \,\, (\eta_{M}, Z_{n-1} o\big)$ are symmetric,   we will only deal with the first condition.

Let $\mu$ be a strongly non-elementary probability measure on $G$. This implies that $\supp \mu$ generates two independent BGIP elements $g$ and $h$.   We choose $g$ as the one specified in the definition of strongly non-elementary measure, namely: \begin{itemize}
\item If $G$ is a countable isometry group of a CAT(-1) space, we take $g$ a loxodromic element.
\item If $G$ is a countable automorphism   group of a CAT(0) cube complex, then we take $g$ to be an automorphism that skewers a pair of strongly separated hyperplanes.
\item If $G$ is the surface group or a free product of nontrivial groups, then we take $g$ to be the element that is 0-squeezing.
\end{itemize}

Let $N_{1},   l, l'$ be such that \[
\min\big(\mu^{\ast N_{1}} (g^{l}), \mu^{\ast N_{1}}(h^{l'}) \big):= \epsilon_{min} > 0.
\]
We then choose $\epsilon = \frac{1}{2} \epsilon_{min}$. Then for any probability measures $\mu_{i}$'s with $\|\mu_{i} - \mu\|_{0} < \epsilon$ for each $i$, we have \begin{equation}\label{eqn:stabilizedMuSq}\begin{aligned}
(\mu_{k+1} \ast \cdots \ast \mu_{k+N_{1}})(g) \ge 2^{-N_{1}} \mu^{\ast N_{1}} (g^{l}) \ge 2^{-N_{1}} \epsilon_{min},\\
(\mu_{k+1} \ast \cdots \ast \mu_{k+N_{1}})(h) \ge 2^{-N_{1}} \mu^{\ast N_{1}} (h^{l'}) \ge 2^{-N_{1}} \epsilon_{min}.
\end{aligned}
\end{equation}

Now, let $K_{0}>0$ be the constant given by Proposition \ref{prop:SchottkyPA} for $g^{l}$, $h^{l'}$ and $A = \{g^{l}, h^{l'}\}$, which determines $D_{0}, E_{0}$ as well. Now let $n$ be   a large enough multiple of $l$ such that either:
\begin{itemize}
\item $d(o, g^{n} o) > 10E_{0}$ if $G$ is a countable isometry group of a CAT(-1) space; or
\item $\Gamma(g, \ldots, g) = (o, go, \ldots, g^{n} o)$ is $(E_{0}, 0)$-squeezing in the remaining case.
\end{itemize}
Here, for CAT(-1) space we are using Fact \ref{fact:CAT(-1)Squeeze}.

We now construct a long and large enough $K_{0}$-Schottky set $S \subseteq G^{M_{0}}$ as in Proposition \ref{prop:SchottkyPA} for $n$. Namely, $\Gamma(s)$ for each $s \in S$ has a subpath that is a translate of $(o, g^{k} o, \ldots, g^{kn} o)$. In this case,  every  $D_{0}$-semi-aligned sequence $N$ Schottky axes is $(E_{0}, E_{0} e^{-N/K'})$-squeezing for some $K'>0$. Note that Inequality \ref{eqn:stabilizedMuSq} tells us that $(\mu_{k+1} \ast \cdots \ast \mu_{k+M_{0}})(s) > 2^{-M_{0}N_{1}} \epsilon_{min}$ for each $s \in S$.

We take $\kappa = \kappa(2^{-M_{0}N_{1}} \epsilon_{min}, S)$ for this specific $S$, as in Proposition \ref{prop:gouezelRWEventual}. Then we have \[
\Prob_{(g_{i})_{i \in \Z} \sim \prod_{i \in \Z} \mu_{i}} \left.\left( \begin{array}{c}  \exists \textrm{($S$-)Schottky axes $\gamma_{1}, \ldots, \gamma_{ \lfloor \kappa k\rfloor } $ such that} \\
\textrm{$(Z_{-m} o, \gamma_{1}, \ldots, \gamma_{ \lfloor \kappa k\rfloor }, Z_{n+l} o)$ is}\\
 \textrm{$D_{0}$-semi-aligned for each $m, l\ge 0$} \end{array}\, \right|\, g_{0}, g_{j_{1}}, \ldots, g_{j_{N}} \right) \ge 1 - \frac{1}{\kappa} e^{-\kappa (k-1) - N},
\]
regardless of the values of $j_{1}, \ldots, j_{N}$. Recall that our choice of $S$ is such that $D_{0}$-semi-aligned sequences of   $\lfloor \kappa k \rfloor$-many Schottky axes are $(E_{0}, K e^{-k/K})$-squeezing for some $K>0$. This ends the proof.
\end{proof}

By Lemma \ref{lem:squeezeCompare}, we have $|R_{n, m}(\mathbf{g}) - R_{n', m'}(\mathbf{g})| < 2Ke^{-k/K}$ for all $n, m, n', m' \ge k$   provided that $\mathbf{g} \in A_{k; e^{-k/K}}$. Proposition \ref{prop:expSqueeze} tells us that for whatever finite set of $\{j_{1}, \ldots, j_{N}\}$ and values of $g_{0}, g_{j_{1}}, \ldots, g_{j_{N}}$, if we distribute $g_{i}$ independently according to $\mu_{i}$ for $i \notin \{0, j_{1}, \ldots, j_{N}\}$ as in the assumption of Proposition \ref{prop:expSqueeze},   then  $\{R_{n, m}(\mathbf{g})\}_{n, m}$ is a.e. Cauchy. This remains true if we then equip $g_{0}, g_{j_{1}}, \ldots, g_{j_{N}}$ with arbitrary finite measures or finite signed   measures. Hence, we can discuss the RV $R(\mathbf{g})$   when  some finitely many of $g_{i}$'s are defined on a signed measure space.

\begin{lem}\label{lem:expSqueezeL1}
Let $(X, d)$ be as in Convention \ref{conv:expSqueeze}, let $\mu$ be a strongly non-elementary probability measure on $G$ with finite moment and let $\epsilon, K > 0$ be as in Proposition \ref{prop:expSqueeze}.

Let $\{j_{1}, \ldots, j_{N}\}$ be $N$ integers, and let $\mu_{0}, \mu_{j_{1}}, \ldots, \mu_{j_{N}}$ be probability measures with finite $\| \cdot \|_{1}$-norm. For $i \in \Z \setminus \{0, j_{1}, \ldots, j_{N}\}$, let $\mu_{i}$ be a probability measures such that $\|\mu_{i} - \mu\|_{0, 1} < \epsilon$. Then we have \[
\E_{\prod_{i} \mu_{i}} \big| R(\mathbf{g}) - R_{n, m}(\mathbf{g})| < K_{2} e^{-\min(n, m)/K +N} \cdot \left(1 + \sum_{l=1}^{N} \|\eta\|_{1}\right)
\]
for every $n, m$, where $K_{2}$   depends only on $\epsilon, K$ and $\|\mu\|_{1}$.
\end{lem}

\begin{proof}
For notational convenience, let  $u = \min(n, m)$. As in the proof of Lemma \ref{lem:squeezeL1}, we will first bound $R - R_{n, m}$ with a summation of RVs related to $A_{k; e^{-k/K}}$'s and estimate the latter summation. One   subtlety is that $\{A_{k; e^{-k/K}}\}_{k=1}^{\infty}$ is not nested, whereas $\{A_{k; \epsilon}\}_{k=1}^{\infty}$ is.   To overcome this issue, we define \[
B_{k} := \cap_{i \ge k} A_{i; e^{-i/K}}.
\]

This time, we claim  \begin{equation}\label{eqn:expSqueezeL1}
|R(\mathbf{g}) - R_{n, m}(\mathbf{g})| \le 10 e^{-u/K} +2 \sum_{1 \le |i| \le u} \|g_{i}\| 1_{\mathbf{g} \notin B_{u-1}} + 2\sum_{|i| > u} \|g_{i}\| \cdot 1_{\mathbf{g} \notin B_{|i|-1}}.
\end{equation}
The proof is almost identical to the one in the proof of Lemma \ref{lem:squeezeL1}, so we omit it.

We now estimate the expectation of the RHS of Inequality \ref{eqn:expSqueezeL1}. By   the Tonelli theorem, it suffices to compute the expectation of each summand. Moreover,   since the RVs $\|g_{i}\|1_{\mathbf{g} \notin B_{u-1}}$ and $\|g_{i}\| \cdot 1_{\mathbf{g}\notin B_{|i|-1}}$ are integrable,   we can condition on $g_{0}, g_{j_{1}}, \ldots, g_{j_{N}}$, compute the conditional expectation and average them. 

For each $j \in \Z \setminus \{0, j_{1}, \ldots, j_{N}\}$, for each $k>0$ and for each prescribed choice $g_{0}, g_{j_{1}}, \ldots, g_{j_{N}}$, we have 
\[\begin{aligned}
\E\left[\left. \|g_{j}\| \cdot 1_{\mathbf{g} \notin B_{k}} \right| g_{0}, g_{j_{1}}, \ldots, g_{j_{N}}\right] &\le \E\left[ \|g_{j}\| \Prob \left( \cup_{i \ge k} A_{i; e^{-i/K}}^{c} \, \Big| \, g_{0}, g_{j_{1}}, \ldots, g_{j_{N}}, g_{j}\right)\right] \\
&\le \E_{\mu_{j}} \|g_{j}\| \cdot \frac{K}{1-e^{-1/K}} e^{-k/K + N} & (\because \textrm{Proposition \ref{prop:expSqueeze}})\\
&\le \frac{K}{1-e^{-1/K}} e^{-k/K + N}(1+\epsilon) \|\mu\|_{1}.
\end{aligned}
\]
The expectation of $\E\left[\left. \|g_{j}\| \cdot 1_{\mathbf{g} \notin B_{k}} \right| g_{0}, g_{j_{1}}, \ldots, g_{j_{N}}\right]$ for $g_{0} \sim \mu_{0}, g_{j_{1}} \sim \mu_{j_{1}}$, $\ldots$, $g_{j_{N}} \sim \mu_{j_{N}}$ is still bounded by the same bound $ \frac{K}{1-e^{-1/K}} e^{-k/K + N}(1+\epsilon) \|\mu\|_{1}$.

if $j \in\{j_{1}, \ldots, j_{m}\}$ and $k>0$, we have 
\[\begin{aligned}
\E\left[\left. \|g_{j}\| \cdot 1_{\mathbf{g} \notin B_{k}} \right| g_{0}, g_{j_{1}}, \ldots, g_{j_{N}}\right] &\le \E\left[ \|g_{j}\| \Prob \left( \cup_{i \ge k} A_{i; e^{-i/K}}^{c} \, \Big| \, g_{0}, g_{j_{1}}, \ldots, g_{j_{N}}, g_{j}\right)\right] \\
&\le  \|g_{j}\| \cdot \frac{K}{1-e^{-1/K}} e^{-k/K + N} & (\because \textrm{Proposition \ref{prop:expSqueeze}})\\
&\le \frac{K}{1-e^{-1/K}} e^{-k/K + N} \|g_{j}\|.
\end{aligned}
\]
The expectation of this quantity for $g_{0} \sim \mu_{0}, g_{j_{1}} \sim \mu_{j_{1}}$, $\ldots$, $g_{j_{N}} \sim \mu_{j_{N}}$ is bounded by $ \frac{K}{1-e^{-1/K}} e^{-k/K + N} \| \mu_{j}\|_{1}$.

Hence, we have \[\begin{aligned}
&\E\Big[10 e^{-u/K} + 2\sum_{1 \le |i| \le u} \|g_{i}\| \cdot 1_{\mathbf{g} \notin B_{u-1}} +  2\sum_{|i| > u} \|g_{i}\| \cdot 1_{\mathbf{g} \notin   B_{|i|-1}}\Big] \\
&\le 10 e^{-u/K} +  \frac{2K}{1-e^{-1/K}} (1+\epsilon) \|\mu\|_{1} \cdot \left( u e^{-(u-1)/K + N} + \frac{1}{1-e^{-1/K}} e^{-(u-1)/K+N}  \right) \\
&+ \frac{K}{1-e^{-1/K}} e^{-(u-1)/K+N} \sum_{l=1}^{N} \|\mu_{j_{l}}\|_{1} \\
&\le K_{2} e^{-(u-1)/K + N} \left(1 +  \|\mu_{j_{1}}\|_{1}+ \ldots +    \|\mu_{j_{N}}\|_{1}\right)
\end{aligned}
\]
for $K_{2} = 100K (1+\epsilon) e^{1/K} (2K/e) \|\mu\|_{1} (1-e^{-1/K})^{-2}$.
\end{proof}

In fact, the constant $\epsilon, K$ we obtained in Proposition \ref{prop:expSqueeze} only depends on how much weight $\mu^{\ast n}$ puts on independent contracting isometries $g, h \in G$. 

Also, we can modify $\mu_{0}, \mu_{j_{1}}, \ldots, \mu_{j_{N}}$ by constant scaling or by taking linear combination, which will render them finite measures or signed measures, respectively. After this modification, an estimate analogous to the one in Lemma \ref{lem:expSqueezeL1} holds, namely: \[
  \E_{\prod_{i} \mu_{i}} \left|R(\mathbf{g}) - R_{n, m}(\mathbf{g}) \right| < K_{2} e^{-\min(n, m)/K +N} \cdot \left(1 + \sum_{l=1}^{N} \|  \mu_{j_{l}}\|_{1}\right) \cdot \|\mu_{0}\|_{0} \cdot \prod_{l=1}^{N} \|\mu_{j_{N}}\|_{0}.
\]
We will now see that this is in fact similar to $\E_{\prod_{i} \mu_{i}} R(\mathbf{g})$ itself.

\begin{cor}\label{cor:expSqueezeL1}
Let $(X, d)$ be as in Convention \ref{conv:expSqueeze}, let $\mu$ be a strongly non-elementary probability measure on $G$ with finite  first moment and let $\epsilon, K > 0$ be as in Proposition \ref{prop:expSqueeze}. Let $K_{2}$ be as in Lemma \ref{lem:expSqueezeL1}. Then for $K_{3} = K_{2} e^{1/K}$, the following holds.

Let $\{j_{1}, \ldots, j_{N}\}$ be $N$ integers, and let $\mu_{0}, \mu_{j_{1}}, \ldots, \mu_{j_{N}}$ be signed measures with finite $\| \cdot \|_{0, 1}$-norm. Suppose further that $\mu_{j_{1}}$ is balanced, i.e., $\sum_{g \in G} \mu_{j_{1}}(g) = 0$. For $i \in \Z \setminus \{0, j_{1}, \ldots, j_{N}\}$, let $\mu_{i}$ be a probability   measure such that $\|\mu_{i} - \mu\|_{0, 1} < \epsilon$. Then we have \[
\Big |\E_{\prod_{i} \mu_{i}} R(\mathbf{g}) \Big| < K_{3} e^{-|j_{1}|/K +N} \cdot \left(1 + \sum_{l=1}^{N} \|   \mu_{j_{l}}\|_{1}\right)\cdot \|\mu_{0}\|_{0} \cdot \prod_{l=1}^{N} \|\mu_{j_{N}}\|_{0}.
\]
\end{cor}

\begin{proof}
Under the assumption, we claim that $\E_{\prod_{i}} R_{|j_{1}|- 1, |j_{1}| - 1}(\mathbf{g}) = 0$. To see this, we note that $R_{|j_{1}|-1,   |j_{1}| - 1} (\mathbf{g})$ does not depend on the value of $g_{j_{1}}$.  Let $E := \E_{g_{i} \sim \mu_{i} : |i| < |j_{1}|} R_{|j_{1}|- 1, |j_{1}| - 1}(\mathbf{g})$. Then we observe  \[
\E_{\prod_{i}} R_{|j_{1}|- 1, |j_{1}| - 1}(\mathbf{g}) = \E_{g_{j_{1}} \sim \mu_{j_{1}}} \E_{g_{i} \sim \mu_{i} : |i| < |j_{1}|} R_{|j_{1}|- 1, |j_{1}| - 1}(\mathbf{g}) = \sum_{g} \mu_{j_{1}}(g) \cdot E = 0.
\]
Given this, Lemma \ref{lem:expSqueezeL1} for $n=m=|j_{1}|-1$ implies that \[
\left| \E_{\prod_{i} \mu_{i}} R(\mathbf{g})\right| < K_{2} e^{- (|j_{1}| - 1)/K +N} \cdot \left(1 + \sum_{l=1}^{N}   \|\mu_{j_l}\|_{1}\right) \cdot \|\mu_{0}\|_{0} \cdot \prod_{l=1}^{N} \|\mu_{j_{N}}\|_{0}. \qedhere
\]
\end{proof}

Let us observe a simple lemma:

\begin{lem}\label{lem:seriesDiff}
Let $\{F_{i}(t)\}_{t \in [-\epsilon, \epsilon]}$ be a family of functions for each $i \in \Z$ and let $(c_{i})_{i \in \Z}$ be a summable sequence of positive numbers. Suppose that   $F_{i}(t)$ is differentiable at $t=0$ for each $i$ and \[
|F_{i}'(0)| \le c_{i}\quad  (\forall i).
\]
Suppose also that there exists a function $f(t)$ with $\lim_{t \rightarrow 0} f(t)/t = 0$ such that \[
|F_{i}(t) - F_{i}(0) - tF_{i}'(0)| \le c_{i} f(t) \quad (\forall i, \forall t).
\]

Then $F(t) := \sum_{i \in \Z} F_{i}(t)$ is differentiable at $t =0$, with $F'(0) = \sum_{i \in \Z} F_{i}'(0)$.
\end{lem}

\begin{proof}
For any $N$ and $t$, we have \[
\left| \frac{F(t) - F(0)}{t} - \frac{\sum_{|i| \le N} F_{i}(t) - \sum_{|i| \le N} F_{i}(0)}{t} \right| \le \sum_{|i| > N} \frac{|F_{i}(t) - F_{i}(0)|}{t} \le \sum_{|i| \ge N} |F_{i}'(0)| +  \frac{f(t)}{t} \sum_{|i| \ge N} c_{i}.
\]
We   let $t \rightarrow 0$ and conclude that $\sum_{|i| \le N} F_{i}'(0)$, $\limsup_{t\rightarrow 0} \frac{F(t) - F(0)}{t}$ and $\liminf_{t \rightarrow 0} \frac{F(t) - F(0)}{t}$ differ by at most $2 \cdot \sum_{|i| \ge N}   c_{i}$.   Now take $N \rightarrow \infty$  to conclude.
\end{proof}

We now prove Theorem \ref{thm:driftDiffSqueezeSecond}.

\begin{proof}

Let $\mu$ be a   non-elementary probability measure, and let $\epsilon, K>0$ be as in Proposition \ref{prop:expSqueeze} for $\mu$. Let us now consider a family of probability   measures $\{\mu_{t}^{(0)}\}_{t \in [-\epsilon, \epsilon]}$, families of   balanced signed measures $\{\mu_{t}^{(i)} : i =1, 2, \ldots\}_{t \in [-\epsilon, \epsilon]}$ and   a family of $o(t)$-functions $\{f_{i}(t) : i=0,1, \ldots\}_{t \in [-\epsilon, \epsilon]}$ such that \[
\begin{aligned}
\|\mu_{t}^{(i)} - \mu^{(i)} - t\mu^{(i+1)} \|_{0, 1} \le f_{i}(t). \quad (\forall t, \forall i).
\end{aligned}
\]
For convenience, let us assume $f_{1} \le f_{2} \le \ldots$; we can just take the maximum of the first $i$ functions if needed. Also, let $M_{1} \le M_{2} \le \ldots$ be such that \[
M_{i} \ge \|\mu^{(j)}(t)\|_{0, 1}, \quad M_{i} \ge f_{j}(t)/t
\]
for each $j \le i$ and each $t \in [-\epsilon, \epsilon]$.

Let us now consider the set \[
\mathcal{I} := \{ (a_{i})_{i \in \Z} : \textrm{$a_{i} \in \Z_{>0}$ for each $i$ and $a_{i} = 0$ for all but finitely many entries}\}.
\]
For $\mathbf{a} = (a_{i})_{i \in \Z}$, we let $\|\mathbf{a}\| = \max\{|i| : a_{i} \neq 0\}$, $\sigma(\mathbf{a}) = \sum_{i} a_{i}$ and $\supp \mathbf{a} = \{ i : a_{i} > 0\}$.

For this proof, we will use the notation $\E_{\mathbf{a}, t}$ and $\E_{\mathbf{a}, t; A}$ for the expectation of RVs with respect to a specific product measure. Namely, $\E_{\mathbf{a}, t} F(\mathbf{g})$   denotes the expectation of the RV $F(\mathbf{g})$  when $g_{i}$'s are all independent and $g_{i} \sim \mu_{t}^{(a_{i})}$.   We know for example that $R(\mathbf{g})$ is well-defined almost surely with respect to any $\Prob_{\mathbf{a}, t}(\cdot)$ so we can discuss $\E_{\mathbf{a}, t} R(\mathbf{g})$.

To differentiate $\E_{\mathbf{a}, t} R(\mathbf{g})$, we need another notation. For each finite subset $A$ of $\Z$, we define $\Prob_{\mathbf{a}, t; A}$ to denote that $g_{i}$'s are independently distributed according to 
\[
g_{i} \sim \left\{\begin{array}{cc} \mu_{t}^{(a_{i})} &  \textrm{if}\,\,  i \in A\\
 \mu_{0}^{(a_{i})} & \textrm{if}\,\, i \notin A.
\end{array}\right.
\]
Finally, if we first distribute $g_{i}$'s in the above rules except for some steps $g_{j}$, we write $\Prob_{\mathbf{a}, t; A; g_{j} \sim \textrm{distribution}}$.

Here, $\E_{\mathbf{a}, t; A}R(\mathbf{g})$ is meant to be a finite approximation of $\E_{\mathbf{a}, t} R(\mathbf{g})$. More precisely, we claim:

\begin{claim}\label{claim:fAA}
For each $\mathbf{a}  \in \mathcal{I}$ and $t \in [-\epsilon, \epsilon]$, we have \[
\E_{\mathbf{a}, t}R(\mathbf{g})= \lim_{N \rightarrow \infty} \E_{\mathbf{a}, t; [-N, N]}R(\mathbf{g})
\]
\end{claim}

\begin{proof}
Take an arbitrary $\epsilon'>0$. Then by Lemma \ref{lem:expSqueezeL1}, there exists $n$ such that \[
\E_{\mathbf{a}, t} |R(\mathbf{g}) - R_{n, n}(\mathbf{g})| < \epsilon',\,\,
\E_{\mathbf{a}, t; [-N, N]} |R(\mathbf{g}) - R_{n, n}(\mathbf{g})| < \epsilon' \quad (\forall N).
\]
We now take an arbitrary $N > n$. Then $R_{n, n}(\mathbf{g})$ has the same distribution for both $\Prob_{\mathbf{a}, t}$ and $\Prob_{\mathbf{a}, t; [-N, N]}$. This implies that \[
|\E_{\mathbf{a}, t}R(\mathbf{g})-  \E_{\mathbf{a}, t; [-N, N]}R(\mathbf{g})| <   2\epsilon' \quad (\forall N > n).
\]
Since such a threshold $n(\epsilon')$ exists for each $\epsilon'$, we conclude the claim.
\end{proof}

From this claim, we observe that \[\begin{aligned}
&\E_{\mathbf{a}, t}R(\mathbf{g}) =  \E_{\mathbf{a}, t; \{0\}} R(\mathbf{g})\\
&+ \sum_{N=0}^{\infty} \big(\E_{\mathbf{a}, t; [-N, N]; g_{N+1} \sim \mu_{t}^{(a_{N+1})} - \mu_{0}^{(a_{N+1})} } R(\mathbf{g}) +  \E_{\mathbf{a}, t; [-N, N+1]; g_{-(N+1)} \sim \mu_{t}^{(a_{-(N+1)})}  - \mu_{0}^{(a_{-(N+1)})} } R(\mathbf{g})\big).
\end{aligned}
\]
Next, note that \[\begin{aligned}
\E_{\mathbf{a}, t; [-N, N]; g_{N+1} \sim\mu_{t}^{(a_{N+1})} - \mu_{0}^{(a_{N+1})}} R(\mathbf{g}) &= \E_{\mathbf{a}, 0; g_{N+1} \sim \mu_{N+1}^{(a_{N+1})}(t) - \mu_{N+1}^{(a_{N+1})}(t)} R(\mathbf{g}) \\
&+ \sum_{k=-N}^{N} \E_{\mathbf{a}, t; [-N, k-1]; g_{N+1} \sim \mu_{t}^{(a_{N+1})} - \mu_{0}^{(a_{N+1})}, g_{k} \sim \mu_{t}^{(a_{k})}- \mu_{0}^{(a_{k})}} R(\mathbf{g}),
\end{aligned}
\]
and similarly $\E_{\mathbf{a}, t; [-N, N+1]; g_{-(N+1)} \sim \mu_{t}^{(a_{-(N+1)})} - \mu_{0}^{(a_{-(N+1)})} } R(\mathbf{g})$ is equal to $\E_{\mathbf{a}, 0; g_{-(N+1)} \sim \mu_{t}^{(a_{-(N+1)})}- \mu_{0}^{(a_{-(N+1)})} } R(\mathbf{g})$ plus $2N+1$ expectations of the form $\E_{\mathbf{a}; t; [k+1, N+1]; g_{-N-1} \sim \mu_{-N-1}^{(a_{-N-1})}(t) - \mu_{-N-1}^{(a_{-N-1})}, g_{k} \sim g_{k} \sim \mu_{k}^{(a_{k})} (t) - \mu_{k}^{(a_{k})}} R(\mathbf{g})$.

Hence, if we define \[\begin{aligned}
S_{k}(t) &:= \E_{\mathbf{a}, 0; g_{k} \sim \mu_{k}^{(a_{k})}(t) - \mu_{k}^{(a_{k})}} R(\mathbf{g}, \\
S_{k, l}(t) &:= \E_{\mathbf{a}, t; [\min(k, l) +1, \max(k, l)+1] ; g_{k} \sim \mu_{k}^{(a_{k})}(t) - \mu_{k}^{(a_{l})}, g_{l} \sim \mu_{l}^{(a_{l})}(t) - \mu_{l}^{(a_{l})}} R(\mathbf{g})
\end{aligned}
\]
then we have \[\begin{aligned}
\E_{\mathbf{a}, t} R(\mathbf{g}) &= \E_{\mathbf{a}, t; \{0\}}R(\mathbf{g}) + \sum_{k \in \Z} S_{k}(t) + \sum_{k \neq l}  S_{k, l}(t).
\end{aligned}
\]In fact, the above is the rearrangement of the implied summation, and it remains to verify the summands are absolutely summable. To check it, first note that $\mu_{t}^{(i)} - \mu_{0}^{(i)}$ is always balanced for each $t$ and for each $i$. For $i = 0$ this is because both $\mu_{t}^{(i)}$ and $\mu_{0}^{(i)}$ have total weight $1$. For $i > 0$ this is because both signed measures are balanced. We can therefore apply Corollary \ref{cor:expSqueezeL1} to observe 
 \[\begin{aligned}
S_{k}(t)&= \E_{g_{k} \sim \mu_{t}^{(a_{k})} - \mu_{0}^{(a_{k})}}  \E_{ g_{i} \sim \mu^{(a_{i})} : i \in \supp \mathbf{a} \setminus \{k\} } \cdot \E_{\mu^{\Z}} \big[R(\mathbf{g}) \, \big| \, g_{i} : i \in k \cup \supp \mathbf{a}\big] \\
&\le \big(1 + \|\mu_{t}^{(a_{k})} - \mu_{0}^{(a_{k})}\|_{1} + \sum_{i \in \supp \mathbf{a} \setminus \{k\}} \|\mu^{(a_{i})}\|_{1}\big) \cdot \|\mu_{t}^{(a_{k})}- \mu_{0}^{(a_{k})}\|_{0} \\
&\,\, \cdot  \prod_{i \in \supp \mathbf{a} \setminus \{k\}} \|\mu^{(a_{i})}\|_{0} \cdot K_{3} e^{-\| \mathbf{a}^{(k)}\|/K + \sigma(\mathbf{a})+1} \\
&\le \Big(1+ \|\mu_{t}^{(a_{k})}\|_{1} + \|\mu_{0}^{(a_{k})}\| + \sum_{i \in \supp \mathbf{a} \setminus \{k\}} M_{a_{i}}\Big ) \cdot \prod_{i \in \supp \mathbf{a} \setminus \{k\}} M_{a_{i}} \\
&\,\, \cdot  K_{3} e^{-\|\mathbf{a}^{(k)}\| /K + \sigma(\mathbf{a}) + 1} \cdot  (f_{a_{k}}(t)/t + \|\mu^{(a_{k} + 1)}\|_{0, 1}) \cdot t\\
&\le  (\sigma(\mathbf{a}) + 2) M_{\sigma(\mathbf{a})+1} \cdot M_{\sigma(\mathbf{a})}^{\sigma(\mathbf{a})} \cdot K_{3} e^{-\| \mathbf{a}^{(k)}\|/K + \sigma(\mathbf{a})+1} \cdot 2M_{\sigma(\mathbf{a})} t:= C_{k} t. 
\end{aligned}
\]
Here, $\mathbf{a}^{(k)}$ denotes the sequence made from $\mathbf{a}$ after increasing $a_{k}$ by $1$.
A similar computation shows that $ \E_{\mathbf{a}, t; A_{k, l} ; g_{k} \sim \mu_{k}^{(a_{k})}(t) - \mu_{k}^{(a_{l})}, g_{l} \sim \mu_{l}^{(a_{l})}(t) - \mu_{l}^{(a_{l})}} R(\mathbf{g})$ is bounded by \begin{equation}\label{eqn:SklExp}\begin{aligned}
S_{k, l}(t)&\le (\sigma(\mathbf{a}) + 4) M_{\sigma(\mathbf{a})} \cdot M_{\sigma(\mathbf{a})}^{\sigma(\mathbf{a})} \cdot K_{3} e^{-\|\mathbf{a}^{(k, l)}\|/K + \sigma(\mathbf{a}) + 2} (2M_{\sigma(\mathbf{a})})^{2} t^{2} := C_{k, l} t^{2}.
\end{aligned}
\end{equation}
Note that $C_{k}$ and $C_{k, l}$ are exponentially decreasing in $k$ and $k$, $l$, respectively. Hence, they are absolutely summable. Also, Inequality \ref{eqn:SklExp} tells us that  $S_{k, l}(t)' = 0$ for each $k, l$. For each $S_{k}$, we observe \[\begin{aligned}
&\big|S_{k}(t)- t \E_{\mathbf{a}, 0; g_{k} \sim \mu_{k}^{(a_{k}+1)}}R(\mathbf{g})\big| \\
&= \Big| \E_{ g_{k} \sim \mu_{k}^{(a_{k})}(t) - \mu_{k}^{(a_{k})} - t\mu_{k}^{(a_{k}+1)}}  \E_{ g_{i} \sim \mu^{(a_{i})} : a_{i} \neq 0} \cdot \E_{\mu^{\Z}} \big[R(\mathbf{g}) \, \big| \, g_{i} : i \in k \cup \supp \mathbf{a}\big] \Big|\\
&\le (1 + f_{a_{k}+1}(t) + \sum_{i \in \supp \mathbf{a} \setminus \{k\}} M_{a_{i}}) \cdot \prod_{i \in \supp \mathbf{a} \setminus \{k\}} M_{a_{i}} \\
& \,\, \cdot K_{3} e^{-\|\mathbf{a}^{(k)}\| / K + \sigma(\mathbf{a}) + 1} \cdot f_{a_{k} + 1}(t)\\
&\le f_{\sigma(\mathbf{a}) + 1}(t) \cdot C_{k}.
\end{aligned}
\]
We now apply Lemma \ref{lem:seriesDiff} to conclude that:

\begin{claim}\label{claim:derivFinal}
For any $\mathbf{a} \in \mathcal{I}$, we have 
\begin{equation}\label{eqn:linExpDeliResult}
\frac{d}{dt} \Big|_{t=0} \E_{\mathbf{a}, t} R(\mathbf{g}) = \sum_{k \in \Z} (S_{k}'(t)) = \sum_{k \in \Z} \E_{\mathbf{a}^{(k)}, 0} R(\mathbf{g}),
\end{equation}
Furthermore, we have a  uniform bound for all $t \in [-\epsilon, \epsilon]$: \begin{equation}\label{eqn:linExpDeli}
\begin{aligned}
\big| \E_{\mathbf{a}, t} R(\mathbf{g})  - \E_{\mathbf{a}, 0} R(\mathbf{g})  - t \sum_{k \in \Z} \E_{\mathbf{a}^{(k)}, 0} R(\mathbf{g})
 \big| 
 &\le \sum_{k} C_{k} f_{\sigma(a) + 1} (t) + \sum_{k, l} C_{k, l} t^{2},
 \end{aligned}
\end{equation}
where $C_{k} \lesssim_{\|\mathbf{a}\|} e^{-\sigma(\mathbf{a})^{(k)}}$, and $C_{k, l} \lesssim_{\|\mathbf{a}\|}  e^{-\sigma(\mathbf{a})^{(k, l)}}$. 
\end{claim}

It is now time to sum up the discussion. Recall that $\sigma_{1}(\mu, \eta) = \E_{\mu^{(1)} }(g) - \E_{\mathbf{a}}R(\mathbf{g})$ for $\mathbf{a} = \delta_{0}$, the Dirac function at $0$. Here, given the setting of the theorem, it is straightforward to see that $\frac{d}{dt}|_{t=0} \E_{\mu_{t}^{(k)}} (g) = \E_{\mu_{t}^{(k+1)}}(g)$. So it remains to examine the differentiability of $\E_{\mathbf{a}}R(\mathbf{g})$. We observe that for each $n=0, 1, \ldots$, \[
\frac{d}{dt} \Big|_{t=0} \sum_{k_{1}, \ldots, k_{n} \in \Z} \E_{\mathbf{a}^{(k_{1}, \ldots, k_{n})}, t} R(\mathbf{g})  =  \sum_{k_{1}, \ldots, k_{n}, k_{n+1} \in \Z} \E_{\mathbf{a}^{(k_{1}, \ldots, k_{n+1})}, t} R(\mathbf{g}).
\]
First, the summand-wise derivative is given in Display \ref{eqn:linExpDeliResult}, and the uniform linear approximation is given in Display \ref{eqn:linExpDeli}. Now Lemma \ref{lem:seriesDiff} implies the claim. This ends the proof.
\end{proof}

\section{Continuity of the entropy}\label{section:entropy}

In this section, we prove Theorem \ref{thm:entropyContinuity}. Our strategy is as follows. Let $\mu$ be a non-elementary probability measure $\mu$ on $G$ with finite  time-one entropy, and suppose that   $\|\mu_{i} - \mu\|_{0} \rightarrow 0$ and $|H(\mu_{i}) - H(\mu)| \rightarrow 0$ holds. Let $\epsilon>0$ be arbitrary.

\begin{enumerate}
\item For a given non-elementary probability measure $\mu$ on $G$ with finite (time-one) entropy, we show that there exists $ N= N(\mu)$ such that $\frac{1}{n} H(\mu^{\ast n})$ and $h(\mu)$ differ by at most $\epsilon$ for all $n \ge N$. 
\item We also show that there exists a small $\epsilon_{meas} >0$ (depending on the choice of $\mu$) such that, whenever \[
\|\mu_{i} - \mu\|_{0} < \epsilon_{meas}\,\,\textrm{and}\,\, |H(\mu_{i}) - H(\mu)| < \epsilon_{meas},
\]
the same estimate holds for $\mu_{i}$,  i.e.,  $\frac{1}{n} H(\mu_{i}^{\ast n})$ and  $h(\mu_{i})$ differ by at most $\epsilon$ for all $n \ge  N$.
\item Finally, we observe that $H(\mu_{i}^{\ast n}) \rightarrow H(\mu^{\ast n})$ for $n=N$. 
\end{enumerate}
This argument will show that $\liminf_{i} h(\mu_{i})$ and $\limsup_{i} h(\mu_{i})$ will differ from $h(\mu)$ by at most $2\epsilon$. Since $\epsilon$ is arbitrary, we conclude that $\lim_{i} h(\mu_{i}) = h(\mu)$.

The third step of the above is classical (cf. \cite[Lemma 3.5]{amir2013amenability}). Hence, the key is to pick $N(\mu)$ that works for not only $\mu$ but all $\mu_{i}$ that are sufficiently close to $\mu$.

\subsection{Schottky sets for WPD elements} \label{subsection:SchottkyWPD}

In this subsection, we construct a Schottky set $S$ that exhibits  the WPD property.

\begin{definition}\label{dfn:SchottkyWPD}
Let $(X, G)$ be as in Convention \ref{conv:main}. Let $S$ be a $K_{0}$-Schottky set associated with the auxiliary constants $D_{0}, E_{0}$. Let $F$ be a finite subset of $G$. We say that $S$ is $F$-WPD Schottky set if the following holds: if $w \in G$ and $s, s' \in S$ are such that the beginning and the ending points of $\Gamma(s)$ and $w \Gamma(s')$ are pairwise $100E_{0}$-close, then $w \in F$ and $\Pi(s)^{-1} w \Pi(s') \in F$.
\end{definition}

\begin{lem}\label{lem:SchottkyWPD}
Let $(X, G)$ be as in Convention \ref{conv:main} and let $g$ be a BGIP element in $G$ with WPD property. Let $A \subseteq G$ be a subset such that the semigroup $\llangle A \rrangle$ generated by $A$ contains two independent BGIP elements and $g$. Then for each $N$, there exists $m, K_{0}>0$, a finite set $F\subseteq G$ and a long enough $K_{0}$-Schottky set $S \subseteq A^{m}$ with cardinality $N$ and with $F$-WPD property.
\end{lem}

\begin{proof}
Given $N$, Proposition \ref{prop:SchottkyPA} tells us that there exists $K_{0}, m>0$ and a finite set $S'' \subseteq A^{m'}$ of cardinality $N$ such that \[
S_{k} := \{(s, \alpha^{(k)}, s) : s \in S''\}
\]
is a $K_{0}$-Schottky set for each $k$. We now take $M=  100E_{0} +2 \max_{s \in S''} \|s\|)$. The WPD property of $g$ tells us that \[
 Stab_{M} (o, g^{n} o) = \{h \in G : d(o, ho) \le M, d(g^{n} o, h g^{n} o) \le M\}
\]
is finite for all large enough $n$.   Now let \[\begin{aligned}
F_{1} &:= S' \cdot Stab_{M}(o, g^{n} o) \cdot S'^{-1} =  \{\Pi(s) t \Pi(u)^{-1} : s, u \in S', t \in Stab_{M}(o, g^{n} o)\}, \\
F_{2} &:= S'^{-1} \cdot g^{-n} \cdot Stab_{M}(o, g^{n} o) \cdot g^{n} S'.
\end{aligned}
\]
These are finite sets. We now claim that for a large enough $n$, $S_{n}$ is the  desired long enough $K_{0}$-Schottky set that has the $F$-WPD property   for $F = F_{1} \cup F_{2}$.

To check this, for $s, u\in S$ and $w \in G$ suppose that the beginning and the ending points of \[
\Gamma(s, \alpha^{(n)}, s), \,w \cdot \Gamma(u, \alpha^{(n)}, u)
\]
are each $100E_{0}$-close. In particular, we have \[
d(o, wo) < 100E_{0},\quad  d(\Pi(s) o, w \Pi(u) o) < 100E_{0} + 2 \max_{t \in S''} \|t\| = M.
\]
For a similar reason, we have $\Pi(s) g^{n} o$ and $w \Pi(u) g^{n}$ are $M$-close to each other. We now use the WPD property of $g^{n}$ to conclude that $\Pi(s)^{-1} w \Pi(u) \in Stab_{M}(o, g^{n} o)$, or in other words, $w \in F$.

Likewise, we have \[
\big( \Pi(s) g^{n} \Pi(s) \big)^{-1} \cdot w \cdot \big( \Pi(u) g^{n} \Pi(u)\big) = \Pi(s)^{-1} g^{-n} \cdot \Pi(s)^{-1} w \Pi(u) \cdot g^{n} \Pi(u) \in S'^{-1} \cdot g^{-n} \cdot Stab_{M}(o, g^{n} o) \cdot g^{n} \cdot S', 
\]
so $\big( \Pi(s) g^{n} \Pi(s) \big)^{-1} \cdot w \cdot \big( \Pi(u) g^{n} \Pi(u)\big)$ belongs to $F$ as well.
\end{proof}

\subsection{Sublinear growth of entropy of displacement} \label{subsection:sublinearEnt}

In this section, we prove the following.

\begin{prop}\label{prop:sublinearEnt}
Let $(G, X, o)$ be as in Convention \ref{conv:main}. Let $K_{0}, M_{0}>0$ and let $S$ be a large and long enough $K_{0}$-Schottky set $S \subseteq G^{M_{0}}$. Suppose that: \begin{enumerate} 
\item $H(\mu)< \infty$, 
\item $\mu^{M_{0}}(s)  > 0$ for each $s \in S$. 
\end{enumerate}
Then for each $\eta > 0$, there   exist $\epsilon' = \epsilon(\eta, \mu)$ and $N = N(\eta, \mu)$ such that the following holds: for    every probability measure $\mu'$ on $G$ with $\|\mu - \mu'\|_{0}< \epsilon'$ and $|H(\mu) - H(\mu')| < \epsilon'$ and for    every $g \in G$, 
\[\begin{aligned}
H_{g_{1} = g, (g_{2}, \ldots, g_{n}) \sim \mu'^{n-1}}  \big( \lfloor \|Z_{n}\| \rfloor =\lfloor  d(o, g_{1} \cdots g_{n} o)\rfloor \big) \le \eta n,\\
H_{(g_{1}, \ldots, g_{n-1}) \sim \mu'^{n-1}, g_{n} = g}  \big( \lfloor \|Z_{n}\| \rfloor =\lfloor  d(o, g_{1} \cdots g_{n} o)\rfloor \big) \le \eta n\\
\end{aligned}
\]
holds for each $n > N$.
\end{prop}

This proposition is proved in \cite{chawla2022the-poisson} when the underlying space is Gromov hyperbolic, by means of pivoting technique. We explain Chawla-Forghani-Frisch-Tiozzo's argument here for the readers' convenience.

\begin{proof}
We    prove the first claim only; the argument for the second claim is symmetric.

   Note that the Schottky set $S$ and the constants $K_{0}, E_{0}$ are fixed    throughout. We     recall  a lemma:

\begin{lem}[{\cite[Lemma 2.4]{chawla2022the-poisson}}]\label{lem:chawlaEnt}
For each $M \in \Z_{>0}$ and $\epsilon>0$, there exists $\delta = \delta(M, \epsilon)>0$ such that the following holds.

Let $A$ be a countable set    and let $Z : (\Omega, \Prob) \rightarrow A$ be a random variable.    Suppose that there exists a subset $U$ of $A$ with $\#U = A$ such that \[
\sum_{ x \notin U} \left( -\log (\Prob(Z=x)) \varphi(\Prob(Z = x))\right) < \epsilon/2.
\] Then for any measurable subset $E \subseteq \Omega$ with $\Prob(E)<\delta$, the modified RV \[
Y(\w):= \left\{\begin{array}{cc} Z(\w) & \textrm{ if $   \omega\in E$}\\ \ast & \textrm{otherwise} \end{array}\right.
\]
has entropy smaller than $\epsilon$.
\end{lem}

Let $\mu$ be a probability measure satisfying the assumption. Since $H(\mu) = \sum_{g \in G} -\mu(g) \log \mu(g) <+\infty$, there exists a finite set $U \subseteq G$ such that $\sum_{g \notin U} -\mu(g) \log \mu(g)$ is $\eta/10$-close to $H(\mu)$. Let $\delta = \delta(\#U, \eta/3)$ be as in Lemma \ref{lem:chawlaEnt}.

Let $m = \min_{s \in S} \mu^{M_{0}}(s) >0$. Let $\kappa = \kappa(\epsilon', M_{0})$ as in Proposition \ref{prop:gouezelRW1Ori} and let $\alpha \in \Z_{>1}$ be a large enough integer such that \[
\frac{1}{\kappa} e^{-\kappa(\alpha-1)} < \delta/10.
\]
We now take large enough finite subset $F$ of $G$ such that $\mu(F) \ge 1 - \frac{1}{100 (\alpha+1)} \delta$.

 We can take small enough $\epsilon' < \eta/10$ such that if a probability measure $\mu'$ satisfies \begin{equation}\label{eqn:muEnoughClose}
 \| \mu' - \mu\|_{0} < \epsilon'\,\,\textrm{and}\,\,|H(\mu') - H(\mu)| < \epsilon'
 \end{equation}
 then the following hold:
 \begin{enumerate}
\item $H(\mu') < H(\mu) + \eta/10$;
\item $\sum_{g \in U} -\mu'(g) \log \mu'(g) > H(\mu) - \eta/5$, and hence
\item $\sum_{g \notin U} -\mu'(g) \log \mu'(g) < \eta/3$;
\item $\mu'^{M_{0}}(s) > m/2$ for each $s \in S$, and
\item $\mu' (F) > 1 - \frac{1}{50 \alpha} \delta$.
\end{enumerate}

We will now take $N(\eta, \mu)$ that works for $\mu'$ satisfying Condition \ref{eqn:muEnoughClose}. But the actual value of $N$ will only depend on    $\eta$ and $\alpha$.

Let $n \in \Z_{>0}$ be given. We construct a probability space $\Omega_{n}$ and its partition $\mathcal{Q}_{n}$ into pivotal equivalence classes, aiming at RVs $g_{i}$'s with \[
(g_{1}, g_{2}, \ldots, g_{n}) \sim 1_{g_{1} = g} \times \mu' \times \cdots \times \mu'.
\]
   This is done using Proposition \ref{prop:gouezelRW1Ori}. Then the set of pivotal times $\mathcal{P}_{n}(\w) = \{j(1) < \ldots < j(\#\mathcal{P}_{n})\} \subseteq \{1, \ldots, n\}$ is a $2^{\{1, \ldots, n\}}$-valued RV on $\Omega_{n}$. We then divide the index set $\{1, \ldots, n\}$ into $\lceil n/\alpha \rceil$ intervals \[
I_{1}, I_{2}, \ldots, I_{\lceil n/\alpha \rceil}
\]
of lengths $\alpha$ or $\alpha - 1$. Now, for each $k=1, \ldots, \lceil n/\alpha \rceil$, we   say that $k$ is \emph{good} if: \begin{enumerate}
\item all elements in $\{ g_{i} : i \in I_{k-1} \cup I_{k} \cup I_{k+1}\}$ have norm less than $L$, i.e., $d(o, g_{i} o) < L$ for $i \in  I_{k-1} \cup I_{k} \cup I_{k+1}$ (by definition, $k=0$ and $k=\lceil n/\alpha \rceil$ are not good);
\item there exists at least one pivotal time in $I_{k-1}$ and at least one pivotal time $I_{k+1}$, i.e., $\mathcal{P}_{n}(\w) \cap I_{k-1} \neq \emptyset \neq \mathcal{P}_{n}(\w) \cap I_{k+1}$.
\end{enumerate}
For $1 \le k < \lceil n /\alpha \rceil$, $\{\w : \textrm{$k$ is not good}\}$ has probability less than $\delta$. To see this,   note that for each $i$, $\|g_{i}\|$ is greater than $L$ for probability at most $\frac{1}{50 \alpha} \delta$. Since there are at most   $3\alpha$ integers in 3 consecutive intervals, the probability that $\|g_{i}\| > L$ for at least one $i \in I_{k-1} \cup I_{k} \cup I_{k}$ is still less than $\delta/15$. Next, $I_{k-1}$ has no pivotal time for probability at most $\frac{1}{\kappa} e^{-\kappa (\alpha-1)} < \delta/10$, and similar estimate holds for $I_{k+1}$. Hence, the probability that both intervals do not have any pivotal time is less than $\delta/5$.

Now for each $i=1, \ldots, n$, we define \[
h_{i} := \left\{ \begin{array}{cc} g_{i} &  \textrm{  if the index $k$ for which $i \in I_{k}$ is not good} \\ \ast & \textrm{  if the index $k$ for which $i \in I_{k}$ is   good}.\end{array}\right.
\]
Since $\Prob(\textrm{the index $k$ for which $i \in I_{k}$ is not good})$ is less than $\delta$, Lemma \ref{lem:chawlaEnt} tells us that $H(h_{i}) < \eta/3$ for each $i$. Considering this, the entropy of the sequence $\mathbf{h} = (h_{1}, h_{2}, \ldots, h_{n})$ is at most $\eta n/3$.

Now we consider the conditional entropy of $\lfloor \|Z_{n}(\w)\|\rfloor $ conditioned on the values of $\mathbf{h}$. It is clear that $\mathbf{h} = (h_{1}, \ldots, h_{n})$ is an alternation of some sequences in $G$ and some sequences of $\ast$. Let $(h_{a(k)}, \ldots, h_{b(k)})$ be the $k$-th sequence   in $G$ that appears consecutively in $(h_{1}, \ldots, h_{n})$, and we define \[
S(\mathbf{h}) := \left\lfloor \sum_{k} \|h_{a(k)} \cdots h_{b(k)}\| \right\rfloor = \left\lfloor \sum_{k} d(Z_{a(k)-1} o, Z_{b(k)} o) \right\rfloor.
\]
We claim that: \[
S(\mathbf{h}) - \lfloor \|Z_{n} \|\rfloor \in \left\{ l \in \Z:  |l| \le    2L\alpha T + 2E_{0} T + Ln+2\right\}.
\]
If the entire $\mathbf{h}$ is recorded with elements of $G$ (not $\ast$), then the claim is clear. If not, let $T$ be the number of consecutive appearances of elements of $G$ (so that there are $T-1$ consecutive appearances of $\ast$). The indices for the $k$-th consecutive appearance of $\ast$'s are $(b(k)+1, \ldots, a(k+1) - 1)$. This is a union of intervals with good index, say $I_{c(k)},I_{c(k)+1}, \ldots, I_{d(k)}$. Because $c(k)$ is a good index, $\|g_{i} \| < L$ holds for all $i \in I_{c(k)-1}$. ($\ast$) Moreover, $I_{c(k)-1}$ contains a pivotal time; take the largest one and label it by $j''(k)$. By $(\ast)$ and by the fact that each    interval is no longer than $\alpha$, we conclude: \begin{equation}\label{eqn:goodPair1}
d(Z_{b(k)}, Z_{j''(k)}) < L\alpha.
\end{equation} For a similar reason, there exists a pivotal time $j'(k)$ in $I_{d(k)+1}$ (we take the smallest one), and we conclude \begin{equation}\label{eqn:goodPair2}
d(Z_{a(k+1)-1}, Z_{j'(k)}) < L\alpha.
\end{equation} Finally, since $c(k), c(k)+1, \ldots, d(k)$ are all good, $\|g_{i}\|< L$ holds for all $i \in I_{c(k)-1} \cup \ldots \cup I_{d(k)+1}$. This implies that \begin{equation}\label{eqn:goodPair3}
d(Z_{j''(k)}, Z_{j'(k)}) \le L(j'(k) - j''(k)).
\end{equation}
Now, note that $j'(k) \le j''(k+1)$; indeed, we have $d(k)+1 \le c(k)-1$ as the $k$-th and the $k+1$-th consecutive appearance of $\ast$'s are separated by some interval with bad index. This implies that \[
(o, \mathbf{Y}_{j''(1)}, \mathbf{Y}_{j'(1)}, \ldots, \mathbf{Y}_{j''(k)}, \mathbf{Y}_{j'(k)}, \ldots, Z_{n} o)
\]
is a $D_{0}$-semi-aligned sequence if we remove the duplicates. By Proposition \ref{prop:BGIPWitness}(1), we have \[
\left| d(o, Z_{j''(1)}) + \sum_{k=1}^{T-1} d(Z_{j''(k)}, Z_{j'(k)}) + \sum_{k=1}^{T-2} d(Z_{j'(k)}, Z_{j''(k+1)}) + d(Z_{j''(T-1)}, Z_{n}) - d(o, Z_{n} o) \right| \le 2E_{0} T.
\]
By plugging in Inequality \ref{eqn:goodPair1}, \ref{eqn:goodPair2} and \ref{eqn:goodPair3}, we conclude that 
\[
| \|Z_{n}\| - S(\mathbf{h}) | \le 2E_{0} T + \sum_{k} L ( j'(k) - j''(k)) +2L\alpha T \le 2E_{0} T + L n + 2L\alpha T.
\]
  We conclude the claim by taking the integer part.

Now, any random variable whose range is $k$ has entropy at most $\log k$. Hence, we have \[
H\big( \lfloor \| Z_{n} \| \rfloor \, \big| \, \mathbf{h}\big) \le \log n + \log (2E_{0} + 3L) + 2.
\]
This is smaller than $\eta n/3$ for all large enough $n$ (depending on $E_{0}$ and $L$).
Since $H(\mathbf{h}) \le \eta n/3$, we conclude \[
H\big( \lfloor \| Z_{n} \| \rfloor \big) \le H\big( \lfloor \| Z_{n} \| \rfloor \, \big| \, \mathbf{h}\big) + H(\mathbf{h}) \le \eta n. \qedhere
\]
\end{proof}

\begin{cor}\label{cor:sublinearEnt}
Let $(G, X, o)$ be as in Convention \ref{conv:main} and let $\mu$ be a non-elementary probability measure on $\mu$ with finite entropy. Let $\nu$ be a probability measure with finite entropy. Let $\eta>0$, and let $\epsilon' = \epsilon'(\eta, \mu)>0$ and $N = N(\eta, \mu)$ be as in Proposition \ref{prop:sublinearEnt}.

Then for any $g \in G$, for any probability measure $\mu'$ such that $\|\mu' - \mu\|_{0} < \epsilon'$ and $|H(\mu) - H(\mu')| < \epsilon'$, and for any $n > N$, we have \[
H_{ (g_{1}, \ldots, g_{n-1}) \sim \mu'^{n-1}, g_{n} \sim \nu} \big( \lfloor \|Z_{n} \| \rfloor \big) \le \eta n + H(\nu).
\]
\end{cor}

\begin{proof}
This is immediate from \[
H\big( \lfloor \|Z_{n} \| \rfloor \big) = H\big( \lfloor \|Z_{n} \| \rfloor  \, \big| \, g_{n} \big)+ H(g_{n})   \le \eta n + H(\nu).\qedhere
\]
\end{proof}

\begin{lem}\label{lem:almostDistbnEnt}
Let $A$ be a countable set, let $  0<\epsilon < 0.1$ and let $Z, Y : (\Omega, \Prob) \rightarrow A$ be RVs such that \[
(1-\epsilon) \Prob(Z(\w) = a) \le \Prob (Y(\w) = a) \le (1+\epsilon) \Prob(Z(\w) =a) \quad (\forall a \in A).
\]
Then we have $(1- 2\epsilon) H(Z) - 2\epsilon \le H(Y) \le (1+\epsilon) H(Z) + 2\epsilon$.
\end{lem}

\begin{proof}
By the triangle inequality, we have \[\begin{aligned}
H(Y) &= \sum_{a \in A} - \Prob(Y = a) \log \Prob(Y=a) \\
&\le \sum_{a \in A} \Prob(Y=a) | \log \Prob(Y = a) - \log \Prob (Z = a)| + \sum_{g \in G} -  |\Prob(Y = a) - \Prob( Z = a)| \log \Prob(Z = a) \\
&+ \sum_{g \in G} - \Prob( Z = a) \log \Prob(Z = a) \\
&\le \log (1 + \epsilon) \sum_{a \in A} \Prob(Y  = a) + \sum_{a \in A} - \epsilon \Prob(Z = a) \log \Prob(Z=a) + H(Z) \\
&\le 1.2\epsilon + (1+\epsilon) H(Z).
\end{aligned}
\]
Also, the assumption tells us that $\Prob(Z = a)/\Prob(Y=a)$ is bounded between $1-1.2\epsilon$ and $1+1.2\epsilon$ for any $a \in A$. The above argument then tells us that $H(Z) \le 1.44\epsilon + (1+1.2\epsilon)H(Y)$, which implies $H(Y) \ge (1-1.44\epsilon)H(Z) - 2 \epsilon$. 
\end{proof}

\subsection{More precise model   for pivoting} \label{subsection:deliPivot}

We next describe a more complicated version of pivotal time construction in \cite[Section 5C]{gouezel2022exponential}. Recall the notation $Z_{n} = g_{1} \cdots g_{n}$ in   Notation \ref{notat:location}. We also employ the notation $\axes_{i} := (Z_{i} o, \ldots, Z_{i + M_{0}} o)$.
\begin{definition}[{\cite[Definition 6.1]{choi2022random1}}]\label{dfn:pivotalEquivLDP}
Let $\mu$ and $\nu$ be non-elementary probability measures on $G$ and $(\Omega, \Prob)$ be a probability space for $\mu$. Let $0<\epsilon<1$, $K_{0}, N > 0$ and let $S \subseteq (\supp \mu)^{M_{0}}$ be a long enough $K_{0}$-Schottky set.

A subset $\mathcal{E}$ of $\Omega$ is called an \emph{$(n, N, \epsilon, \nu)$-pivotal equivalence class for $\mu$}, associated with the \emph{set of pivotal times} \[
\diffPivot^{(n, N, \epsilon, \nu)}(\mathcal{E}) = \{j(1) < j'(1) <  \ldots < j(\#\diffPivot/2) < j'(\#\diffPivot/2)\} \subseteq M_{0} \Z_{>0},
\]  if the following hold: \begin{enumerate}
\item for each $\w \in \mathcal{E}$ and $k \ge 1$, \[\begin{aligned}
s_{k}(\w) &:= \big(g_{j(k) - M_{0} + 1}(\w), \,\,g_{j(k) - M_{0} + 2}(\w), \,\, \ldots, \,\, g_{j(k)}(\w) \big),\\
s_{k}'(\w) &:=  \big(g_{j'(k) - M_{0} + 1}(\w), \,\,g_{j'(k) - M_{0} + 2}(\w), \,\, \ldots, \,\, g_{j'(k)}(\w) \big)
\end{aligned}
\]
are Schottky sequences;
\item for each $\w \in \mathcal{E}$, $\big(o, \axes_{j(1)}, \axes_{j'(1)}, \ldots, \axes_{j(\#\diffPivot/2)}, \axes_{j'(\#\diffPivot/2)}, Z_{n} o\big)$ is $D_{0}$-semi-aligned;
\item for the RV defined as \[
r_{k} := g_{j(k) + 1} g_{j(k) + 2} \cdots g_{j'(k) - M_{0}},
\]
$(s_{k}, s_{k}', r_{k})_{k > 0}$ on $\mathcal{E}$ are i.i.d.s and $r_{k}$'s are distributed almost according to $\mu^{\ast2 M_{0} N} \ast \nu^{\ast \frac{j'(k) - j(k)}{2M_{0}} - N - 0.5}$ in the sense that\[
(1-\epsilon) (\mu^{\ast 2M_{0} N} \ast \nu^{\ast \frac{j'(k) - j(k)}{2M_{0}} - N - 0.5})(g) \le \Prob(r_{k} = g) \le (1+\epsilon) (\mu^{\ast 2M_{0} N} \ast \nu^{\ast \frac{j'(k) - j(k)}{2M_{0}} - N-0.5})(g).
\]
for each $g \in G$.
\item On $\mathcal{E}$, for $i \notin \{ j(k) - M_{0} + 1 \le l \le   j'(k): k = 1, \ldots, \#\mathcal{P}/2\}$, $g_{i}$ is fixed.
\end{enumerate}
\end{definition}

\begin{prop}[{\cite[Proposition 6.2]{choi2022random1}}]\label{prop:gouezelRWLDP}
Let $\mu$ be a non-elementary probability measure on $G$, let $0<\epsilon   <1 $ and let $S \in G^{M_{0}}$ be a long enough Schottky set for $\mu$ with cardinality greater than $10/\epsilon$. Let $m := \min\{\mu^{M_{0}}(s) : s \in S\}$, let $N > 20/\epsilon m$ and let $\nu$ be a probability measure defined as \[
\nu:= \frac{1}{1 - 0.5m^{2}} \left( \mu^{\ast 2M_{0}} - 0.5 m^{2} \cdot (\textrm{uniform measure on}\, \{ \Pi(s) \Pi(s') : s, s' \in S\})\right).
\]
Then $\nu$ is a non-elementary probability measure. Moreover, there exists $K >0$ depending only on $S$, $m, N$ and $\epsilon$ (but not on $\mu$) such that, for each $n$, we have a probability space $(\Omega, \Prob)$ for $\mu$ and its measurable partition $\mathscr{P}_{n, N, \epsilon, \nu} = \{\mathcal{E}_{\alpha}\}_{\alpha}$ into $(n, N, \epsilon, \nu)$-pivotal equivalence classes, associated with the set of pivotal times $\mathcal{P}^{(n, N, \epsilon, \nu)}$, that satisfies\[
\Prob\left( \w :\frac{1}{2}\# \diffPivot^{(n, N, \epsilon, \nu)}(\w) \le (1-\epsilon) \frac{n}{2M_{0} N} \right) \le K e^{-n/K} \quad (\forall n \in \Z_{>0}).
\]
\end{prop}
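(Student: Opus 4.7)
The strategy is to augment the sample space $(G^{\Z_{>0}}, \mu^{\Z_{>0}})$ with auxiliary Bernoulli labels that record which blocks of $M_0$ consecutive coordinates behave as uniform Schottky sequences, and then to construct pivotal times inductively out of the labelled blocks, following the scheme of \cite{gouezel2022exponential} as adapted in \cite[\S 5--6]{choi2022random1}.

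First I would split the sequence $(g_i)_{i>0}$ into consecutive \emph{blocks} $B_k := (g_{(k-1)M_0+1}, \ldots, g_{kM_0})$, each distributed according to $\mu^{M_0}$ on $G^{M_0}$. The hypothesis $\min_{s \in S}\mu^{M_0}(s) \ge m$ lets me decompose $\mu^{M_0} = (0.5m\cdot\#S)\,\Unif(S) + (1-0.5m\cdot\#S)\,\tilde\nu$ for a probability measure $\tilde\nu$ on $G^{M_0}$ whose $\Pi$-pushforward is the $\nu$ in the statement. Enlarge the sample space by independent labels $\xi_k \in \{S, N\}$ with $\Prob(\xi_k = S) = 0.5m\cdot\#S$; conditioned on $\xi_k = S$, $B_k$ is uniform on $S$, and conditioned on $\xi_k = N$, $B_k$ is drawn from $\tilde\nu$.

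Next I would construct pivots greedily by scanning the blocks in order. Reading the blocks labelled $S$ in succession, I would attempt to designate them as potential pivotal times $j(k) = M_0 \cdot (\text{block index})$, paired with a later Schottky block at position $j'(k)$, provided that at least $2N$ further $S$-blocks lie strictly between $j(k)$ and $j'(k)$ to act as a buffer. The alignment rules from Fact \ref{fact:listPivot}, together with Lemma \ref{lem:1segment} and Corollary \ref{cor:semiAlign}, guarantee that the Schottky axes at $j(k),j'(k)$ automatically align with one another and with $[o, Z_n o]$, verifying conditions (1)--(2) of Definition \ref{dfn:pivotalEquivLDP}. Backtracking through already-placed pivots (as in Fact \ref{fact:listPivot}(3)) occurs with geometric probability, so loss of pivots is exponentially controlled. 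The resulting equivalence class $\mathcal{E}$ fixes the sequence of labels $(\xi_k)$, the positions of the pivotal pairs and their $2N$-block buffers, and the values of all coordinates outside the Schottky-designated free slots; on $\mathcal{E}$ the free Schottky blocks $(s_k, s_k')$ and the interstitial chunks $r_k$ are conditionally independent.

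For condition (3), note that given $\mathcal{E}$ the $r_k$ is a concatenation of $2N$ buffer blocks carrying their full $\mu^{M_0}$ law and $\tfrac{j'(k)-j(k)}{M_0} - 1 - 2N$ blocks drawn from $\tilde\nu$, yielding a law on $G$ that equals $\mu^{\ast 2M_0 N} \ast \nu^{\ast(\frac{j'(k)-j(k)}{M_0} - 1 - 2N)}$ up to the bias introduced by conditioning which positions are buffer versus generic $N$-blocks; the choice $N > 20/(\epsilon m)$ ensures this bias is bounded by the multiplicative factor $1 \pm \epsilon$. Finally, for the exponential lower bound on $\tfrac{1}{2}\#\diffPivot$, each successive attempt to close a pivotal pair requires on average $O(M_0 N)$ blocks and succeeds independently with positive probability determined by $m$, $\#S$, and $N$; Hoeffding's inequality applied to these geometric sub-attempts, combined with the geometric tail on backtracking, delivers $\Prob\!\bigl(\tfrac12 \#\diffPivot \le (1-\epsilon) n/(2M_0 N)\bigr) \le K e^{-n/K}$.

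The main obstacle, as I see it, is the quantitative control of the approximate equidistribution in condition (3). The pivotal-time procedure conditions on the full trajectory of the labels $(\xi_k)$ and on the event that the alignment tests succeed, and both of these operations subtly distort the law of each $r_k$. Handling this requires a careful bookkeeping of the Radon--Nikodym derivative between the conditional and the target product law, showing that the $2M_0 N$-block $\mu$-buffer dilutes the conditioning bias down to the multiplicative factor $(1 \pm \epsilon)$; the threshold $N > 20/(\epsilon m)$ is chosen precisely to absorb this.
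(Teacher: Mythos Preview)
The paper does not prove this proposition at all: it is quoted verbatim from \cite[Proposition~6.2]{choi2022random1} and used as a black box in the proof of Theorem~\ref{thm:entropyContinuity}. There is therefore no ``paper's own proof'' to compare your sketch against.

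That said, your outline is a faithful summary of the Gou\"ezel--Choi pivoting scheme that underlies the cited result: the Bernoulli decomposition of $\mu^{M_0}$ into a uniform-on-$S$ part and a residual $\tilde\nu$, the greedy scan for pivotal pairs with buffer blocks, the geometric backtracking estimate from Fact~\ref{fact:listPivot}(3), and the Hoeffding bound for the count of pivots are exactly the ingredients one expects. The one place where your sketch is genuinely thin is, as you yourself flag, condition~(3): the conditioning that defines $\mathcal{E}$ fixes not only the label sequence $(\xi_k)$ but also the outcome of several alignment tests, and you need to argue that the $2M_0 N$-block $\mu$-buffer absorbs this bias multiplicatively within $1\pm\epsilon$. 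This is where the hypothesis $N > 20/(\epsilon m)$ and $\#S > 10/\epsilon$ are actually used, and making that step rigorous is the real content of the proof in \cite{choi2022random1}; your proposal correctly identifies it as the crux but does not carry it out.
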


Before proving Theorem \ref{thm:entropyContinuity}, we state two lemmata that are well-known to the experts.

\begin{lem}\label{lem:lengthSpec}
Let $g$ be a contracting isometry of a metric space $(X, d)$. Then for each $N$ there exists $K$ such that \[
\#\{ i > 0 : a \le \|g^{i}\| \le a + K\} < N
\]
for every $a \in \mathbb{R}_{>0}$.
\end{lem}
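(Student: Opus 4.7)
The plan is to reduce the lemma to the fact that for a contracting isometry $g$, the sequence $i \mapsto \|g^i\|$ grows linearly with bounded error. Concretely, I would first establish a \emph{linear-growth estimate}: there exist constants $\tau = \tau(g) > 0$ and $C = C(g) \ge 0$ such that
\[
\big| \|g^i\| - i\tau \big| \le C \qquad \text{for every } i \in \Z_{>0}.
\]
Given this, the counting claim is immediate: if $\|g^i\| \in [a, a+K]$, then $i\tau \in [a - C,\, a + K + C]$, so $i$ lies in an interval of length $(K+2C)/\tau$, containing at most $\lfloor (K+2C)/\tau \rfloor + 1$ positive integers. For any $N$ exceeding $1 + 2C/\tau$, choosing $K$ with $(K+2C)/\tau + 1 < N$ (say $K = \tfrac{1}{2}((N-1)\tau - 2C)$) delivers the required bound.

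To prove the linear-growth estimate, I would proceed in two steps. The lower bound $\|g^i\| \ge i \tau$ together with the existence and positivity of $\tau := \lim_i \|g^i\|/i = \inf_i \|g^i\|/i$ follow from subadditivity $\|g^{i+j}\| \le \|g^i\| + \|g^j\|$ via Fekete's lemma, combined with the quasigeodesic lower bound $\|g^i\| \ge i/K_0 - K_0$ supplied by the contracting axis $\{g^k o\}_{k \in \Z}$ (with $K_0$ the contraction constant). For the upper bound $\|g^i\| \le i\tau + O(1)$, I would invoke the fellow-traveling conclusion of Proposition \ref{prop:BGIPWitness}: the trivial alignment $(o,\,(g^0 o, \ldots, g^{i+j} o),\, g^{i+j} o)$ is satisfied, so a subsegment of $[o, g^{i+j} o]$ fellow-travels with the orbit, and in particular $[o, g^{i+j} o]$ passes within a bounded distance $D = D(g)$ of $g^i o$. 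The triangle inequality then yields the approximate super-additivity
\[
\|g^{i+j}\| \ge \|g^i\| + \|g^j\| - 2D.
\]
Iterating as $\|g^{2^k n}\| \ge 2^k \|g^n\| - (2^k - 1)\cdot 2D$, dividing by $2^k$, and letting $k \to \infty$ (using $\|g^{2^k n}\|/2^k \to n\tau$) produces $\|g^n\| \le n\tau + 2D$, so $C = 2D$ suffices.

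The main obstacle will be the upper bound in the linear-growth estimate. Subadditivity by itself provides only $\|g^i\|/i \to \tau$ from above, compatible with a deficit $\|g^i\| - i\tau$ growing sublinearly (for instance like $\sqrt{i}$); such fluctuations would allow windows containing arbitrarily many orbit distances and would falsify the lemma. The bounded-projection/Morse property of the contracting axis, extracted through Proposition \ref{prop:BGIPWitness}, is precisely what rules out such fluctuations and promotes the asymptotic $\|g^i\|/i \to \tau$ to a bounded-error linear approximation.
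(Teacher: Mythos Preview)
Your argument is correct and yields a slightly cleaner statement than the paper actually proves. The paper does not establish the full two-sided estimate $|\|g^i\|-i\tau|\le C$; instead it passes to a large power $g^{k}$ with $\|g^{k}\|\ge 10K'$, uses the contracting property directly to show $[o,g^{k(i+1)}o]$ comes within $2K'$ of $g^{ki}o$, and concludes $\|g^{k(i+1)}\|\ge \|g^{ki}\|+8K'$, i.e.\ the subsequence $(\|g^{ki}\|)_i$ has increments bounded below. It then appeals to the coarse density of $\{\|g^{ki}\|\}$ in $\{\|g^{j}\|\}$. Both proofs rest on the same geometric input---the geodesic between two orbit points passes uniformly close to the intermediate orbit points---but you package it via Fekete plus near-superadditivity to obtain linear growth with bounded error, which is a more informative conclusion and is in fact exactly what the sole application (Lemma~\ref{lem:eltClosure}) uses.

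Two small points worth tightening. First, your appeal to Proposition~\ref{prop:BGIPWitness} requires the orbit segment to have domain length at least $L=L(D_0,K_0)$; for $i+j<L$ the near-superadditivity $\|g^{i+j}\|\ge \|g^i\|+\|g^j\|-2D$ follows trivially from $\|g^i\|+\|g^j\|\le L\|g\|$ after enlarging $D$. Second, your deduction only produces a positive $K$ once $N>1+2C/\tau$. The paper's argument has the same limitation, and indeed the lemma as stated is vacuous for $N=1$; the content needed downstream is precisely the linear-growth estimate you prove.
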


\begin{proof}
Let $g$ be a $K'$-contracting isometry. Since $|d(o, g^{i} o) - d(o, g^{j} o)| \le d(g^{i}o, g^{j} o) \le |i-j| \|g\|$, the set $\{ \|g^{ki}\| : i \in \Z\}$ is $k\|g\|$-coarsely dense in $\{\|g^{i}\| : i \in \Z\}$. Hence, it suffices to prove the statement for a power $g^{k}$ of $g$. We take $k$ large enough such that $\|g^{k}\| \ge 10K'$. Now, for each $i$, the contracting property of the set $\{o, go, \ldots, g^{k(i+1)} o\}$ forces that $[o, g^{k(i+1)} o]$ is $2K'$-close to $g^{ki} o$. Hence we have \[
\|g^{k(i+1)} \| = d(o, g^{k(i+1)} o) \ge d(o, g^{ki} o) + d(g^{ki} o, g^{k(i+1)} o) - 2K' \ge d(o, g^{ki} o) + 8K'.
\]
Now the conclusion follows.
\end{proof}

Given an isometry $g \in G$ of a metric space $(X, d)$, we define its \emph{elementary closure} \[
E(g) := \{ h \in G : d_{\rm Hauss} ( \{h g^{n} o\}_{n \in \Z}, \{g^{n} o\}_{n \in \Z}) < +\infty \}.
\]
  
\begin{lem}[{\cite[Corollary 4.4]{sisto2018contracting}}]\label{lem:eltClosure}
If $g \in G$ is a WPD contracting isometry of $X$, then $\langle g \rangle$ is a finite-index subgroup of $E(g)$. 
\end{lem}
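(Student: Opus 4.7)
The plan is, for each $h \in E(g)$, to find a power $g^m$ so that $g^m h$ lands in a set $\Stab_{K_0}(o, g^{N_0} o)$ with $K_0, N_0$ depending only on $g$; finiteness of that set, supplied by the WPD hypothesis, will then bound $[E(g):\langle g\rangle]$.

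First I would observe that $E(g)$ is a subgroup of $G$, since the Hausdorff distance $d_H(\cdot,\langle g\rangle o)$ is preserved under the relevant left-translations by isometries. For $h \in E(g)$, set $K_h := d_H(h\langle g\rangle o,\langle g\rangle o)$ and choose, for each $i\in\Z$, an integer $\sigma_h(i)$ with $d(hg^i o, g^{\sigma_h(i)} o) \le K_h$. Because $h$ is an isometry and the orbit $\{g^n o\}_{n\in\Z}$ is a contracting quasi-axis whose length spectrum is coarsely injective (Lemma \ref{lem:lengthSpec}), the shift $\sigma_h$ must agree, up to bounded error depending only on $K_h$ and the contracting constant of $g$, with an affine map $n \mapsto \epsilon_h n + c_h$ for some $\epsilon_h \in \{\pm 1\}$ and $c_h \in \Z$. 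The orientation map $\epsilon : E(g)\to\{\pm 1\}$ is then a homomorphism, so it suffices to show $[E^+(g):\langle g\rangle] < \infty$ for $E^+(g) := \ker \epsilon$.

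For $h \in E^+(g)$, I would normalize within its $\langle g\rangle$-coset by replacing $h$ with $\tilde h := g^{-c_h} h$, so that $\sigma_{\tilde h}(n) = n + O(1)$ uniformly in $n$. The normalized element then has displacement bounded simultaneously at $o$ and at every $g^N o$, controlled by $K_h$ and the contracting constant of $g$; in particular $\tilde h \in \Stab_{K^*(h)}(o, g^N o)$ for every $N$, where $K^*(h)$ depends on the coset only through $K_h$.

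The main obstacle is to upgrade this coset-dependent bound $K^*(h)$ to a constant $K_0$ uniform across all cosets of $\langle g\rangle$ in $E^+(g)$; without uniformity, WPD — which gives finiteness only at each fixed level $K$ — cannot be directly invoked. I plan to handle this by a contradiction argument: given normalized representatives $\tilde h_n$ of infinitely many distinct cosets with $K_{h_n}\to\infty$, consider the ratios $\tilde h_m^{-1} \tilde h_n \in E^+(g)$; using that both factors are approximately the identity shift along the axis, the contracting geometry of $\langle g \rangle o$ forces these ratios to have displacement at $o$ and at some $g^{N_0} o$ bounded by a \emph{fixed} $K_0$ depending only on $g$, so they sit in the WPD-finite set $\Stab_{K_0}(o, g^{N_0}o)$. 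This collapses infinitely many of the $\tilde h_n$ into the same coset, a contradiction. Once the uniform bound is in hand, WPD yields $[E^+(g):\langle g\rangle] < \infty$, and the earlier index-two orientation reduction completes the proof.
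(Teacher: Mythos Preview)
Your overall architecture---reduce to the orientation-preserving subgroup $E^{+}(g)$, normalize each $h$ by a power of $g$, and then appeal to WPD via a single set $\Stab_{K_{0}}(o,g^{N_{0}}o)$---matches the paper's. The paper, however, does not attempt your ratio-based contradiction. Instead it begins by invoking the fact (quoted from \cite{yang2019statistically}) that there is a \emph{uniform} constant $C$ with $d_{H}(h\langle g\rangle o,\langle g\rangle o)<C$ for every $h\in E(g)$; this is the Morse property of a contracting axis (any quasi-geodesic at finite Hausdorff distance from $\langle g\rangle o$ is in fact at Hausdorff distance bounded in terms of the contraction constant alone). With that uniform $C$ in hand, the paper picks $i$ with $d(ho,g^{i}o)<C$ and $j$ with $d(hg^{N}o,g^{j}o)<C$, uses Lemma~\ref{lem:lengthSpec} to pin down $j-i$ up to a constant, and lands $hg^{-i}$ in a single WPD-finite set. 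No contradiction step is needed.

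Your proposed contradiction has a genuine gap. You assert that for normalized representatives $\tilde h_{m},\tilde h_{n}$ with $K_{h_{m}},K_{h_{n}}\to\infty$, the ratio $\tilde h_{m}^{-1}\tilde h_{n}$ has displacement at $o$ and $g^{N_{0}}o$ bounded by a fixed $K_{0}$. But the only control you have is $d(\tilde h_{n}o,g^{O(1)}o)\le K_{h_{n}}$ and the analogous bound for $\tilde h_{m}$; the triangle inequality then yields only
\[
d\bigl(\tilde h_{m}^{-1}\tilde h_{n}\,o,\,o\bigr)=d(\tilde h_{n}o,\tilde h_{m}o)\le K_{h_{m}}+K_{h_{n}}+O(1),
\]
which diverges. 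Nothing in the phrase ``contracting geometry'' closes this: what the contracting property actually gives you is precisely the uniform Hausdorff bound $d_{H}(h\langle g\rangle o,\langle g\rangle o)\le C(K)$ \emph{directly}, and once you have that the contradiction hypothesis $K_{h_{n}}\to\infty$ is impossible from the outset---there is no need to pass to ratios. So the fix is to replace your contradiction paragraph with a one-line appeal to the Morse lemma for contracting quasi-geodesics (as the paper does by citing \cite{yang2019statistically}), after which the rest of your argument goes through and coincides with the paper's.
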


We now prove Theorem \ref{thm:entropyContinuity}.

\begin{proof}
Let $g$ be a WPD contracting isometry of $X$, and let $\mu$ be a probability measure on $G$ whose suitable power has nonzero weight on $g$. Let $\{\mu_{i}\}_{i}$ be a sequence of probability measures that converges simply to $\mu$ with $H(\mu_{i}) \rightarrow H(\mu)$, and let $\epsilon>0$. We first note that $\limsup_{i \rightarrow \infty}   h(\mu_{i}) \le   h(\mu)$: this is the upper-semicontinuity of the asymptotic entropy that holds on any group. See \cite[Proposition 3.3]{amir2013amenability} and the proof of \cite[Theorem 2.9]{gouezel2018entropy}. Hence, the nontrivial part is to prove \begin{equation}\label{eqn:liminfEntropy}
\liminf_{i \rightarrow \infty}   h(\mu_{i}) \ge    h(\mu).
\end{equation} 

We   have  elementary and non-elementary cases. First, consider the case that $\supp \mu$ is contained in the elementary closure $E(g)$ of $g$.
Since $E(g)$ is    virtually cyclic, the $\mu$-random walk  becomes a random walk on an amenable group and its   asymptotic entropy must vanish. In this case, Inequality \ref{eqn:liminfEntropy} is immediate. 

In the case that there exists $h \in \supp \mu \setminus E(g)$, $g^{m}$ and $hg^{m}h^{-1}$ are independent contracting isometries and $\mu$ is non-elementary.

Let $\epsilon>0$ be arbitrary. We will   construct $\epsilon_{meas}>0$ such that Item (2) of the strategy in the beginning of this section holds. For this, let $\eta>0$ be another small positive such that \[
\big(( 1 - 5\eta) h(\mu) - (4H(\mu) +0.5) \eta \big) \cdot (1- \eta) \ge (1- \epsilon) h(\mu).
\]

  For $N=100/\eta$, we use Lemma \ref{lem:SchottkyWPD} to  obtain a finite set $F \subseteq G$, constants $K_{0}, M_{0} > 0$ and a large enough and long $K_{0}$-Schottky set $S \subseteq (\supp \mu)^{M_{0}}$ of cardinality $>100/\eta$ with the $F$-WPD property.
Let $\mathbf{m}$ be the minimum of $\mu^{M_{0}}(s)$ for $s \in S$. 

We now consider the condition \begin{equation}\label{eqn:muClosClose}
\|\mu' - \mu\|_{0} < \epsilon_{meas}\,\,\textrm{and}\,\, |H(\mu') - H(\mu) | < \epsilon_{meas} 
\end{equation} for some $\epsilon_{meas}> 0$. There exists a small enough choice of $\epsilon_{meas}$, accompanied with a threshold $N_{sub}$ such that for any $g \in G$, for any $n > N_{sub}$ and for any $\mu'$ satisfying Equation \ref{eqn:muClosClose}, \[
H_{(g_{1}, \ldots, g_{n-1}) \sim \mu'^{n-1}, g_{n} = g} \big(\lfloor d(o, g_{1} \cdots g_{n} o) \rfloor\big) < \frac{\eta}{10}n.
\]
Such $\epsilon_{meas}$ and $N_{sub}$   are  given by Proposition \ref{prop:sublinearEnt}. We now fix $N > 40/\eta \mathbf{m} + N_{sub} + (\log \#S)/\eta M_{0}$ that is also large enough such that: \[
H(\mu^{\ast 2M_{0} N}) \ge(1-\eta) \cdot  2M_{0} N  h(\mu) 
\]
and such   that \begin{equation}\label{eqn:logLargeClaim}
\frac{2\log \# S + 4 \log (6E_{0} + 6)}{M_{0} N} \le 0.1 \eta.
\end{equation}
Choosing some smaller $\epsilon_{meas}$ if necessary, we can also guarantee the following for the measures $\mu'$ satisfying Equation \ref{eqn:muClosClose}: \[ 
0.5 \mathbf{m} \le \min \mu'^{M_{0}}(s) \le 2\mathbf{m}, \quad H(\mu') < 2H(\mu), \quad H(\mu'^{\ast 2M_{0} N}) > (1-\eta) H(\mu^{\ast 2M_{0} N}).
\]
In the remaining, we will deal with an arbitrary probability measure $\mu'$ satisfying Equation \ref{eqn:muClosClose}.

  Let also $m = \min_{s \in S} \mu'^{M_{0}}(s)$ and \[
\nu':= \frac{1}{1 - 0.5m^{2}} \left( \mu'^{\ast 2M_{0}} - 0.5 m^{2} \cdot (\textrm{uniform measure on}\, \{ \Pi(s) \Pi(s') : s, s' \in S\})\right).
\]
For this choice, note that  \[
H(\nu') \le \frac{1}{1-0.5m^{2}} \cdot H(\mu'^{2M_{0}}) \le 2 \cdot 2M_{0}H(\mu') \le 8M_{0} H(\mu).
\]

By Proposition \ref{prop:gouezelRWLDP},  for each $n$ there exists a measurable partition $\mathscr{P}_{n, N, \eta, \nu'} = \{\mathcal{E}_{\alpha}\}_{\alpha}$ of $(G^{n}, \mu'^{n})$ into $(n, N, \epsilon, \nu)$-pivotal equivalence classes such that $\frac{1}{2} \#\mathcal{P}^{(n, N, \eta, \nu')}(\w) > (1-\eta) n/2M_{0} N$ holds for probability $1 - K e^{-n/K}$. Here, note that $K$ does not depend on the nature of $\mu$ but only on our choice of $S$, $\mathbf{m}$, $N$ and $\eta$. Hence, we can choose $\mathbf{n}$ only depending on $S, \mathbf{m}, N, \eta$ such that \begin{equation}\label{eqn:suffPivEquiv}
\Prob \Big( \frac{1}{2} \#\mathcal{P}^{(n, N, \eta, \nu')}(\w) > (1-\eta) n/2M_{0} N\Big) \ge 1-\eta
\end{equation}
holds for all $n > \mathbf{n}$.

We now condition on a $(n, N, \eta, \nu)$-pivotal equivalence class $\mathcal{E} \in \mathscr{P}_{n, N, \epsilon, \nu}$. For convenience, we use notation $T := \frac{1}{2} \#\mathcal{P}^{(n, N, \eta, \nu)}(\mathcal{E})$, i.e., \[
\mathcal{P}(\mathcal{E}) = \{j(1) < j'(1) < \ldots < j(T) < j'(T)\}.
\] Note that $T \le n/2M_{0} N$ always hold, as \[
n > \sum_{k=1}^{T} \big(j'(k) - j(k) \big) > 2M_{0} N \cdot T.
\]
Our first claim is 
\begin{claim}\label{claim:entropyContinuityC1}
For each $i =1, \ldots, T$, we have \[
H(r_{i} | \mathcal{E}) \ge (1-2\eta) H(\mu'^{\ast 2M_{0} N}) - 2\eta
\]
\end{claim}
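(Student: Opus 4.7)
The plan is to exploit condition (3) of Definition \ref{dfn:pivotalEquivLDP}, which tells us that the law $p(g) := \Prob(r_i = g)$ and the reference law $q := \mu^{\ast 2 M_0 N} \ast \nu^{\ast k}$ (with $k := (j'(i) - j(i))/M_0 - 1 - 2N$) satisfy a pointwise multiplicative comparison $(1-\epsilon) q(g) \le p(g) \le (1+\epsilon) q(g)$. The goal is to convert this pointwise comparison into a lower bound on $H(p)$ in terms of $H(q)$, and then compare $H(q)$ to $H(\mu^{\ast 2 M_0 N})$.

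The first step is to pass to the cross entropy. From $p(g) \le (1+\epsilon) q(g)$ I get $-\log p(g) \ge -\log q(g) - \log(1+\epsilon) \ge -\log q(g) - 2\epsilon$ (using $\epsilon < 0.01$), so
\[
H(p) = \sum_g p(g)\bigl(-\log p(g)\bigr) \ge -\sum_g p(g) \log q(g) - 2\epsilon.
\]
The second step bounds the cross entropy $-\sum_g p(g) \log q(g)$ from below by $(1-\epsilon) H(q)$. Since $q(g) \le 1$, every term $-\log q(g)$ is nonnegative, and the lower bound $p(g) \ge (1-\epsilon) q(g)$ yields termwise
\[
p(g) \bigl(-\log q(g)\bigr) \ge (1-\epsilon)\, q(g) \bigl(-\log q(g)\bigr),
\]
and summing gives $-\sum_g p(g)\log q(g) \ge (1-\epsilon) H(q)$. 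Combining these two steps gives $H(r_i) \ge (1-\epsilon) H(q) - 2\epsilon$.

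The final step is to argue $H(q) \ge H(\mu^{\ast 2M_0 N})$. Writing $q$ as the law of an independent product $XY$ where $X \sim \mu^{\ast 2 M_0 N}$ and $Y \sim \nu^{\ast k}$, the standard fact ``conditioning reduces entropy'' on a group gives
\[
H(XY) \ge H(XY \mid Y) = H(X \mid Y) = H(X),
\]
so $H(q) \ge H(\mu^{\ast 2M_0 N})$ and the claim follows. The argument is essentially routine; the only mild obstacle is to be careful that the $(1\pm\epsilon)$ approximation in Definition \ref{dfn:pivotalEquivLDP}(3) translates to an additive $O(\epsilon)$ loss in entropy rather than a multiplicative one—this is precisely why both inequalities $p \le (1+\epsilon) q$ and $p \ge (1-\epsilon) q$ are used in the two steps above.
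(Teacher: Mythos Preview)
Your proof is correct and follows essentially the same approach as the paper: both exploit the pointwise comparison $(1-\epsilon)q \le p \le (1+\epsilon)q$ from Definition \ref{dfn:pivotalEquivLDP}(3) to obtain $H(p)\ge(1-\epsilon)H(q)-2\epsilon$, and then invoke the convolution inequality $H(XY)\ge H(X)$ to conclude. Your route through the cross entropy $-\sum_g p(g)\log q(g)$ is a slightly cleaner packaging of the same computation the paper carries out by decomposing $H(p)$ into $H(q)$ plus two error terms controlled by $|\log p-\log q|$ and $|p-q|\cdot|\log q|$, respectively.
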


\begin{proof}[Proof of Claim \ref{claim:entropyContinuityC1}]
Let $f_{i}$ be the distribution of $\mu'^{\ast 2M_{0} N} \ast \nu'^{\ast \frac{j'(k) - j(k)}{2M_{0}} - N-0.5}$, i.e., $f_{i}(g) = \mu'^{\ast 2M_{0} N} \ast \nu^{\ast \frac{j'(k) - j(k)}{2M_{0}} - N-0.5}(g)$. Note that the entropy of $f_{i}$ is at least $H(\mu^{\ast 2M_{0} N})$, because \[
H(Y_{1}Y_{2}) \ge \E[H(Y_{1} Y_{2} | Y_{2})] \ge H(Y_{1})
\]
holds for every pair of RVs $Y_{1}$ and $Y_{2}$ on $G$. Now, since $r_{i}$ is almost distributed according to $f_{i}$, Lemma \ref{lem:almostDistbnEnt} tells us the $H(r_{i} | \mathcal{E}) \ge (1-2\eta) H(\mu'^{\ast 2 M_{0} N}) - 2\eta$ as desired.
\end{proof}

Note that\[
Z_{j'(k)} = h_{0} \Pi(s_{1}) r_{1}  \Pi(s_{1})'h_{1} \cdots h_{k-1} \Pi(s_{k}) r_{k}  \quad (h_{0} := Z_{j(1) - M_{0}}, h_{i} := Z_{j'(i)}^{-1}   Z_{j(i+1) - M_{0}}\,\,\textrm{for $i\ge 1$})
\]
Since we are discussing everything inside $\mathcal{E}$, $h_{i}$'s are always fixed. 
Let $\mathscr{F}$ be the partition of $\mathcal{E}$ based on the values of \[
\big\{ \big\lfloor \| Z_{j(k) - M_{0}} \| \big\rfloor, \big\lfloor \| Z_{j(k)} \| \big\rfloor,   \big\lfloor \| Z_{j'(k)} \| \big\rfloor , \big\lfloor \| Z_{j'(k) +M_{0}} \| \big\rfloor  : k =1, \ldots, T \big\}.
\]

\begin{claim}\label{claim:entropyContinuityC2}
We have \[
H(\mathscr{F} | \mathcal{E}) \le \frac{\eta n}{4} + 4H(\mu) \cdot (n - 2M_{0} N T).
\]
\end{claim}

\begin{proof}[Proof of Claim \ref{claim:entropyContinuityC2}]
Let $k \in \{1, \ldots, T\}$. Let $Z$ be an RV distributed according to $\mu'^{\ast 2M_{0} N} \ast \nu'^{\ast \frac{j'(k) - j(k)}{2M_{0}} - N-0.5}$. Then the ratio of the  distributions of $\lfloor \| Z \| \rfloor$ and $\lfloor\|r_{k}\| \rfloor$ is uniformly between $(1-\eta)$ and $(1+\eta)$. By Lemma \ref{lem:almostDistbnEnt}, we have \[
H\big(\lfloor\|r_{k}\| \rfloor \, \big| \, \mathcal{E}\big) \le (1+ 2 \eta) H(\lfloor \| Z \| \rfloor) + 2\eta. 
\]
Meanwhile, since $2M_{0} N > N > N_{sub}$, Proposition \ref{prop:sublinearEnt} and Corollary \ref{cor:sublinearEnt} tells us that \[\begin{aligned}
H(\lfloor \| Z \| \rfloor) &\le \frac{\eta}{10} \cdot 2M_{0} N + H(\nu'^{\ast \frac{j'(k) - j(k)}{2M_{0}} - N-0.5}) \\
&\le \frac{\eta}{10} \cdot 2M_{0} N  + 8M_{0} H(\mu) \cdot \left(\frac{j'(k) - j(k)}{2M_{0}} - N-0.5\right).
\end{aligned}
\]
Combined together, we have \[
H\big( \lfloor \|r_{k} \| \rfloor  \, \big| \, \mathcal{E}\big) \le \frac{\eta}{4} M_{0} N + 5M_{0} H(\mu)\cdot (j'(k) - j(k) - 2M_{0} N - M_{0}) + 2 \eta.
\]

Let us now extract the information \[
\big\{ \big\lfloor \| Z_{j(k) - M_{0}} \| \big\rfloor, \big\lfloor \| Z_{j(k)} \| \big\rfloor,   \big\lfloor \| Z_{j'(k)} \| \big\rfloor , \big\lfloor \| Z_{j'(k) +M_{0}} \| \big\rfloor  : k =1, \ldots, T \big\}.
\] from the RVs $\lfloor \|r_{k} \| \rfloor, s_{k}, s_{k}'$'s. Proposition \ref{prop:BGIPWitness} asserts that  $[o, Z_{n} o]$ passes through the $E_{0}$-neighborhoods of $Z_{j(1)-M_{0}}, Z_{j(1)}, Z_{j'(1)}, Z_{j'(1) + M_{0}}, \ldots, Z_{j(T)-M_{0}}, Z_{j(T)}, Z_{j'(T)}, Z_{j'(T) + M_{0}}$ in order. Hence, setting $j'(-1) := -M_{0}$ for convenience, the following quantities: \[\begin{aligned}
 \|Z_{j'(k-1)+M_{0}}\| + \|Z_{j'(k-1) + M_{0}}^{-1} Z_{j(k) - M_{0}} \| -\|Z_{j(k)-M_{0}}\|,\\ 
\|Z_{j(k) - M_{0}}\| + \|s_{k}\| -\|Z_{j(k)} \|, \\
 \|Z_{j(k)}\| + \|r_{k}\|-\|Z_{j'(k)} \|, \\
\|Z_{j'(k)}\| + \|s_{k}'\|- \|Z_{j'(k) + M_{0}} \| 
\end{aligned} \quad (k=1, \ldots, T)
\]are all real numbers between $0$ and $6E_{0}$. Hence, if we define $\alpha_{k}, \beta_{k}, \gamma_{k}, \delta_{k}$ so that   \[\begin{aligned}
\left\lfloor \|Z_{j(k)-M_{0}}\| \right\rfloor &= \left\lfloor\|Z_{j'(k-1)+M_{0}}\|  \right\rfloor + \left\lfloor\|Z_{j'(k-1) + M_{0}}^{-1} Z_{j(k) - M_{0}} \|  \right\rfloor + \alpha_{k}, \\ 
\left\lfloor\|Z_{j(k)} \|\right\rfloor &=\left\lfloor \|Z_{j(k) - M_{0}}\|  \right\rfloor + \left\lfloor\|s_{k}\|  \right\rfloor + \beta_{k}, \\
\left\lfloor\|Z_{j'(k)} \|\right\rfloor &=\left\lfloor \|Z_{j(k)}\|  \|\right\rfloor+ \left\lfloor\|r_{k}\| \right\rfloor  + \gamma_{k}, \\
\left\lfloor\|Z_{j'(k) + M_{0}}  \|\right\rfloor &= \left\lfloor\|Z_{j'(k)}  \|\right\rfloor+ \left\lfloor\|s_{k}'\|   \|\right\rfloor + \delta_{k}
\end{aligned}
\]
then $\alpha_{k}, \beta_{k}, \gamma_{k}, \delta_{k}$'s are integers between $3$ and $-(6E_{0} + 3)$. Here, note that $Z_{j'(k-1) + M_{0}}^{-1} Z_{j(k) - M_{0}}$'s are fixed information across the pivotal equivalence class $\mathcal{E}$. Hence, as soon as we have information $(\lfloor \|r_{k}\|\rfloor, s_{k}, s_{k}', \alpha_{k}, \beta_{k}, \delta_{k}, \delta_{k})_{k=1}^{T}$, we determine all information for $\mathscr{F}$. 
We conclude that  \[\begin{aligned}
&H\big(\mathscr{F}\, \big|\,\mathcal{E}\big)\\
&\le H \Big( (\lfloor \|r_{k}\|\rfloor, s_{k}, s_{k}', \alpha_{k}, \beta_{k}, \delta_{k}, \delta_{k})_{k=1}^{T} \, \Big| \, \mathcal{E} \Big) \\
&\le \sum_{k=1}^{T} H( \lfloor \|r_{k} \| \rfloor)  + \sum_{k=1}^{T} H(s_{k}) + \sum_{k=1}^{T} H(s_{k}) \\
& \, + \sum_{k=1}^{T} H(\alpha_{k}) + \sum_{k=1}^{T} H(\beta_{k})+\sum_{k=1}^{T} H(\gamma_{k})+\sum_{k=1}^{T} H(\delta_{k}) \\
&\le T \cdot \frac{\eta M_{0}  N}{4} + 4H(\mu) \cdot \# \big(\{1, \ldots, n\} \setminus \{j(k)+ 1, \ldots, j(k) + 2M_{0} N : k=1, \ldots, T\}\big)\\
& \, + T ( 2\log \# S + 4 \log (6E_{0} + 6)) \\
&\le \frac{\eta n}{8} + 4H(\mu) \cdot (n - 2M_{0} N \cdot T) + \frac{\eta n}{8}.
\end{aligned}
\]
In the final step, we used the fact that $\{j(k) + 1, \ldots, j'(k)-M_{0}\}$'s for $k=1, \ldots, T$ are disjoint intervals containing at least $2M_{0}N$ integers, and that $N$ satisfies   Inequality \ref{eqn:logLargeClaim}.
\end{proof}

Now, we claim: \begin{claim}\label{claim:entropyContinuityC4}
Conditioned on each equivalence class $E$ of $\mathscr{F}$ (that partitions $\mathcal{E}$), we have \[
H( Z_{n} \, | \,E) \ge H\big( (r_{1}, \ldots, r_{T}) \, \big| \, E \big) - 2 T \log \# F.
\]
\end{claim}

\begin{proof}[Proof of Claim \ref{claim:entropyContinuityC4}]
Fix an equivalence class $E$, an element $g\in G$ and an element $\w \in E$ such that $Z_{n}(\w) = g$. We claim that if $\w' \in E$ satisfies $Z_{n}(\w') = g$, then $r_{i}(\w') \in F \cdot r_{i}(\w) \cdot  F$ for each $i=1, \ldots, T$. To see this, pick an arbitrary $k \in \{1, \ldots, T\}$. Recall that \[
\big(o, \,\mathbf{Y}_{j(k)}(\w),  \,\mathbf{Y}_{j(k)}(\w), \, g o\big),\quad 
\big(o, \,\mathbf{Y}_{j(k)}(\w'),  \,\mathbf{Y}_{j(k)}(\w'), \, g o\big)
\]
are each $D_{0}$-semi-aligned. Hence, there exists subsegments $\eta_{k}, \eta_{k}''$ of $[o, go]$ such that $\eta_{k}$ is $0.1E_{0}$-fellow traveling with $\mathbf{Y}_{j(k)}(\w)$ and    $\eta_{k}'$ is $0.1E_{0}$-fellow traveling with  $\mathbf{Y}_{j(k)}(\w')$.  Let us  denote the beginning point $\eta_{k}$ and $\eta_{k}'$ by $p$ and $p'$, respectively. Since    we chose $\w, \w'$ from the same equivalence class $E$ of $\mathcal{F}$, $\|Z_{j(k) - M_{0}}(\w)\|$ and $\|Z_{j(k) - M_{0}}(\w)\|$ have the same integer part and they differ by at most $1 < E_{0}$. This implies that $d(o, p)$ and $d(o, p')$ differ by at most $0.1E_{0} + E_{0} + 0.1E_{0} < 2E_{0}$. Meanwhile, $p$ and $p'$ are on the same geodesic starting from $o$. It follows that $d(p, p') < 2E_{0}$. Hence, $Z_{j(k) - M_{0}}(\w)$ and $Z_{j(k) - M_{0}}(\w')$ are $(2E_{0} + 0.1E_{0} + 0.1E_{0})$-close. For the same reason, the endpoints of $\eta_{k}$ and $\eta_{k}'$ are $2E_{0}$-close. This in turn implies that $Z_{j(k)}(\w)$ and $Z_{j(k)}(\w')$ are $3E_{0}$-close.

We are now in the situation that the beginning and the ending points of \[
\Gamma\big(s_{k}(\w)\big), \quad \big(Z_{j(k)-M_{0}}(\w) \big)^{-1} Z_{j(k)-M_{0}}(\w') \Gamma\big(s_{k}(\w') \big)
\]
are pairwise $3E_{0}$-close. Since $S$ is a $F$-WPD $K_{0}$-Schottky set, we conclude that \[
\big(Z_{j(k)-M_{0}}(\w) \big)^{-1} Z_{j(k)-M_{0}}(\w') \in F, \quad \big(Z_{j(k)}(\w) \big)^{-1} Z_{j(k)}(\w') \in F. \quad(k=1, \ldots, T) 
\]

For a similar reason, we have \[
\big(Z_{j'(k)}(\w) \big)^{-1} Z_{j'(k)}(\w') \in F, \quad \big(Z_{j(k)+M_{0}}(\w) \big)^{-1} Z_{j'(k)+M_{0}}(\w') \in F. \quad(k=1, \ldots, T) 
\]
We then conclude that \[\begin{aligned}
r_{k}(\w') &= \big( Z_{j(k) + M_{0}}(\w') \big)^{-1} Z_{j'(k)}(\w') \\
&=  \big( Z_{j(k) + M_{0}}(\w') \big)^{-1} Z_{j(k) + M_{0}}(\w) \cdot  \big( Z_{j(k) + M_{0}}(\w) \big)^{-1} Z_{j'(k)}(\w) \cdot  \big( Z_{j'(k)}(\w) \big)^{-1} Z_{j'(k)}(\w') \\
&\in F \cdot  \big( Z_{j(k) + M_{0}}(\w) \big)^{-1} Z_{j'(k)}(\w) \cdot F = F \cdot r_{k}(\w) \cdot F.
\end{aligned}
\]

This claim leads to the fact that there are at most $(\#F)^{2}$ choices of $r_{i}$ for each $i$  when conditioned with the value of $Z_{n}$. The entropy of such choices is at most $T \cdot \log (\#F)^{2}$. Now recall Bayes' rule: \[
H\big( Z_{n} \, \big| \, (r_{i})_{i=1}^{T}, E\big) = 
H\big((r_{i})_{i=1}^{T}\, \big | \, Z_{n}, E\big) - H\big( (r_{i})_{i=1}^{T}\, \big| \, E\big) + H(Z_{n} | E).
\]
By rearranging, we obtain \[\begin{aligned}
H(Z_{n} | E) &= 
H\big( Z_{n} \, \big| \, (r_{i})_{i=1}^{T}, E\big)  + H\big( (r_{i})_{i=1}^{T}\, \big| \, E\big) - H\big((r_{i})_{i=1}^{T}\, \big | \, Z_{n}, E\big) \\
&\ge H\big( (r_{i})_{i=1}^{T}\, \big| \,E\big) - 2T \log \# F. \qedhere
\end{aligned}
\]
\end{proof}

Let us now put the claims together. Let $\mathcal{E}$ be a pivotal equivalence class with $\#\frac{1}{2} \# \mathcal{P}^{(n, N, \eta, \nu')}(\mathcal{E}) = T$. Recall the partition    $\mathscr{F}$ made on $\mathcal{E}$. We have \[
\begin{aligned}
H(Z_{n} \, | \, \mathcal{E}) &\ge H(Z_{n} \, | \, \mathscr{F}) \\
&\ge H\big( (r_{1}, \ldots, r_{T}) \, \big| \, \mathscr{F}) - 2T \log \# F \\
&\ge H\big( (r_{1}, \ldots, r_{T}) \, \big| \, \mathcal{E}) - H(\mathscr{F} \, \big| \, \mathcal{E}) - 2T \log \# F \\
&= \sum_{i=1}^{T} H(r_{i} | \mathcal{E}) - 2T \log \# F - \frac{\eta n}{4} - 4H(\mu) (n-2M_{0} N T) \\
&\ge T(1- 2\eta) H(\mu'^{2M_{0} N}) - 2T\eta - \frac{n}{M_{0} N} \log \# F - \frac{\eta n}{4} - 4H(\mu) (n-2M_{0} N T)\\
&\ge T(1- 2\eta) (1-2\eta) 2M_{0} N \cdot h(\mu) -  \frac{\eta n}{2} - 4H(\mu) (n-2M_{0} N T) - 2\eta.
\end{aligned}
\]
Here we used the fact that $N >5 (\log \# F)/M_{0} \eta$.

Now consider the collection \[
   \mathcal{Q} := \big\{\mathcal{E} \in  \mathscr{P}_{n, N, \eta, \nu'} : T = \frac{1}{2} \# \mathcal{P}^{(n, N, \eta, \nu')}(\mathcal{E}) > (1-\eta) \frac{n}{2M_{0} N} \big\}
\]
When $\mathcal{E}$ belongs to $\mathcal{Q}$, the above estimate is greater than \[
(1 - 5 \eta) n h(\mu) - (4H(\mu) +0.5) \eta n.
\]
It remains to sum up the conditional entropies on each $\mathcal{E} \in \mathscr{P}_{n, N, \eta, \nu'}$. Equation \ref{eqn:suffPivEquiv} guarantees that equivalence classes in    $\mathcal{Q}$ take up probability at least $1-\eta$. This tells us that \[\begin{aligned}
H(Z_{n}) &\ge H(Z_{n} | \mathscr{P}_{n, N, \eta, \nu'}) \\
&= \sum_{\mathcal{E} \in \mathscr{P}_{n, N, \eta, \nu'}} H(Z_{n} | \mathcal{E}) \cdot \Prob(\mathcal{E}) \\
&\ge \sum_{\mathcal{E} \in \mathcal{Q}} H(Z_{n} | \mathcal{E}) \cdot \Prob(\mathcal{E}) \\
&\ge  \sum_{\mathcal{E} \in \mathcal{Q}} (( 1 - 5\eta) n h(\mu) - (4H(\mu) +0.5) \eta n) \cdot \Prob(\mathcal{E})\\
&\ge (( 1 - 5\eta) n h(\mu) - (4H(\mu) +0.5) \eta n) \cdot (1- \eta)    \ge (1-\epsilon) nh(\mu).
\end{aligned}
\]
This is as we wanted.
\end{proof}

\medskip
\bibliographystyle{alpha}
\bibliography{Continuity_rate}

\end{document}